
\documentclass{amsbook}%
\usepackage{graphicx}
\usepackage{amscd}
\usepackage{amsmath}
\usepackage{amsfonts}
\usepackage{amssymb}%
\setcounter{MaxMatrixCols}{30}
\providecommand{\U}[1]{\protect\rule{.1in}{.1in}}
\providecommand{\U}[1]{\protect\rule{.1in}{.1in}}
\providecommand{\U}[1]{\protect\rule{.1in}{.1in}}
\providecommand{\U}[1]{\protect\rule{.1in}{.1in}}
\theoremstyle{plain}

\newtheorem{condition}{Condition}

\newtheorem{corollary}{Corollary}

\newtheorem{definition}{Definition}
\newtheorem{example}{Example}

\newtheorem{lemma}{Lemma}

\newtheorem{proposition}{Proposition}
\newtheorem{remark}{Remark}

\newtheorem{theorem}{Theorem}
\numberwithin{equation}{section}

\begin{document}
\frontmatter
\title[Fractional Sobolev inequalities]{FRACTIONAL SOBOLEV INEQUALITIES: SYMMETRIZATION, ISOPERIMETRY AND INTERPOLATION}
\dedicatory{To Iolanda, Quim and Vanda}\author{Joaquim Mart\'{\i}n$^{\ast}$}
\address{Department of Mathematics\\
Universitat Aut\`{o}noma de Barcelona}
\email{jmartin@mat.uab.cat}
\author{Mario Milman**}
\address{Department of Mathematics\\
Florida Atlantic University\\
Boca Raton, Fl. 33431}
\email{mario.milman@gmail.com}
\urladdr{https://sites.google.com/site/mariomilman}
\thanks{$^{\ast}$Partially supported in part by Grants MTM2010-14946, MTM-2010-16232.}
\thanks{**This work was partially supported by a grant from the Simons Foundation
(\#207929 to Mario Milman).}
\thanks{This paper is in final form and no version of it will be submitted for
publication elsewhere.}
\subjclass{2000 Mathematics Subject Classification Primary: 46E30, 26D10.}
\keywords{Sobolev inequalities, modulus of continuity, symmetrization, isoperimetric
inequalities, interpolation.}

\begin{abstract}
We obtain new oscillation inequalities in metric spaces in terms of the Peetre
$K-$functional and the isoperimetric profile. Applications provided include a
detailed study of Fractional Sobolev inequalities and the Morrey-Sobolev
embedding theorems in different contexts. In particular we include a detailed
study of Gaussian measures as well as probability measures between Gaussian
and exponential. We show a kind of reverse P\'{o}lya-Szeg\"{o} principle that
allows us to obtain continuity as a self improvement from boundedness, using
symetrization inequalities. Our methods also allow for precise estimates of
growth envelopes of generalized Sobolev and Besov spaces on metric spaces. We
also consider embeddings into $BMO$ and their connection to Sobolev embeddings.

\end{abstract}
\maketitle
\tableofcontents

\section{Preface}

This paper is devoted to the study of fractional Sobolev inequalities and
Morrey-Sobolev type embedding theorems in metric spaces, using symmetrization.
The connection with isoperimetry plays a crucial role. The aim was to provide
a unified account and develop the theory in the general setting of metric
measure spaces whose isoperimetric profiles satisfy suitable assumptions. In
particular, the use of new pointwise inequalities for suitable defined moduli
of continuity allow us to treat in a unified way Euclidean and Gaussian
measures as well as a large class of different geometries. We also study the
role of isoperimetry in the estimation of $BMO$ oscillations. The connection
with Interpolation/Approximation theory also plays a crucial role in our
development and suggests further applications to optimization...

The authors are grateful to the institutions and agencies that supported their
research over the long course of time required to complete this work. In
particular, the second named author acknowledges a one semester sabbatical
provided by FAU.

\mainmatter

\chapter{Introduction\label{intro}}

\setcounter{section}{1} In this paper we establish general versions of
fractional Sobolev embeddings, including Morrey-Sobolev type embedding
theorems, in the context of metric spaces, using symmetrization methods. The
connection of the underlying inequalities with interpolation and isoperimetry
plays a crucial role.

We shall consider connected, measure metric spaces $\left(  \Omega
,d,\mu\right)  $ equipped with a finite Borel measure $\mu$. For measurable
functions $u:\Omega\rightarrow\mathbb{R},$ the distribution function is
defined by
\[
\mu_{u}(t)=\mu\{x\in{\Omega}:u(x)>t\}\text{ \ \ \ \ }(t\in\mathbb{R}).
\]
The signed \textbf{decreasing rearrangement} of $u,$ which we denote by
$u_{\mu}^{\ast},$ is the right-continuous non-increasing function from
$[0,\mu(\Omega))$ into $\mathbb{R}$ that is equimeasurable with $u;$ i.e.
$u_{\mu}^{\ast}$ satisfies
\[
\mu_{u}(t)=\mu\{x\in{\Omega}:u(x)>t\}=m(\left\{  s\in\lbrack0,\mu
(\Omega)):u_{\mu}^{\ast}(s)>t\right\}  )\text{ , \ }t\in\mathbb{R}%
\]
(where $m$ denotes the Lebesgue measure on $[0,\mu(\Omega)).$ The maximal
average$\ $of $u_{\mu}^{\ast}$ is defined by
\[
u_{\mu}^{\ast\ast}(t)=\frac{1}{t}\int_{0}^{t}u_{\mu}^{\ast}(s)ds,\text{
}(t>0).
\]

For a Borel set $A\subset\Omega,$ the \textbf{perimeter} or \textbf{Minkowski
content} of $A$ is defined by
\[
P(A;\Omega)=\lim\inf_{h\rightarrow0}\frac{\mu\left(  \left\{  x\in
\Omega:d(x,A)<h\right\}  \right)  -\mu\left(  A\right)  }{h}.
\]
The \textbf{isoperimetric profile} $I_{\Omega}(t),t\in(0,\mu(\Omega)),$ is
maximal with respect to the inequality
\begin{equation}
I_{\Omega}(\mu(A))\leq P(A;\Omega). \label{alaisi}%
\end{equation}

From now on we only consider connected metric measure spaces whose
isoperimetric profile $I_{\Omega}$ is zero at zero, continuous, concave and
symmetric around $\mu(\Omega)/2.$

The starting point of the discussion are the rearrangement
inequalities\footnote{For more detailed information we refer to Chapter
\ref{preliminar} below.} of \cite{mamiadv} and \cite{mamicon}, where we showed
that\footnote{See also the extensive list of references provided in
\cite{mamiadv}.}, under our current assumptions on the profile $I_{\Omega}$,
for all Lipschitz functions $f$ on $\Omega$ (briefly $f\in Lip(\Omega))$,
\begin{equation}
\left\vert f\right\vert _{\mu}^{\ast\ast}(t)-\left\vert f\right\vert _{\mu
}^{\ast}(t)\leq\frac{t}{I(t)}\left\vert \nabla f\right\vert _{\mu}^{\ast\ast
}(t),\text{ }0<t<\mu(\Omega),\text{ } \label{cuatro}%
\end{equation}
where%
\[
|\nabla f(x)|=\limsup_{d(x,y)\rightarrow0}\frac{|f(x)-f(y)|}{d(x,y)}.
\]
In fact, in \cite{mamiadv} we showed that (\ref{cuatro}) is equivalent to
(\ref{alaisi}).

Since the integrability properties do not change by rearrangements (i.e.
integrability properties are *rearrangement invariant*), rearrangement
inequalities are particularly useful to prove embeddings of Sobolev spaces
into rearrangement invariant spaces\footnote{Roughly speaking, a rearrangement
invariant space is a Banach function space where the norm of a function
depends only on the $\mu$-measure of its level sets.}. On the other hand, the
use of rearrangement inequalities to study smoothness of functions is harder
to implement. The main difficulty here is that while the classical
P\'{o}lya-Szeg\"{o} principle (cf. \cite{leoni}, \cite{bes}) roughly states
that symmetrizations are smoothing, i.e. they preserve the (up to first order)
smoothness of Sobolev/Besov functions, the converse does not hold in general.
In other words, it is not immediate how to deduce smoothness properties of $f$
from inequalities on $f_{\mu}^{\ast}.$ From this point of view, one could
describe some of the methods we develop in this paper as \textquotedblleft
suitable converses to the P\'{o}lya-Szeg\"{o} principle\textquotedblright.

As it turns out, related issues have been studied long ago, albeit in a less
general context, by A. Garsia and his collaborators. The original impetus of
Garsia's group was to study the path continuity of certain stochastic
processes (cf. \cite{garsia0}, \cite{garr}); a classical topic in Probability
theory. This task led Garsia et al. to obtain rearrangement inequalities for
general moduli of continuity, including $L^{p}$ or even Orlicz moduli of
continuity. Moreover, in \cite{garsiagrenoble}, \cite{garsiaind}, and
elsewhere (cf. \cite{garsia}), these symmetrization inequalities were also
applied to problems in Harmonic Analysis and, in particular, to study the
absolute convergence of Fourier series. From our point of view, a remarkable
aspect of the approach of Garsia et al. (cf. \cite{garsiaind}) is precisely
that the sought continuity can be recovered using rearrangement inequalities.
In other words, one can reinterpret this part of the Garsia-Rodemich analysis
as an approach to the Morrey-Sobolev embedding theorem using rearrangement inequalities.

It will be instructive to show how Garsia's analysis can be combined with
(\ref{cuatro}). To fix ideas we consider the setting of Garsia-Rodemich: The
metric measure space is ($(0,1)^{n},\left\vert \cdot\right\vert ,dx)$ (that is
$(0,1)^{n}$ provided with the Euclidean distance and Lebesgue measure). For
functions $f\in Lip(0,1)^{n}$ the inequality (\ref{cuatro}) takes the
following form\footnote{The rearrangement of $f$ with respect to the Lebesgue
measure is simply denoted by $f^{\ast}.$}%
\[
\left\vert f\right\vert ^{\ast\ast}(t)-\left\vert f\right\vert ^{\ast}(t)\leq
c_{n}\frac{t}{\min(t,1-t)^{1-1/n}}\left\vert \nabla f\right\vert ^{\ast\ast
}(t),\text{ \ }0<t<1.
\]
In fact (cf. Chapter \ref{capitap}), the previous inequality remains true for
all functions $f\in W_{L^{p}}^{1}(0,1)^{n}$ (where $1\leq p<\infty$)$,$ and
$W_{L^{p}}^{1}(0,1)^{n}$ is the Sobolev space of real-valued weakly
differentiable functions on $(0,1)^{n}$ whose first-order derivatives belong
to $L^{p}$). Moreover, as we shall see (cf. Chapter \ref{contchap}), the
inequality also holds for (signed) rearrangements; i.e.\ for all $f\in
W_{L^{p}}^{1}(0,1)^{n},$ we have that,%
\begin{equation}
f^{\ast\ast}(t)-f^{\ast}(t)\leq c_{n}\frac{t}{\min(t,1-t)^{1-1/n}}\left\vert
\nabla f\right\vert ^{\ast\ast}(t),\text{ \ }0<t<1.\label{cuatrodos}%
\end{equation}
Suppose that $p>n.$ Integrating, and using the fundamental theorem of
calculus\footnote{Recall that $\frac{d}{dt}\left(  f^{\ast\ast}(t)\right)
=-\frac{\left(  f^{\ast\ast}(t)-f^{\ast}(t)\right)  }{t}.$}$,$ we get
\begin{align}
f^{\ast\ast}(0)-f^{\ast\ast}(1) &  =\int_{0}^{1}\left(  f^{\ast\ast
}(t)-f^{\ast}(t)\right)  \frac{dt}{t}\nonumber\\
&  \leq c_{n}\int_{0}^{1}\left\vert \nabla f\right\vert ^{\ast\ast}%
(t)\frac{dt}{\min(t,1-t)^{1-1/n}}\nonumber\\
&  \leq c_{n,p}\left\Vert \left\vert \nabla f\right\vert \right\Vert _{L^{p}%
}\left\Vert \frac{1}{\min(t,1-t)^{1-1/n}}\right\Vert _{L^{p^{\prime}}}\text{
(by H\"{o}lder's inequality)}\nonumber\\
&  =C_{n.p}\left\Vert \left\vert \nabla f\right\vert \right\Vert _{L^{p}%
},\nonumber
\end{align}
where the last inequality follows from the fact that for $p>n,\left\Vert
\frac{1}{\min(t,1-t)^{1-1/n}}\right\Vert _{L^{p^{\prime}}}<\infty.$
Summarizing our findings, we have
\begin{equation}
ess\sup_{x\in(0,1)^{n}}f-\int_{0}^{1}f=f^{\ast\ast}(0)-f^{\ast\ast}(1)\leq
C_{n.p}\left\Vert \left\vert \nabla f\right\vert \right\Vert _{L^{p}%
}.\label{vale}%
\end{equation}
Applying (\ref{vale}) to $-f$ yields
\begin{equation}
\int_{0}^{1}f-ess\inf_{x\in(0,1)^{n}}f\leq C_{n.p}\left\Vert \left\vert \nabla
f\right\vert \right\Vert _{L^{p}}.\label{vale1}%
\end{equation}
Therefore, adding (\ref{vale}) and (\ref{vale1}) we obtain
\begin{equation}
Osc(f;(0,1)^{n}):=ess\sup_{x\in(0,1)^{n}}f-ess\inf_{x\in(0,1)^{n}}%
f\leq2C_{n.p}\left\Vert \left\vert \nabla f\right\vert \right\Vert _{L^{p}%
}.\label{persiste1}%
\end{equation}

We have shown that (\ref{cuatro}) gives us good control of the oscillation of
the original function on the whole cube $(0,1)^{n}.$ To control the
oscillation on any cube $Q\subset(0,1)^{n}$, we use a modification of an
argument that originates\footnote{In the original one dimensional argument
(cf. \cite{garsiagrenoble}, \cite{garsiaind}), one controls the oscillation of
$f$ in terms of an expression that involves the modulus of continuity, rather
than the gradient.} in the work of Garsia et al. (cf. \cite{garsiagrenoble}).
The idea is that if an inequality scales appropriately, one can re-scale.
Namely, given two fixed points $x<y\in(0,1)$, one can apply the inequality at
hand to the re-scaled function\footnote{For example, consider the case $n=1.$
Given $0<x<y<1,$ the inequality (\ref{persiste1}) applied to $\tilde{f}$
yields%
\begin{align*}
ess\sup_{[x,y]}f-ess\text{ }\inf_{[x,y]}f  &  \leq c_{p}\left\vert
y-x\right\vert \left(  \int_{0}^{1}\left\vert f^{\prime}(x+t(y-x))\right\vert
^{p}dt\right)  ^{1/p}\\
&  =c_{p}\left\vert y-x\right\vert ^{1-1/p}\left(  \int_{0}^{1}\left\vert
f^{\prime}\right\vert ^{p}\right)  ^{1/p}.
\end{align*}
} $\tilde{f}(t)=f(x+t(y-x)),t\in\lbrack0,1].$ This type of \textquotedblleft
change of scale argument\textquotedblright\ can be extended to the cube
$(0,1)^{n},$ but for general domains becomes unmanageable. Therefore, we
needed to reformulate the idea somewhat. From our point of view the idea is
that if we control $\left\vert \nabla f\right\vert $ on $(0,1)^{n}$ then we
ought to be able to control its restrictions$.$ The issue then becomes: How do
our inequalities scale under restrictions? Again for $(0,1)^{n}$ all goes
well. In fact, if $f\in W_{L^{p}}^{1}((0,1)^{n})$ then, for any open cube
$Q\subset(0,1)^{n},$ we have $f\chi_{Q}\in W_{L^{p}}^{1}(Q).$ Moreover, the
fundamental inequality (\ref{cuatrodos}) has the following scaling
\[
\left(  f\chi_{Q}\right)  ^{\ast\ast}(t)-\left(  f\chi_{Q}\right)  ^{\ast
}(t)\leq c_{n}\frac{t}{\min(t,\left\vert Q\right\vert -t)^{1-1/n}}\left\vert
\nabla\left(  f\chi_{Q}\right)  \right\vert ^{\ast\ast}(t),\text{ }0<t<|Q|.
\]
Using the previous argument applied to $f\chi_{Q}$ we thus obtain%
\[
Osc(f;Q)\leq c_{n,p}\left\Vert \frac{t}{\min(t,\left\vert Q\right\vert
-t)^{1-1/n}}\right\Vert _{L^{p^{\prime}}(0,|Q|)}\left\Vert \left\vert \nabla
f\right\vert \right\Vert _{L^{p}(Q)}.
\]
By computation, and a classical argument, it is easy to see from here that
(cf. Remark \ref{rema} in Chapter \ref{capitap})
\[
\left\vert f(y)-f(z)\right\vert \leq c_{n,p}\left\vert y-z\right\vert
^{(1-\frac{n}{p})}\left\Vert \left\vert \nabla f\right\vert \right\Vert
_{p},\text{a.e. }y,z.
\]
When $p=n$ this argument fails but, nevertheless, by a simple modification, it
yields a result due independently to Stein \cite{ST1} and C. P. Calder\'{o}n
\cite{calx}: Namely\footnote{We refer to [(\ref{montana}), (\ref{vermont}),
Chapter \ref{chapbmo}] for the definition of Lorentz spaces.}, if $\left\Vert
\left\vert \nabla f\right\vert \right\Vert _{L^{n,1}}<\infty,$ then $f$ is
essentially continuous (cf. [Remark \ref{rema}, Chapter \ref{capitap}]).

We will show that, with suitable technical adjustments, this method can be
extended to the metric setting\footnote{For a different approach to the
Morrey-Sobolev theorem on metric spaces we refer to the work of Coulhon
\cite{cou1}, \cite{cou}.}. To understand the issues involved let us note that,
since our inequalities are formulated in terms of isoperimetric profiles, to
achieve the local control or \textquotedblleft the change of scales (in our
situation through the restrictions)"\ we need suitable control of the
(relative) isoperimetric profiles on the (new) metric spaces obtained by
restriction. More precisely, if $Q\subset\Omega$ is an open set, we shall
consider the metric measure space $\left(  Q,d_{\mid Q},\mu_{\mid Q}\right)
.$ Then the problem we face is that, in general, the isoperimetric profile
$I_{Q}$ of $\left(  Q,d_{\mid Q},\mu_{\mid Q}\right)  $ is different from the
isoperimetric profile $I_{\Omega}$ of $\left(  \Omega,d,\mu\right)  .$ What we
needed is to control the \textquotedblleft relative isoperimetric
inequality\textquotedblright\footnote{Recall that given an open set
$G\subset\Omega$ , and a set $A\subset G,$ the \textbf{perimeter} of $A$
\textbf{relative} to $G$ (cf. Chapter \ref{preliminar}) is defined by
\[
P(A;G)=\lim\inf_{h\rightarrow0}\frac{\mu\left(  \left\{  x\in
G:d(x,A)<h\right\}  \right)  -\mu\left(  A\right)  }{h}.
\]
The corresponding \textbf{relative isoperimetric profile} of $G\subset\Omega$
is given by
\[
I_{G}(s)=I_{(G,d,\mu)}(s)=\inf\left\{  P(A;G):\text{ }A\subset G,\text{ }%
\mu(A)=s\right\}  .
\]
}, and make sure the corresponding inequalities scale appropriately. We say
that an \textbf{isoperimetric inequality relative to} $G$ holds, if there
exists a positive constant $C_{G}$ such that
\[
I_{G}(s)\geq C_{G}\min(I_{\Omega}(s),I_{\Omega}(\mu(G)-s)).
\]
We will say that a metric measure space $\left(  \Omega,d,\mu\right)  $ has
the \textbf{relative uniform isoperimetric property }if\ there is a constant
$C$ such that for any ball $B\ $in $\Omega,$ its \textbf{relative
isoperimetric profile }$I_{B}$ satisfies
\[
I_{B}(s)\geq C\min(I_{\Omega}(s),I_{\Omega}(\mu(B)-s)),\text{ \ \ }%
0<s<\mu(B).
\]

For metric spaces $(\Omega,d,\mu)$ satisfying the relative uniform
isoperimetric property we have the scaling that we need to apply the previous
analysis. This theme is developed in detail in Chapter \ref{contchap}. The
previous discussion implicitly shows that $(0,1)^{n}$ has the relative uniform
isoperimetric property. We shall show in Chapter \ref{capitap} that many
familiar metric measure spaces also have the relative uniform isoperimetric property.

We now turn to the main objective of this paper which is to develop the
corresponding theory for fractional order Besov-Sobolev spaces. This is,
indeed, the original setting of Garsia's work, and our aim in this paper is to
extend it to the metric setting. The first part of our program for Besov
spaces was to formulate a suitable replacement of (\ref{cuatro}) for the
fractional setting. To explain the peculiar form of the underlying
inequalities that we need requires some preliminary background information.

Let $X=X(\mathbb{R}^{n})$ be a rearrangement invariant space\footnote{See
[(\ref{secc:ri}), Chapter \ref{preliminar}].} on $\mathbb{R}^{n},$ and let
$\omega_{X}$ be the modulus of continuity associated with $X$ defined for
$g\in X$ by
\[
\omega_{X}\left(  t,g\right)  =\sup_{\left\vert h\right\vert \leq t}\left\Vert
g(\cdot+h)-g(\cdot)\right\Vert _{X}.
\]
It is known (for increasing levels of generality see \cite{garsiagrenoble},
\cite{ko}, \cite{mamiproc}, \cite{Mar} and the references therein), that there
exists $c=c_{n}>0$ such that, for all functions $f\in X(\mathbb{R}^{n}%
)+\dot{W}_{X}^{1}(\mathbb{R}^{n}),$%
\begin{equation}
\left\vert f\right\vert ^{\ast\ast}(t)-\left\vert f\right\vert ^{\ast}(t)\leq
c_{n}\frac{\omega_{X}\left(  t^{1/n},f\right)  }{\phi_{X}(t)},\text{ }t>0,
\label{tres}%
\end{equation}
where $\dot{W}_{X}^{1}(\mathbb{R}^{n})$ is the homogeneous Sobolev space
defined by means of the seminorm $\left\Vert u\right\Vert _{\dot{W}_{X}%
^{1}(\mathbb{R}^{n})}:=\left\Vert \left\vert \nabla u\right\vert \right\Vert
_{X(\mathbb{R}^{n})},$ $\phi_{X}(t)$ is the fundamental function\footnote{For
the definition see [(\ref{dasfundamental}), Chapter \ref{preliminar}] below.}
of $X,$ and $\left\vert f\right\vert ^{\ast}$ is the rearrangement of
$\left\vert f\right\vert $ with respect to the Lebesgue measure\footnote{In
the background of inequalities of this type lies a form of the
P\'{o}lya-Szeg\"{o} principle that states that symmetric rearrangements do not
increase Besov norms (cf. \cite{almlie}, \cite{mamiproc} and the references
therein).}.

The inequalities we seek are extensions of (\ref{tres}) to the metric setting.
Note that, in some sense, one can consider (\ref{tres}) as an extension, by
interpolation, of (\ref{cuatro}). Therefore, it is natural to ask: How should
(\ref{tres}) be reformulated in order to make sense for metric spaces? Not
only we need a suitable substitute for the modulus of continuity $\omega_{X},$
but a suitable re-interpretation of the factor $``t^{1/n}"$ is required as
well. We now discuss these issues in detail.

There are several known alternative, although possibly non equivalent,
definitions of modulus of continuity in the general setting of metric measure
spaces $(\Omega,d,\mu)$ (cf. \cite{Akos} for the interpolation properties of
Besov spaces on metric spaces). Given our background on approximation theory,
it was natural for us to choose the universal object that is provided by
interpolation/approximation theory, namely the Peetre $K-$functional. Indeed
on $\mathbb{R}^{n},$ the Peetre $K-$functional is defined by:
\[
K(t,f;X(\mathbb{R}^{n}),\dot{W}_{X}^{1}(\mathbb{R}^{n})):=\inf\{\left\Vert
f-g\right\Vert _{X}+t\left\Vert \left\vert \nabla g\right\vert \right\Vert
_{X}:g\in\dot{W}_{X}^{1}(\mathbb{R}^{n})\}.
\]
Considering the $K-$functional is justified since it is well known
that\footnote{Here the symbol $f\simeq g$ indicates the existence of a
universal constant $c>0$ (independent of all parameters involved) such that
$(1/c)f\leq g\leq c\,f$. Likewise the symbol $f\preceq g$ will mean that there
exists a universal constant $c>0$ (independent of all parameters involved)
such that $f\leq c\,g$.} (cf. \cite[Chapter 5, formula (4.41)]{bs})
\[
K(t,f;X(\mathbb{R}^{n}),\dot{W}_{X}^{1}(\mathbb{R}^{n}))\simeq\omega
_{X}(t,f).
\]
In the general case of metric measure spaces $(\Omega,d,\mu)$ we shall
consider:
\begin{align*}
K(t,f;X(\Omega),S_{X}(\Omega))  &  :=\\
\inf\{\left\Vert f-g\right\Vert _{X(\Omega)}+t\left\Vert \left\vert \nabla
g\right\vert \right\Vert _{X(\Omega)}  &  :g\in S_{X}(\Omega)\},
\end{align*}
where $X(\Omega)$ is a r.i. space on $\Omega,$ and $S_{X}(\Omega)=\{f\in
Lip(\Omega):$ $\left\Vert \left\vert \nabla f\right\vert \right\Vert
_{X(\Omega)}<\infty\}.$ We shall thus think of $K(t,f;X(\Omega),S_{X}%
(\Omega))$ as \textquotedblleft a modulus of continuity\textquotedblright.

Now, given our experience with the inequality (\ref{cuatro}), we were led to
conjecture the following reformulation\footnote{At least for the metric
measure spaces $(\Omega,d,\mu)$ considered in \cite{mamiadv} (for which, in
particular, (\ref{cuatro}) holds).} of (\ref{tres}): There exists a universal
constant $c>0,$ such that for every r.i. space $X(\Omega),$ and for all $f\in
X(\Omega)+S_{X}(\Omega),$ we have
\begin{equation}
\left\vert f\right\vert _{\mu}^{\ast\ast}(t)-\left\vert f\right\vert _{\mu
}^{\ast}(t)\leq c\frac{K\left(  \frac{t}{I_{\Omega}(t)},f;X(\Omega
),S_{X}(\Omega)\right)  }{\phi_{X}(t)},\text{ }0<t<\mu(\Omega). \label{seis}%
\end{equation}
We presented this conjectural inequality when lecturing on the topic. In
particular, we communicated the conjecture to M. Mastylo, who recently proved
in \cite{mastylo} that indeed (\ref{seis}) holds for $t\in(0,\mu(\Omega)/4),$
and for all rearrangement invariant spaces $X$ that are \textquotedblleft far
away from $L^{1}$ and from $L^{\infty}"$.

The result of \cite{mastylo}, while in many respects satisfying, leaves some
important questions open. Indeed, the restrictions placed on the range of $t$
(i.e. the measure of the sets), as well as those placed on the spaces,
precludes the investigation of the isoperimetric nature of (\ref{seis}). In
particular, while the equivalence of (\ref{cuatro}) and the isoperimetric
inequality (\ref{alaisi}) is known to hold (cf. \cite{mamiadv}), the possible
equivalence of (\ref{seis}) with the isoperimetric inequality (\ref{alaisi})
apparently cannot be answered without involving the space $L^{1}$.

One of our main results in Chapter \ref{main} shows that (\ref{seis})
crucially holds for all $t\in(0,\mu(\Omega)/2)$ and without restrictions on
the function spaces $X$ $\ $(cf. [Theorem \ref{main1}, Chapter \ref{main}]
below). The possibility of including $X=L^{1}$ allows us to prove the
following fractional Sobolev version of the celebrated Maz'ya
equivalence\footnote{Which claims the equivalence between the
Gagliardo-Nirenberg inequality and the isoperimetric inequality.} (cf.
\cite{ma})).

\begin{theorem}
\label{ref teo 1} Let $(\Omega,d,\mu)$ be a metric measure space that
satisfies our standard assumptions. Then,

(i) (cf. Theorem \ref{main1}) For all rearrangement invariant spaces
$X(\Omega)$, and for all $f\in X(\Omega)+S_{X}(\Omega),$
\begin{equation}
\left\vert f\right\vert _{\mu}^{\ast\ast}(t)-\left\vert f\right\vert _{\mu
}^{\ast}(t)\leq16\frac{K\left(  \frac{t}{I_{\Omega}(t)},f;X(\Omega
),S_{X}(\Omega)\right)  }{\phi_{X}(t)},\text{ }t\in(0,\mu(\Omega)/2),
\label{desiK}%
\end{equation}

\begin{equation}
\left\vert f-f_{\Omega}\right\vert _{\mu}^{\ast\ast}(t)-\left\vert
f-f_{\Omega}\right\vert _{\mu}^{\ast}(t)\leq16\frac{K\left(  \frac
{t}{I_{\Omega}(t)},f;X(\Omega),S_{X}(\Omega)\right)  }{\phi_{X}(t)},\text{
}t\in(0,\mu(\Omega)), \label{desiK9}%
\end{equation}
where
\begin{equation}
f_{\Omega}=\frac{1}{\mu(\Omega)}\int_{\Omega}fd\mu. \label{average}%
\end{equation}

(ii) (cf. Theorem \ref{ref teo 2}) Conversely, suppose that $G:(0,\mu
(\Omega))\rightarrow\mathbb{R}_{+},$ is a continuous function, which is
concave and symmetric around $\mu(\Omega)/2,$ and there exists a constant
$c>0$ such that\footnote{In other words we assume that (\ref{desiK}) holds for
$X=L^{1}(\Omega),$ and with $\frac{t}{G(t)}$ replacing $\frac{t}{I_{\Omega
}(t)}.$} for all $f\in X(\Omega)+S_{X}(\Omega),$%
\[
\left\vert f\right\vert _{\mu}^{\ast\ast}(t)-\left\vert f\right\vert _{\mu
}^{\ast}(t)\leq c\frac{K\left(  \frac{t}{G(t)},f;X(\Omega),S_{X}%
(\Omega)\right)  }{t},\text{ }t\in(0,\mu(\Omega)/2).
\]
Then, for all Borel sets with $\mu(A)\leq\mu(\Omega)/2,$ we have the
isoperimetric inequality
\[
G(\mu(A))\leq cP(A,\Omega).
\]
As a consequence, there exists a constant $c>0$ such that for all $t\in
(0,\mu(\Omega)),$%
\[
G(t)\leq cI_{\Omega}(t).
\]

\end{theorem}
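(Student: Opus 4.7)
The theorem has two directions: the direct $K$-functional oscillation bound (i) and the converse isoperimetric statement (ii); I describe a plan for each.

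For (i), the strategy is to bound $\Lambda_f(t) := |f|_\mu^{**}(t) - |f|_\mu^*(t)$ by reducing to inequality (\ref{cuatro}) applied to a Lipschitz piece of a decomposition $f = g + h$ with $g \in S_X(\Omega)$ and $h = f - g \in X(\Omega)$, then taking infimum over $g$. First, combining (\ref{cuatro}) with the standard r.i.\ H\"older estimate $u^{**}(s) \leq \|u\|_X/\phi_X(s)$ (a consequence of $\phi_X(s)\phi_{X'}(s) = s$) yields
\[
\Lambda_g(s) \leq \frac{s}{I_\Omega(s)} \cdot \frac{\||\nabla g|\|_X}{\phi_X(s)}.
\]
Next, two applications of the Hardy-Littlewood sum inequality $|u+v|_\mu^*(s_1+s_2) \leq |u|_\mu^*(s_1) + |v|_\mu^*(s_2)$ (to $f = g+h$ and to $g = f + (-h)$) with parameters $\alpha = \beta = 1/3$ give the two-sided sandwich
\[
|f|_\mu^*(s) \leq |g|_\mu^*\!\bigl(\tfrac{2s}{3}\bigr) + |h|_\mu^*\!\bigl(\tfrac{s}{3}\bigr), \qquad |f|_\mu^*(t) \geq |g|_\mu^*\!\bigl(\tfrac{3t}{2}\bigr) - |h|_\mu^*\!\bigl(\tfrac{t}{2}\bigr).
\]
The specific choice $3t/2$ (rather than $2t$) is crucial: for $t < \mu(\Omega)/2$ one has $3t/2 < 3\mu(\Omega)/4$, and a direct computation using concavity and symmetry of $I_\Omega$ about $\mu(\Omega)/2$, together with quasi-concavity of $\phi_X$, shows that $I_\Omega(s) \geq (1/2)I_\Omega(t)$ and $\phi_X(s) \geq (2/3)\phi_X(t)$ uniformly for $s \in [2t/3, 3t/2]$.

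Integrating the upper bound on $(0,t)$ and subtracting the lower bound at $t$ leads to
\[
\Lambda_f(t) \leq \bigl[|g|_\mu^{**}(\tfrac{2t}{3}) - |g|_\mu^*(\tfrac{3t}{2})\bigr] + \bigl[|h|_\mu^{**}(\tfrac{t}{3}) + |h|_\mu^*(\tfrac{t}{2})\bigr].
\]
Using the identity $(|g|_\mu^{**})'(s) = -\Lambda_g(s)/s$, the $g$-bracket rewrites as $\int_{2t/3}^{3t/2} \Lambda_g(s)\,ds/s + \Lambda_g(3t/2)$, which by the bound on $\Lambda_g$ together with the comparability of $I_\Omega$ and $\phi_X$ is $\leq C(t/I_\Omega(t)) \||\nabla g|\|_X/\phi_X(t)$. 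The $h$-bracket is immediately $\leq C\|h\|_X/\phi_X(t)$ by the r.i.\ H\"older bound and quasi-concavity of $\phi_X$. Taking the infimum over $g \in S_X(\Omega)$ yields (\ref{desiK}), with the explicit constant $16$ obtainable by careful bookkeeping. For (\ref{desiK9}) on $(0, \mu(\Omega)/2)$, apply (\ref{desiK}) to $f - f_\Omega$: since $f_\Omega$ is constant (hence lies in $S_X$ with zero gradient), one has $K(s, f - f_\Omega; X, S_X) = K(s, f; X, S_X)$. The extension to $t \in (\mu(\Omega)/2, \mu(\Omega))$ uses the mean-zero property of $f - f_\Omega$ together with $I_\Omega(t) = I_\Omega(\mu(\Omega) - t)$ to reflect the bound from $\tilde t = \mu(\Omega) - t \in (0, \mu(\Omega)/2)$ back to $t$.

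For (ii), apply the hypothesis with $f = \chi_A \in L^1(\Omega)$ and $t = \mu(A)$ for a Borel set $A$ with $\mu(A) \leq \mu(\Omega)/2$. The left-hand side is then $|\chi_A|_\mu^{**}(\mu(A)) - |\chi_A|_\mu^*(\mu(A)) = 1 - 0 = 1$. For the right-hand side, bound $K(s, \chi_A; L^1, S_{L^1})$ using the Lipschitz cutoffs $g_h(x) = \max(0, 1 - d(x,A)/h)$: the bound $\|g_h - \chi_A\|_{L^1} \leq \mu(\{0 < d(x,A) < h\})$ vanishes as $h \to 0^+$, while $\||\nabla g_h|\|_{L^1} \leq \mu(\{0 < d(x,A) < h\})/h \to P(A, \Omega)$ by the Minkowski-content definition, giving $K(s, \chi_A; L^1, S_{L^1}) \leq sP(A, \Omega)$. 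Substituting $s = \mu(A)/G(\mu(A))$ into the hypothesis produces $1 \leq cP(A, \Omega)/G(\mu(A))$, i.e.\ $G(\mu(A)) \leq cP(A, \Omega)$. Taking the infimum over $A$ with $\mu(A) = s$ gives $G(s) \leq cI_\Omega(s)$ on $(0, \mu(\Omega)/2]$, and symmetry of both $G$ and $I_\Omega$ about $\mu(\Omega)/2$ extends the bound to all of $(0, \mu(\Omega))$.

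The principal obstacle in (i) is that the functional $\Lambda_f = |f|_\mu^{**} - |f|_\mu^*$ is \emph{not} subadditive in $f$ (cancellation in $|f + g|$ breaks any naive triangle bound), so the contribution from $h = f - g$ cannot be absorbed simply into $\Lambda_h$; the $1/3$-sandwich above compensates by trading small perturbations in the measure arguments against the differentiation identity for $|g|_\mu^{**}$. The choice of split $1/3$ rather than the more obvious $1/2$ is essential: with $1/2$ the relevant interval $[t/2, 2t]$ would touch $\mu(\Omega)$ as $t \to \mu(\Omega)/2^-$, forcing $I_\Omega(2t) \to 0$ and destroying any universal constant. Extending (\ref{desiK9}) past $\mu(\Omega)/2$ is a separate technical layer that requires the mean-zero hypothesis on $f - f_\Omega$ combined with the symmetry of $I_\Omega$.
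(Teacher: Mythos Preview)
Your argument for (\ref{desiK}) via the $1/3$--$2/3$ sandwich is correct and genuinely different from the paper's. The paper splits into two cases: for $t\in(0,\mu(\Omega)/4]$ it uses a half-split with arguments $t$ and $2t$, relying on $2t\le\mu(\Omega)/2$ so that $I_\Omega(2t)\ge I_\Omega(t)$; for $t\in(\mu(\Omega)/4,\mu(\Omega)/2]$ it abandons that line entirely and goes through the $K$-Poincar\'e inequality (Theorem~\ref{teobmo}) together with the elementary bound $|f|^*(r)-|f|^*(\tau)\le|f-\sigma|^*(r)+|f-\sigma|^*(\mu(\Omega)-\tau)$. Your choice of $3t/2$ rather than $2t$ keeps the whole interval $[2t/3,3t/2]$ inside $(0,3\mu(\Omega)/4)$, so symmetry and concavity of $I_\Omega$ give $I_\Omega(s)\ge\tfrac12 I_\Omega(t)$ uniformly with no case split; this is cleaner and in fact delivers a constant below $16$.

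However, your extension of (\ref{desiK9}) to $t\in(\mu(\Omega)/2,\mu(\Omega))$ has a real gap. The ``reflection'' heuristic does not work: there is no general relation between $|f-f_\Omega|^{**}(t)-|f-f_\Omega|^*(t)$ and the same quantity at $\mu(\Omega)-t$, and mean zero does not supply one. The paper handles this range by dropping $|f-f_\Omega|^*(t)\ge0$, using $|f-f_\Omega|^{**}(t)\le(2/\mu(\Omega))\|f-f_\Omega\|_{L^1}$, and then invoking the $K$-Poincar\'e inequality of Theorem~\ref{teobmo} to bound $\|f-f_\Omega\|_{L^1}$ by $K\bigl(\tfrac{\mu(\Omega)/2}{I_\Omega(\mu(\Omega)/2)},f\bigr)/\phi_X(\mu(\Omega))$; monotonicity of $t/I_\Omega(t)$ and $\phi_X$ then finishes. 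Your single-case proof of (\ref{desiK}) bypassed Theorem~\ref{teobmo}, but you still need it (or an equivalent control of the full $L^1$-oscillation) here.

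For (ii) your direct evaluation at $t=\mu(A)$ is morally right but the cutoff $g_h(x)=\max(0,1-d(x,A)/h)$ equals $1$ on $\bar A$, not on $A$, so $\|g_h-\chi_A\|_{L^1}\to\mu(\bar A\setminus A)$ rather than $0$, and the gradient term recovers $P(\bar A,\Omega)$ rather than $P(A,\Omega)$. The paper sidesteps this by quoting a Bobkov--Houdr\'e approximation lemma producing Lipschitz $f_n\to\chi_A$ in $L^1$ with $\limsup\||\nabla f_n|\|_{L^1}\le P(A,\Omega)$, applying the hypothesis to $f_n$ at $r>\mu(A)$, and letting $r\to\mu(A)^+$ using the continuity of $G$.
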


Using (\ref{desiK}) and (\ref{desiK9}) as a starting point we can study
embeddings of Besov spaces in metric spaces and, in particular, the
corresponding Morrey-Sobolev-Besov embedding.

We now focus the discussion on the fractional Morrey-Sobolev theorem. We start
by describing the inequalities of \cite{garsiaind}, \cite{garsia}, which for
$L^{p}$ spaces\footnote{Importantly, Garsia-Rodemich also can deal with
$X=L^{p},$ or $X=L_{A}$ (Orlicz space), our approach covers all r.i. spaces
and works for a large class of metric spaces.} on $[0,1]$ take the following
form%
\begin{equation}
\left.
\begin{array}
[c]{c}%
f^{\ast}(x)-f^{\ast}(1/2)\\
f^{\ast}(1/2)-f^{\ast}(1-x)
\end{array}
\right\}  \leq c\int_{x}^{1}\frac{\omega_{L^{p}}(t,f)}{t^{1/p}}\frac{dt}%
{t},\text{ }x\in\left(  0,\frac{1}{2}\right]  . \label{garintro1}%
\end{equation}
Letting $x\rightarrow0$ in (\ref{garintro1}) and adding the two inequalities
then yields%
\[
ess\sup_{[0,1]}f-ess\text{ }\inf_{[0,1]}f\leq c\int_{0}^{1}\frac{\omega
_{L^{p}}(t,f)}{t^{1/p}}\frac{dt}{t}.
\]
Using the *change of scale argument* leads to%
\[
\left|  f(x)-f(y)\right|  \leq2c\int_{0}^{\left|  x-y\right|  }\frac
{\omega_{L^{p}}(t,f)}{t^{1/p}}\frac{dt}{t};\text{ }x,y\in\lbrack0,1],
\]
from which the essential continuity\footnote{An application of H\"{o}lder's
inequality also yields Lip conditions.} of $f$ is apparent if we know that
$\int_{0}^{1}\frac{\omega_{L^{p}}(t,f)}{t^{1/p}}\frac{dt}{t}<\infty.$ To
obtain the $n-$dimensional version of (\ref{garintro1}) for $[0,1]^{n}$,
Garsia et al. had to develop deep combinatorial techniques. The corresponding
$n-$dimensional inequality is given by (cf. \cite[(3.6)]{garsia} and the
references therein)%
\[
\left.
\begin{array}
[c]{c}%
f^{\ast}(x)-f^{\ast}(1/2)\\
f^{\ast}(1/2)-f^{\ast}(1-x)
\end{array}
\right\}  \leq c\int_{x}^{1}\frac{\omega_{L^{p}}(t^{1/n},f)}{t^{1/p}}\frac
{dt}{t},\text{ }x\in x\in\left(  0,\frac{1}{2}\right]  ,
\]
which by the now familiar argument yields%
\begin{equation}
\left|  f(x)-f(y)\right|  \leq C_{p,n}\int_{0}^{\left|  x-y\right|  }%
\frac{\omega_{L^{p}}(t,f)}{t^{n/p}}\frac{dt}{t};\text{ }x,y\in\left[
0,\frac{1}{2}\right]  ^{n}. \label{garintro4}%
\end{equation}
However, as pointed out above, the change of scale technique is apparently not
available for more general domains. Moreover, as witnessed by the difficulties
already encountered by Garsia et al. when proving inequalities on
$n-$dimensional cubes, it was not even clear at that time what form the
rearrangement inequalities would take in general. In particular, Garsia et al.
do not use isoperimetry.

For more general function spaces we need to reformulate Theorem
\ref{ref teo 1} above as follows (cf. Chapter \ref{main})

\begin{theorem}
\label{teo2} (cf. [Theorem \ref{main2}, Chapter \ref{main}]) Let $\left(
\Omega,d,\mu\right)  $ be a metric measure space that satisfies our standard
assumptions, and let $X$ be a r.i. space on $\Omega.$ Then, there exists a
constant $c>0$ such that for all $f\in X+S_{X}(\Omega),$%
\begin{equation}
\left\vert f\right\vert _{\mu}^{\ast\ast}(t/2)-\left\vert f\right\vert _{\mu
}^{\ast}(t/2)\leq c\frac{K\left(  \psi(t),f;X,S_{X}(\Omega)\right)  }{\phi
_{X}(t)},\text{ }0<t<\mu(\Omega), \label{desiKK}%
\end{equation}
where%
\[
\psi(t)=\frac{\phi_{X}(t)}{t}\left\Vert \frac{s}{I_{\Omega}(s)}\chi
_{(0,t)}(s)\right\Vert _{\bar{X}^{^{\prime}}},
\]
and $\bar{X}^{^{\prime}}$ denotes the associated\footnote{[cf. Chapter
\ref{preliminar} for the definition]} space of $\bar{X}$.
\end{theorem}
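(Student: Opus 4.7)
The plan is to apply Theorem \ref{ref teo 1}(i), specifically inequality (\ref{desiK}), at the scale $t/2$. Since $0<t<\mu(\Omega)$ forces $t/2\in(0,\mu(\Omega)/2)$, (\ref{desiK}) is directly applicable and gives
\[
\left\vert f\right\vert _{\mu}^{\ast\ast}(t/2)-\left\vert f\right\vert _{\mu}^{\ast}(t/2)\leq 16\,\frac{K\!\left(\tfrac{t/2}{I_{\Omega}(t/2)},f;X,S_{X}(\Omega)\right)}{\phi_{X}(t/2)}.
\]
Two elementary reductions now bring this into the form (\ref{desiKK}). First, the quasi-concavity of $\phi_{X}$ gives $\phi_{X}(t)\leq 2\phi_{X}(t/2)$, which replaces $\phi_{X}(t/2)$ by $\phi_{X}(t)$ at the cost of a factor of $2$. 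Second, since $s\mapsto K(s,f;X,S_{X}(\Omega))$ is concave with $K(0,f)=0$, it is monotone non-decreasing in $s$, so it suffices to establish the pointwise scalar comparison $\tfrac{t/2}{I_{\Omega}(t/2)}\leq c\,\psi(t)$ for some absolute constant $c$.

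For the scalar comparison I would bound $\psi(t)$ from below by restricting the $\bar{X}^{\prime}$-norm in its definition to the subinterval $(t/2,t)$. Because $I_{\Omega}$ is concave with $I_{\Omega}(0)=0$, the function $s\mapsto s/I_{\Omega}(s)$ is non-decreasing on $(0,\mu(\Omega))$, so
\[
\left\Vert \tfrac{s}{I_{\Omega}(s)}\chi_{(0,t)}(s)\right\Vert _{\bar{X}^{\prime}}\geq\tfrac{t/2}{I_{\Omega}(t/2)}\,\left\Vert \chi_{(t/2,t)}\right\Vert _{\bar{X}^{\prime}}=\tfrac{t/2}{I_{\Omega}(t/2)}\,\phi_{\bar{X}^{\prime}}(t/2),
\]
where the last equality uses that $\bar{X}^{\prime}$ is rearrangement invariant. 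The Lorentz--Shimogaki identity $\phi_{X}(s)\phi_{\bar{X}^{\prime}}(s)=s$ combined with the monotonicity of $\phi_{X}$ gives $\phi_{\bar{X}^{\prime}}(t/2)=(t/2)/\phi_{X}(t/2)\geq (t/2)/\phi_{X}(t)$. Multiplying through by the prefactor $\phi_{X}(t)/t$ from the definition of $\psi(t)$ yields $\psi(t)\geq t/(4\,I_{\Omega}(t/2))$, i.e.\ $\tfrac{t/2}{I_{\Omega}(t/2)}\leq 2\,\psi(t)$.

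Combining the three estimates --- the bound (\ref{desiK}) applied at scale $t/2$, the quasi-concavity comparison for $\phi_{X}$, and the scalar comparison above together with the monotonicity of $K$ in its first argument --- produces (\ref{desiKK}) with an explicit constant on the order of $16\cdot 2\cdot 2=64$. I expect the main obstacle to be purely cosmetic: verifying the quasi-concavity constants for $\phi_{X}$ and for $I_{\Omega}$, and tracking constants through the chain of inequalities without losing the universal nature of $c$. No idea beyond Theorem \ref{ref teo 1} appears to be needed. The reason for stating the inequality with $t/2$ on the left-hand side (rather than $t$) is precisely that this keeps $I_{\Omega}(t/2)$ in the range $(0,\mu(\Omega)/2)$ where the profile is monotone increasing, so no subtlety arising near the symmetry point $\mu(\Omega)/2$ intervenes.
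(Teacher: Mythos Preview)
Your argument is correct, and it takes a genuinely different route from the paper's own proof of Theorem~\ref{main2}. The paper does \emph{not} deduce (\ref{desiKK}) from (\ref{desiK}); instead it reopens the decomposition $|f|=(|f|-h)+h$ used in the proof of Theorem~\ref{main1} and, at the key step, estimates $|h|_{\mu}^{\ast\ast}(2t)-|h|_{\mu}^{\ast}(2t)$ via integration by parts and a H\"older inequality with the weight $s/I_{\Omega}(s)$, obtaining
\[
|h|_{\mu}^{\ast\ast}(2t)-|h|_{\mu}^{\ast}(2t)\le \frac{1}{2t}\Bigl\Vert \tfrac{s}{I_{\Omega}(s)}\chi_{(0,2t)}\Bigr\Vert_{\bar X'}\,\Vert\,|\nabla h|\,\Vert_{X},
\]
which produces $\psi$ directly inside the $K$-functional with constant $8$. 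Your approach instead establishes the two-sided comparison $\frac{t}{4I_{\Omega}(t/2)}\le \psi(t)\le \frac{t}{I_{\Omega}(t)}$ and then feeds the lower bound back into (\ref{desiK}).

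What each approach buys: yours is shorter and avoids the approximation machinery (Lemma~\ref{ll1}) entirely; more interestingly, it shows that on the range $0<t<\mu(\Omega)$ the quantities $\psi(t)$ and $\tfrac{t/2}{I_{\Omega}(t/2)}$ are comparable up to absolute constants, so that (\ref{desiK}) and (\ref{desiKK}) are in fact \emph{equivalent} modulo constants---a point the paper's Remark after Theorem~\ref{main2} (``(\ref{polaka2}) is stronger than (\ref{polaka1})'') leaves implicit. The paper's direct argument, on the other hand, yields the sharper constant $8$ instead of your $64$, and its method (H\"older against $s/I_{\Omega}(s)$) is the template reused later, e.g.\ in the continuity arguments of Chapter~\ref{contchap} where $\psi$ is replaced by the larger $\Psi$.

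One small remark: you invoke ``$K(0,f)=0$'' to justify monotonicity of $s\mapsto K(s,f)$, but $K(0,f)$ need not vanish for general $f\in X+S_{X}$. This is harmless, since monotonicity is immediate from the definition of $K$, and the bound $K(2s,f)\le 2K(s,f)$ follows from concavity together with $K(0,f)\ge 0$.
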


\begin{remark}
Note that when $X=L^{1}$ the inequalities (\ref{desiKK}) and (\ref{desiK}) are
equivalent, modulo constants.
\end{remark}

A second step of our program is the routine, but crucially important,
reformulation of rearrangement inequalities using signed rearrangements. Once
this is done, our generalized Morrey-Sobolev-Garsia-Rodemich theorem can be
stated as follows

\begin{theorem}
\label{teo3} (cf. [Theorem \ref{continuo}, Chapter \ref{contchap}]) Let
$\left(  \Omega,d,\mu\right)  $ be a metric measure space that satisfies our
standard assumptions and has the relative uniform isoperimetric
property\textbf{. }Let $X$ be a r.i. space in $\Omega$ such that
\[
\left\Vert \frac{1}{I_{\Omega}(s)}\right\Vert _{\bar{X}^{^{\prime}}}<\infty.
\]
If $f\in X+S_{X}(\Omega)$ satisfies
\[
\int_{0}^{\mu(\Omega)}\frac{K\left(  \phi_{X}(t)\left\Vert \frac{1}{I_{\Omega
}(s)}\chi_{(0,t)}(s)\right\Vert _{\bar{X}^{^{\prime}}},f;X,S_{X}%
(\Omega)\right)  }{\phi_{_{X}}(t)}\frac{dt}{t}<\infty,
\]
then $f$ is essentially bounded and essentially continuous.
\end{theorem}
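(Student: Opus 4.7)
The plan is to reproduce, in the general metric setting, the Garsia--Rodemich-style scheme sketched for $(0,1)^n$ in the Introduction (see (\ref{vale})--(\ref{persiste1}) and (\ref{garintro4})). The two ingredients are a signed-rearrangement version of the fractional Maz'ya-type oscillation inequality (\ref{desiK9}) from Theorem~\ref{ref teo 1}, valid on the whole interval $(0,\mu(\Omega))$ (the ``routine but crucially important'' reformulation announced in the excerpt just before the statement of Theorem~\ref{teo3}), and the relative uniform isoperimetric property, which supplies the scaling that transfers the oscillation inequality from $\Omega$ to balls. The same integration scheme is then applied in two passes: once on $\Omega$ (giving essential boundedness) and then on every ball $B\subset\Omega$ as $\mu(B)\to 0$ (giving essential continuity).

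For essential boundedness, apply the signed version of (\ref{desiK9}) to $f$:
\[
f^{\ast\ast}_\mu(t)-f^\ast_\mu(t)\leq c\,\frac{K(t/I_\Omega(t),f;X,S_X(\Omega))}{\phi_X(t)},\qquad 0<t<\mu(\Omega),
\]
where $f^{\ast\ast}_\mu$ and $f^\ast_\mu$ denote the signed rearrangements. Since by H\"older's inequality in the r.i.\ space $X$,
\[
\frac{t}{I_\Omega(t)}\leq\int_0^t\frac{ds}{I_\Omega(s)}\leq \phi_X(t)\Bigl\|\frac{1}{I_\Omega(s)}\chi_{(0,t)}(s)\Bigr\|_{\bar{X}'}
\]
(using concavity of $I_\Omega$ on $(0,\mu(\Omega)/2)$ and the symmetry of $I_\Omega$ around $\mu(\Omega)/2$ for the remaining range), and since the $K$-functional is nondecreasing in its first argument, the right-hand side is dominated by the integrand appearing in the hypothesis. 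Integrating against $dt/t$ over $(0,\mu(\Omega))$ and using $-\tfrac{d}{dt}g^{\ast\ast}(t)=(g^{\ast\ast}(t)-g^\ast(t))/t$, the fundamental theorem of calculus collapses the left side to $f^{\ast\ast}_\mu(0^+)-f^{\ast\ast}_\mu(\mu(\Omega))=\mathrm{ess\,sup}\,f-f_\Omega$. Running the same argument on $-f$ and adding yields
\[
\mathrm{Osc}(f;\Omega)\leq C\int_0^{\mu(\Omega)}\frac{K\bigl(\phi_X(t)\bigl\|\tfrac{1}{I_\Omega(s)}\chi_{(0,t)}(s)\bigr\|_{\bar{X}'},f;X,S_X(\Omega)\bigr)}{\phi_X(t)}\,\frac{dt}{t}<\infty,
\]
so $f$ is essentially bounded.

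To upgrade this to essential continuity, I would run the same scheme on an arbitrary ball $B\subset\Omega$. Applied in the metric measure space $(B,d_{|B},\mu_{|B})$, Theorem~\ref{ref teo 1} (in its signed form) gives the oscillation inequality on $B$ with $I_B$ replacing $I_\Omega$ and the $K$-functional computed on $B$. By the relative uniform isoperimetric property,
\[
I_B(s)\geq c\min\bigl(I_\Omega(s),I_\Omega(\mu(B)-s)\bigr),
\]
together with the symmetry of $I_\Omega$ around $\mu(\Omega)/2$, one may replace $I_B$ by $I_\Omega$ inside the relevant $\bar{X}'$-norm at the cost of a universal constant. A near-optimal decomposition $f=g+(f-g)$ in $X(\Omega)+S_X(\Omega)$ restricts to a valid decomposition on $B$ with no inflation of the norms, so $K(\,\cdot\,,f;X(B),S_X(B))\leq K(\,\cdot\,,f;X(\Omega),S_X(\Omega))$. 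The same FTC integration on $B$, applied to $f$ and $-f$ and added, then yields
\[
\mathrm{Osc}(f;B)\leq C\int_0^{\mu(B)}\frac{K\bigl(\phi_X(t)\bigl\|\tfrac{1}{I_\Omega(s)}\chi_{(0,t)}(s)\bigr\|_{\bar{X}'},f;X,S_X(\Omega)\bigr)}{\phi_X(t)}\,\frac{dt}{t},
\]
which is a tail of the finite hypothesis integral and therefore tends to $0$ as $\mu(B)\to 0$ by absolute continuity of the Lebesgue integral. Hence $\mathrm{Osc}(f;B(x,r))\to 0$ uniformly in $x$ as $r\to 0$, which delivers essential continuity.

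The step I expect to be the most delicate is the $K$-functional restriction: one must verify that a near-optimal decomposition $f=g+(f-g)$ on $\Omega$ restricts to $B$ with $g|_B\in S_X(B)$ and $|\nabla(g|_B)|\leq|\nabla g|$ $\mu$-a.e.\ on $B$, so that no inflation of the $S_X$-seminorm occurs when passing to $B$. This is where the geometric structure of $\Omega$ enters explicitly (compatibility of the ambient upper gradient with its restriction), and it is what ultimately justifies using the relative, rather than merely pointwise, uniform isoperimetric property. A secondary but routine obstacle is the passage from the unsigned form of (\ref{desiK9}) to its signed-rearrangement counterpart; as indicated in the excerpt, this is handled by the same truncation/comparison device that converts (\ref{cuatro}) into (\ref{cuatrodos}).
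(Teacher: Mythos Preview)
Your overall architecture---integrate a signed oscillation inequality, apply it to $-f$, add, then rescale to balls via the relative uniform isoperimetric property---is exactly the paper's. The $K$-functional restriction step you flag as delicate is in fact routine (it is recorded in Chapter~\ref{preliminar}: if $f=g+(f-g)$ on $\Omega$ with $g\in S_X(\Omega)$, then $g\chi_B\in S_{X_r}(B)$ and $|\nabla(g\chi_B)|\le |\nabla g|\chi_B$), so that part is fine.

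The gap is in your use of the \emph{full} interval $(0,\mu(B))$. Two things go wrong there. First, the signed pointwise inequality $f^{\ast\ast}_\mu(t)-f^\ast_\mu(t)\le c\,K(t/I_\Omega(t),f)/\phi_X(t)$ is only established on $(0,\mu(\Omega)/2)$ (this is Theorem~\ref{main2sig}; the passage from $|f-f_\Omega|$ in (\ref{desiK9}) to the signed rearrangement of $f$ does not survive the shift trick, because (\ref{desiK9}) already subtracts the average). Second, your comparison $t/I_B(t)\le C\,\Psi_{X,\Omega}(t)$ \emph{fails} as $t\to\mu(B)^-$: the left side blows up (since $I_B(\mu(B)^-)=0$) while the right side stays bounded by $\Psi_X(\mu(B))<\infty$. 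So you cannot dominate the integrand on the upper half of the range, and the tail bound $\mathrm{Osc}(f;B)\le C\int_0^{\mu(B)}\ldots$ does not follow as written.

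The paper repairs exactly this: it integrates only on $(0,\mu(B)/2)$, using Theorem~\ref{main2sig} with $\psi_B(2t)$ (bounded by $c\,\Psi_{X,\Omega}(2t)$ through Proposition~\ref{esti}, which is where the relative uniform isoperimetric property enters). This leaves a midpoint remainder $(f\chi_B)^{\ast\ast}(\mu(B)/2)+(-f\chi_B)^{\ast\ast}(\mu(B)/2)$, which the paper controls by the signed-rearrangement identity $(-g)^\ast(\mu(B)/2)=-g^\ast(\mu(B)/2)$: the two starred terms cancel, and the remaining $(\cdot)^{\ast\ast}-(\cdot)^{\ast}$ pieces are bounded by one more application of the oscillation inequality at $t=\mu(B)/2$. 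That single endpoint value is then absorbed back into the integral $\int_0^{2\mu(B)}K(\Psi_X(t),f)/\phi_X(t)\,dt/t$ by monotonicity. If you restrict your integration to $(0,\mu(B)/2)$ and add this midpoint-cancellation step, your argument becomes the paper's proof.
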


To see the connection\footnote{The full metric version of Garsia's inequality
is given in Chapter \ref{Garetal}.} between our inequalities and those of
Garsia et al. let us fix ideas and set $\mu(\Omega)=1.$ Observe that on
account of (\ref{desiK}), and the fact that $K\left(  \frac{t}{I(t)}%
,f;X,S_{X}\right)  $ is increasing and $\phi_{X}(t)$ is concave, we have%
\begin{align*}
f_{\mu}^{\ast\ast}(t)-f_{\mu}^{\ast}(t)  &  \leq16\frac{K\left(  \frac
{t}{I_{\Omega}(t)},f;X,S_{X}\right)  }{\phi_{X}(t)}\\
&  \leq\frac{16}{\ln2}2\int_{t}^{2t}\frac{K\left(  \frac{s}{I_{\Omega}%
(s)},f;X,S_{X}\right)  }{\phi_{X}(s)}\frac{ds}{s},\text{ }t\in(0,1/2].
\end{align*}
Combining the last inequality with (cf. \cite[(4.1) pag 1222]{bmr})
\begin{equation}
f_{\mu}^{\ast}\left(  \frac{t}{2}\right)  -f_{\mu}^{\ast}(t)\leq2\left(
f_{\mu}^{\ast\ast}(t)-f_{\mu}^{\ast}(t)\right)  , \label{davobonee}%
\end{equation}
yields
\[
f_{\mu}^{\ast}\left(  \frac{t}{2}\right)  -f_{\mu}^{\ast}(t)\leq\frac{16}%
{\ln2}2\int_{t}^{2t}\frac{K\left(  \frac{s}{I_{\Omega}(s)},f;X,S_{X}\right)
}{\phi_{X}(s)}\frac{ds}{s},\text{ }t\in(0,1/2].
\]
Therefore, for $n=2,....$%
\[
f_{\mu}^{\ast}\left(  \frac{1}{2^{n+1}}\right)  -f_{\mu}^{\ast}\left(
\frac{1}{2}\right)  =%
{\displaystyle\sum\limits_{j=1}^{n}}
f_{\mu}^{\ast}\left(  \frac{1}{2^{j+1}}\right)  -f_{\mu}^{\ast}\left(
\frac{1}{2^{j}}\right)  \leq\frac{16}{\ln2}2\int_{0}^{1}\frac{K\left(
\frac{s}{I_{\Omega}(s)},f;X,S_{X}\right)  }{\phi_{X}(s)}\frac{ds}{s},
\]
and letting $n\rightarrow\infty,$ we find%
\[
ess\sup_{\Omega}f-f_{\mu}^{\ast}\left(  \frac{1}{2}\right)  \leq c\int_{0}%
^{1}\frac{K\left(  \frac{s}{I_{\Omega}(s)},f;X,S_{X}\right)  }{\phi_{X}%
(s)}\frac{ds}{s}.
\]
Likewise, applying the previous inequality to $-f$ $\ $and adding the two
resulting inequalities, and then observing that $(-f)_{\mu}^{\ast}(\frac{1}%
{2})=-f_{\mu}^{\ast}(\frac{1}{2}),$ yields%
\[
ess\sup_{\Omega}f-ess\text{ }\inf_{\Omega}f\leq c\int_{0}^{1}\frac{K\left(
\frac{s}{I_{\Omega}(s)},f;X,S_{X}\right)  }{\phi_{X}(s)}\frac{ds}{s}.
\]
From this point we can proceed to study the continuity or Lip properties of
$f$ using the arguments\footnote{Interestingly the one dimensional case
studied by Garsia et al somehow does not follow directly since the
isoperimetric profile for the unit interval $(0,1)$ is $1.$ Therefore in this
case the isoperimetric profile does not satisfy the assumptions of
\cite{mamiadv}. Nevertheless, our inequalities remain true and provide an
alternate approach to the Garsia inequalities. See Chapter \ref{Garetal}%
\ below for complete details.} outlined above.

Next let us consider limiting cases of the Sobolev-Besov embeddings connected
with these inequalities and the role of $BMO.$ Note that the inequality
(\ref{garintro4}) can be reformulated as the embedding of $B_{p}%
^{n/p,1}([0,1]^{n})$ into the space of continuous functions $C([0,1]^{n})$%

\begin{equation}
B_{p}^{n/p,1}([0,1]^{n})\subset C([0,1]^{n}),\text{where }%
n/p<1.\label{extienda}%
\end{equation}
Moreover, since%
\[
\omega_{L^{p}}\left(  t,f\right)  \leq c_{p,n}t\left\Vert f\right\Vert
_{W_{L^{p}}^{1}},
\]
we also have%
\[
W_{L^{p}}^{1}\left(  [0,1]^{n}\right)  \subset B_{p}^{n/p,1}([0,1]^{n}).
\]
Therefore, (\ref{extienda}) implies the (Morrey-Sobolev) continuity of Sobolev
functions in $W_{p}^{1}$ when $p>n.$ On the other hand, if we consider the
Besov condition defined by the right hand side of (\ref{tres}), when $X=L^{p}%
$, and $n/p<1,$ we find\footnote{Note that we have%
\[
\left\Vert f\right\Vert _{B_{p}^{n/p,\infty}([0,1]^{n})}\leq c_{p,n}\left\Vert
f\right\Vert _{B_{p}^{n/p,1}([0,1]^{n})}.
\]
}%
\[
\left\Vert f\right\Vert _{B_{p}^{n/p,\infty}([0,1]^{n})}=\sup_{t\in
\lbrack0,1]}\frac{\omega_{L^{p}}\left(  t,f\right)  }{t^{n/p}}.
\]
Now, for functions in $B_{p}^{n/p,\infty}([0,1]^{n})$ we don't expect
boundedness, and in fact, apparently the best we can say directly from our
rearrangement inequalities, follows from (\ref{tres}):%
\begin{equation}
\sup_{\lbrack0,1]}\left(  f^{\ast\ast}(t)-f^{\ast}(t)\right)  \leq c\left\Vert
f\right\Vert _{B_{p}^{n/p,\infty}([0,1]^{n})}.\label{ufa}%
\end{equation}
In view of the celebrated result of Bennett-DeVore-Sharpley \cite{bds} (cf.
also \cite{bs}) that characterizes the rearrangement invariant hull of $BMO$
via the left hand side of (\ref{ufa}), we see that (\ref{ufa}) gives $f\in
B_{p}^{n/p,\infty}([0,1]^{n})\Rightarrow f^{\ast}\in BMO[0,1].$ In fact, a
stronger result is known and readily available%
\begin{equation}
B_{p}^{n/p,\infty}([0,1]^{n})\subset BMO.\label{geometria}%
\end{equation}
It turns out that our approach to the estimation of oscillations allows us to
extend (\ref{geometria}) to other geometries. Our method reflects the
remarkable connections between oscillation, isoperimetry, interpolation, and
rescalings. We briefly explain the ideas behind our approach to
(\ref{geometria}). Given a metric measure space $\left(  \Omega,d,\mu\right)
$ satisfying our standard assumptions, we have the well known Poincar\'{e}
inequality (cf. \cite[page 150]{mamiadv} and the references therein) given by
\begin{equation}
\int_{{\Omega}}\left\vert f(x)-m(f)\right\vert d\mu\leq\frac{\mu(\Omega
)}{2I_{\Omega}(\mu(\Omega)/2)}\int_{{\Omega}}\left\vert \nabla f(x)\right\vert
d\mu,\text{ for all }f\in S_{L^{1}}(\Omega),\label{poitier}%
\end{equation}
where $m(f)$ is a median\footnote{For the definition of median see
[(\ref{mediandef}), Chapter \ref{main}].} of $f.$ Then given a r.i. space $X$
on $\Omega$ we can extend (\ref{poitier}) by \textquotedblleft by
interpolation\textquotedblright\ and obtain a $K-$Poincar\'{e}
inequality\footnote{See (\ref{average}) above.} (cf. [Theorem \ref{teobmo},
Chapter \ref{main}])
\[
\frac{1}{\mu(\Omega)}\int_{\Omega}\left\vert f-f_{\Omega}\right\vert d\mu\leq
c\frac{K\left(  \frac{\mu(\Omega)/2}{I_{\Omega}(\mu(\Omega)/2)},f;X,S_{X}%
\right)  }{\phi_{X}(\mu(\Omega))}.
\]
Now, if $\Omega$ supports the isoperimetric rescalings described above, the
previous inequality self improves to (cf. [Theorem \ref{bmomarkao}, chapter
\ref{chapbmo}])%
\[
\left\Vert f\right\Vert _{BMO(\Omega)}=\sup_{B\text{ balls}}\frac{1}{\mu
(B)}\int_{B}\left\vert f-f_{B}\right\vert d\mu\leq c\sup_{t<\mu(\Omega)}%
\frac{K\left(  \frac{t}{I_{\Omega}(t)},f;X,S_{X}\right)  }{\phi_{X}(t)}.
\]
It is easy to see that for metric spaces with Euclidean type isoperimetric
profiles, i.e. $I_{\Omega}(t)\succeq t^{1-1/n},$ on $(0,1/2),$ we recover
(\ref{geometria}) (cf. [Corollary \ref{corolariomarkao}, Chapter
\ref{chapbmo}]). Indeed, the result exhibits a new connection between the
geometry of the ambient space and the embedding of Besov and BMO spaces. For
further examples we refer to Chapter \ref{chapbmo} (cf. [(\ref{dependitalpha},
Chapter \ref{chapbmo}]).

In the opposite direction, we can use the insights we gained ``interpolating
between a r.i. space $X$ and the corresponding space of Lip functions $S_{X}"$
to obtain analogous results interpolating with $BMO.$ This is not a
coincidence; for recall the well known Euclidean interpretation of $BMO$ as a
limiting $Lip$ condition. This can be seen by means of writing Lip$_{\alpha}$
conditions on a fixed Euclidean cube $Q$ as%
\[
\left\|  f\right\|  _{Lip_{\alpha}}\simeq\sup_{\substack{Q^{\prime}\subset
Q\\Q^{\prime}\text{ cub}}}\frac{1}{\left|  Q^{\prime}\right|  ^{1-\alpha/n}%
}\int_{Q^{\prime}}\left|  f-f_{Q^{\prime}}\right|  dx<\infty.
\]
In this fashion $BMO$ appears as the limiting case of $Lip_{\alpha}$
conditions when $\alpha\rightarrow0.$ With this intuition at hand we were led
to formulate the corresponding version of Theorem \ref{ref teo 1} for $BMO.$
It reads as follows\footnote{In particular, we arrive, albeit through a very
different route than the original, to an extension of an inequality of
Bennett-DeVore-Sharpley (cf. \cite[combine Theorem 7.3 and Theorem 8.8]{bs})
for the space $L^{1}.$}

\begin{theorem}
(cf. [Theorem \ref{teoremarkao}, Chapter \ref{chapbmo}]) Suppose that
$(\Omega,d,\mu)$ is a metric measure space that satisfies our usual
assumptions and, moreover, is such that the Bennett-DeVore-Sharpley inequality%
\begin{equation}
\sup_{t}\left(  f_{\mu}^{\ast\ast}(t)-f_{\mu}^{\ast}(t)\right)  \leq
c\left\Vert f\right\Vert _{BMO(\Omega)}, \label{otravez}%
\end{equation}
holds. Then,
\[
f_{\mu}^{\ast\ast}(t)-f_{\mu}^{\ast}(t)\leq c\frac{K(\phi_{X}(t),f;X(\Omega
),BMO(\Omega))}{\phi_{X}(t)},\,0<t<\mu(\Omega).
\]

\end{theorem}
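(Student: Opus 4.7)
The plan is a $K$-functional decomposition argument, paralleling the proof of Theorem~\ref{ref teo 1}. Fix $t\in(0,\mu(\Omega))$ and let $f=g+h$ be any admissible decomposition with $g\in X(\Omega)$ and $h\in BMO(\Omega)$. It suffices to establish the pointwise estimate
\[
f_\mu^{\ast\ast}(t)-f_\mu^{\ast}(t)\leq C\left(\frac{\|g\|_{X(\Omega)}}{\phi_X(t)}+\|h\|_{BMO(\Omega)}\right),
\]
since then multiplying by $\phi_X(t)$ and taking the infimum over such decompositions delivers the stated bound in terms of $K(\phi_X(t),f;X(\Omega),BMO(\Omega))$.

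Two simple one-sided estimates supply the ingredients. For the $X$-part, H\"older's inequality in the associate space $\bar X'$ together with the duality relation $\phi_X(t)\phi_{\bar X'}(t)=t$ yields
\[
g_\mu^{\ast\ast}(t)=\frac{1}{t}\int_0^t g_\mu^{\ast}(s)\,ds\leq \frac{1}{t}\|g_\mu^{\ast}\chi_{(0,t)}\|_{\bar X}\|\chi_{(0,t)}\|_{\bar X'}\leq \frac{\|g\|_{X(\Omega)}}{\phi_X(t)},
\]
and the same bound holds for $(-g)_\mu^{\ast}(t)$ since $(-g)_\mu^{\ast}(t)\leq |g|_\mu^{\ast}(t)$. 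For the $BMO$-part, the hypothesis (\ref{otravez}) applied to $h$ gives $h_\mu^{\ast\ast}(s)-h_\mu^{\ast}(s)\leq c\|h\|_{BMO(\Omega)}$ uniformly in $s$.

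The nontrivial step is to patch these two one-sided estimates into a bound on $f_\mu^{\ast\ast}(t)-f_\mu^{\ast}(t)$. The obstacle, which is the whole point, is that $f\mapsto f_\mu^{\ast\ast}(t)-f_\mu^{\ast}(t)$ is not literally subadditive. My plan is to sidestep this by combining the Hardy--Littlewood--P\'olya subadditivity $f_\mu^{\ast\ast}(t)\leq g_\mu^{\ast\ast}(t)+h_\mu^{\ast\ast}(t)$ with the weak subadditivity of rearrangements evaluated at the \emph{shifted} point $2t$: writing $h=f+(-g)$ gives $h_\mu^{\ast}(2t)\leq f_\mu^{\ast}(t)+(-g)_\mu^{\ast}(t)$, hence $-f_\mu^{\ast}(t)\leq (-g)_\mu^{\ast}(t)-h_\mu^{\ast}(2t)$, and so
\[
f_\mu^{\ast\ast}(t)-f_\mu^{\ast}(t)\leq\bigl(g_\mu^{\ast\ast}(t)+(-g)_\mu^{\ast}(t)\bigr)+\bigl(h_\mu^{\ast\ast}(t)-h_\mu^{\ast}(2t)\bigr).
\]
The first parenthesis is at most $2\|g\|_{X(\Omega)}/\phi_X(t)$ by the previous step. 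The second parenthesis splits as $(h_\mu^{\ast\ast}(t)-h_\mu^{\ast}(t))+(h_\mu^{\ast}(t)-h_\mu^{\ast}(2t))$; the first piece is $\leq c\|h\|_{BMO(\Omega)}$ by (\ref{otravez}), and the second is controlled by the doubling inequality (\ref{davobonee}) applied to $h$ at $2t$, which yields $h_\mu^{\ast}(t)-h_\mu^{\ast}(2t)\leq 2(h_\mu^{\ast\ast}(2t)-h_\mu^{\ast}(2t))\leq 2c\|h\|_{BMO(\Omega)}$ by (\ref{otravez}) once more. Assembling the pieces produces the pointwise estimate above, and infimizing over $f=g+h$ gives the theorem. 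The crux is precisely the trick of paying the price of shifting one rearrangement from $t$ to $2t$ and then absorbing that mismatch uniformly via (\ref{davobonee}); signed rearrangements cause no additional difficulty because all the contributing terms were already estimated using the $|\cdot|$-envelope, exactly as in the signed reformulations discussed in Chapter~\ref{contchap}.
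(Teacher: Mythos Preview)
Your proposal is correct and follows essentially the same route as the paper's proof of Theorem~\ref{teoremarkao}: subadditivity of $(\cdot)^{\ast\ast}$ combined with the weak subadditivity (\ref{aa1}) at the shifted point $2t$, then H\"older's inequality for the $X$-piece and the Bennett--DeVore--Sharpley hypothesis together with (\ref{davobonee}) for the $BMO$-piece. The only cosmetic difference is that the paper first passes to $|f|$ and restricts to nonnegative $h$ (exploiting $\||f|\|_{BMO}\le\|f\|_{BMO}$), whereas you work directly with the signed decomposition $f=g+h$; your version is marginally more direct and the bookkeeping with $(-g)_\mu^{\ast}(t)\le|g|_\mu^{\ast}(t)$ is correct. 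Note that, exactly as in the paper's own argument, the use of $h_\mu^{\ast}(2t)$ tacitly restricts the range to $t<\mu(\Omega)/2$; the paper's detailed statement (Theorem~\ref{teoremarkao}) in fact records the conclusion at $t/2$ for $0<t<\mu(\Omega)$, which is equivalent.
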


From this point of view the Bennett-DeVore-Sharpley inequality (\ref{otravez})
takes the role of our basic inequality (\ref{cuatro}). In this respect it is
important to note that (\ref{otravez}) has been shown to hold in great
generality, for example it holds for doubling measures (cf. \cite{saghschv}).
Finally, in connection with $BMO,$ we considered the role of signed
rearrangements$.$ Here the import of this notion is that signed rearrangements
provide a theoretical method to compute medians (cf. Theorem \ref{median}) and
thus quickly lead to a version of the limiting case of the
John-Stromberg-Jawerth-Torchinsky inequality (cf. Chapter \ref{chapbmo},
(\ref{mediana})).

Our approach to the (Morrey-)Sobolev embedding theorem also leads to the
consideration of \textquotedblleft Lorentz spaces with negative
indices\textquotedblright\ (cf. Chapter \ref{negativo}), providing still a
very suggestive approach\footnote{Although in this paper we only concentrate
on the role that these spaces play on the theory of embeddings, one cannot but
feel that a detailed study of these spaces could be useful for other questions
connected with interpolation/approximation.} to these results, at least in the
Euclidean case.

In Chapter \ref{capitap} and Chapter \ref{chapgauss} we have considered
explicit versions of our results in different classical contexts. In
particular, in Chapter \ref{chapgauss}, we obtain new fractional Sobolev
inequalities for Gaussian measures, as well as for probability measures that
are in between Gaussian and exponential. For example, for Gaussian measure on
$\mathbb{R}^{n},$ we have for $1\leq q<\infty,\theta\in(0,1)$%
\[
\left\{  \int_{0}^{1/2}\left|  f\right|  _{\gamma_{n}}^{\ast}(t)^{q}\left(
\log\frac{1}{t}\right)  ^{\frac{q\theta}{2}}dt\right\}  ^{1/q}\leq c\left\|
f\right\|  _{B_{L^{q}}^{\theta,q}(\gamma_{n})},
\]
where $c$ is independent of the dimension. Likewise, the same proof yields
that for probability measures on the real line of the form\footnote{where
$Z_{r}^{-1}$ is a normalizing constant.}%
\[
d\mu_{r}(x)=Z_{r}^{-1}\exp\left(  -\left|  x\right|  ^{r}\right)  dx,\text{
}r\in(1,2]
\]
and their tensor products%
\[
\mu_{r,n}=\mu_{p}^{\otimes n},
\]
we have%
\[
\left\{  \int_{0}^{1/2}\left|  f\right|  _{\mu_{r,n}}^{\ast}(t)^{q}\left(
\log\frac{1}{t}\right)  ^{q\theta\left(  1-1/r\right)  }dt\right\}  ^{1/q}\leq
c\left\|  f\right\|  _{B_{L^{q}}^{\theta,q}(\mu_{r,n})}.
\]
We refer to Chapter \ref{chapgauss} for the details, where the reader will
also find a treatment of the case $q=\infty,$ which yields the corresponding
improvements on the exponential integrability:%
\[
\sup_{t\in(0,\frac{1}{2})}\left(  \left|  f\right|  _{\mu_{r,n}}^{\ast\ast
}(t)-\left|  f\right|  _{\mu_{r,n}}^{\ast}(t)\right)  \left(  \log\frac{1}%
{t}\right)  ^{(1-\frac{1}{r})\theta}\leq c\left\|  f\right\|  _{\dot
{B}_{L^{\infty}}^{\theta,\infty}(\mu_{r,n})}.
\]

Applications to the computation of envelopes of function spaces in the sense
of Triebel-Haroske and their school are provided in Chapter \ref{triebel}.

The table of contents will serve to show the organization of the paper. We
have tried to make the reading of the chapters in the second part of the paper
as independent of each other as possible.

\begin{flushleft}

\textbf{Acknowledgement}.\ \ \textit{We are extremely grateful to the referees
for their painstaking review that gave us the opportunity to correct, and
fill-in gaps in some arguments, and thus contributed to substantially improve
the quality of the paper.}
\end{flushleft}

\chapter{Preliminaries\label{preliminar}}

\section{Background}

Our notation in the paper will be for the most part standard. In this paper we
shall only consider\footnote{See also Condition \ref{condition} below.}
connected measure metric spaces $\left(  \Omega,d,\mu\right)  $ equipped with
a finite Borel measure $\mu$, which we shall simply refer to, as
\textquotedblleft measure metric spaces\textquotedblright. For measurable
functions $u:\Omega\rightarrow\mathbb{R},$ the distribution function of $u$ is
given by
\[
\mu_{u}(t)=\mu\{x\in{\Omega}:u(x)>t\}\text{ \ \ \ \ }(t\in\mathbb{R}).
\]
The signed \textbf{decreasing rearrangement}\footnote{Note that this notation
is somewhat unconventional. In the literature it is common to denote the
decreasing rearrangement of $\left\vert u\right\vert $ by $u_{\mu}^{\ast},$
while here it is denoted by $\left\vert u_{\mu}\right\vert ^{\ast}$ since we
need to distinguish between the rearrangements of $u$ and $\left\vert
u\right\vert .$ In particular, the rearrangement of $u$ can be negative. We
refer the reader to \cite{rako} and the references quoted therein for a
complete treatment.} of a function $u$ is the right-continuous non-increasing
function from $[0,\mu(\Omega))$ into $\mathbb{R}$ which is equimeasurable with
$u.$ It can be defined by the formula%
\[
u_{\mu}^{\ast}(s)=\inf\{t\geq0:\mu_{u}(t)\leq s\},\text{ \ }s\in\lbrack
0,\mu(\Omega)),
\]
and satisfies
\[
\mu_{u}(t)=\mu\{x\in{\Omega}:u(x)>t\}=m\left\{  s\in\lbrack0,\mu
(\Omega)):u_{\mu}^{\ast}(s)>t\right\}  \text{ , \ }t\in\mathbb{R}%
\]
(where $m$ denotes the Lebesgue measure on $[0,\mu(\Omega)).$ It follows from
the definition that
\begin{equation}
\left(  u+v\right)  _{\mu}^{\ast}(s)\leq u_{\mu}^{\ast}(s/2)+v_{\mu}^{\ast
}(s/2). \label{aa1}%
\end{equation}
Moreover,
\[
u_{\mu}^{\ast}(0^{+})=ess\sup_{\Omega}u\text{ \ \ and \ \ \ }u_{\mu}^{\ast
}(\mu(\Omega)^{-})=ess\inf_{\Omega}u.
\]

The maximal average $u_{\mu}^{\ast\ast}(t)$ is defined by
\[
u_{\mu}^{\ast\ast}(t)=\frac{1}{t}\int_{0}^{t}u_{\mu}^{\ast}(s)ds=\frac{1}%
{t}\sup\left\{  \int_{E}u(s)d\mu:\mu(E)=t\right\}  ,t>0.
\]
The operation $u\rightarrow u_{\mu}^{\ast\ast}$ is sub-additive, i.e.
\begin{equation}
\left(  u+v\right)  _{\mu}^{\ast\ast}(s)\leq u_{\mu}^{\ast\ast}(s)+v_{\mu
}^{\ast\ast}(s). \label{a2}%
\end{equation}
Moreover, since $u_{\mu}^{\ast}$ is decreasing, $u_{\mu}^{\ast\ast}$ is also
decreasing and $u_{\mu}^{\ast}\leq u_{\mu}^{\ast\ast}$.

The following lemma proved in \cite[Lemma 2.1]{garsiagrenoble} will be useful
in what follows.

\begin{lemma}
\label{garlem} Let $f$ and $f_{n}$, $n=1,..,$ be integrable on $\Omega.$
Suppose that
\[
\lim_{n}\int_{\Omega}\left|  f_{n}(x)-f(x)\right|  d\mu=0.
\]
Then
\begin{align*}
\left(  f_{n}\right)  _{\mu}^{\ast\ast}(t)\rightarrow f_{\mu}^{\ast\ast}(t)
&  ,\text{ uniformly for }t\in\lbrack0,\mu(\Omega)]\text{, and }\\
\left(  f_{n}\right)  _{\mu}^{\ast}(t)\rightarrow f_{\mu}^{\ast}(t)\text{ }
&  \text{at all points of continuity of }f_{\mu}^{\ast}.
\end{align*}

\end{lemma}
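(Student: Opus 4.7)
The plan is to work through the variational representation of the maximal average
\[
u_{\mu}^{\ast\ast}(t)=\frac{1}{t}\sup\left\{  \int_{E}u\,d\mu:\mu(E)=t\right\}
\]
given in the preliminaries. Rather than trying to estimate $(f_n)_\mu^{**}(t)$ directly, I would first establish the uniform convergence of the primitives $F_n(t):=t\,(f_n)_\mu^{\ast\ast}(t)=\int_0^t (f_n)_\mu^\ast(s)\,ds$ and $F(t):=\int_0^t f_\mu^\ast(s)\,ds$, since this reformulates the problem as a uniform estimate that sidesteps the degeneracy at $t=0$.

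The first step: for any Borel set $E$ with $\mu(E)=t$, write
\[
\int_E f_n\,d\mu \leq \int_E f\,d\mu + \int_E |f_n-f|\,d\mu \leq t f_\mu^{\ast\ast}(t)+\|f_n-f\|_{L^1},
\]
and take the supremum over admissible $E$ to obtain $F_n(t)\leq F(t)+\|f_n-f\|_{L^1}$. The symmetric inequality gives
\[
\sup_{t\in[0,\mu(\Omega)]}\bigl|F_n(t)-F(t)\bigr|\leq \|f_n-f\|_{L^1}\longrightarrow 0,
\]
which is precisely the uniform convergence asserted (interpreting $u_\mu^{\ast\ast}(t)$ at $t=0$ through the integral $F$, or equivalently obtaining uniform convergence of $u_\mu^{\ast\ast}$ on any interval bounded away from $0$ by dividing by $t$).

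For the second conclusion, I would exploit the monotonicity of $(f_n)_\mu^\ast$ together with the uniform convergence of $F_n$. Fix a point $t_0$ where $f_\mu^\ast$ is continuous, and, given $\varepsilon>0$, choose $h>0$ with $|f_\mu^\ast(t_0\pm h)-f_\mu^\ast(t_0)|<\varepsilon/2$. Since $(f_n)_\mu^\ast$ is nonincreasing,
\[
(f_n)_\mu^\ast(t_0)\leq \frac{1}{h}\int_{t_0-h}^{t_0}(f_n)_\mu^\ast(s)\,ds=\frac{F_n(t_0)-F_n(t_0-h)}{h},
\]
and letting $n\to\infty$ the right side tends to $\frac{1}{h}\int_{t_0-h}^{t_0} f_\mu^\ast(s)\,ds\leq f_\mu^\ast(t_0-h)\leq f_\mu^\ast(t_0)+\varepsilon/2$. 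Hence $\limsup_n (f_n)_\mu^\ast(t_0)\leq f_\mu^\ast(t_0)+\varepsilon$. A symmetric estimate using the interval $[t_0,t_0+h]$ yields $\liminf_n(f_n)_\mu^\ast(t_0)\geq f_\mu^\ast(t_0)-\varepsilon$, so $(f_n)_\mu^\ast(t_0)\to f_\mu^\ast(t_0)$.

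The key observation driving everything is the first step: the variational formula converts the $L^1$ hypothesis into a uniform $L^\infty$ estimate on $F_n-F$. The only delicate point worth flagging is that one cannot extract a \emph{uniform} rate for $(f_n)_\mu^\ast$ itself at continuity points, since the argument only controls its averages over $[t_0-h,t_0]$ and $[t_0,t_0+h]$; this is precisely why the second assertion is merely pointwise, in contrast to the uniform statement for $F_n$. I do not anticipate any real obstacle beyond this observation; the lemma is essentially a packaging of the variational formula together with the standard Lebesgue differentiation argument for monotone functions.
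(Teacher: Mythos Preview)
The paper does not supply its own proof of this lemma; it simply cites \cite[Lemma 2.1]{garsiagrenoble}. Your argument is correct and is essentially the standard one: the variational formula for $u_\mu^{\ast\ast}$ converts the $L^1$ hypothesis into the uniform bound $|F_n(t)-F(t)|\leq\|f_n-f\|_{L^1}$, and the pointwise convergence of $(f_n)_\mu^\ast$ at continuity points then follows from the monotonicity-plus-averaging argument you give. Your caveat about $t=0$ is apt: what one actually obtains is uniform convergence of $t\,(f_n)_\mu^{\ast\ast}(t)$, equivalently uniform convergence of $(f_n)_\mu^{\ast\ast}$ on compact subsets of $(0,\mu(\Omega)]$, which is all that the applications in the paper require.
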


When the measure is clear from the context, or when we are dealing with
Lebesgue measure, we may simply write $u^{\ast}$ and $u^{\ast\ast}$, etc.

For a Borel set $A\subset\Omega,$ the \textbf{perimeter} or \textbf{Minkowski
content} of $A$ is defined by
\[
P(A;\Omega)=\lim\inf_{h\rightarrow0}\frac{\mu\left(  A_{h}\right)  -\mu\left(
A\right)  }{h},
\]
where $A_{h}=\left\{  x\in\Omega:d(x,A)<h\right\}  $ is the open
$h-$neighborhood of $A.$

The \textbf{isoperimetric profile} is defined by
\[
I_{\Omega}(s)=I_{(\Omega,d,\mu)}(s)=\inf\left\{  P(A;\Omega):\text{ }%
\mu(A)=s\right\}  ,
\]
i.e. $I_{(\Omega,d,\mu)}:[0,\mu(\Omega)]\rightarrow\left[  0,\infty\right)  $
is the pointwise maximal function such that%
\begin{equation}
P(A;\Omega)\geq I_{\Omega}(\mu(A)), \label{ena}%
\end{equation}
holds for all Borel sets $A$. A set $A$ for which equality in (\ref{ena}) is
attained will be called an \textbf{isoperimetric domain. }Again when no
confusion arises we shall drop the subindex $\Omega$ and simply write $I.$

We will always assume that the metric measure spaces $(\Omega,d,\mu)$
considered satisfy the following condition

\begin{condition}
\label{condition}We will assume throughout the paper that our metric measure
spaces $(\Omega,d,\mu)$ are such that the isoperimetric profile $I_{(\Omega
,d,\mu)}$ is a concave continuous function, increasing on $(0,\mu(\Omega)/2),$
symmetric around the point $\mu(\Omega)/2$ that, moreover, vanishes at zero.
We remark that these assumptions are fulfilled for a large class of metric
measure spaces\footnote{These assumptions are satisfied for the classical
examples (cf. \cite{bobk}, \cite{MiE}, \cite{Bayle} and the references
therein)}.
\end{condition}

A continuous, concave function, $J:[0,\mu(\Omega)]\rightarrow\left[
0,\infty\right)  $, increasing on $(0,\mu(\Omega)/2)$, symmetric around the
point $\mu(\Omega)/2,$ and such that
\begin{equation}
I_{\Omega}\geq J, \label{enaena}%
\end{equation}
will be called an \textbf{isoperimetric estimator} for $(\Omega,d,\mu).$ Note
that (\ref{enaena}) and the fact that $I_{\Omega}(0)=0$ implies that $J(0)=0.$

For a Lipschitz function $f$ on $\Omega$ (briefly $f\in Lip(\Omega))$ we
define the \textbf{modulus of the gradient} by\footnote{In fact one can define
$\left|  \nabla f\right|  $ for functions $f$ that are Lipschitz on every ball
in $(\Omega,d)$ (cf. \cite[pp. 2, 3]{bobk} for more details).}
\[
|\nabla f(x)|=\limsup_{d(x,y)\rightarrow0}\frac{|f(x)-f(y)|}{d(x,y)}.
\]

Let us recall some results that relate isoperimetry and rearrangements (see
\cite{mamiadv},\cite{mamiproc}).

\begin{theorem}
The following statements hold

\begin{enumerate}
\item Isoperimetric inequality: $\forall A\subset\Omega,$ Borel set$,$%
\[
P(A;\Omega)\geq I_{\Omega}(\mu(A)).
\]

\item Oscillation inequality: $\forall f\in Lip(\Omega),$%
\begin{equation}
(\left\vert f\right\vert _{\mu}^{\ast\ast}(t)-\left\vert f\right\vert _{\mu
}^{\ast}(t))\frac{I_{\Omega}(t)}{t}\leq\frac{1}{t}\int_{0}^{t}\left\vert
\nabla f\right\vert _{\mu}^{\ast}(s)ds,\text{ \ \ }0<t<\mu(\Omega).
\label{reod00}%
\end{equation}

\end{enumerate}
\end{theorem}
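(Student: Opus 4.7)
Part (1) is essentially a tautology: by definition, $I_\Omega(s)=\inf\{P(A;\Omega):\mu(A)=s\}$, so for any Borel $A$ we trivially have $P(A;\Omega)\geq I_\Omega(\mu(A))$. No real work is required beyond unpacking the definition.

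For the oscillation inequality (2), my plan is to combine three standard ingredients: a layer-cake representation of $|f|_\mu^{\ast\ast}(t)-|f|_\mu^\ast(t)$, the coarea (Fleming--Rishel) inequality, and the isoperimetric inequality from Part (1), with concavity of $I_\Omega$ used to factor out the ratio $I_\Omega(t)/t$. First, using that $|f|_\mu^\ast$ is the non-increasing rearrangement of $|f|$ and exchanging the order of integration,
\begin{equation*}
t\bigl(|f|_\mu^{\ast\ast}(t)-|f|_\mu^\ast(t)\bigr)=\int_0^t\bigl(|f|_\mu^\ast(s)-|f|_\mu^\ast(t)\bigr)ds=\int_{|f|_\mu^\ast(t)}^\infty \mu_{|f|}(r)\,dr,
\end{equation*}
where the last step uses the fact that for $r>|f|_\mu^\ast(t)$ one has $\mu_{|f|}(r)\leq t$, so the horizontal slice at height $r$ of the region under $|f|_\mu^\ast$ restricted to $(0,t)$ has length exactly $\mu_{|f|}(r)$.

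Second, I would apply the isoperimetric inequality to the super-level set $\{|f|>r\}$ to get $P(\{|f|>r\};\Omega)\geq I_\Omega(\mu_{|f|}(r))$, and then use that $I_\Omega$ is concave with $I_\Omega(0)=0$, hence $s\mapsto I_\Omega(s)/s$ is non-increasing on $(0,\mu(\Omega)/2)$. Since $\mu_{|f|}(r)\leq t$ whenever $r>|f|_\mu^\ast(t)$, this gives
\begin{equation*}
P(\{|f|>r\};\Omega)\geq I_\Omega(\mu_{|f|}(r))\geq \frac{I_\Omega(t)}{t}\,\mu_{|f|}(r),\qquad r>|f|_\mu^\ast(t).
\end{equation*}
(For $t\in[\mu(\Omega)/2,\mu(\Omega))$ symmetry of $I_\Omega$ around $\mu(\Omega)/2$ handles the remaining range.) Integrating in $r$ over $(|f|_\mu^\ast(t),\infty)$ and combining with the previous display yields
\begin{equation*}
\frac{I_\Omega(t)}{t}\cdot t\bigl(|f|_\mu^{\ast\ast}(t)-|f|_\mu^\ast(t)\bigr)\leq \int_{|f|_\mu^\ast(t)}^\infty P(\{|f|>r\};\Omega)\,dr.
\end{equation*}

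The third ingredient is the coarea/Fleming--Rishel inequality in the metric setting, which bounds the right-hand side by $\int_{\{|f|>|f|_\mu^\ast(t)\}}|\nabla f|\,d\mu$. Since the set $\{|f|>|f|_\mu^\ast(t)\}$ has $\mu$-measure at most $t$, the Hardy--Littlewood inequality gives $\int_{\{|f|>|f|_\mu^\ast(t)\}}|\nabla f|\,d\mu\leq \int_0^t |\nabla f|_\mu^\ast(s)\,ds$. Dividing by $t$ produces (\ref{reod00}). The main technical point I expect to have to justify carefully is the coarea inequality in this metric-measure generality (together with the fact that the Fleming--Rishel estimate is valid for the particular weak-type gradient $|\nabla f|$ defined above); this is where one invokes results from \cite{mamiadv},\cite{mamiproc}, and it is the only step that is not elementary. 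A minor point is handling the density/differentiability issues in the rearrangement identity, which can be done either by approximation (truncating $f$ and using Lemma \ref{garlem}) or by working directly with the right-continuous rearrangement.
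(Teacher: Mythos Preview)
Your proposal is correct. Note that the paper itself does not prove this theorem here; it is stated as a recalled result from \cite{mamiadv}, \cite{mamiproc}. Your layer-cake identity $t(|f|_\mu^{\ast\ast}(t)-|f|_\mu^\ast(t))=\int_{|f|_\mu^\ast(t)}^\infty \mu_{|f|}(r)\,dr$, followed by the isoperimetric bound, concavity, coarea, and Hardy--Littlewood, is a clean and standard route.

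For comparison, the method from \cite{mamiadv} (which the paper sketches later, in Section~\ref{class}, for Euclidean domains) proceeds differently: one applies the coarea formula to \emph{two-sided} truncations $|f|_{t_1}^{t_2}$, obtaining a difference-quotient inequality $(|f|_\mu^\ast(s)-|f|_\mu^\ast(s+h))\min(I_\Omega(s),I_\Omega(s+h))\leq \int_{\{|f|_\mu^\ast(s+h)<|f|<|f|_\mu^\ast(s)\}}|\nabla f|\,d\mu$, and then passes to the limit. That route yields the stronger pointwise P\'olya--Szeg\H{o}-type estimate recorded in Lemma~\ref{ll1}(3), namely $\int_0^t\bigl|(-|f|_\mu^\ast)'(\cdot)I_\Omega(\cdot)\bigr|^\ast(s)\,ds\leq \int_0^t|\nabla f|_\mu^\ast(s)\,ds$, which is what the paper actually needs for Theorem~\ref{main2}. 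Your argument delivers the oscillation inequality directly and more transparently, but does not give this refinement.

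One minor remark: your parenthetical about handling $t\in[\mu(\Omega)/2,\mu(\Omega))$ via symmetry is unnecessary. Concavity of $I_\Omega$ on the whole interval together with $I_\Omega(0)=0$ already forces $s\mapsto I_\Omega(s)/s$ to be non-increasing on all of $(0,\mu(\Omega))$, regardless of the monotonicity of $I_\Omega$ itself.
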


\begin{lemma}
\label{ll1}Let $h$ be a bounded Lip function on $\Omega$. Then there exists a
sequence of bounded functions $\left(  h_{n}\right)  _{n}$ $\subset
Lip(\Omega)$ , such that

\begin{enumerate}
\item
\begin{equation}
\left|  \nabla h_{n}(x)\right|  \leq(1+\frac{1}{n})\left|  \nabla h(x)\right|
,\text{ \ }x\in\Omega. \label{cota01}%
\end{equation}

\item
\begin{equation}
h_{n}\underset{n\rightarrow\infty}{\rightarrow}h\text{ in }L^{1}.
\label{converge}%
\end{equation}

\item
\begin{equation}
\int_{0}^{t}\left|  \left(  -\left|  h_{n}\right|  _{\mu}^{\ast}\right)
^{\prime}(\cdot)I_{\Omega}(\cdot)\right|  ^{\ast}(s)ds\leq\int_{0}^{t}\left|
\nabla h_{n}\right|  _{\mu}^{\ast}(s)ds\text{, \ \ \ \ \ \ }0<t<\mu(\Omega).
\label{aa}%
\end{equation}
(The second rearrangement on the left hand side is with respect to the
Lebesgue measure on $\left[  0,\mu(\Omega)\right)  $).
\end{enumerate}
\end{lemma}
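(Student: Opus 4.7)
The lemma supplies an approximating sequence $(h_n)\subset Lip(\Omega)$ that is well-behaved for a pointwise-derivative refinement of the oscillation inequality (\ref{reod00}). Properties (\ref{cota01}) and (\ref{converge}) are of ``transport'' type --- they will let one pass to the limit in $n$ in later applications --- while (\ref{aa}) is the main content: it replaces the maximal average $|\nabla h_n|_\mu^{\ast\ast}$ on the right-hand side of (\ref{reod00}) by a genuine Hardy--Littlewood majorization between $(-(|h_n|_\mu^{\ast})')\cdot I_\Omega$ and $|\nabla h_n|_\mu^{\ast}$.

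My strategy is a two-step reduction. First, I would construct $h_n=\psi_n\circ h$ with $\psi_n:\mathbb{R}\to\mathbb{R}$ a bi-Lipschitz map chosen so that $\psi_n'(s)\in[1,1+1/n]$ and $\|\psi_n-\mathrm{id}\|_{\infty}\leq 1/n$ on the (bounded) range of $h$. The chain rule immediately gives (\ref{cota01}) via $|\nabla h_n|=\psi_n'(h)\,|\nabla h|$, and uniform closeness of $\psi_n$ to the identity on a bounded range together with $\mu(\Omega)<\infty$ gives (\ref{converge}) by dominated convergence. The local slope profile of $\psi_n$ can be arranged so that the ``breakpoints'' of $\psi_n$ avoid the atoms of the pushforward measure $|h|_\ast\mu$, making the distribution function $\mu_{|h_n|}$ as regular as the multiplicative gradient bound permits.

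Second, to obtain (\ref{aa}), I would combine the metric-space co-area formula for Lipschitz functions with the isoperimetric inequality (\ref{ena}): writing $\int_{\{t<|h_n|\leq t+dt\}}|\nabla h_n|\,d\mu=P(\{|h_n|>t\};\Omega)\,dt\geq I_\Omega(\mu_{|h_n|}(t))\,dt$ and changing variable to $s=\mu_{|h_n|}(t)$, one obtains at a.e.\ point $s\in(0,\mu(\Omega))$ the inequality
\[
I_\Omega(s)\cdot\bigl(-(|h_n|_\mu^{\ast})'(s)\bigr)\leq \overline{|\nabla h_n|}(s),
\]
where $\overline{|\nabla h_n|}(s)$ denotes the conditional average of $|\nabla h_n|$ on the corresponding level set. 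An appeal to the Hardy--Littlewood--P\'olya principle $\int_0^t F^{\ast}\leq\int_0^t G^{\ast}$ whenever $F$ is the conditional average of $G$ over level sets then upgrades this pointwise bound to the rearrangement-integral statement (\ref{aa}).

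The main obstacle I expect is that the pointwise bound $|\nabla h_n|\leq(1+1/n)|\nabla h|$, together with the limsup definition of the modulus of the gradient, \emph{forces} $h_n$ to be constant on the interiors of positive-measure level sets of $h$, so composition alone cannot remove the atoms of $|h|_\ast\mu$. The resolution is to interpret $(-(|h_n|_\mu^{\ast})')$ as the a.e.\ (absolutely continuous) derivative: singular contributions from flat pieces of $|h_n|_\mu^{\ast}$ contribute zero to $\int_0^t |(-(|h_n|_\mu^{\ast})')\cdot I_\Omega|^{\ast}\,ds$, while on the complement the co-area argument applies cleanly because $\mu_{|h_n|}$ is continuous there by the choice of $\psi_n$. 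Lemma \ref{garlem} provides the continuity of rearrangements under $L^1$-approximation needed to close the limiting argument.
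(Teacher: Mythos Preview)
The paper does not prove this lemma here; it is recalled from \cite{mamiadv} and \cite{mamiproc}, so there is no in-paper proof to match against. That said, your sketch has a genuine gap at the step you flag as ``Hardy--Littlewood--P\'olya,'' and the composition construction does less than you think.

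\textbf{On the construction.} Your $h_n=\psi_n\circ h$ with $\psi_n$ a bi-Lipschitz bijection handles (\ref{cota01}) and (\ref{converge}) cleanly. But because $\psi_n$ is a bijection, the level sets of $|h_n|$ are exactly those of $|h|$ relabelled: $\{|h_n|>t\}=\{|h|>\psi_n^{-1}(t)\}$, atoms of $|h|_*\mu$ persist, and $|h_n|_\mu^{\ast}=\psi_n\circ|h|_\mu^{\ast}$. On each level set of $|h|$ the factor $\psi_n'(h)$ is constant, so it cancels from both sides of any level-set based inequality. Your argument for (\ref{aa}) therefore reduces verbatim to an argument for $h$ itself, and the sequence $(h_n)$ contributes nothing beyond what (\ref{aa}) for $h$ would already give. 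The construction in \cite{mamiadv} is not of this type.

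\textbf{On the ``HLP principle.''} Your route to (\ref{aa}) is: differentiate the co-area formula to get $I_\Omega(s)\,(-(|h_n|_\mu^{\ast})')(s)\leq\overline{|\nabla h_n|}(s)$ with $\overline{|\nabla h_n|}$ a level-set conditional average, then invoke that conditional expectation is a contraction in rearrangement order. This heuristic is sound on $\mathbb{R}^n$, where the co-area formula is an \emph{equality} and one has the disintegration $d\mu=|\nabla h_n|^{-1}\,d\mathcal{H}^{n-1}\,dr$; then $\overline{|\nabla h_n|}=E[\,|\nabla h_n|\mid\sigma(|h_n|)\,]$ is a genuine conditional expectation and the majorization follows. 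In the metric setting of the paper, however, only the co-area \emph{inequality} $\int|\nabla g|\,d\mu\geq\int P(\{g>r\})\,dr$ is available (cf.\ \cite{bobk}), there is no level-set disintegration, and neither the object $\overline{|\nabla h_n|}(s)$ nor the pointwise bound you need is well-defined. From $\Psi\geq\Theta$ (integrated co-area) you cannot conclude $\Psi'\geq\Theta'$ pointwise.

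The argument that does work in metric spaces, and which \cite{mamiadv} carries out (the Euclidean shadow is the passage around (\ref{A1})--(\ref{A2}) in Chapter~\ref{capitap}), is by truncation rather than differentiation: for any finite disjoint union $E=\bigcup_i(s_i,s_i+\delta_i)\subset(0,\mu(\Omega))$ the slabs $A_i=\{|h_n|_\mu^{\ast}(s_i+\delta_i)<|h_n|\leq|h_n|_\mu^{\ast}(s_i)\}$ are pairwise disjoint with $\sum_i\mu(A_i)\leq|E|$; applying the co-area inequality to each truncation and the isoperimetric inequality gives $\int_E(-(|h_n|_\mu^{\ast})')\,I_\Omega\,ds\leq\int_{\bigcup_iA_i}|\nabla h_n|\,d\mu$, and Hardy--Littlewood then bounds the right side by $\int_0^{|E|}|\nabla h_n|_\mu^{\ast}$. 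Taking the supremum over such $E$ with $|E|=t$ yields (\ref{aa}). The role of the approximants $h_n$ in \cite{mamiadv} is to guarantee that $|h_n|_\mu^{\ast}$ is locally absolutely continuous and that the slab-measure bookkeeping $\sum_i\mu(A_i)\leq|E|$ goes through cleanly --- regularity that the applications (integration by parts in the proofs of Theorems~\ref{teo:elotro} and~\ref{main2}) rely on.
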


\section{Rearrangement invariant spaces\label{secc:ri}}

We recall briefly the basic definitions and conventions we use from the theory
of rearrangement-invariant (r.i.) spaces, and refer the reader to \cite{bs}
and \cite{KPS} for a complete treatment.

Let $X=X({\Omega})$ be a Banach function space on $({\Omega},d,\mu)$, with the
Fatou property\footnote{This means that if $f_{n}\geq0,$ and $f_{n}\uparrow
f,$ then $\left\Vert f_{n}\right\Vert _{X}\uparrow\left\Vert f\right\Vert
_{X}$ (i.e. the monotone convergence theorem holds in the $X$ norm). The
nomenclature is somewhat justified by the fact that this property is
equivalent to the validity of the usual Fatou Lemma in the $X$ norm (cf.
\cite{bs}).}. We shall say that $X$ is a \textbf{rearrangement-invariant}
(r.i.) space, if $g\in X$ implies that all $\mu-$measurable functions $f$ with
$\left\vert f\right\vert _{\mu}^{\ast}=\left\vert g\right\vert _{\mu}^{\ast},$
also belong to $X$ and moreover, $\Vert f\Vert_{X}=\Vert g\Vert_{X}$. The
functional $\Vert\cdot\Vert_{X}$ \ will be called a rearrangement invariant
norm. Typical examples of r.i. spaces are the $L^{p}$-spaces, Orlicz spaces,
Lorentz\footnote{See [(\ref{montana}), (\ref{vermont}), Chapter \ref{chapbmo}]
for the definition of Lorentz spaces.} spaces, Marcinkiewicz spaces, etc.

On account of the fact that $\mu(\Omega)<\infty,$ for any r.i. space
$X({\Omega})$ we have%

\[
L^{\infty}(\Omega)\subset X(\Omega)\subset L^{1}(\Omega),
\]
with continuous embeddings.

For rearrangement invariant norms (or seminorms) $\left\Vert .\right\Vert
_{X},$ we can compare the size of elements by comparing their averages, as
expressed by a majorization principle, sometimes referred to as the
Calder\'{o}n-Hardy Lemma:
\begin{align}
\text{Suppose that }\int_{0}^{t}\left\vert f\right\vert _{\mu}^{\ast}(s)ds  &
\leq\int_{0}^{t}\left\vert g\right\vert _{\mu}^{\ast}(s)ds,\text{ holds for
all }0<t<\mu(\Omega)\text{ }\label{mayorante}\\
&  \Rightarrow\left\Vert f\right\Vert _{X}\leq\left\Vert g\right\Vert _{X}
.\nonumber
\end{align}
\ 

The associated space $X^{\prime}(\Omega)$ is defined using the r.i. norm given
by%
\[
\left\Vert h\right\Vert _{X^{\prime}(\Omega)}=\sup_{g\neq0}\frac{\int_{\Omega
}\left\vert g(x)h(x)\right\vert d\mu}{\left\Vert g\right\Vert _{X(\Omega)}%
}=\sup_{g\neq0}\frac{\int_{0}^{\mu(\Omega)}\left\vert h\right\vert _{\mu
}^{\ast}(s)\left\vert g\right\vert _{\mu}^{\ast}(s)ds}{\left\Vert g\right\Vert
_{X(\Omega)}}.
\]
In particular, the following \textbf{generalized H\"{o}lder's inequality }holds%

\begin{equation}
\int_{\Omega}\left|  g(x)h(x)\right|  d\mu\leq\left\|  g\right\|  _{X(\Omega
)}\left\|  h\right\|  _{X^{\prime}(\Omega)}. \label{hol}%
\end{equation}

The \textbf{fundamental function\ }of $X({\Omega})$ is defined by
\begin{equation}
\phi_{X}(s)=\left\Vert \chi_{E}\right\Vert _{X},\text{ \ }0\leq s\leq
\mu(\Omega), \label{dasfundamental}%
\end{equation}
where $E$ is any measurable subset of $\Omega$ with $\mu(E)=s.$ We can assume
without loss of generality that $\phi_{X}$ is concave (cf. \cite{bs}).
Moreover,%
\begin{equation}
\phi_{X^{\prime}}(s)\phi_{X}(s)=s. \label{dual}%
\end{equation}
For example, if $X=L^{p}$ or $X=L^{p,q}$ (a Lorentz space), then $\phi_{L^{p}%
}(t)=\phi_{L^{p,q}}(t)=t^{1/p},$ if $1\leq p<\infty,$ while for $p=\infty,$
$\phi_{L^{\infty}}(t)\equiv1.$ If $N$ is a Young's function, then the
fundamental function of the Orlicz space $X=L_{N}$ is given by $\phi_{L_{N}%
}(t)=1/N^{-1}(1/t).$

The Lorentz $\Lambda(X)$ space and the Marcinkiewicz space $M(X)$ associated
with $X$ are defined by the quasi-norms
\begin{equation}
\left\|  f\right\|  _{M(X)}=\sup_{t}f_{\mu}^{\ast}(t)\phi_{X}(t),\text{
\ \ }\left\|  f\right\|  _{\Lambda(X)}=\int_{0}^{\mu(\Omega)}f_{\mu}^{\ast
}(t)d\phi_{X}(t). \label{fier}%
\end{equation}
Notice that
\[
\phi_{M(X)}(t)=\phi_{\Lambda(X)}(t)=\phi_{X}(t),
\]
and, moreover,%
\begin{equation}
\Lambda(X)\subset X\subset M(X). \label{tango}%
\end{equation}

Let $X({\Omega})$ be a r.i. space, then there exists a \textbf{unique} r.i.
space $\bar{X}=\bar{X}(0,\mu(\Omega))$ on $\left(  (0,\mu(\Omega)),m\right)
$, ($m$ denotes the Lebesgue measure on the interval $(0,\mu(\Omega))$) such
that%
\begin{equation}
\Vert f\Vert_{X({\Omega})}=\Vert\left\vert f\right\vert _{\mu}^{\ast}%
\Vert_{\bar{X}(0,\mu(\Omega))}. \label{repppp}%
\end{equation}
$\bar{X}$ is called the \textbf{representation space} of $X({\Omega})$. The
explicit norm of $\bar{X}(0,\mu(\Omega))$ is given by (see \cite[Theorem 4.10
and subsequent remarks]{bs})
\begin{equation}
\Vert h\Vert_{\bar{X}(0,\mu(\Omega))}=\sup\left\{  \int_{0}^{\mu(\Omega
)}\left\vert h\right\vert ^{\ast}(s)\left\vert g\right\vert _{\mu}^{\ast
}(s)ds:\left\Vert g\right\Vert _{X^{\prime}(\Omega)}\leq1\right\}
\label{representada}%
\end{equation}
(the first rearrangement is with respect to the Lebesgue measure on $\left[
0,\mu(\Omega)\right)  $).

Classically conditions on r.i. spaces can be formulated in terms of the
boundedness of the Hardy operators defined by
\[
Pf(t)=\frac{1}{t}\int_{0}^{t}f(s)ds;\text{ \ \ \ }Qf(t)=\int_{t}^{\mu(\Omega
)}f(s)\frac{ds}{s}.
\]
The boundedness of these operators on r.i. spaces can be best described in
terms of the so called \textbf{Boyd indices}\footnote{Introduced by D.W. Boyd
in \cite{boyd}.} defined by
\[
\bar{\alpha}_{X}=\inf\limits_{s>1}\dfrac{\ln h_{X}(s)}{\ln s}\text{ \ \ and
\ \ }\underline{\alpha}_{X}=\sup\limits_{s<1}\dfrac{\ln h_{X}(s)}{\ln s},
\]
where $h_{X}(s)=\sup_{\left\|  f\right\|  _{\bar{X}}\leq1}$ $\left\|
E_{s}f\right\|  _{\bar{X}}$ denotes the norm of the compression/dilation
operator $E_{s}$ on $\bar{X}$, defined for $s>0,$ by
\[
E_{s}f(t)=\left\{
\begin{array}
[c]{ll}%
f^{\ast}(\frac{t}{s}) & 0<t<s,\\
0 & s<t<\mu(\Omega).
\end{array}
\right.
\]
The operator $E_{s}$ is bounded on $\bar{X}$ on every r.i. space $X(\Omega),$
and moreover,
\begin{equation}
h_{X}(s)\leq\max(1,s),\text{ for all }s>0. \label{ccdd}%
\end{equation}
For example, if $X=L^{p}$, then $\overline{\alpha}_{L^{p}}=\underline{\alpha
}_{L^{p}}=\frac{1}{p}.$ It is well known that
\begin{equation}%
\begin{array}
[c]{c}%
P\text{ is bounded on }\bar{X}\text{ }\Leftrightarrow\overline{\alpha}%
_{X}<1,\\
Q\text{ is bounded on }\bar{X}\text{ }\Leftrightarrow\underline{\alpha}_{X}>0.
\end{array}
\label{alcance}%
\end{equation}

We shall also need to consider the restriction of functions of the r.i. space
$X(\Omega)$ to measurable subsets $G\subset\Omega$ with $\mu(G)\neq0.$ We can
then consider $G$ as a metric measure space $(G,d_{\mid G},\mu_{\mid G})$
where the corresponding distance and the measure are obtained by the
restrictions of the distance $d$ and the measure $\mu$ to $G.$ We shall denote
the r.i. space $X(G,d_{\mid G},\mu_{\mid G})$ by $X_{r}(G).$ Given
$u:G\rightarrow\mathbb{R}$, $u\in X_{r}(G),$ we let $\tilde{u}:\Omega
\rightarrow\mathbb{R}$, be its extension to $\Omega$ defined by
\begin{equation}
\tilde{u}(x)=\left\{
\begin{array}
[c]{ll}%
u(x) & x\in G,\\
0 & x\in\text{ }\Omega\setminus G.
\end{array}
\right.  \label{chavez}%
\end{equation}
Then,%
\[
\left\Vert u\right\Vert _{X_{r}(G)}=\left\Vert \tilde{u}\right\Vert
_{X(\Omega)}.
\]

\begin{proposition}
Let $X(\Omega)$ be a r.i. space on $\Omega,$ and let $G$ be a measurable
subset of $\Omega$ with $\mu(G)\neq0$. Then,

\begin{enumerate}
\item If $u\in X(\Omega),$ then $u\chi_{G}\in X_{r}(G)$ and
\[
\left\|  u\chi_{G}\right\|  _{X_{r}\left(  G\right)  }\leq\left\|  u\right\|
_{X\left(  \Omega\right)  }.
\]

\item Let $\bar{X}_{r}$ be the representation space of $X_{r}\left(  G\right)
$ and let $\bar{X}$ be the representation space of $X\left(  \Omega\right)  $.
Let $u\in X_{r}\left(  G\right)  .$ Then%
\[
\left\|  u\right\|  _{X_{r}(G)}=\left\|  \widetilde{{\left(  \left|  u\right|
_{\mu_{\mid G}}^{\ast}\right)  }}\right\|  _{\bar{X}},
\]
where given $h:(0,\mu(G))\rightarrow(0,\infty),$ $\tilde{h}$ denotes its
continuation by $0$ outside $(0,\mu(G)).$ Thus by the uniqueness of the
representation space, if $h\in\bar{X}_{r},$ then
\[
\left\|  h\right\|  _{\bar{X}_{r}}=\left\|  \tilde{h}\right\|  _{\bar{X}}.
\]

\item The fundamental function of $X_{r}\left(  G\right)  $ is given by
\begin{equation}
\phi_{X_{r}\left(  G\right)  }(s)=\phi_{X\left(  \Omega\right)  }(s)\text{
\ \ }(0\leq s\leq\mu(G)).\text{\ } \label{galio}%
\end{equation}

\item Let $\left(  X_{r}\left(  G\right)  \right)  ^{\prime}$ be the
associated space of $X_{r}\left(  G\right)  .$ Then%
\begin{equation}
\left(  X_{r}\left(  G\right)  \right)  ^{\prime}=\left(  X(\Omega)^{\prime
}\right)  _{r}(G). \label{dualres}%
\end{equation}

\end{enumerate}
\end{proposition}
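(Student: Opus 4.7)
The plan is to deduce all four parts from a single key observation: if $u:G\to\mathbb{R}$ is measurable and $\tilde u$ denotes its zero extension to $\Omega$ as in (\ref{chavez}), then the distribution function of $\tilde u$ with respect to $\mu$ coincides with that of $u$ with respect to $\mu_{\mid G}$ for every $t>0$, and therefore
\[
\left|\tilde u\right|_{\mu}^{\ast}(s)=\widetilde{\left(\left|u\right|_{\mu_{\mid G}}^{\ast}\right)}(s),\qquad 0<s<\mu(\Omega),
\]
where on the right the extension is by $0$ outside $(0,\mu(G))$. This identity, together with the rearrangement-invariance of $\Vert\cdot\Vert_{X(\Omega)}$ and the definition $\Vert u\Vert_{X_{r}(G)}=\Vert\tilde u\Vert_{X(\Omega)}$, already yields (2) by appealing to (\ref{repppp}); and since (\ref{repppp}) pins down $\bar X_{r}$ uniquely from the rearrangements involved, the dual formula $\Vert h\Vert_{\bar X_{r}}=\Vert\tilde h\Vert_{\bar X}$ follows as a corollary.

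For part (1), I would argue directly from distribution functions. Since $\{x\in G:\left|u\chi_{G}(x)\right|>t\}\subset\{x\in\Omega:\left|u(x)\right|>t\}$ for each $t>0$, the monotonicity of $\mu$ gives $\left|u\chi_{G}\right|_{\mu_{\mid G}}^{\ast}(s)\leq\left|u\right|_{\mu}^{\ast}(s)$ on $(0,\mu(G))$. Extending both sides by $0$ and invoking the already-established part (2), together with the lattice property of $\Vert\cdot\Vert_{\bar X}$ on the representation space, then produces the required norm inequality.

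Part (3) is immediate from part (2): for $E\subset G$ with $\mu(E)=s$, its zero extension inside $\Omega$ is simply $\chi_{E}$ viewed in $\Omega$, so $\phi_{X_{r}(G)}(s)=\Vert\chi_{E}\Vert_{X(\Omega)}=\phi_{X(\Omega)}(s)$, using (\ref{dasfundamental}).

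The slightly subtler step is (4). Here I would use the concrete description of the associate norm to compute, for $h$ defined on $G$,
\[
\Vert h\Vert_{(X_{r}(G))'}=\sup_{g}\frac{\int_{G}|g\,h|\,d\mu_{\mid G}}{\Vert g\Vert_{X_{r}(G)}}=\sup_{\tilde g}\frac{\int_{\Omega}|\tilde g\,\tilde h|\,d\mu}{\Vert\tilde g\Vert_{X(\Omega)}},
\]
where the second supremum is a priori only over elements of $X(\Omega)$ supported in $G$. The point that requires a tiny argument is that enlarging the supremum to all of $X(\Omega)$ does not change its value: since $\tilde h$ vanishes outside $G$, replacing $g\in X(\Omega)$ by $g\chi_{G}$ leaves the numerator unchanged while, by part (1), it does not increase the denominator. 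This shows $\Vert h\Vert_{(X_{r}(G))'}=\Vert\tilde h\Vert_{X(\Omega)'}$, which, after applying part (2) to the space $X(\Omega)'$ in place of $X(\Omega)$, is exactly (\ref{dualres}). The only mild obstacle is making sure this last appeal is legitimate, i.e.\ that the associate space $X(\Omega)'$ is itself an r.i.\ space with the Fatou property so that parts (1)--(2) apply to it; this is standard (\cite{bs}).
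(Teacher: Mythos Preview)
Your proof is correct and follows essentially the same route as the paper. The paper's proof dismisses parts (1) and (4) as ``elementary'' without details, proves part (2) via exactly the rearrangement identity $|\tilde u|_{\mu}^{\ast}=\widetilde{|u|_{\mu_{\mid G}}^{\ast}}$ that you isolate, and derives (3) from (2); you supply the details the paper omits, and your argument for (4) via the supremum characterisation of the associate norm is the natural one. One small remark: for (4) you do not actually need to invoke part (2) at the end---once you have $\Vert h\Vert_{(X_r(G))'}=\Vert\tilde h\Vert_{X(\Omega)'}$, the identification with $\Vert h\Vert_{(X(\Omega)')_r(G)}$ is just the \emph{definition} of the restricted norm, so the caveat about $X(\Omega)'$ inheriting the Fatou property, while true, is not really needed here.
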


\begin{proof}
Part $1$ and $4$ are elementary. For Part $2,$ note that if $u\in X_{r}(G),$
then%
\begin{align*}
\left\Vert u\right\Vert _{X_{r}(G)}  &  =\left\Vert \tilde{u}\right\Vert
_{X(\Omega)}\\
&  =\left\Vert \left(  \left\vert \tilde{u}\right\vert \right)  _{\mu}^{\ast
}\right\Vert _{\bar{X}}\text{ \ (by (\ref{repppp})).}%
\end{align*}
Since $\mu_{\mid G}=\mu$ on $G,$ it follows from the definition of $\tilde{u}$
that
\[
\left(  \left\vert \tilde{u}\right\vert \right)  _{\mu}^{\ast}=\left\{
\begin{array}
[c]{ll}%
\left(  \left\vert u\right\vert \right)  _{\mu_{\mid G}}^{\ast}(s) &
s\in(0,\mu(G)),\\
0 & s\in(\mu(G),\mu(\Omega)).
\end{array}
\right.
\]
Thus%
\[
\left\Vert \left(  \left\vert \tilde{u}\right\vert \right)  _{\mu}^{\ast
}\right\Vert _{\bar{X}}=\left\Vert \left(  \left\vert u\right\vert \right)
_{\mu_{\mid G}}^{\ast}\chi_{(0,\mu(G))}\right\Vert _{\bar{X}}=\left\Vert
\widetilde{{\left(  \left\vert u\right\vert _{\mu_{\mid G}}^{\ast}\right)  }%
}\right\Vert _{\bar{X}}.
\]

Now Part $3$ follows from Part $2$ taking in account that
\[
\phi_{X\left(  \Omega\right)  }(s)=\phi_{\bar{X}}(s).
\]

\end{proof}

In what follows, when $G\subset\Omega$ is clear from the context, and $u$ is a
function defined on $G,$ we shall use the notation $\tilde{u}$ to denote its
extension (by zero) defined by (\ref{chavez}) above.

\section{Some remarks about Sobolev spaces}

Let $(\Omega,d,\mu)$ be a connected metric measure space with finite measure,
and let $X$ be a r.i. space on $\Omega.$ We let $S_{X}=S_{X}({\Omega})=\{f\in
Lip(\Omega):\left\vert \nabla f\right\vert \in X(\Omega)\},$ equipped with the
seminorm
\[
\left\Vert f\right\Vert _{S_{X}}=\left\Vert \left\vert \nabla f\right\vert
\right\Vert _{X}.
\]

At some point in our development we also need to consider restrictions of
Sobolev functions. Let $G\subset\Omega$ be an open subset, then if $f\in
S_{X}({\Omega})$ we have $f\chi_{{G}}\in S_{X_{r}}(G),$and%
\begin{align*}
\left\Vert f\chi_{{G}}\right\Vert _{S_{X_{r}}({G})}  &  \leq\left\Vert
\left\vert \nabla f\right\vert \chi_{{G}}\right\Vert _{X({\Omega})}\\
&  \leq\left\Vert \left\vert \nabla f\right\vert \right\Vert _{X({\Omega})}\\
&  =\left\Vert f\right\Vert _{S_{X}({\Omega})}.
\end{align*}

$K-$functionals play an important role in this paper. The $K-$functional for
the pair $(X(\Omega),S_{X}(\Omega))$ for is defined by
\begin{equation}
K(t,f;X(\Omega),S_{X}(\Omega))=\inf_{g\in S_{X}(\Omega)}\{\left\Vert
f-g\right\Vert _{X(\Omega)}+t\left\Vert \left\vert \nabla g\right\vert
\right\Vert _{X(\Omega)}\}. \label{defK1}%
\end{equation}
If ${G}$ is an open subset of ${\Omega},$ each competing decomposition for the
calculation of the $K-$functional of $f$, $K(t,f;X(\Omega),S_{X}(\Omega)),$
produces by restriction a decomposition for the calculation of the
$K-$functional of $f\chi_{{G}},$ and we have%
\[
K(t,f\chi_{{G}};X_{r}(G),S_{X_{r}}(G))\leq K(t,f;X(\Omega),S_{X}(\Omega)).
\]

Notice that from our definition of $S_{X}(\Omega)$ it does not follow that
$h\in S_{X}$ implies that $h\in X.$ However, under mild conditions on $X,$ one
can guarantee that $h\in X$. Indeed, using the isoperimetric profile
$I=I_{(\Omega,d,\mu)}$, let us define the associated \textbf{isoperimetric
Hardy operator} by
\[
Q_{I}f(t)=\int_{t}^{\mu(\Omega)}f(s)\frac{ds}{I(s)}\text{ \ \ }(f\geq0).
\]
Suppose that there exists an absolute constant $c>0$ such that, for all
$f\in\bar{X},$ such that $f\geq0,$ and with supp$(f)\subset(0,\mu(\Omega)/2),$
we have
\begin{equation}
\left\Vert Q_{I}f\right\Vert _{\bar{X}}\leq c\left\Vert f\right\Vert _{\bar
{X}}. \label{condita}%
\end{equation}
Then, it was shown in \cite{mamiadv} that for all $h\in S_{X},$%
\[
\left\Vert h-\frac{1}{\mu(\Omega)}\int_{\Omega}h\right\Vert _{X}%
\preceq\left\Vert \left\vert \nabla h\right\vert \right\Vert _{X}.
\]
Therefore, since constant functions belong to $X$ we can then conclude that
indeed $h\in X.$ It is easy to see that if $\underline{\alpha}_{X}>0,$
condition (\ref{condita}) is satisfied. Indeed, from the concavity of $I,$ it
follows that $\frac{I(s)}{s}$ is decreasing, therefore%
\[
\frac{I(\mu(\Omega)/2)}{\mu(\Omega)/2}\leq\frac{I(s)}{s},\text{ }s\in
(0,\mu(\Omega)/2).
\]
It follows that if $s\in(0,\mu(\Omega)/2),$ then%
\[
s\leq\frac{\mu(\Omega)/2}{I(\mu(\Omega)/2)}I(s)=cI(s).
\]
Consequently, for all $f\geq0,$ with supp$(f)\subset(0,\mu(\Omega)/2),$%
\[
Q_{I}f(t)=\int_{t}^{\mu(\Omega)/2}f(s)\frac{ds}{I(s)}\leq c\int_{t}%
^{\mu(\Omega)/2}f(s)\frac{ds}{s}=Qf(t).
\]
Therefore,%
\[
\left\Vert Q_{I}f\right\Vert _{\bar{X}}\leq c\left\Vert Qf\right\Vert
_{\bar{X}}\leq c_{X}\left\Vert f\right\Vert _{\bar{X}},
\]
where the last inequality follows from the fact that $\underline{\alpha}%
_{X}>0.$ We can avoid placing restrictions on $X$ if instead we impose more
conditions on the isoperimetric profile. For example, suppose that the
following condition \footnote{A typical example is $I(t)\simeq t^{1-1/n},$
near zero.} on $I$ holds$:$
\begin{equation}
\int_{0}^{\mu(\Omega)/2}\frac{ds}{I(s)}=c<\infty. \label{ciso}%
\end{equation}
Then, for $f\in L^{\infty}$ we have%
\begin{align*}
Q_{I}f(t)  &  \leq\left\Vert f\right\Vert _{L^{\infty}}\int_{t}^{\mu(\Omega
)}\frac{ds}{I(s)}\\
&  \leq c\left\Vert f\right\Vert _{L^{\infty}}.
\end{align*}
Consequently, $Q_{I}$ is bounded on $L^{\infty}.$ Since, as we have already
seen $Q_{I}$ $\leq Q,$ it follows that $Q_{I}$ is also bounded on $L^{1},$ and
therefore, by Calder\'{o}n's interpolation theorem, $Q_{I}$ is bounded on any
r.i. space $X.$ In particular, (\ref{condita}) is satisfied.

\chapter{Oscillations, K-functionals and Isoperimetry\label{main}}

\section{Summary}

Let $(\Omega,d,\mu)$ be a metric measure space satisfying the usual
assumptions, and let $X$ be a r.i. space on $\Omega$. In this chapter we show
(cf. Theorem \ref{main1} in Section \ref{secc:primera}) that for all $f\in
X+S_{X},$
\begin{equation}
\left\vert f\right\vert _{\mu}^{\ast\ast}(t)-\left\vert f\right\vert _{\mu
}^{\ast}(t)\leq16\frac{K\left(  \frac{t}{I_{\Omega}(t)},f;X,S_{X}\right)
}{\phi_{X}(t)},\text{ }0<t\leq\mu(\Omega)/2. \label{polaka1}%
\end{equation}
Moreover, if $f_{\Omega}=\frac{1}{\mu(\Omega)}\int_{\Omega}fd\mu,$ then%
\begin{equation}
\left\vert f-f_{\Omega}\right\vert _{\mu}^{\ast\ast}(t)-\left\vert
f-f_{\Omega}\right\vert _{\mu}^{\ast}(t)\leq16\frac{K\left(  \frac
{t}{I_{\Omega}(t)},f;X,S_{X}\right)  }{\phi_{X}(t)},\text{ }0<t\leq\mu
(\Omega). \label{polakita}%
\end{equation}

This extends one of the main results of \cite{mastylo}. In Section
\ref{sec:variante} we prove a variant of inequality (\ref{polaka1}) that will
play an important role in Chapter \ref{contchap}, where embeddings into the
space of continuous functions will be analyzed. In this variant we replace
$\frac{t}{I_{\Omega}(t)}$ by a smaller function that depends on the space $X,$
more specifically we show that%
\[
\left|  f\right|  _{\mu}^{\ast\ast}(t)-\left|  f\right|  _{\mu}^{\ast}%
(t)\leq4\frac{K\left(  \psi(t),f;X,S_{X}\right)  }{\phi_{X}(t)},\text{
}0<t\leq\mu(\Omega)/2,
\]
where
\[
\psi(t)=\frac{\phi_{X}(t)}{t}\left\|  \frac{s}{I_{\Omega}(s)}\chi
_{(0,2t)}(s)\right\|  _{\bar{X}^{^{\prime}}}.
\]
In Section \ref{sec:iso} we show that (\ref{polaka1}) for $X=L^{1}$ implies an
isoperimetric inequality.

Underlying these results is the following estimation of the oscillation
(without rearrangements): there exists a constant $c>0$ such that
\[
\frac{1}{\mu(\Omega)}\int_{\Omega}\left\vert f-f_{\Omega}\right\vert d\mu\leq
c\frac{K\left(  \frac{\mu(\Omega)/2}{I_{\Omega}(\mu(\Omega)/2)},f;X,S_{X}%
\right)  }{\phi_{X}(\mu(\Omega))}.
\]

\section{Estimation of the oscillation in terms of $K-$%
functionals\label{secc:primera}}

The leitmotif of this chapter are the remarkable connections between
oscillations, optimization and isoperimetry.

\begin{definition}
\label{mediandef} Let $f:\Omega\rightarrow\mathbb{R}$ be an integrable
function. We say that $m(f)$ is a median of $f$ if%
\[
\mu\{f>m(f)\}\geq\mu(\Omega)/2;\text{ and }\mu\{f<m(f)\}\geq\mu(\Omega)/2.
\]

\end{definition}

Recall the following basic property of medians (cf. \cite{tor}, \cite[page
134]{mamiadv}, etc.)%
\begin{align}
\frac{1}{2}\left(  \frac{1}{\mu(\Omega)}\int_{\Omega}\left\vert f-f_{\Omega
}\right\vert d\mu\right)   &  \leq\inf_{c}\frac{1}{\mu(\Omega)}\int_{\Omega
}\left\vert f-c\right\vert d\mu\nonumber\\
&  \leq\frac{1}{\mu(\Omega)}\int_{\Omega}\left\vert f-m(f)\right\vert
d\mu\label{basicamed}\\
&  \leq3\left(  \frac{1}{\mu(\Omega)}\int_{\Omega}\left\vert f-f_{\Omega
}\right\vert d\mu\right)  .\nonumber
\end{align}

The starting point of our analysis is the well known Poincar\'{e} inequality
(cf. \cite{bobk}, \cite[page 150]{mamiadv}):
\begin{equation}
\int_{{\Omega}}\left\vert f-m(f)\right\vert d\mu\leq\frac{\mu(\Omega
)}{2I_{\Omega}(\mu(\Omega)/2)}\int_{{\Omega}}\left\vert \nabla f\right\vert
d\mu,\text{ for all }f\in S_{L^{1}}(\Omega). \label{pr1}%
\end{equation}

Our next result is an extension of the Poincar\'{e} type inequality
(\ref{pr1}) by interpolation (i.e. using $K-$functionals).

\begin{theorem}
\label{teobmo} Let $X$ be a r.i space on $\Omega.$ Then for all $f\in X,$
\begin{equation}
\frac{1}{\mu(\Omega)}\int_{\Omega}\left\vert f-f_{\Omega}\right\vert d\mu
\leq2\frac{K\left(  \frac{\mu(\Omega)/2}{I_{\Omega}(\mu(\Omega)/2)}%
,f;X,S_{X}\right)  }{\phi_{X}(\mu(\Omega))}. \label{poinbesov}%
\end{equation}

\end{theorem}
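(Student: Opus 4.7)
The plan is to prove this by starting from a concrete decomposition $f = h + g$ with $h \in X$ and $g \in S_X$, bounding $\int_\Omega |f - f_\Omega|\,d\mu$ by controlling $h$ through Hölder's inequality and controlling $g$ through the Poincaré inequality (\ref{pr1}), and then taking the infimum over decompositions to convert the sum of these two estimates into the $K$-functional with parameter $\mu(\Omega)/(2 I_\Omega(\mu(\Omega)/2))$.

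More concretely, I would first invoke the median bound (\ref{basicamed}) to replace $f_\Omega$ by any convenient constant $c$ at the cost of a factor of $2$: for any $c \in \mathbb{R}$,
\[
\frac{1}{\mu(\Omega)}\int_\Omega |f-f_\Omega|\,d\mu \;\leq\; \frac{2}{\mu(\Omega)}\int_\Omega |f-c|\,d\mu.
\]
Given a decomposition $f = h + g$, the natural choice is $c = m(g)$, a median of $g$. Applying the triangle inequality $|f - m(g)| \leq |h| + |g - m(g)|$ splits the problem into two manageable pieces.

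Next, I would estimate each piece by a generalized Hölder (\ref{hol}) paired with the fundamental function identity $\phi_X(s)\phi_{X'}(s) = s$ from (\ref{dual}), which gives $\|\chi_\Omega\|_{X'} = \phi_{X'}(\mu(\Omega)) = \mu(\Omega)/\phi_X(\mu(\Omega))$. This yields
\[
\int_\Omega |h|\,d\mu \leq \|h\|_X \cdot \frac{\mu(\Omega)}{\phi_X(\mu(\Omega))},
\]
and for the gradient term, the Poincaré inequality (\ref{pr1}) applied to $g \in S_X \subset S_{L^1}$ followed by Hölder gives
\[
\int_\Omega |g - m(g)|\,d\mu \leq \frac{\mu(\Omega)}{2 I_\Omega(\mu(\Omega)/2)}\int_\Omega |\nabla g|\,d\mu \leq \frac{\mu(\Omega)/2}{I_\Omega(\mu(\Omega)/2)} \cdot \||\nabla g|\|_X \cdot \frac{\mu(\Omega)}{\phi_X(\mu(\Omega))}.
\]

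Combining these and dividing by $\mu(\Omega)$, I obtain
\[
\frac{1}{\mu(\Omega)}\int_\Omega |f - f_\Omega|\,d\mu \leq \frac{2}{\phi_X(\mu(\Omega))}\left[\|h\|_X + \frac{\mu(\Omega)/2}{I_\Omega(\mu(\Omega)/2)}\||\nabla g|\|_X\right].
\]
Taking the infimum over all decompositions $f = h + g$ with $g \in S_X$ yields exactly (\ref{poinbesov}) by the definition (\ref{defK1}) of the $K$-functional. The only mild obstacle is choosing $c = m(g)$ (rather than $m(f)$, which would leave us unable to apply the classical Poincaré inequality cleanly to $g$ alone) and carefully tracking the constants so the final factor of $2$ comes out, but this is routine.
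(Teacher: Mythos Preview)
Your proof is correct and essentially identical to the paper's own argument: both pick an arbitrary decomposition, choose the constant to be a median of the $S_X$-piece, apply the triangle inequality, use Poincar\'e (\ref{pr1}) on the Sobolev part and H\"older on both parts via $\phi_{X'}(\mu(\Omega))=\mu(\Omega)/\phi_X(\mu(\Omega))$, then take the infimum to recover the $K$-functional. The only difference is notational (your $g\in S_X$ is the paper's $h$, your $h$ is the paper's $f-h$).
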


\begin{proof}
For an arbitrary decomposition $f=(f-h)+h,$ with $h\in S_{X},$ we have%
\begin{align*}
\frac{1}{\mu(\Omega)}\int_{{\Omega}}\left\vert f-m(h)\right\vert d\mu &
=\frac{1}{\mu(\Omega)}\int_{{\Omega}}\left\vert f-h+\left(  h-m(h)\right)
\right\vert d\mu\\
&  \leq\frac{1}{\mu(\Omega)}\int_{{\Omega}}\left\vert f-h\right\vert
d\mu+\frac{1}{\mu(\Omega)}\int_{{\Omega}}\left\vert h-m(h)\right\vert d\mu\\
&  \leq\frac{1}{\mu(\Omega)}\left\Vert f-h\right\Vert _{L^{1}}+\frac
{1}{2I_{\Omega}(\mu(\Omega)/2)}\left\Vert \left\vert \nabla h\right\vert
\right\Vert _{L^{1}}\text{ (by (\ref{pr1}))}\\
&  \leq\frac{1}{\mu(\Omega)}\left\Vert f-h\right\Vert _{X}\phi_{X^{\prime}%
}(\mu(\Omega))+\\
&  \frac{1}{2I_{\Omega}(\mu(\Omega)/2)}\left\Vert \left\vert \nabla
h\right\vert \right\Vert _{X}\phi_{X^{\prime}}(\mu(\Omega))\text{ (by
H\"{o}lder's inequality)}\\
&  =\frac{1}{\mu(\Omega)}\phi_{X^{\prime}}(\mu(\Omega))\left(  \left\Vert
f-h\right\Vert _{X}+\frac{\mu(\Omega)/2}{I_{\Omega}(\mu(\Omega)/2)}\left\Vert
\left\vert \nabla h\right\vert \right\Vert _{X}\right) \\
&  =\frac{1}{\phi_{X}(\mu(\Omega))}\left(  \left\Vert f-h\right\Vert
_{X}+\frac{\mu(\Omega)/2}{I_{\Omega}(\mu(\Omega)/2)}\left\Vert \left\vert
\nabla h\right\vert \right\Vert _{X}\right)  .
\end{align*}
Combining the last inequality with (\ref{basicamed}), we obtain that for all
decompositions $f=(f-h)+h,$ with $h\in S_{X},$
\[
\frac{1}{\mu(\Omega)}\int_{\Omega}\left\vert f-f_{\Omega}\right\vert d\mu
\leq\frac{2}{\phi_{X}(\mu(\Omega))}\left(  \left\Vert f-h\right\Vert
_{X}+\frac{\mu(\Omega)/2}{I_{\Omega}(\mu(\Omega)/2)}\left\Vert \left\vert
\nabla h\right\vert \right\Vert _{X}\right)  .
\]
Consequently,%
\begin{align*}
\frac{1}{\mu(\Omega)}\int_{\Omega}\left\vert f-f_{\Omega}\right\vert d\mu &
\leq\frac{2}{\phi_{X}(\mu(\Omega))}\inf_{h\in S_{X}}\left(  \left\Vert
f-h\right\Vert _{X}+\frac{\mu(\Omega)/2}{I_{\Omega}(\mu(\Omega)/2)}\left\Vert
\left\vert \nabla h\right\vert \right\Vert _{X}\right) \\
&  =\frac{2}{\phi_{X}(\mu(\Omega))}K\left(  \frac{\mu(\Omega)/2}{I_{\Omega
}(\mu(\Omega)/2)},f;X,S_{X}\right)  ,
\end{align*}
as we wished to show.
\end{proof}

The main result of this section is the following

\begin{theorem}
\label{main1}Let $X$ be a r.i. space on $\Omega.$ Then (\ref{polaka1}) and
(\ref{polakita}) hold for all $f\in X+S_{X}.$
\end{theorem}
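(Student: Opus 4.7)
The plan is to bootstrap from the classical oscillation inequality (\ref{reod00}), which is stated for Lipschitz functions on $\Omega$, to a bound involving the $K$-functional on $X+S_X$. Two single-term estimates are the starting point. For $h\in S_X$, (\ref{reod00}) combined with H\"{o}lder's inequality and the duality identity $\phi_{X'}(t)=t/\phi_X(t)$ gives
\[
|h|_\mu^{**}(t)-|h|_\mu^*(t)\leq\frac{t}{I_\Omega(t)\,\phi_X(t)}\,\||\nabla h|\|_X,\qquad 0<t<\mu(\Omega),
\]
while for $g\in X$ the Hardy--Littlewood principle yields $|g|_\mu^{**}(t)\leq\|g\|_X/\phi_X(t)$. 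These are the two building blocks.

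Fix $t\in(0,\mu(\Omega)/2]$ and a decomposition $f=g+h$ with $g\in X$, $h\in S_X$. Because the oscillation $u\mapsto|u|_\mu^{**}(t)-|u|_\mu^*(t)$ is not directly subadditive, I bound $|f|_\mu^{**}(t)$ and $-|f|_\mu^*(t)$ separately. From $|f|\leq|g|+|h|$ and the subadditivity (\ref{a2}) of the maximal average, $|f|_\mu^{**}(t)\leq|g|_\mu^{**}(t)+|h|_\mu^{**}(t)$. From the reverse pointwise bound $|h|\leq|f|+|g|$ applied via (\ref{aa1}) at argument $2t$, one has $|h|_\mu^*(2t)\leq|f|_\mu^*(t)+|g|_\mu^*(t)$, hence $-|f|_\mu^*(t)\leq-|h|_\mu^*(2t)+|g|_\mu^*(t)$. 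Summing and using $|g|_\mu^*\leq|g|_\mu^{**}$ produces
\[
|f|_\mu^{**}(t)-|f|_\mu^*(t)\leq 2|g|_\mu^{**}(t)+\bigl(|h|_\mu^{**}(t)-|h|_\mu^*(2t)\bigr).
\]

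The remaining scale shift from $t$ to $2t$ is handled by writing $|h|_\mu^{**}(t)-|h|_\mu^*(2t)=(|h|_\mu^{**}(t)-|h|_\mu^*(t))+(|h|_\mu^*(t)-|h|_\mu^*(2t))$, controlling the first piece by the single-term estimate for $h$, and using (\ref{davobonee}) at scale $2t$ to obtain $|h|_\mu^*(t)-|h|_\mu^*(2t)\leq 2(|h|_\mu^{**}(2t)-|h|_\mu^*(2t))$; a further application of the single-term estimate at scale $2t$, together with the monotonicity of $I_\Omega$ and $\phi_X$ on $(0,\mu(\Omega)/2)$, absorbs $2t/(I_\Omega(2t)\phi_X(2t))$ into $t/(I_\Omega(t)\phi_X(t))$ up to a universal constant. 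Combining everything gives
\[
|f|_\mu^{**}(t)-|f|_\mu^*(t)\leq\frac{C}{\phi_X(t)}\Bigl(\|g\|_X+\frac{t}{I_\Omega(t)}\||\nabla h|\|_X\Bigr),
\]
and taking the infimum over decompositions yields (\ref{polaka1}). For (\ref{polakita}) with $t\in(0,\mu(\Omega)/2]$, I note that constants lie in $S_X$ with zero seminorm, so the $K$-functional is translation invariant, $K(s,f-f_\Omega;X,S_X)=K(s,f;X,S_X)$, and applying (\ref{polaka1}) to $f-f_\Omega$ delivers the bound. For $t\in(\mu(\Omega)/2,\mu(\Omega))$, the monotonicity of the maximal average yields $|f-f_\Omega|_\mu^{**}(t)-|f-f_\Omega|_\mu^*(t)\leq|f-f_\Omega|_\mu^{**}(\mu(\Omega)/2)\leq(2/\mu(\Omega))\int_\Omega|f-f_\Omega|\,d\mu$; Theorem~\ref{teobmo} converts this integral into a bound of the form $K(\mu(\Omega)/(2I_\Omega(\mu(\Omega)/2)),f;X,S_X)/\phi_X(\mu(\Omega))$, which transfers to the desired $K(t/I_\Omega(t),f;X,S_X)/\phi_X(t)$ using that $K(\cdot,f)$ is increasing, $t/I_\Omega(t)\geq\mu(\Omega)/(2I_\Omega(\mu(\Omega)/2))$ by concavity and symmetry of $I_\Omega$ around $\mu(\Omega)/2$, and $\phi_X$ is increasing.

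The main obstacle is the non-subadditivity of the oscillation functional $u\mapsto|u|_\mu^{**}-|u|_\mu^*$, which prevents a direct addition of the single-term estimates and forces the asymmetric treatment of $|f|_\mu^{**}(t)$ and $|f|_\mu^*(t)$: one obtains only the lower bound $|f|_\mu^*(t)\geq|h|_\mu^*(2t)-|g|_\mu^*(t)$, introducing a spurious scale shift from $t$ to $2t$. The restriction $t\leq\mu(\Omega)/2$ and the self-improvement (\ref{davobonee}) are exactly what is needed to absorb this shift back into a clean $K$-functional at argument $t/I_\Omega(t)$, and verifying that $I_\Omega(2t)/I_\Omega(t)$ and $\phi_X(2t)/\phi_X(t)$ remain bounded (by concavity and the zero-at-zero condition) is the final ingredient that controls the numerical constant.
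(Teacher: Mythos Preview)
Your argument for (\ref{polaka1}) is essentially the paper's ``Part I'' argument, with a harmless variation in how you split $|h|_\mu^{**}(t)-|h|_\mu^*(2t)$. The problem is that this argument does \emph{not} cover the full range $t\in(0,\mu(\Omega)/2]$, only $t\in(0,\mu(\Omega)/4]$.

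The failure is precisely in the absorption step you flag at the end. You need to control $\frac{2t}{I_\Omega(2t)\phi_X(2t)}$ by a universal multiple of $\frac{t}{I_\Omega(t)\phi_X(t)}$, which amounts to a uniform lower bound on $I_\Omega(2t)/I_\Omega(t)$. For $t\le\mu(\Omega)/4$ this is fine: $2t\le\mu(\Omega)/2$, $I_\Omega$ is increasing there, and $I_\Omega(2t)\ge I_\Omega(t)$. But for $t\in(\mu(\Omega)/4,\mu(\Omega)/2]$ you have $2t\in(\mu(\Omega)/2,\mu(\Omega)]$, where $I_\Omega$ is \emph{decreasing}; by symmetry $I_\Omega(2t)=I_\Omega(\mu(\Omega)-2t)$, and as $t\uparrow\mu(\Omega)/2$ this tends to $I_\Omega(0)=0$ while $I_\Omega(t)\to I_\Omega(\mu(\Omega)/2)>0$. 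So $I_\Omega(t)/I_\Omega(2t)\to\infty$ and no universal constant exists. Concavity does not rescue you here.

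The paper closes this gap by treating $t\in(\mu(\Omega)/4,\mu(\Omega)/2]$ with a completely different argument: it uses that $t(|f|_\mu^{**}(t)-|f|_\mu^*(t))$ is increasing to pass to $t=\mu(\Omega)/2$, bounds the resulting integral by $\int_\Omega|f-f_\Omega|\,d\mu$ via an elementary rearrangement inequality, and then invokes Theorem~\ref{teobmo}. You already use exactly this Poincar\'e mechanism for (\ref{polakita}) on $(\mu(\Omega)/2,\mu(\Omega))$; you just need to deploy it one more time, for (\ref{polaka1}) on $(\mu(\Omega)/4,\mu(\Omega)/2]$, together with the concavity of $K$ and the monotonicity of $I_\Omega$ on $(0,\mu(\Omega)/2)$ to pass from $K\bigl(\frac{\mu(\Omega)/2}{I_\Omega(\mu(\Omega)/2)},f\bigr)$ back to $K\bigl(\frac{t}{I_\Omega(t)},f\bigr)$. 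Your treatment of (\ref{polakita}) itself is correct and matches the paper.
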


\begin{proof}
It will be useful to note for future use that if $\left\|  \cdot\right\|  $
denotes either $\left\|  \cdot\right\|  _{X}$ or $\left\|  \cdot\right\|
_{S_{X}},$ we have
\begin{equation}
\left\|  \left|  f\right|  \right\|  \leq\left\|  f\right\|  . \label{dolores}%
\end{equation}
Let $\varepsilon>0$, and consider any decomposition $f=f-h+h$ with $h\in
S_{X},$ such that
\begin{equation}
\left\|  f-h\right\|  _{X}+t\left\|  \left|  \nabla h\right|  \right\|
_{X}\leq K\left(  t,f;X,S_{X}\right)  +\varepsilon. \label{dolores1}%
\end{equation}
Since by (\ref{dolores}), $h\in S_{X}$ implies that $\left|  h\right|  \in
S_{X},$ this decomposition of $f$ produces the following decomposition of
$\left|  f\right|  :$
\[
\left|  f\right|  =\left|  f\right|  -\left|  h\right|  +\left|  h\right|  .
\]
Therefore, by (\ref{dolores}) and (\ref{dolores1}) we have
\begin{align*}
\left\|  \left|  f\right|  -\left|  h\right|  \right\|  _{X}+t\left\|  \left|
\nabla\left|  h\right|  \right|  \right\|  _{X}  &  \leq\left\|  f-h\right\|
_{X}+t\left\|  \left|  \nabla h\right|  \right\|  _{X}\\
&  \leq K\left(  t,f;X,S_{X}\right)  +\varepsilon.
\end{align*}
Consequently,
\begin{equation}
\inf_{0\leq h\in S_{X}}\left\{  \left\|  \left|  f\right|  -h\right\|
_{X}+t\left\|  \left|  \nabla h\right|  \right\|  _{X}\right\}  \leq K\left(
t,f;X,S_{X}\right)  . \label{porfin}%
\end{equation}

In what follows we shall use the following notation:%
\[
K\left(  t,f\right)  :=K\left(  t,f;X,S_{X}\right)  .
\]

We shall start by proving (\ref{polaka1}). The proof will be divided in two parts.

\textbf{Part 1}: $t\in(0,\mu(\Omega)/4].$

Given $0\leq h\in S_{X}$ consider the decomposition
\[
\left|  f\right|  =\left(  \left|  f\right|  -h\right)  +h.
\]
By (\ref{a2}) we have
\[
\left|  f\right|  _{\mu}^{\ast\ast}(t)\leq\left|  \left|  f\right|  -h\right|
_{\mu}^{\ast\ast}(t)+\left|  h\right|  _{\mu}^{\ast\ast}(t),
\]
and by (\ref{aa1}) we get
\[
\left|  h\right|  _{\mu}^{\ast}(2t)-\left|  \left|  f\right|  -h\right|
_{\mu}^{\ast}(t)\leq\left|  f\right|  _{\mu}^{\ast}(t).
\]
Combining the previous estimates we can write
\begin{align}
\left|  f\right|  _{\mu}^{\ast\ast}(t)-\left|  f\right|  _{\mu}^{\ast}(t)  &
\leq\left|  \left|  f\right|  -h\right|  _{\mu}^{\ast\ast}(t)+\left|
h\right|  _{\mu}^{\ast\ast}(t)-(\left|  h\right|  _{\mu}^{\ast}(2t)-\left|
\left|  f\right|  -h\right|  _{\mu}^{\ast}(t))\nonumber\\
&  =\left|  \left|  f\right|  -h\right|  _{\mu}^{\ast\ast}(t)+\left|  \left|
f\right|  -h\right|  _{\mu}^{\ast}(t)+\left|  h\right|  _{\mu}^{\ast\ast
}(t)-\left|  h\right|  _{\mu}^{\ast}(2t)\nonumber\\
&  \leq2\left|  \left|  f\right|  -h\right|  _{\mu}^{\ast\ast}(t)+\left(
\left|  h\right|  _{\mu}^{\ast\ast}(t)-\left|  h\right|  _{\mu}^{\ast
}(2t)\right) \nonumber\\
&  =2\left|  \left|  f\right|  -h\right|  _{\mu}^{\ast\ast}(t)+\left(  \left|
h\right|  _{\mu}^{\ast\ast}(t)-\left|  h\right|  _{\mu}^{\ast\ast}(2t)\right)
+(\left|  h\right|  _{\mu}^{\ast\ast}(2t)-\left|  h\right|  _{\mu}^{\ast
}(2t))\nonumber\\
&  =(I)+(II)+(III). \label{flaca}%
\end{align}
We first show that $\ (II)\leq(III).$ Recall that $\frac{d}{dt}(-\left|
g\right|  _{\mu}^{\ast\ast}(t))=\frac{\left|  g\right|  _{\mu}^{\ast\ast
}(t)-\left|  g\right|  _{\mu}^{\ast}(t)}{t},$ then using the fundamental
theorem of Calculus, and then the fact that $t\left(  \left|  g\right|  _{\mu
}^{\ast\ast}(t)-\left|  g\right|  _{\mu}^{\ast}(t)\right)  =\int_{\left|
g\right|  _{\mu}^{\ast}(t)}^{\infty}\mu_{\left|  g\right|  }(s)ds$ is
increasing, to estimate $(II)$ as follows:%
\begin{align*}
(II)  &  =\left|  h\right|  _{\mu}^{\ast\ast}(t)-\left|  h\right|  _{\mu
}^{\ast\ast}(2t)\\
&  =\int_{t}^{2t}\left(  \left|  h\right|  _{\mu}^{\ast\ast}(s)-\left|
h\right|  _{\mu}^{\ast}(s)\right)  \frac{ds}{s}\\
&  \leq2t\left(  \left|  h\right|  _{\mu}^{\ast\ast}(2t)-\left|  h\right|
_{\mu}^{\ast}(2t)\right)  \int_{t}^{2t}\frac{ds}{s^{2}}\\
&  =\left(  \left|  h\right|  _{\mu}^{\ast\ast}(2t)-\left|  h\right|  _{\mu
}^{\ast}(2t)\right) \\
&  =(III).
\end{align*}
Inserting this information in (\ref{flaca}), applying ([(\ref{reod00}),
Chapter \ref{preliminar}]) and using the fact that $I_{\Omega}(s)$ is
increasing $\ $on $(0,\mu(\Omega)/2),$ yields,
\begin{align}
\left|  f\right|  _{\mu}^{\ast\ast}(t)-\left|  f\right|  _{\mu}^{\ast}(t)  &
\leq2\left(  \left|  \left|  f\right|  -h\right|  _{\mu}^{\ast\ast}%
(t)+\frac{2t}{I_{\Omega}(2t)}\left|  \nabla h\right|  ^{\ast\ast}(2t)\right)
\nonumber\\
&  \leq4\left(  \left|  \left|  f\right|  -h\right|  _{\mu}^{\ast\ast
}(t)+\frac{t}{I_{\Omega}(t)}\left|  \nabla h\right|  ^{\ast\ast}(t)\right)
\nonumber\\
&  =4\left(  A(t)+B(t)\right)  \text{.} \label{det5}%
\end{align}
We now estimate the two terms on the right hand side of (\ref{det5}). For the
term $A(t):$ Note that for any $g\in X,$%
\[
\left|  g\right|  _{\mu}^{\ast\ast}(t)=\frac{1}{t}\int_{0}^{t}\left|
g\right|  _{\mu}^{\ast}(s)ds=\frac{1}{t}\int_{0}^{1}\left|  g\right|  _{\mu
}^{\ast}(s)\chi_{(0,t)}(s)ds.
\]
Therefore, by H\"{o}lder's inequality (cf. [(\ref{hol}) and (\ref{dual}) of
Chapter\ref{preliminar}]) we have
\begin{align}
\left|  \left|  f\right|  -h\right|  _{\mu}^{\ast\ast}(t)  &  =\frac{1}{t}%
\int_{0}^{1}\left|  \left|  f\right|  -h\right|  _{\mu}^{\ast}(s)\chi
_{(0,t)}(s)ds\nonumber\\
&  \leq\left\|  \left(  \left|  f\right|  -h\right)  \right\|  _{X}\frac
{\phi_{X^{\prime}}(t)}{t}\nonumber\\
&  =\left\|  \left(  \left|  f\right|  -h\right)  \right\|  _{X}\frac{1}%
{\phi_{X}(t)}. \label{det1}%
\end{align}
Similarly, for $B(t)$ we get
\begin{equation}
B(t)=\frac{t}{I(t)}\left|  \nabla h\right|  _{\mu}^{\ast\ast}(t)\leq\frac
{t}{I_{\Omega}(t)}\frac{\left\|  \left|  \nabla h\right|  \right\|  _{X}}%
{\phi_{X}(t)}. \label{det2}%
\end{equation}
Inserting (\ref{det1}) and (\ref{det2}) back in (\ref{det5}) we find that$,$%
\[
\left|  f\right|  _{\mu}^{\ast\ast}(t)-\left|  f\right|  _{\mu}^{\ast}%
(t)\leq\frac{4}{\phi_{X}(t)}\left(  \left\|  \left|  f\right|  -h\right\|
_{X}+\frac{t}{I_{\Omega}(t)}\left\|  \left|  \nabla h\right|  \right\|
_{X}\right)  .
\]
Therefore, by (\ref{porfin}),
\begin{align*}
\left|  f\right|  _{\mu}^{\ast\ast}(t)-\left|  f\right|  _{\mu}^{\ast}(t)  &
\leq\frac{4}{\phi_{X}(t)}\inf_{0\leq h\in S_{X}}\left(  \left\|  \left|
f\right|  -h\right\|  _{X}+\frac{t}{I_{\Omega}(t)}\left\|  \left|  \nabla
h\right|  \right\|  _{X}\right) \\
&  \leq4\frac{K\left(  \frac{t}{I_{\Omega}(t)},f\right)  }{\phi_{X}(t)}.
\end{align*}

\textbf{Part II:} $t\in(\mu(\Omega)/4,\mu(\Omega)/2].$

Using that the function $t(\left|  f\right|  _{\mu}^{\ast\ast}(t)-\left|
f\right|  _{\mu}^{\ast}(t))$ is increasing, we get,
\begin{align*}
t(\left|  f\right|  _{\mu}^{\ast\ast}(t)-\left|  f\right|  _{\mu}^{\ast}(t))
&  \leq\mu(\Omega)/2(\left|  f\right|  _{\mu}^{\ast\ast}(\mu(\Omega
)/2)-\left|  f\right|  _{\mu}^{\ast}(\mu(\Omega)/2))\\
&  =\int_{0}^{\frac{\mu(\Omega)}{2}}\left(  \left|  f\right|  _{\mu}^{\ast
}(s)-\left|  f\right|  _{\mu}^{\ast}(\mu(\Omega)/2)\right)  ds\\
&  =I.
\end{align*}
Now we use the following elementary inequality to estimate the difference
$\left|  f\right|  _{\mu}^{\ast}(s)-\left|  f\right|  _{\mu}^{\ast}(\mu
(\Omega)/2)$ (cf. \cite[pag 94]{garsiagrenoble}, \cite[(2.5)]{ko}): For any
$\sigma\in\mathbb{R},$ $0<r\leq\tau<\mu(\Omega),$ we have%
\begin{equation}
\left|  f\right|  _{\mu}^{\ast}(r)-\left|  f\right|  _{\mu}^{\ast}(\tau
)\leq\left|  f-\sigma\right|  _{\mu}^{\ast}(r)+\left|  f-\sigma\right|  _{\mu
}^{\ast}(\mu(\Omega)-\tau). \label{formula1}%
\end{equation}
Consequently, if $0<s<\mu(\Omega)/2$ and we let $r=s,\tau=\mu(\Omega)/2,$ then
(\ref{formula1}) yields%
\[
\left|  f\right|  _{\mu}^{\ast}(s)-\left|  f\right|  _{\mu}^{\ast}(\mu
(\Omega)/2))\leq\left|  f-\sigma\right|  _{\mu}^{\ast}(s)+\left|
f-\sigma\right|  _{\mu}^{\ast}(\mu(\Omega)/2).
\]
The term $I$ above can be now be estimated as follows. For any $\sigma
\in\mathbb{R},$ we have%
\begin{align*}
I  &  \leq\int_{0}^{\frac{\mu(\Omega)}{2}}\left|  f-\sigma\right|  _{\mu
}^{\ast}(s)+\left|  f-\sigma\right|  _{\mu}^{\ast}(\mu(\Omega)/2)ds\\
&  \leq2\int_{0}^{\frac{\mu(\Omega)}{2}}\left|  f-\sigma\right|  _{\mu}^{\ast
}(s)ds\\
&  \leq2\int_{0}^{\mu(\Omega)}\left|  f-\sigma\right|  _{\mu}^{\ast}(s)ds.
\end{align*}
Selecting $\sigma=f_{\Omega}$, yields%
\begin{align*}
I  &  \leq2\int_{0}^{\mu(\Omega)}\left|  f-f_{\Omega}\right|  _{\mu}^{\ast
}(s)ds\\
&  =2\int_{\Omega}\left|  f-f_{\Omega}\right|  d\mu.
\end{align*}
At this point we can apply (\ref{poinbesov}) to obtain%
\[
I\leq2\mu(\Omega)\frac{K\left(  \frac{\mu(\Omega)/2}{I_{\Omega}(\mu
(\Omega)/2)},f\right)  }{\phi_{X}(\mu(\Omega))}.
\]
A well known elementary fact about $K-$functionals is that they are concave
functions (cf. \cite{bs}), in particular $\frac{K(t,f)}{t}$ is a decreasing
function. Thus, since $I_{\Omega}(t)$ is increasing on $(0,\mu(\Omega)/2)),$
we see that
\begin{align*}
K\left(  \frac{\mu(\Omega)/2}{I_{\Omega}(\mu(\Omega)/2)},f\right)   &
\leq\frac{\mu(\Omega)/2}{I_{\Omega}(\mu(\Omega)/2)}\frac{I_{\Omega}(t)}%
{t}K\left(  \frac{t}{I_{\Omega}(t)},f\right) \\
&  \leq\frac{\mu(\Omega)/2}{I_{\Omega}(\mu(\Omega)/2)}\frac{I_{\Omega}%
(\mu(\Omega)/2)}{t}K\left(  \frac{t}{I_{\Omega}(t)},f\right)  \text{ }\\
&  \leq2K\left(  \frac{t}{I_{\Omega}(t)},f\right)  \text{ (since }t>\mu
(\Omega)/4).
\end{align*}
Therefore,%
\begin{align*}
\left|  f\right|  _{\mu}^{\ast\ast}(t)-\left|  f\right|  _{\mu}^{\ast}(t))  &
\leq4\frac{\mu(\Omega)}{t}\frac{K\left(  \frac{t}{I_{\Omega}(t)},f\right)
}{\phi_{X}(\mu(\Omega))}\\
&  \leq16\frac{K\left(  \frac{t}{I_{\Omega}(t)},f\right)  }{\phi_{X}%
(\mu(\Omega))}\text{ (since }t>\mu(\Omega)/4)\\
&  \leq16\frac{K\left(  \frac{t}{I_{\Omega}(t)},f\right)  }{\phi_{X}(t)}\text{
(since }\frac{1}{\phi_{X}}\text{ decreases).}%
\end{align*}

Let us now to prove (\ref{polakita}). Once again we divide the proof in two
cases. For $t$ $\in$ $(0,\mu(\Omega)/2)$ we claim that (\ref{polakita}) is a
consequence of (\ref{polaka1}). Indeed, if we apply (\ref{polaka1}) to the
function $f-f_{\Omega}$ and then use the fact that the $K-$functional is
subadditive and zero on constant functions, we obtain
\begin{align*}
\left|  f-f_{\Omega}\right|  _{\mu}^{\ast\ast}(t)-\left|  f-f_{\Omega}\right|
_{\mu}^{\ast}(t)  &  \leq16K\left(  \frac{t}{I_{\Omega}(t)},f-f_{\Omega
}\right) \\
&  \leq16K\left(  \frac{t}{I_{\Omega}(t)},f\right)  +K\left(  \frac
{t}{I_{\Omega}(t)},f_{\Omega}\right) \\
&  =16K\left(  \frac{t}{I_{\Omega}(t)},f\right)  .
\end{align*}

Suppose now that $t\in(\mu(\Omega)/2,\mu(\Omega)).$ We have%
\begin{align*}
\left\vert f-f_{\Omega}\right\vert _{\mu}^{\ast\ast}(t)-\left\vert
f-f_{\Omega}\right\vert _{\mu}^{\ast}(t)  &  \leq\left\vert f-f_{\Omega
}\right\vert _{\mu}^{\ast\ast}(t)\\
&  \leq\frac{1}{t}\int_{0}^{t}\left\vert f-f_{\Omega}\right\vert _{\mu}^{\ast
}(s)ds\\
&  \leq\frac{2}{\mu(\Omega)}\int_{0}^{\mu(\Omega)}\left\vert f-f_{\Omega
}\right\vert _{\mu}^{\ast}(s)ds\\
&  =\frac{2}{\mu(\Omega)}\int_{\Omega}\left\vert f-f_{\Omega}\right\vert
d\mu\\
&  \leq4\frac{K\left(  \frac{\mu(\Omega)/2}{I_{\Omega}(\mu(\Omega
)/2)},f\right)  }{\phi_{X}(\mu(\Omega))}\text{ (by (\ref{poinbesov})).}%
\end{align*}
Recalling that $\frac{t}{I_{\Omega}(t)}$ is increasing and $\frac{1}{\phi
_{X}(t)}$ is decreasing, we can continue with
\begin{align*}
4\frac{K\left(  \frac{\mu(\Omega)/2}{I_{\Omega}(\mu(\Omega)/2)},f\right)
}{\phi_{X}(\mu(\Omega))}  &  \leq4\frac{K\left(  \frac{t}{I_{\Omega}%
(t)},f\right)  }{\phi_{X}(\mu(\Omega))}\\
&  \leq4\frac{K\left(  \frac{t}{I_{\Omega}(t)},f\right)  }{\phi_{X}%
(t)}\text{,}%
\end{align*}
an the desired result follows.
\end{proof}

A useful variant of the previous result can be stated as follows (cf.
\cite{mastylo} for a somewhat weaker result).

\begin{theorem}
\label{teo:elotro}Let $X=X(\Omega)$ be a r.i. space with $\underline{\alpha
}_{X}>0.$ Then, there exists a constant $c=c(X)>0$ such that for all $f\in
X,$
\begin{equation}
\left\Vert \left(  \left\vert f\right\vert _{\mu}^{\ast}(\cdot)-\left\vert
f\right\vert _{\mu}^{\ast}(t/2)\right)  \chi_{(0,t/2)}(\cdot)\right\Vert
_{\bar{X}}\leq cK\left(  \frac{t}{I_{\Omega}(t)},f;X,S_{X}\right)  ,\text{
}0<t\leq\mu(\Omega). \label{fertilizada}%
\end{equation}

\end{theorem}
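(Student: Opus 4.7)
The plan is to exhibit, for any decomposition $f = f_{0} + f_{1}$ with $f_{0} \in X$ and $f_{1} \in S_{X}$, the bound
\[
\|(|f|_{\mu}^{\ast}(\cdot) - |f|_{\mu}^{\ast}(t/2))\chi_{(0,t/2)}\|_{\bar{X}} \leq c(X)\Bigl(\|f_{0}\|_{X} + \frac{t}{I_{\Omega}(t)}\||\nabla f_{1}|\|_{X}\Bigr)
\]
and then take the infimum to produce the $K$-functional. From (\ref{aa1}) applied in both directions (using $|f| \leq |f_{0}| + |f_{1}|$ and $|f_{1}| \leq |f| + |f_{0}|$), for $s \in (0, t/2)$ I get
\[
|f|_{\mu}^{\ast}(s) - |f|_{\mu}^{\ast}(t/2) \leq |f_{0}|_{\mu}^{\ast}(s/2) + |f_{0}|_{\mu}^{\ast}(t/2) + \bigl(|f_{1}|_{\mu}^{\ast}(s/2) - |f_{1}|_{\mu}^{\ast}(t)\bigr),
\]
each summand being nonnegative. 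Taking $\bar{X}$-norms splits the problem into three pieces.

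The two $f_{0}$-pieces are handled by standard r.i.\ machinery. The dilation $s \mapsto s/2$ is bounded on every r.i.\ space, which gives $\||f_{0}|_{\mu}^{\ast}(\cdot/2)\chi_{(0,t/2)}\|_{\bar{X}} \leq c\|f_{0}\|_{X}$; the Marcinkiewicz embedding $X \subset M(X)$ from (\ref{tango}) yields $|f_{0}|_{\mu}^{\ast}(t/2)\phi_{X}(t/2) \leq \|f_{0}\|_{X}$.

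The third piece, involving $f_{1}$, is the main obstacle. A direct use of the pointwise bound (\ref{polaka1}) would introduce the factor $\|\chi_{(0,t/2)}/\phi_{X}\|_{\bar{X}}$, which typically diverges, so a finer estimate is required. Applying dilation once more and the identity $((|h| - \sigma)_{+})_{\mu}^{\ast} = (|h|_{\mu}^{\ast} - \sigma)_{+}$, this piece is bounded by $c\|(|f_{1}| - \lambda_{1})_{+}\|_{X}$ with $\lambda_{1} := |f_{1}|_{\mu}^{\ast}(t)$. The function $v := (|f_{1}| - \lambda_{1})_{+}$ is a nonnegative Lipschitz truncation with $|\nabla v| \leq |\nabla f_{1}|$ and support of $\mu$-measure $M_{1} \leq t$. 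Apply Lemma \ref{ll1} to $v$ (after a preliminary truncation at height $N$ and the limit $N \to \infty$ via Lemma \ref{garlem}, if necessary) and the Calder\'{o}n--Hardy principle (\ref{mayorante}) to obtain
\[
\|\Psi\|_{\bar{X}} \leq \||\nabla f_{1}|\|_{X}, \qquad \Psi(w) := -(v_{\mu}^{\ast})'(w)\,I_{\Omega}(w).
\]
Writing $v_{\mu}^{\ast}(u) = \chi_{(0,M_{1})}(u)\int_{u}^{M_{1}}\Psi(w)/I_{\Omega}(w)\,dw$ and exploiting the monotonicity of $s \mapsto s/I_{\Omega}(s)$ to replace $w/I_{\Omega}(w)$ by $M_{1}/I_{\Omega}(M_{1})$ on $(0, M_{1})$, this integral is pointwise dominated by $(M_{1}/I_{\Omega}(M_{1}))\,Q(\Psi\chi_{(0,M_{1})})(u)$. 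The hypothesis $\underline{\alpha}_{X} > 0$ makes $Q$ bounded on $\bar{X}$ by (\ref{alcance}), so
\[
\|v\|_{X} \leq c_{X}\,\frac{M_{1}}{I_{\Omega}(M_{1})}\,\|\Psi\|_{\bar{X}} \leq c_{X}\,\frac{t}{I_{\Omega}(t)}\,\||\nabla f_{1}|\|_{X},
\]
the last step using $M_{1} \leq t$ and the same monotonicity.

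Assembling the three estimates proves the displayed bound for every admissible decomposition, and taking the infimum over $f = f_{0} + f_{1}$ delivers the asserted inequality. The crux is the truncation step, where the fact that $v$ is supported on a set of measure $\leq t$ is coupled with Lemma \ref{ll1}---which crucially avoids the maximal function $|\nabla f_{1}|^{\ast\ast}$ and replaces it by the sharper $\bar{X}$-bound on $-(v_{\mu}^{\ast})'\,I_{\Omega}$---and with the boundedness of $Q$ on $\bar{X}$ that the hypothesis $\underline{\alpha}_{X} > 0$ supplies.
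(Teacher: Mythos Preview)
Your argument is correct and uses the same essential tools as the paper---Lemma~\ref{ll1} for the P\'olya--Szeg\H{o} bound and the boundedness of $Q$ on $\bar X$ coming from $\underline{\alpha}_X>0$---but the organisation differs. The paper works by duality: it pairs against an arbitrary $g\in\bar X'$ with $\|g\|_{\bar X'}=1$, invokes the Calder\'on--Hardy majorization principle to pass from $|f|_\mu^\ast$ to $|\,|f|-h\,|_\mu^\ast+|h|_\mu^\ast$, and then estimates the $h$-piece through the integral representation $|h_n|_\mu^\ast(s)-|h_n|_\mu^\ast(t)=\int_s^t(-|h_n|_\mu^\ast)'(r)\,dr$. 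You bypass duality altogether: the pointwise inequality (\ref{aa1}) gives the splitting directly, and the key observation that the truncation $v=(|f_1|-\lambda_1)_+$ has support of measure $M_1\le t$ reduces the $f_1$-piece to the clean Sobolev-type bound $\|v\|_X\le c_X\,\tfrac{t}{I_\Omega(t)}\,\||\nabla v|\|_X$. Your route is arguably more transparent about where the factor $t/I_\Omega(t)$ enters.

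One technical point needs tightening. Lemma~\ref{ll1} as stated delivers (\ref{aa}) only for the approximating sequence $h_n$, not for the original function, so the conclusion $\|\Psi\|_{\bar X}\le\||\nabla f_1|\|_X$ with $\Psi:=-(v_\mu^\ast)'I_\Omega$ is not immediate; nor is the representation $v_\mu^\ast(u)=\int_u^{M_1}\Psi/I_\Omega$ available without first knowing that $v_\mu^\ast$ is locally absolutely continuous. The clean fix is to run your computation at the level of the approximants $v_n$ of (a height-$N$ truncation of) $v$: write $v_{n,\mu}^\ast(u)-v_{n,\mu}^\ast(t)=\int_u^t(-v_{n,\mu}^\ast)'(w)\,dw$ (upper limit $t$, not $M_1$, since $v_n$ need not have small support), bound this pointwise by $\tfrac{t}{I_\Omega(t)}\,Q(\Psi_n)(u)$, take $\bar X$-norms, and pass to the limit with Fatou and Lemma~\ref{garlem}. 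Only after the limit does one use $v_\mu^\ast(t)=0$ to identify the left side with $\|v\|_X$. This is exactly how the paper handles the parallel step, and with that adjustment your proof is complete.
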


\begin{proof}
Fix $t\in(0,\mu(\Omega)].$ We will first suppose that $f$ is bounded. We claim
that in the computation of $K\left(  t,f;X,S_{X}\right)  $ we can restrict
ourselves to consider decompositions with bounded $h.$ In fact, we have
\begin{equation}
\inf_{0\leq h\leq\left\Vert f\right\Vert _{\infty},h\in S_{X}}\left\{
\left\Vert \left\vert f\right\vert -h\right\Vert _{X}+t\left\Vert \left\vert
\nabla h\right\vert \right\Vert _{X}\right\}  \leq2\inf_{0\leq h\in S_{X}%
}\left\{  \left\Vert \left\vert f\right\vert -h\right\Vert _{X}+t\left\Vert
\left\vert \nabla h\right\vert \right\Vert _{X}\right\}  . \label{porfin1}%
\end{equation}
To see this consider any competing decomposition with $0\leq h\in S_{X}.$ Let
\[
g=\min(h,\left\Vert f\right\Vert _{\infty})\in S_{X}.
\]
Then,
\[
\left\vert \nabla g\right\vert \leq\left\vert \nabla h\right\vert ,
\]
and, moreover, we have
\begin{align*}
\left\Vert \left\vert f\right\vert -g\right\Vert _{X}+t\left\Vert \left\vert
\nabla g\right\vert \right\Vert _{X}  &  \leq\left\Vert \left(  \left\vert
f\right\vert -h\right)  \chi_{\left\{  h\leq\left\Vert f\right\Vert _{\infty
}\right\}  }\right\Vert _{X}+\left\Vert \left(  \left\vert f\right\vert
-\left\Vert f\right\Vert _{\infty}\right)  \chi_{\left\{  h>\left\Vert
f\right\Vert _{\infty}\right\}  }\right\Vert _{X}\\
&  +t\left\Vert \left\vert \nabla h\right\vert \right\Vert _{X}\\
&  \leq\left\Vert \left\vert f\right\vert -h\right\Vert _{X}+\left\Vert
\left(  \left\vert f\right\vert -\left\Vert f\right\Vert _{\infty}\right)
\chi_{\left\{  h>\left\Vert f\right\Vert _{\infty}\right\}  }\right\Vert
_{X}+t\left\Vert \left\vert \nabla h\right\vert \right\Vert _{X}.
\end{align*}
Now, since
\[
\left\vert \left(  \left\vert f\right\vert -\left\Vert f\right\Vert _{\infty
}\right)  \chi_{\left\{  h>\left\Vert f\right\Vert _{\infty}\right\}
}\right\vert =\left(  \left\Vert f\right\Vert _{\infty}-\left\vert
f\right\vert \right)  \chi_{\left\{  h>\left\Vert f\right\Vert _{\infty
}\right\}  }\leq\left(  h-\left\vert f\right\vert \right)  \chi_{\left\{
h>\left\Vert f\right\Vert _{\infty}\right\}  },
\]
we see that
\[
\left\Vert \left(  \left\vert f\right\vert -\left\Vert f\right\Vert _{\infty
}\right)  \chi_{\left\{  h>\left\Vert f\right\Vert _{\infty}\right\}
}\right\Vert _{X}=\left\Vert \left(  h-\left\vert f\right\vert \right)
\chi_{\left\{  h>\left\Vert f\right\Vert _{\infty}\right\}  }\right\Vert
_{X}\leq\left\Vert \left\vert f\right\vert -h\right\Vert _{X}.
\]
Consequently%
\[
\left\Vert \left\vert f\right\vert -g\right\Vert _{X}+t\left\Vert \left\vert
\nabla g\right\vert \right\Vert _{X}\leq2\left\Vert \left\vert f\right\vert
-h\right\Vert _{X}+t\left\Vert \left\vert \nabla h\right\vert \right\Vert
_{X},
\]
and (\ref{porfin1}) follows.

Let $0\leq h$ be a bounded $Lip(\Omega)$ function$,$ and fix $g\in\bar
{X}^{\prime}$ with $\left\Vert g\right\Vert _{\bar{X}^{\prime}}=1.$ Recall
that $\bar{X}^{\prime}$ is a r.i. space on $\left(  [0,\mu(\Omega)],m\right)
$, where $m$ denotes Lebesgue measure. We let $|g|^{\ast}:=|g|_{m}^{\ast}.$
Consider the decomposition
\[
\left\vert f\right\vert =\left(  \left\vert f\right\vert -h\right)  +h.
\]
Writing $h=\left\vert f\right\vert +(-\left(  \left\vert f\right\vert
-h\right)  ),$ we see that $\left\vert h\right\vert _{\mu}^{\ast}%
(t)\leq\left\vert f\right\vert _{\mu}^{\ast}(t/2)+\left\vert \left\vert
f\right\vert -h\right\vert _{\mu}^{\ast}(t/2).$ We use this inequality to
provide a lower bound for $\left\vert f\right\vert _{\mu}^{\ast}(t/2),$ namely%
\[
\left\vert h\right\vert _{\mu}^{\ast}(t)-\left\vert \left\vert f\right\vert
-h\right\vert _{\mu}^{\ast}(t/2)\leq\left\vert f\right\vert _{\mu}^{\ast
}(t/2).
\]
Then we have%
\begin{align}
&  \int_{0}^{t/2}(\left\vert f\right\vert _{\mu}^{\ast}(s)-\left\vert
f\right\vert _{\mu}^{\ast}(t/2))\chi_{(0,t/2)}(s)\left\vert g\right\vert
^{\ast}(s)ds\nonumber\\
&  =\int_{0}^{t/2}\left\vert f\right\vert _{\mu}^{\ast}(s)\left\vert
g\right\vert ^{\ast}(s)ds-\left\vert f\right\vert _{\mu}^{\ast}(t/2)\int%
_{0}^{t/2}\left\vert g\right\vert ^{\ast}(s)ds\\
&  \leq\int_{0}^{t/2}\left\vert f\right\vert _{\mu}^{\ast}(s)\left\vert
g\right\vert ^{\ast}(s)ds-\left(  \left\vert h\right\vert _{\mu}^{\ast
}(t)-\left\vert \left\vert f\right\vert -h\right\vert _{\mu}^{\ast
}(t/2)\right)  \int_{0}^{t/2}\left\vert g\right\vert ^{\ast}(s)ds.
\label{este0}%
\end{align}
To estimate the first integral on the right hand side without sacrificing
precision on the range of the variable $^{\ast}t^{\ast}$ requires an argument.
We shall use the majorization principle [(\ref{mayorante}), Chapter
\ref{preliminar}] as follows. Since the operation $f\rightarrow f^{\ast\ast}$
is sub-additive, for any $r>0$ we have%
\begin{align*}
\int_{0}^{r}\left\vert f\right\vert _{\mu}^{\ast}(s)\chi_{(0,\frac{t}{2}%
)}(s)ds  &  =\int_{0}^{\min\{r,\frac{t}{2}\}}\left\vert f\right\vert _{\mu
}^{\ast}(s)ds\leq\int_{0}^{\min\{r,\frac{t}{2}\}}\left\vert \left\vert
f\right\vert -h\right\vert _{\mu}^{\ast}(s)ds+\int_{0}^{\min\{r,\frac{t}{2}%
\}}\left\vert h\right\vert _{\mu}^{\ast}(s)ds\\
&  =\int_{0}^{\min\{r,\frac{t}{2}\}}\left(  \left\vert \left\vert f\right\vert
-h\right\vert _{\mu}^{\ast}(s)+\left\vert h\right\vert _{\mu}^{\ast
}(s)\right)  ds\\
&  \leq\int_{0}^{r}\left(  \left\vert \left\vert f\right\vert -h\right\vert
_{\mu}^{\ast}(s)+\left\vert h\right\vert _{\mu}^{\ast}(s)\right)
\chi_{(0,\frac{t}{2})}(s)ds.
\end{align*}
Now, since for each fixed $t$ the functions $H_{1}=\left\vert f\right\vert
_{\mu}^{\ast}(\cdot)\chi_{(0,\frac{t}{2})}(\cdot)$ and $H_{2}=\left(
\left\vert \left\vert f\right\vert -h\right\vert _{\mu}^{\ast}(\cdot
)+\left\vert h\right\vert _{\mu}^{\ast}(\cdot)\right)  \chi_{(0,\frac{t}{2}%
)}(\cdot)$ are decreasing, we can apply the Calder\'{o}n-Hardy Lemma to the
(Lorentz) function seminorm defined by (cf. \cite[Theorem 1]{lor})%
\[
\left\Vert H\right\Vert _{\Lambda g}=\int_{0}^{\mu(\Omega)}\left\vert
H\right\vert ^{\ast}(s)\left\vert g\right\vert ^{\ast}(s)ds.
\]
We obtain%
\[
\left\Vert H_{1}\right\Vert _{\Lambda g}\leq\left\Vert H_{2}\right\Vert
_{\Lambda g}.
\]
It follows that%
\begin{align*}
\int_{0}^{t/2}\left\vert f\right\vert _{\mu}^{\ast}(s)\left\vert g\right\vert
^{\ast}(s)ds  &  =\int_{0}^{\mu(\Omega)}\left\vert f\right\vert _{\mu}^{\ast
}(s)\chi_{(0,\frac{t}{2})}(s)\left\vert g\right\vert ^{\ast}(s)ds\\
&  \leq\int_{0}^{\mu(\Omega)}\left(  \left\vert \left\vert f\right\vert
-h\right\vert _{\mu}^{\ast}(s)+\left\vert h\right\vert _{\mu}^{\ast
}(s)\right)  \chi_{(0,\frac{t}{2})}(s)\left\vert g\right\vert ^{\ast}(s)ds\\
&  =\int_{0}^{t/2}\left(  \left\vert \left\vert f\right\vert -h\right\vert
_{\mu}^{\ast}(s)+\left\vert h\right\vert _{\mu}^{\ast}(s)\right)  \left\vert
g\right\vert ^{\ast}(s)ds.
\end{align*}
Inserting this estimate back in (\ref{este0}) we have,%
\begin{align*}
&  \int_{0}^{t/2}(\left\vert f\right\vert _{\mu}^{\ast}(s)-\left\vert
f\right\vert _{\mu}^{\ast}(t/2))\chi_{(0,t/2)}(s)\left\vert g\right\vert
^{\ast}(s)ds\\
&  \leq\int_{0}^{t/2}\left(  \left\vert \left\vert f\right\vert -h\right\vert
_{\mu}^{\ast}(s)+\left\vert h\right\vert _{\mu}^{\ast}(s)\right)  \left\vert
g\right\vert ^{\ast}(s)ds-\left(  \left\vert h\right\vert _{\mu}^{\ast
}(t)-\left\vert \left\vert f\right\vert -h\right\vert _{\mu}^{\ast
}(t/2)\right)  \int_{0}^{t/2}\left\vert g\right\vert ^{\ast}(s)ds\\
&  =\int_{0}^{t/2}\left\vert \left\vert f\right\vert -h\right\vert _{\mu
}^{\ast}(s)\left\vert g\right\vert ^{\ast}(s)ds+\int_{0}^{t/2}\left\vert
h\right\vert _{\mu}^{\ast}(s)\left\vert g\right\vert ^{\ast}(s)ds+(\left\vert
\left\vert f\right\vert -h\right\vert _{\mu}^{\ast}(t/2)-\left\vert
h\right\vert _{\mu}^{\ast}(t))\int_{0}^{t/2}\left\vert g\right\vert ^{\ast
}(s)ds\\
&  =\int_{0}^{t/2}\left(  \left\vert \left\vert f\right\vert -h\right\vert
_{\mu}^{\ast}(s)+\left\vert \left\vert f\right\vert -h\right\vert _{\mu}%
^{\ast}(t/2)\right)  \left\vert g\right\vert ^{\ast}(s)ds+\int_{0}%
^{t/2}\left(  \left\vert h\right\vert _{\mu}^{\ast}(s)-\left\vert h\right\vert
_{\mu}^{\ast}(t)\right)  \left\vert g\right\vert ^{\ast}(s)ds\\
&  \leq2\int_{0}^{\mu(\Omega)}\left\vert \left\vert f\right\vert -h\right\vert
_{\mu}^{\ast}(s)\chi_{(0,t/2)}(s)\left\vert g\right\vert ^{\ast}(s)ds+\int%
_{0}^{\mu(\Omega)}\left(  \left\vert h\right\vert _{\mu}^{\ast}(s)-\left\vert
h\right\vert _{\mu}^{\ast}(t)\right)  \chi_{(0,t/2)}(s)\left\vert g\right\vert
^{\ast}(s)ds\\
&  \leq2\left\Vert \left\vert f\right\vert -h\right\Vert _{\bar{X}}+\left\Vert
(\left\vert h\right\vert _{\mu}^{\ast}(\cdot)-\left\vert h\right\vert _{\mu
}^{\ast}(t))\chi_{(0,t/2)}\right\Vert _{\bar{X}}\text{ (by (\ref{representada}%
), Chapter \ref{preliminar}).}%
\end{align*}

Now, let $\{h_{n}\}_{n\in N}$ be the sequence provided by [Lemma \ref{ll1},
Chapter \ref{preliminar}]. Then,
\begin{align*}
\left(  \left\vert h_{n}\right\vert _{\mu}^{\ast}(s)-\left\vert h_{n}%
\right\vert _{\mu}^{\ast}(t)\right)  \chi_{(0,t/2)}(s)  &  =\int_{s}%
^{t}\left(  -\left\vert h_{n}\right\vert _{\mu}^{\ast}\right)  ^{\prime
}(r)dr\chi_{(0,t/2)}(s)\\
&  \leq\int_{s}^{t}\left(  -\left\vert h_{n}\right\vert _{\mu}^{\ast}\right)
^{\prime}(r)I_{\Omega}(r)\frac{dr}{I_{\Omega}(r)}\\
&  \leq\frac{t}{I_{\Omega}(t)}\int_{s}^{t}\left(  -\left\vert h_{n}\right\vert
_{\mu}^{\ast}\right)  ^{\prime}(r)I_{\Omega}(r)\frac{dr}{r}\\
&  \leq\frac{t}{I_{\Omega}(t)}\int_{s}^{\mu(\Omega)}\left(  -\left\vert
h_{n}\right\vert _{\mu}^{\ast}\right)  ^{\prime}(r)I_{\Omega}(r)\frac{dr}{r}\\
&  =\frac{t}{I_{\Omega}(t)}Q(\left(  -\left\vert h_{n}\right\vert _{\mu}%
^{\ast}\right)  ^{\prime}I_{\Omega})(s).
\end{align*}
Applying $\left\Vert \cdot\right\Vert _{\bar{X}}$ (in the variable $s$) and
using the fact that $\underline{\alpha}_{X}>0$, we see that
\begin{align*}
\left\Vert \left(  \left\vert h_{n}\right\vert _{\mu}^{\ast}(\cdot)-\left\vert
h_{n}\right\vert _{\mu}^{\ast}(t)\right)  \chi_{(0,t/2)}(\cdot)\right\Vert
_{\bar{X}}  &  \leq c\frac{t}{I_{\Omega}(t)}\left\Vert \left(  -\left\vert
h_{n}\right\vert _{\mu}^{\ast}\right)  ^{\prime}(\cdot)I_{\Omega}%
(\cdot)\right\Vert _{\bar{X}}\\
&  \leq c\frac{t}{I_{\Omega}(t)}\left\Vert \left\vert \nabla h_{n}\right\vert
\right\Vert _{X}\text{ \ \ (by [(\ref{aa}), Chapter \ref{preliminar}])}\\
&  \leq c\frac{t}{I_{\Omega}(t)}(1+\frac{1}{n})\left\Vert \left\vert \nabla
h_{n}\right\vert \right\Vert _{X}\text{ \ (by [(\ref{cota01}), Chapter
\ref{preliminar}]).}%
\end{align*}
On the other hand, by [(\ref{converge}) Chapter \ref{preliminar}],
$h_{n}\underset{n\rightarrow\infty}{\rightarrow}h$ $\ $in $L^{1},$ and by
Lemma \ref{garlem} we get $\left\vert h_{n}\right\vert _{\mu}^{\ast
}(s)\rightarrow\left\vert h\right\vert _{\mu}^{\ast}(s),$ thus applying
Fatou's Lemma in the $X$ norm (cf. Section \ref{secc:ri} in Chapter
\ref{preliminar}), we find that
\begin{align}
\left\Vert \left(  \left\vert h\right\vert _{\mu}^{\ast}(\cdot)-\left\vert
h\right\vert _{\mu}^{\ast}(t)\right)  \chi_{(0,t)}(\cdot)\right\Vert _{\bar
{X}}  &  =\lim_{n\rightarrow\infty}\inf\left\Vert \left(  \left\vert
h_{n}\right\vert _{\mu}^{\ast}(\cdot)-\left\vert h_{n}\right\vert _{\mu}%
^{\ast}(t)\right)  \chi_{(0,t)}(s)\right\Vert _{\bar{X}}\nonumber\\
&  \leq c\frac{t}{I_{\Omega}(t)}\left\Vert \left\vert \nabla h\right\vert
\right\Vert _{X}. \label{este}%
\end{align}
Combining (\ref{este0}) and (\ref{este}) \ we get%
\begin{equation}
\int_{0}^{\mu(\Omega)}(\left\vert f\right\vert _{\mu}^{\ast}(s)-\left\vert
f\right\vert _{\mu}^{\ast}(t/2))\chi_{(0,t/2)}(s)\left\vert g\right\vert
^{\ast}(s)ds\leq c(\left\Vert \left\vert f\right\vert -h\right\Vert _{X}%
+\frac{t}{I_{\Omega}(t)}\left\Vert \left\vert \nabla h\right\vert \right\Vert
_{X}). \label{este2S}%
\end{equation}
Since $\left(  \left\vert f\right\vert _{\mu}^{\ast}(s)-\left\vert
f\right\vert _{\mu}^{\ast}(t/2)\right)  \chi_{(0,t/2)}(s)$ is a decreasing
function of $s,$ we can write
\[
\left(  \left\vert f\right\vert _{\mu}^{\ast}(s)-\left\vert f\right\vert
_{\mu}^{\ast}(t/2)\right)  \chi_{(0,t/2)}(s)=\left(  \left(  \left\vert
f\right\vert _{\mu}^{\ast}(\cdot)-\left\vert f\right\vert _{\mu}^{\ast
}(t/2)\right)  \chi_{(0,t/2)}(\cdot)\right)  ^{\ast}(s).
\]
Combining successively duality, the last formula and (\ref{este2S}), we get
\begin{align*}
&  \left\Vert \left(  \left\vert f\right\vert _{\mu}^{\ast}(\cdot)-\left\vert
f\right\vert _{\mu}^{\ast}(t/2)\right)  \chi_{(0,t/2)}(s)\right\Vert _{\bar
{X}}\\
&  =\sup_{\left\Vert g\right\Vert _{\bar{X}^{\prime}}\leq1}\int_{0}%
^{\mu(\Omega)}[\left(  \left\vert f\right\vert _{\mu}^{\ast}(\cdot)-\left\vert
f\right\vert _{\mu}^{\ast}(t/2)\right)  \chi_{(0,t/2)}(\cdot)]^{\ast
}(s)\left\vert g\right\vert ^{\ast}(s)ds\\
&  =\sup_{\left\Vert g\right\Vert _{\bar{X}^{\prime}}\leq1}\int_{0}%
^{\mu(\Omega)}\left(  \left\vert f\right\vert _{\mu}^{\ast}(s)-\left\vert
f\right\vert _{\mu}^{\ast}(t/2)\right)  \chi_{(0,t/2)}(s)\left\vert
g\right\vert ^{\ast}(s)ds\\
&  \leq2c\left(  \left\Vert \left\vert f\right\vert -h\right\Vert _{X}%
+\frac{t}{I_{\Omega}(t)}\left\Vert \left\vert \nabla h\right\vert \right\Vert
_{X}\right)  ,
\end{align*}
where $c$ is an absolute constant that depends only on $X$. Consequently, if
$f$ is bounded there exists an absolute constant $c>0$ such that
\begin{align*}
\left\Vert \left(  \left\vert f\right\vert _{\mu}^{\ast}(\cdot)-\left\vert
f\right\vert _{\mu}^{\ast}(t/2)\right)  \chi_{(0,t/2)}(\cdot)\right\Vert
_{\bar{X}}  &  \leq c\inf_{0\leq h\in S_{X}}\left\{  \left\Vert \left\vert
f\right\vert -h\right\Vert _{X}+\frac{t}{I_{\Omega}(t)}\left\Vert \left\vert
\nabla h\right\vert \right\Vert _{X}\right\} \\
&  \leq cK\left(  \frac{t}{I_{\Omega}(t)},f;X,S_{X}\right)  \text{ (by
(\ref{porfin})).}%
\end{align*}
Suppose now that $f$ is not bounded. Let $f_{n}=\min(\left\vert f\right\vert
,n)\nearrow\left\vert f\right\vert $. By the first part of the proof we have,
\[
\left\vert f_{n}\right\vert _{\mu}^{\ast\ast}(t)-\left\vert f_{n}\right\vert
_{\mu}^{\ast}(t)\leq cK\left(  \frac{t}{I_{\Omega}(t)},f_{n};X,S_{X}\right)
.
\]
Fix $n.$ For any $0\leq h\in S_{X},$ let $\tilde{h}_{n}=\min\left(
h,n\right)  .$ Then,%
\begin{align*}
K\left(  \frac{t}{I(t)},f_{n};X,S_{X}\right)   &  \leq\left\Vert \left\vert
f_{n}\right\vert -\tilde{h}_{n}\right\Vert _{X}+\frac{t}{I_{\Omega}%
(t)}\left\Vert \left\vert \nabla\tilde{h}_{n}\right\vert \right\Vert _{X}\\
&  \leq\left\Vert \left\vert f\right\vert -h\right\Vert _{X}+\frac
{t}{I_{\Omega}(t)}\left\Vert \left\vert \nabla h\right\vert \right\Vert _{X}.
\end{align*}
Taking infimum it follows that, for all $n\in\mathbb{N},$%
\[
K\left(  \frac{t}{I_{\Omega}(t)},f_{n};X,S_{X}\right)  \leq K\left(  \frac
{t}{I_{\Omega}(t)},f;X,S_{X}\right)  .
\]
Since $f_{n}=\min(\left\vert f\right\vert ,n)\nearrow\left\vert f\right\vert
$, then by the Fatou property of the norm we have%
\begin{align*}
\left\Vert \left(  \left\vert f\right\vert _{\mu}^{\ast}(\cdot)-\left\vert
f\right\vert _{\mu}^{\ast}(t/2)\right)  \chi_{(0,t/2)}(\cdot)\right\Vert
_{\bar{X}}  &  =\lim_{n}\left\Vert \left(  \left\vert f_{n}\right\vert _{\mu
}^{\ast}(\cdot)-\left\vert f_{n}\right\vert _{\mu}^{\ast}(t/2)\right)
\chi_{(0,t/2)}(\cdot)\right\Vert _{X}\\
&  \leq c\lim_{n}K\left(  \frac{t}{I_{\Omega}(t)},f_{n};X,S_{X}\right) \\
&  \leq cK\left(  \frac{t}{I_{\Omega}(t)},f;X,S_{X}\right)  ,
\end{align*}
as we wished to show.
\end{proof}

\begin{remark}
For perspective we now show that, under the extra assumption that the r.i.
space satisfies $\underline{\alpha}_{X}>0,$ (\ref{fertilizada}) can be used to
give a direct proof of (\ref{polaka1}).
\end{remark}

\begin{proof}
We have,
\begin{align*}
\left\vert f\right\vert _{\mu}^{\ast\ast}(t/2)-\left\vert f\right\vert _{\mu
}^{\ast}(t/2)  &  =\frac{2}{t}\int_{0}^{\mu(\Omega)}\left(  \left\vert
f\right\vert _{\mu}^{\ast}(s)-\left\vert f\right\vert _{\mu}^{\ast
}(t/2)\right)  \chi_{(0,t/2)}(s)ds\\
&  \leq\left\Vert \left(  \left\vert f\right\vert _{\mu}^{\ast}(\cdot
)-\left\vert f\right\vert _{\mu}^{\ast}(t/2)\right)  \chi_{(0,t/2)}%
(\cdot)\right\Vert _{\bar{X}}\frac{2\phi_{X^{\prime}}(t/2)}{t}\text{ \ \ (by
H\"{o}lder's inequality) }\\
&  \leq2cK\left(  \frac{t}{I_{\Omega}(t)},f;X,S_{X}\right)  \frac
{\phi_{X^{\prime}}(t)}{t}\text{ (by (\ref{fertilizada}))}\\
&  =2c\frac{K\left(  \frac{t}{I_{\Omega}(t)},f;X,S_{X}\right)  }{\phi_{X}(t)}.
\end{align*}

\end{proof}

\begin{example}
For familiar spaces (\ref{fertilizada}) takes a more concrete form. For
example, if $X=L^{p},$ $1\leq p<\infty,$ then $\underline{\alpha}_{L^{p}}>0,$
and (\ref{fertilizada}) becomes
\begin{equation}
\int_{0}^{t/2}\left(  \left\vert f\right\vert _{\mu}^{\ast}(s)-\left\vert
f\right\vert _{\mu}^{\ast}(t/2)\right)  ^{p}ds\leq c_{p}\left(  K\left(
\frac{t}{I_{\Omega}(t)},f;L^{p},S_{L^{p}}\right)  \right)  ^{p}.
\label{laizquierda}%
\end{equation}
In particular, when $p=1,$ the left hand side of (\ref{laizquierda}) becomes
\[
\frac{t}{2}(|f|_{\mu}^{\ast\ast}(t/2)-|f|_{\mu}^{\ast}(t/2))=\int_{0}%
^{t/2}\left(  f_{\mu}^{\ast}(s)-f_{\mu}^{\ast}(t/2)\right)  ds.
\]
As a consequence, when $X=L^{1}\ $and $0<t<\mu(\Omega)/4,$ (\ref{fertilizada})
and (\ref{polaka1}) represent the same inequality, modulo constants.
\end{example}

The next easy variant of Theorem \ref{main1} gives more flexibility for some applications.

\begin{theorem}
Let $X$ and $Y$ be a r.i. spaces on $\Omega.$ Then, for each $f\in X+S_{Y}$ we
have%
\begin{equation}
\left\vert f\right\vert _{\mu}^{\ast\ast}(t/2)-\left\vert f\right\vert _{\mu
}^{\ast}(t/2)\leq c\frac{K\left(  \frac{t}{I_{\Omega}(t)}\frac{\phi_{X}%
(t)}{\phi_{Y}(t)},f;X,S_{Y}\right)  }{\phi_{X}(t)},\,0<t\leq\mu(\Omega).
\label{polaca}%
\end{equation}

\end{theorem}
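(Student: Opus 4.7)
The strategy is a line-by-line adaptation of the proof of Theorem~\ref{main1}, with the pair $(X,S_X)$ replaced by $(X,S_Y)$. Nothing in that earlier derivation uses that the approximating function $h$ lies in $S_X$ specifically; all that is needed is that $h$ is a (bounded) nonnegative Lipschitz function so that the basic oscillation inequality (\ref{reod00}) applies to $h$. The truncation argument behind (\ref{porfin}) goes through verbatim once one notes that $\min(h,\|f\|_\infty)\in S_Y$ whenever $h\in S_Y$ and $|\nabla|h||\le|\nabla h|$, so
\[
\inf_{0\le h\in S_Y}\bigl\{\|\,|f|-h\,\|_X+s\,\|\,|\nabla h|\,\|_Y\bigr\}\le K(s,f;X,S_Y),\quad s>0.
\]

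For $t\in(0,\mu(\Omega)/2]$, I run Part~I of the proof of Theorem~\ref{main1} with $t$ replaced by $t/2$ throughout and the competitor $h$ now drawn from $S_Y$. The same chain of rearrangement manipulations culminates in
\[
|f|_\mu^{\ast\ast}(t/2)-|f|_\mu^{\ast}(t/2)\le 4\Bigl(\,||f|-h|_\mu^{\ast\ast}(t/2)+\frac{t/2}{I_\Omega(t/2)}|\nabla h|^{\ast\ast}(t/2)\Bigr).
\]
The H\"older step is then applied in two different r.i.\ spaces: $||f|-h|_\mu^{\ast\ast}(t/2)\le \|\,|f|-h\,\|_X/\phi_X(t/2)$ and $|\nabla h|^{\ast\ast}(t/2)\le \|\,|\nabla h|\,\|_Y/\phi_Y(t/2)$. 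Factoring out $1/\phi_X(t/2)$ forces the multiplicative factor $\phi_X(t/2)/\phi_Y(t/2)$ to appear on the gradient term, producing precisely the dilation parameter $\frac{t}{I_\Omega(t)}\frac{\phi_X(t)}{\phi_Y(t)}$ (modulo harmless constants absorbed using concavity of $K$ and monotonicity of $t/I_\Omega(t)$ and $1/\phi_X$). Taking the infimum over $0\le h\in S_Y$ using the displayed $K$-bound yields (\ref{polaca}) in this range.

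For $t\in(\mu(\Omega)/2,\mu(\Omega)]$, I follow Part~II. The only new input required is the $(X,S_Y)$-analog of the Poincar\'e bound (\ref{poinbesov}), obtained by repeating the proof of Theorem~\ref{teobmo} with the decomposition $f=(f-h)+h$, $h\in S_Y$, and applying H\"older in $X$ to $\|f-h\|_{L^1}$ and in $Y$ to $\|\,|\nabla h|\,\|_{L^1}$. This yields
\[
\frac{1}{\mu(\Omega)}\int_\Omega|f-f_\Omega|\,d\mu\le c\,\frac{K\bigl(\frac{\mu(\Omega)/2}{I_\Omega(\mu(\Omega)/2)}\frac{\phi_X(\mu(\Omega))}{\phi_Y(\mu(\Omega))},\,f;X,S_Y\bigr)}{\phi_X(\mu(\Omega))}.
\]
Plugging this into the Part~II argument, which bounded $t(|f|_\mu^{\ast\ast}(t)-|f|_\mu^\ast(t))$ by $2\int_\Omega|f-f_\Omega|\,d\mu$ and then exploited concavity of $K$ and the monotonicity of $s\mapsto s/I_\Omega(s)$ to transfer the bound to the range $t>\mu(\Omega)/4$, completes the proof.

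There is no genuine obstacle; the only bookkeeping hurdle is keeping the split roles of $X$ (controlling $|f|-h$) and $Y$ (controlling $|\nabla h|$) straight through the H\"older steps, which is exactly what creates the extra $\phi_X/\phi_Y$ factor in the dilation parameter.
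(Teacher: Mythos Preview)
Your proposal is correct and follows essentially the same approach as the paper: adapt the proof of Theorem~\ref{main1} by estimating $|\nabla h|_{\mu}^{\ast\ast}(t)$ via H\"older in $Y$ rather than in $X$, which produces the extra factor $\phi_X(t)/\phi_Y(t)$ in the $K$-functional argument. The paper's own proof is in fact terser than yours---it only writes out the Part~I modification (referring back to (\ref{det5}), (\ref{det1}), and the $Y$-analogue of (\ref{det2}))---whereas you also spell out the Part~II adaptation and the mixed $(X,S_Y)$ Poincar\'e inequality; this is entirely in the intended spirit and fills in what the paper leaves implicit.
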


\begin{proof}
Since the proof is almost the same as the proof of Theorem \ref{main1} we
shall only briefly indicate the necessary changes. Let $f=f_{0}+f_{1}$ be a
decomposition of $f$, then using estimate (\ref{det2}), with $Y$ instead of
$X,$ we get
\[
\left\vert \nabla f_{1}\right\vert _{\mu}^{\ast\ast}(t)\leq\frac{\left\Vert
\left\vert \nabla f_{1}\right\vert \right\Vert _{Y}}{\phi_{Y}(t)}.
\]
Therefore,
\begin{equation}
\frac{t}{I_{\Omega}(t)}\left\vert \nabla f_{1}\right\vert _{\mu}^{\ast\ast
}(t)\leq\frac{\phi_{X}(t)}{\phi_{X}(t)}\frac{t}{I_{\Omega}(t)}\frac{\left\Vert
\left\vert \nabla f_{1}\right\vert \right\Vert _{Y}}{\phi_{Y}(t)}.
\label{det41}%
\end{equation}
Inserting (\ref{det1}) and (\ref{det41}) back in (\ref{det5}) we find that,
for $0<t<\mu(\Omega),$%
\begin{align*}
\left\vert f\right\vert _{\mu}^{\ast\ast}(t/2)-\left\vert f\right\vert _{\mu
}^{\ast}(t/2)  &  \leq\frac{\left\Vert f_{0}\right\Vert _{X}}{\phi_{X}%
(t)}+\frac{\phi_{X}(t)}{\phi_{X}(t)}\frac{t}{I_{\Omega}(t)}\frac{\left\Vert
\left\vert \nabla f_{1}\right\vert \right\Vert _{Y}}{\phi_{Y}(t)}\\
&  \leq\frac{1}{\phi_{X}(t)}\left(  \left\Vert f_{0}\right\Vert _{X}+\phi
_{X}(t)\frac{t}{I_{\Omega}(t)}\frac{\left\Vert \left\vert \nabla
f_{1}\right\vert \right\Vert _{Y}}{\phi_{Y}(t)}\right)  .
\end{align*}
The desired result follows taking infimum over all decompositions of $f.$
\end{proof}

\begin{remark}
Obviously if there exists a constant $c>0$ such that for all $t$
\begin{equation}
\phi_{X}(t)\leq c\phi_{Y}(t), \label{mass}%
\end{equation}
then for each $f\in X+S_{Y},$%
\[
\left\vert f\right\vert _{\mu}^{\ast\ast}(t/2)-\left\vert f\right\vert _{\mu
}^{\ast}(t/2)\leq2\frac{K\left(  \frac{ct}{I_{\Omega}(t)},f;X,S_{Y}\right)
}{\phi_{X}(t)},\text{ }0<t\leq\mu(\Omega).
\]
Note that $Y\subset X$ implies that (\ref{mass}) holds.
\end{remark}

\section{A variant of Theorem \ref{main1}\label{sec:variante}}

This section is devoted to the proof of an improvement of Theorem \ref{main1}
that will play an important role in Chapter \ref{contchap}. In this variant we
shall replace the variable inside the $K-$functional, namely $\frac
{t}{I_{\Omega}(t)},$ by a smaller function that depends on the space $X.$ The
relevant functions are defined next

\begin{definition}
\label{ladefdelapsi}Let $X$ be a r.i. space on $\Omega.$ For $t\in
(0,\mu(\Omega))$ we let
\begin{equation}
\psi_{X,\Omega}(t)=\frac{\phi_{X}(t)}{t}\left\Vert \frac{s}{I_{\Omega}(s)}%
\chi_{(0,t)}(s)\right\Vert _{\bar{X}^{^{\prime}}}, \label{polaka3}%
\end{equation}%
\begin{equation}
\Psi_{X,\Omega}(t)=\phi_{X}(t)\left\Vert \frac{1}{I_{\Omega}(s)}\chi
_{(0,t)}(s)\right\Vert _{\bar{X}^{^{\prime}}}. \label{polaka3sig}%
\end{equation}

\end{definition}

If either $X$ and/or $\Omega$ are clear from the context we shall drop the
corresponding sub-index. In the next Lemma we collect a few elementary remarks
connected with these functions.

\begin{lemma}
\label{ellemadelapsi}(i) The function $\psi_{X}(t)$ is always finite, and in
fact%
\[
\psi_{X}(t)\leq\frac{t}{I_{\Omega}(t)},\text{ }t\in(0,\mu(\Omega)).
\]
(ii) In the isoperimetric case there is no improvement: When $X=L^{1},$%
\[
\psi_{L^{1}}(t)=\frac{t}{I_{\Omega}(t)}.
\]
(iii) We always have%
\[
\psi_{X}\leq\Psi_{X}.
\]
(iv) The function $\Psi_{X}$ is increasing

(v) A necessary and sufficient condition for $\Psi_{X}(t)$ to take only finite
values is%
\begin{equation}
\left\Vert \frac{1}{I_{\Omega}(s)}\right\Vert _{\bar{X}^{^{\prime}}}<\infty.
\label{conditione}%
\end{equation}

\end{lemma}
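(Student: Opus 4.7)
The plan is to verify the five parts directly from the definitions, leaning on a single preliminary observation: since $I_{\Omega}$ is concave with $I_{\Omega}(0)=0$, the quotient $I_{\Omega}(s)/s$ is non-increasing on $(0,\mu(\Omega))$, equivalently $s/I_{\Omega}(s)$ is non-decreasing. This, together with the duality relation $\phi_{X}(s)\phi_{X'}(s)=s$ and the symmetry $I_{\Omega}(s)=I_{\Omega}(\mu(\Omega)-s)$, will drive everything.

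For (i), monotonicity of $s/I_{\Omega}(s)$ gives the pointwise bound $\tfrac{s}{I_{\Omega}(s)}\chi_{(0,t)}(s)\le \tfrac{t}{I_{\Omega}(t)}\chi_{(0,t)}(s)$, so taking $\bar X'$-norms yields $\|\tfrac{s}{I_{\Omega}(s)}\chi_{(0,t)}\|_{\bar X'}\le \tfrac{t}{I_{\Omega}(t)}\phi_{X'}(t)=\tfrac{t^{2}}{I_{\Omega}(t)\phi_{X}(t)}$; multiplying by $\phi_{X}(t)/t$ gives $\psi_{X}(t)\le t/I_{\Omega}(t)$. For (ii), when $X=L^{1}$ the associate space is $L^{\infty}$, $\phi_{L^{1}}(t)=t$, and the norm is the essential supremum, which by the same monotonicity equals $t/I_{\Omega}(t)$; combining these gives the claimed equality. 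For (iii), on the support $(0,t)$ we have $s\le t$, so $\tfrac{s}{I_{\Omega}(s)}\chi_{(0,t)}(s)\le t\cdot\tfrac{1}{I_{\Omega}(s)}\chi_{(0,t)}(s)$; again taking $\bar X'$-norms and multiplying by $\phi_{X}(t)/t$ yields $\psi_{X}\le\Psi_{X}$.

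For (iv), the fundamental function $\phi_{X}$ is non-decreasing (it may be taken concave with $\phi_{X}(0)=0$), and for $t_{1}\le t_{2}$ the pointwise inequality $\chi_{(0,t_{1})}\le\chi_{(0,t_{2})}$ gives the monotonicity of $\|I_{\Omega}^{-1}\chi_{(0,t)}\|_{\bar X'}$ in $t$; the product $\Psi_{X}(t)$ is therefore non-decreasing. For (v), sufficiency of $\|1/I_{\Omega}\|_{\bar X'}<\infty$ is immediate from $\Psi_{X}(t)\le \phi_{X}(t)\|1/I_{\Omega}\|_{\bar X'}$. For necessity I would use the symmetry $I_{\Omega}(s)=I_{\Omega}(\mu(\Omega)-s)$: the change of variable $s\mapsto\mu(\Omega)-s$ shows that $\chi_{(0,t)}(s)/I_{\Omega}(s)$ and $\chi_{(\mu(\Omega)-t,\mu(\Omega))}(s)/I_{\Omega}(s)$ are equimeasurable, hence have equal $\bar X'$-norm; for $t\ge\mu(\Omega)/2$ their sum dominates $1/I_{\Omega}$ pointwise on $(0,\mu(\Omega))$, giving $\|1/I_{\Omega}\|_{\bar X'}\le 2\|I_{\Omega}^{-1}\chi_{(0,t)}\|_{\bar X'}=2\Psi_{X}(t)/\phi_{X}(t)$, so finiteness of $\Psi_{X}$ at a single $t\ge\mu(\Omega)/2$ forces $\|1/I_{\Omega}\|_{\bar X'}<\infty$.

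There is no real obstacle here: each item reduces to one of monotonicity of $s/I_{\Omega}(s)$, the duality $\phi_{X}\phi_{X'}=s$, or the interval symmetry of $I_{\Omega}$. The only point requiring a moment of thought is (v), where one must avoid the naive attempt to let $t\to\mu(\Omega)^{-}$ (which would require a Fatou argument on an uncontrolled quantity) and instead exploit symmetry to bootstrap from a single interior truncation to the full norm.
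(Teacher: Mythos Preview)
Your proof is correct and follows essentially the same approach as the paper: parts (i)--(iv) match the paper's argument almost verbatim (monotonicity of $s/I_{\Omega}(s)$, the duality $\phi_{X}\phi_{X'}=s$, and monotonicity of the truncated norm), and for (v) both you and the paper exploit the symmetry $I_{\Omega}(s)=I_{\Omega}(\mu(\Omega)-s)$ to bound $\|1/I_{\Omega}\|_{\bar X'}$ by $2\Psi_{X}(\mu(\Omega)/2)/\phi_{X}(\mu(\Omega)/2)$. The only cosmetic difference is that the paper writes the necessity in (v) as a triangle-inequality estimate at $t=\mu(\Omega)/2$, whereas you phrase it via equimeasurability of the two half-interval truncations; the content is the same.
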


\begin{proof}
(i) Since $\frac{s}{I_{\Omega}(s)}$ is increasing,
\begin{align*}
\frac{\phi_{X}(t)}{t}\left\Vert \frac{s}{I_{\Omega}(s)}\chi_{(0,t)}%
(s)\right\Vert _{\bar{X}^{^{\prime}}}  &  \leq\frac{\phi_{X}(t)}{t}\frac
{t}{I_{\Omega}(t)}\left\Vert \chi_{(0,t)}(s)\right\Vert _{\bar{X}^{^{\prime}}%
}\\
&  =\frac{\phi_{X}(t)}{t}\frac{t}{I_{\Omega}(t)}\phi_{X^{\prime}}(t)\\
&  =\frac{t}{I_{\Omega}(t)}.
\end{align*}

(ii) For $X=L^{1},$%
\begin{align*}
\psi_{L^{1}}(t)  &  =\frac{\phi_{L^{1}}(t)}{t}\left\|  \frac{s}{I_{\Omega}%
(s)}\chi_{(0,t)}(s)\right\|  _{L^{\infty}}\\
&  =\frac{t}{t}\sup_{s<t}\left\{  \frac{s}{I_{\Omega}(s)}\right\} \\
&  =\frac{t}{I_{\Omega}(t)}.
\end{align*}
(iii) Since $s\uparrow$%
\begin{align*}
\psi_{X}(t)  &  =\frac{\phi_{X}(t)}{t}\left\|  \frac{s}{I_{\Omega}(s)}%
\chi_{(0,t)}(s)\right\|  _{\bar{X}^{^{\prime}}}\\
&  \leq\phi_{X}(t)\left\|  \frac{1}{I_{\Omega}(s)}\chi_{(0,t)}(s)\right\|
_{\bar{X}^{^{\prime}}}\\
&  =\Psi_{X}(t).
\end{align*}

(iv) By inspection $\left\Vert \frac{1}{I_{\Omega}(s)}\chi_{(0,t)}%
(s)\right\Vert _{\bar{X}^{^{\prime}}}$ increases in the variable $t.$

(v) By (iv) for $t\in(0,\mu(\Omega)),$
\[
\Psi_{X}(t)\leq\Psi_{X}(\mu(\Omega))=\phi_{X}(\mu(\Omega))\left\Vert \frac
{1}{I_{\Omega}(s)}\right\Vert _{\bar{X}^{^{\prime}}}.
\]
On the other hand, by the triangle inequality%
\[
\Psi_{X}(\mu(\Omega)/2)\geq\phi_{X}(\mu(\Omega)/2)\left\Vert \frac
{1}{I_{\Omega}(s)}\right\Vert _{\bar{X}^{^{\prime}}}-\phi_{X}(\mu
(\Omega)/2)\left\Vert \frac{\chi_{(\mu(\Omega)/2,\mu(\Omega))}(s)}{I_{\Omega
}(s)}\right\Vert _{\bar{X}^{^{\prime}}}%
\]
It follows that%
\[
\phi_{X}(\mu(\Omega)/2)\left\Vert \frac{1}{I_{\Omega}(s)}\right\Vert _{\bar
{X}^{^{\prime}}}\leq2\Psi_{X}(\mu(\Omega)/2).
\]

\end{proof}

\begin{remark}
\label{rekatao}Unless mention to the contrary, we shall always assume that
(\ref{conditione}) holds when dealing with the functions introduced in
Definition\ \ref{ladefdelapsi}.
\end{remark}

\begin{theorem}
\label{main2}Let $X$ be a r.i. space on $\Omega.$ Then, for all $f\in
X+S_{X},$%
\begin{equation}
\left\vert f\right\vert _{\mu}^{\ast\ast}(t)-\left\vert f\right\vert _{\mu
}^{\ast}(t)\leq8\frac{K\left(  \psi_{X}(2t),f;X,S_{X}\right)  }{\phi_{X}%
(t)},\text{ }0<t\leq\mu(\Omega)/2. \label{polaka2}%
\end{equation}

\end{theorem}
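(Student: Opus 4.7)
The plan is to follow the structure of Part 1 of the proof of \thmref{main1}, but to replace the isoperimetric-gradient estimate coming from inequality (\ref{reod00}) with a sharper duality-based bound that naturally produces the function $\psi_X$. Specifically, given $0\le h\in S_X$, the decomposition $|f|=(|f|-h)+h$ combined with the subadditivity of $|\cdot|_\mu^{\ast\ast}$ and the basic inequality (\ref{aa1}) yields, exactly as in Part 1 of \thmref{main1} (and valid for all $t\in(0,\mu(\Omega)/2]$ since $(II)\le(III)$ holds throughout that range),
\[
|f|_\mu^{\ast\ast}(t)-|f|_\mu^{\ast}(t)\le 2|\,|f|-h|_\mu^{\ast\ast}(t)+2\bigl(|h|_\mu^{\ast\ast}(2t)-|h|_\mu^{\ast}(2t)\bigr).
\]
The first term is dominated by $2\|f-h\|_X/\phi_X(t)$ via the standard H\"older argument already used in the proof of \thmref{main1}, so the only thing that needs to be upgraded is the gradient term.

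The key new step is the following duality estimate. Starting from the identity (integration by parts, justified below via approximation)
\[
|h|_\mu^{\ast\ast}(s)-|h|_\mu^{\ast}(s)=\frac{1}{s}\int_0^s u\bigl(-(|h|_\mu^{\ast})'\bigr)(u)\,du,
\]
I would factor the integrand as $u=\tfrac{u}{I_\Omega(u)}\cdot I_\Omega(u)$ and apply the generalized H\"older inequality (\ref{hol}) in the pairing $\bar X,\bar X'$:
\[
|h|_\mu^{\ast\ast}(2t)-|h|_\mu^{\ast}(2t)\le\frac{1}{2t}\bigl\|(-(|h|_\mu^{\ast})')I_\Omega\,\chi_{(0,2t)}\bigr\|_{\bar X}\cdot\Bigl\|\tfrac{u}{I_\Omega(u)}\chi_{(0,2t)}(u)\Bigr\|_{\bar X'}.
\]
By Definition \ref{ladefdelapsi} the second factor is exactly $2t\,\psi_X(2t)/\phi_X(2t)$, which is precisely where $\psi_X$ enters.

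To control the first factor by $\|\nabla h\|_X$, I would invoke the approximating sequence $\{h_n\}\subset\mathrm{Lip}(\Omega)$ furnished by \lemref{ll1}. Property (\ref{aa}) of that lemma, combined with the Calder\'on--Hardy majorization principle (\ref{mayorante}), gives
\[
\bigl\|(-(|h_n|_\mu^{\ast})')I_\Omega\bigr\|_{\bar X}\le\|\nabla h_n\|_X\le\bigl(1+\tfrac{1}{n}\bigr)\|\nabla h\|_X,
\]
using (\ref{cota01}) for the last inequality. One then applies the previous chain of inequalities to each $h_n$; by \lemref{garlem} (uniform convergence of $(h_n)_\mu^{\ast\ast}$ and pointwise convergence of $(h_n)_\mu^{\ast}$ at continuity points), together with the Fatou property of the r.i.\ norm, one passes to the limit to obtain
\[
|h|_\mu^{\ast\ast}(2t)-|h|_\mu^{\ast}(2t)\le\frac{\psi_X(2t)}{\phi_X(2t)}\|\nabla h\|_X\le\frac{\psi_X(2t)}{\phi_X(t)}\|\nabla h\|_X,
\]
where the last step uses that $\phi_X$ is increasing. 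Combining all estimates gives
\[
|f|_\mu^{\ast\ast}(t)-|f|_\mu^{\ast}(t)\le\frac{c}{\phi_X(t)}\Bigl(\|f-h\|_X+\psi_X(2t)\|\nabla h\|_X\Bigr),
\]
and taking the infimum over $0\le h\in S_X$, together with (\ref{porfin}), yields the desired inequality (\ref{polaka2}).

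The main obstacle is the last step: since $|h|_\mu^{\ast}$ need not be differentiable, the integration-by-parts identity and the Hardy-type majorization behind (\ref{aa}) must be carried out on the smooth approximants $h_n$, and one must pass to the limit in the $\bar X$ norm without losing track of constants. The technical ingredients for this --- namely Lemmas \ref{ll1} and \ref{garlem} and the Fatou property --- are already in place, so no new machinery is needed, only careful bookkeeping to handle the case when $2t$ is not a continuity point of $|h|_\mu^{\ast}$ (handled by approaching $2t$ from the right through continuity points and using right-continuity of the rearrangement).
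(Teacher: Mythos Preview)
Your approach is essentially identical to the paper's: the same decomposition from Part~1 of Theorem~\ref{main1}, the same integration-by-parts identity on $|h_n|_\mu^\ast$, the same H\"older pairing that produces $\psi_X(2t)$, and the same passage to the limit via Lemmas~\ref{ll1} and~\ref{garlem}.

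One technical point you glossed over: Lemma~\ref{ll1} requires $h$ to be \emph{bounded}, whereas you invoke it for an arbitrary $0\le h\in S_X$. The paper handles this by first assuming $f$ is bounded, using (\ref{porfin1}) to restrict to $0\le h\le\|f\|_\infty$ (so $h$ is bounded and Lemma~\ref{ll1} applies), and then treating unbounded $f$ by the truncation $f_n=\min(|f|,n)$ exactly as at the end of Theorem~\ref{teo:elotro}. Your ``main obstacle'' paragraph identifies the limit-passage issue but not this boundedness prerequisite; once you insert the two-step reduction (bounded $f$ first, then truncate), your argument is complete and matches the paper's.
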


\begin{remark}
Lemma \ref{ellemadelapsi} (i) implies that (\ref{polaka2}) is stronger than
(\ref{polaka1}).
\end{remark}

\begin{proof}
\textbf{(of Theorem \ref{main2})}

Let $f\in X+S_{X},$ be bounded. Proceeding as in the proof of Theorem
\ref{main1} we can show that for any $h\in S_{X},$ with $0\leq h\leq\left\|
f\right\|  _{L^{\infty}},$%
\[
\left|  f\right|  _{\mu}^{\ast\ast}(t)-\left|  f\right|  _{\mu}^{\ast}%
(t)\leq4\left|  \left|  f\right|  -h\right|  _{\mu}^{\ast\ast}(t)+2(\left|
h\right|  _{\mu}^{\ast\ast}(2t)-\left|  h\right|  _{\mu}^{\ast}(2t)),\text{
\ \ }0<t<\mu(\Omega)/2.
\]
The first term on the right hand side was estimated in (\ref{det1})
\[
4\left|  \left|  f\right|  -h\right|  _{\mu}^{\ast\ast}(t)\leq4\left\|
\left(  \left|  f\right|  -h\right)  \right\|  _{X}\frac{1}{\phi_{X}(t)}.
\]
To estimate $\left|  h\right|  _{\mu}^{\ast\ast}(2t)-\left|  h\right|  _{\mu
}^{\ast}(2t),$ consider $\left\{  h_{n}\right\}  _{n\in N}$ , the sequence of
Lip functions associated to $h$ that is provided by Lemma \ref{ll1}. Then
\begin{align*}
\left|  h_{n}\right|  _{\mu}^{\ast\ast}(2t)-\left|  h_{n}\right|  _{\mu}%
^{\ast}(2t)  &  =\frac{1}{2t}\int_{0}^{2t}s\left(  -\left|  h_{n}\right|
_{\mu}^{\ast}\right)  ^{\prime}(s)ds\text{ (integration by parts)}\\
&  =\frac{1}{2t}\int_{0}^{2t}s\left(  -\left|  h_{n}\right|  _{\mu}^{\ast
}\right)  ^{\prime}(s)I_{\Omega}(s)\frac{ds}{I(s)}\\
&  \leq\frac{1}{2t}\left\|  \frac{s}{I_{\Omega}(s)}\chi_{(0,2t)}(s)\right\|
_{\bar{X}^{^{\prime}}}\left\|  \left(  -\left|  h_{n}\right|  _{\mu}^{\ast
}\right)  ^{\prime}(s)I_{\Omega}(s)\right\|  _{\bar{X}}\text{ \ (by
H\"{o}lder's inequality)}\\
&  \leq\frac{1}{2t}\left\|  \frac{s}{I_{\Omega}(s)}\chi_{(0,2t)}(s)\right\|
_{\bar{X}^{^{\prime}}}\left\|  \left|  \nabla h_{n}\right|  \right\|
_{X}\text{ \ \ (by (\ref{aa}))}\\
&  \leq\frac{1}{2t}\left\|  \frac{s}{I_{\Omega}(s)}\chi_{(0,2t)}(z)\right\|
_{\bar{X}^{^{\prime}}}(1+\frac{1}{n})\left\|  \left|  \nabla h\right|
\right\|  _{X}\text{ \ (by [(\ref{cota01}), Chapter \ref{preliminar}]).}%
\end{align*}
On the other hand, from [(\ref{converge}) Chapter \ref{preliminar}] and Lemma
\ref{garlem}, we have $\left|  h_{n}\right|  _{\mu}^{\ast}(s)\rightarrow
\left|  h\right|  _{\mu}^{\ast}(s),$ and $\left|  h_{n}\right|  _{\mu}%
^{\ast\ast}(s)\rightarrow\left|  h\right|  _{\mu}^{\ast\ast}(s).$
Consequently,
\[
\left|  h\right|  _{\mu}^{\ast\ast}(2t)-\left|  h\right|  _{\mu}^{\ast
}(2t)\leq\frac{1}{2t}\left\|  \frac{s}{I_{\Omega}(s)}\chi_{(0,2t)}(s)\right\|
_{\bar{X}^{^{\prime}}}\left\|  \left|  \nabla h\right|  \right\|  _{X}.
\]
Summarizing,
\begin{align*}
\left|  f\right|  _{\mu}^{\ast\ast}(t)-\left|  f\right|  _{\mu}^{\ast}(t)  &
\leq4\frac{\left\|  \left|  f\right|  -h\right\|  _{X}}{\phi_{X}(t)}+\frac
{1}{2t}\left\|  \frac{s}{I_{\Omega}(s)}\chi_{(0,2t)}(s)\right\|  _{\bar
{X}^{^{\prime}}}\left\|  \left|  \nabla h\right|  \right\|  _{X}\\
&  =\frac{4}{\phi_{X}(t)}\left(  \left\|  \left|  f\right|  -h\right\|
_{X}+\frac{\phi_{X}(t)}{2t}\left\|  \frac{s}{I_{\Omega}(s)}\chi_{(0,2t)}%
(s)\right\|  _{\bar{X}^{^{\prime}}}\left\|  \left|  \nabla h\right|  \right\|
\right)  _{X}\\
&  \leq\frac{4}{\phi_{X}(t)}\left(  \left\|  \left|  f\right|  -h\right\|
_{X}+\psi_{X}(2t)\left\|  \left|  \nabla h\right|  \right\|  \right)  _{X}.
\end{align*}
Thus,%
\begin{align*}
\left|  f\right|  _{\mu}^{\ast\ast}(t)-\left|  f\right|  _{\mu}^{\ast}(t)  &
\leq\frac{4}{\phi_{X}(t)}\inf_{0\leq h\leq\left\|  f\right\|  _{\infty},h\in
S_{X}}\left\{  \left\|  \left|  f\right|  -h\right\|  _{X}+\psi(2t)\left\|
\left|  \nabla h\right|  \right\|  \right\} \\
&  \leq\frac{8}{\phi_{X}(t)}\frac{K\left(  \psi_{X}(2t),f;X,S_{X}\right)
}{\phi_{X}(t)}\text{ (by (\ref{porfin1})).}%
\end{align*}
When $f$ is not bounded we consider the sequence $f_{n}=\min(\left|  f\right|
,n)\nearrow\left|  f\right|  $, and we proceed as in the proof of Theorem
\ref{teo:elotro}.
\end{proof}

\section{Isoperimetry\label{sec:iso}}

In this section we show the connection of (\ref{polaka1}) and (\ref{polaka2})
with isoperimetry. Observe that (\ref{polaka1}) holds for all r.i. spaces. In
particular, it holds for $X=L^{1}.$

\begin{theorem}
\label{ref teo 2}Let $G$ be a continuous function on $(0,\mu(\Omega))$ which
is zero at zero and symmetric around $\mu(\Omega)/2.$ Then the following are equivalent

i) Isoperimetric inequality: There exists an absolute constant $c>0,$ such
that
\begin{equation}
G(t)\leq cI_{\Omega}(t),\text{ }0<t\leq\mu(\Omega). \label{vertigo}%
\end{equation}

ii) There exists an absolute constant $c>0$ such that for each $f\in L^{1},$%
\begin{equation}
\left|  f\right|  _{\mu}^{\ast\ast}(t)-\left|  f\right|  _{\mu}^{\ast}(t)\leq
c\frac{K\left(  \frac{t}{G(t)},f;L^{1},S_{L^{1}}\right)  }{t},\text{ }%
0<t\leq\mu(\Omega)/2. \label{polaka4}%
\end{equation}

\end{theorem}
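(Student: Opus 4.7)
The plan is to establish the two directions separately. For (i) $\Rightarrow$ (ii), I would specialize Theorem \ref{main1} to $X=L^{1}$. Since $\phi_{L^{1}}(t)=t$, inequality (\ref{polaka1}) reads
\[
\left\vert f\right\vert _{\mu}^{\ast\ast}(t)-\left\vert f\right\vert _{\mu}^{\ast}(t)\leq \frac{16}{t}\,K\!\left(\frac{t}{I_{\Omega}(t)},f;L^{1},S_{L^{1}}\right),\quad 0<t\leq\mu(\Omega)/2.
\]
Using the hypothesis $G\leq c\,I_{\Omega}$ gives $t/I_{\Omega}(t)\leq c\,t/G(t)$, and since $s\mapsto K(s,f;L^{1},S_{L^{1}})$ is nondecreasing, concave, and vanishes at $s=0$, concavity yields $K(\lambda s,f)\leq\lambda K(s,f)$ for $\lambda\geq 1$. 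Composing these two observations produces (\ref{polaka4}) with a modified constant.

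The substantial direction is (ii) $\Rightarrow$ (i), which I would prove by testing (\ref{polaka4}) against characteristic functions. Fix a Borel set $A\subset\Omega$ with $0<\mu(A)\leq\mu(\Omega)/2$ and take $f=\chi_{A}$. A direct computation gives $\left\vert f\right\vert _{\mu}^{\ast}=\chi_{[0,\mu(A))}$ and $\left\vert f\right\vert _{\mu}^{\ast\ast}(\mu(A))=1$, so evaluating (\ref{polaka4}) at $t=\mu(A)$ reads
\[
1\leq \frac{c}{\mu(A)}\,K\!\left(\frac{\mu(A)}{G(\mu(A))},\chi_{A};L^{1},S_{L^{1}}\right).
\]

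The key step is to bound the $K$-functional from above by the perimeter. For $h>0$ I would use the Lipschitz truncation $f_{h}(x):=\max(0,1-d(x,A)/h)$, which belongs to $S_{L^{1}}(\Omega)$, equals $1$ on $A$, vanishes off $A_{h}$, and satisfies $|\nabla f_{h}|\leq h^{-1}\chi_{A_{h}\setminus A}$. Using $f_{h}$ as a competitor yields
\[
K(s,\chi_{A};L^{1},S_{L^{1}})\leq \|\chi_{A}-f_{h}\|_{L^{1}}+s\,\||\nabla f_{h}|\|_{L^{1}}\leq \Bigl(1+\frac{s}{h}\Bigr)(\mu(A_{h})-\mu(A)).
\]
Passing to the $\liminf$ as $h\rightarrow 0^{+}$ along a sequence realizing the Minkowski content, the factor $\mu(A_{h})-\mu(A)\rightarrow 0$ while $(\mu(A_{h})-\mu(A))/h\rightarrow P(A;\Omega)$, producing the clean bound $K(s,\chi_{A};L^{1},S_{L^{1}})\leq s\,P(A;\Omega)$ valid for every $s>0$. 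Substituting $s=\mu(A)/G(\mu(A))$ into the displayed inequality above gives $G(\mu(A))\leq c\,P(A;\Omega)$.

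Finally, taking the infimum over all Borel $A$ of measure $s\in(0,\mu(\Omega)/2]$ yields $G(s)\leq c\,I_{\Omega}(s)$ on that range, and the symmetry of both $G$ and $I_{\Omega}$ around $\mu(\Omega)/2$ extends the bound to all $s\in(0,\mu(\Omega)]$, which is (\ref{vertigo}). The principal technical point, though not deep, is the $\liminf$ passage inside the $K$-functional estimate: one needs to fix a sequence $h_{n}\rightarrow 0$ along which $(\mu(A_{h_{n}})-\mu(A))/h_{n}\rightarrow P(A;\Omega)$ and observe that this forces $\mu(A_{h_{n}})-\mu(A)\rightarrow 0$, so the first summand in the competitor bound drops out while the second converges to $s\,P(A;\Omega)$. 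If $P(A;\Omega)=+\infty$ the isoperimetric inequality is trivial, so no separate case analysis is required.
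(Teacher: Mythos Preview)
Your argument for (i)$\Rightarrow$(ii) is the same as the paper's.

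For (ii)$\Rightarrow$(i) you take a genuinely different and more direct route. The paper invokes \cite[Lemma~3.7]{bobk} to produce Lipschitz $f_n\to\chi_A$ in $L^1$ with $\|\,|\nabla f_n|\,\|_{L^1}\le P(A;\Omega)$, applies (\ref{polaka4}) to each $f_n$, passes to the limit in $n$ using Lemma~\ref{garlem} at a point $r>\mu(A)$ (a continuity point of $(\chi_A)^{\ast}_{\mu}$), and only then lets $r\to\mu(A)^{+}$. You instead apply (\ref{polaka4}) directly to $f=\chi_A$ at $t=\mu(A)$ and estimate $K(s,\chi_A)$ by the explicit competitor $f_h$. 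If your gradient bound were valid this would be a clean simplification, eliminating both Lemma~\ref{garlem} and the secondary limit in $r$.

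The gap is the pointwise claim $|\nabla f_h|\le h^{-1}\chi_{A_h\setminus A}$. In this paper $|\nabla\cdot|$ denotes the local Lipschitz constant, and for $x\in A$ one has
\[
|\nabla f_h|(x)=h^{-1}\limsup_{y\to x}\frac{d(y,A)}{d(x,y)},
\]
which need not vanish when $x\in A\cap\partial\bar A$; it can be as large as $h^{-1}$. In $\mathbb{R}^n$ this is harmless a.e.\ because Rademacher's theorem forces the local Lipschitz constant of $f_h$ to coincide a.e.\ with the classical gradient, and the latter vanishes a.e.\ on the level set $\{f_h=1\}$. But the theorem is stated for general metric measure spaces satisfying only Condition~\ref{condition}, where no Rademacher-type differentiability is available. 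The integrated estimate you actually need,
\[
\liminf_{h\to0}\bigl\|\,|\nabla f_h|\,\bigr\|_{L^1}\le P(A;\Omega),
\]
is precisely the content of the Bobkov--Houdr\'e lemma the paper cites as a black box. So the external ingredient has not been removed, only relocated into an unjustified pointwise bound. Once that lemma is granted, however, the remainder of your argument---evaluating at $t=\mu(A)$ and reading off $G(\mu(A))\le cP(A;\Omega)$ directly---is cleaner than the paper's double limit.
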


\begin{proof}
(i) $\Rightarrow$(ii) Since
\[
\left\vert f\right\vert _{\mu}^{\ast\ast}(t)-\left\vert f\right\vert _{\mu
}^{\ast}(t)\leq c\frac{K\left(  \frac{t}{I_{\Omega}(t)},f;L^{1},S_{L^{1}%
}\right)  }{t},\text{ }0<t\leq\mu(\Omega)/2.
\]
it follows that (\ref{vertigo}) implies (\ref{polaka4}).

(ii) $\Rightarrow$ (i). Suppose that $A$ is a Borel set with $0<\mu
(A)<\mu(\Omega)/2.$ We may assume, without loss, that $P(A;\Omega)<\infty.$ By
\cite[Lemma 3.7]{bobk} we can select a sequence $\{f_{n}\}_{n\in N}$ of Lip
functions such that $f_{n}\underset{L^{1}}{\rightarrow}\chi_{A}$, and
\[
P(A;\Omega)\geq\lim\sup_{n\rightarrow\infty}\left\Vert \left\vert \nabla
f_{n}\right\vert \right\Vert _{L^{1}}.
\]
Going through a subsequence, if necessary, we can actually assume that for all
$n$ we have
\[
P(A;\Omega)\geq\left\Vert \left\vert \nabla f_{n}\right\vert \right\Vert
_{L^{1}}\text{ }.
\]
From (\ref{polaka4}) we know that there exists a constant $c>0$ such that for
all $0<t\leq\mu(\Omega)/2,$%
\[
\left\vert f_{n}\right\vert _{\mu}^{\ast\ast}(t)-\left\vert f_{n}\right\vert
_{\mu}^{\ast}(t)\leq c\frac{K\left(  \frac{t}{G(t)},f_{n};L^{1},S_{L^{1}%
}\right)  }{t}.
\]
We take limits when $n\rightarrow\infty$ on both sides of this inequality. To
compute on the left hand side we observe that, since, $f_{n}\underset{L^{1}%
}{\rightarrow}\chi_{A}$, Lemma \ref{garlem} implies that:
\begin{align*}
\left\vert f_{n}\right\vert _{\mu}^{\ast\ast}(t)\rightarrow\left\vert \chi
_{A}\right\vert _{\mu}^{\ast\ast}(t)  &  ,\text{ uniformly for }t\in
\lbrack0,1]\text{, and }\\
\left\vert f_{n}\right\vert _{\mu}^{\ast}(t)\rightarrow\left\vert \chi
_{A}\right\vert _{\mu}^{\ast}(t)\text{ }  &  \text{for }t\in\left(
0,1\right)  .
\end{align*}
Fix $1/2>r>\mu(A).$ We have
\begin{align*}
\lim_{n\rightarrow\infty}\left(  \left\vert f_{n}\right\vert _{\mu}^{\ast\ast
}(r)-\left\vert f_{n}\right\vert _{\mu}^{\ast}(r)\right)   &  =\left(
\chi_{A}\right)  _{\mu}^{\ast\ast}(r)-\left(  \chi_{A}\right)  _{\mu}^{\ast
}(r)\\
&  =\left(  \chi_{A}\right)  _{\mu}^{\ast\ast}(r)\text{ (since }\left(
\chi_{A}\right)  _{\mu}^{\ast}(r)=\chi_{(0,\mu(A))}(r)=0\text{)}\\
&  =\frac{\mu(A)}{r}.
\end{align*}
Now, to estimate the right hand side we observe that, for each $n,$ $f_{n}\in
L^{1}\cap S_{L^{1}}.$ Consequently, by the definition of $K$-functional, we
have
\begin{align*}
\frac{K(\frac{t}{G(t)},f_{n};L^{1},S_{L^{1}})}{t}  &  \leq\min\left\{
\frac{\left\Vert f_{n}\right\Vert _{L^{1}}}{t},\frac{1}{G(t)}\left\Vert
\left\vert \nabla f_{n}\right\vert \right\Vert _{L^{1}}\right\} \\
&  \leq\min\left\{  \frac{\left\Vert f_{n}\right\Vert _{L^{1}}}{t},\frac
{1}{G(t)}P(A;\Omega)\right\}  .
\end{align*}
Thus,
\[
\lim_{n\rightarrow\infty}\frac{K(\frac{t}{G(t)},f_{n};L^{1},S_{L^{1}})}{t}%
\leq\min\left\{  \frac{\mu(A)}{t},\frac{1}{G(t)}P(A;\Omega)\right\}  .
\]
Combining these estimates we find that for all $1/2>r>\mu(A),$%
\[
\frac{\mu(A)}{r}\leq c\frac{1}{G(r)}P(A;\Omega).
\]
Let $r\rightarrow\mu(A)$ then, by the continuity of $G,$ we find
\[
1\leq c\frac{1}{G(\mu(A))}P(A;\Omega),
\]
or
\[
G(\mu(A))\leq cP(A;\Omega).
\]
Thus,%
\begin{align*}
G(\mu(A))  &  \leq c\inf\{P(B;\Omega):\mu(B)=\mu(A)\}\\
&  =cI_{\Omega}(\mu(A)).
\end{align*}
Suppose now that $t\in(\mu(\Omega)/2,\mu(\Omega)).$ Then $1-t\in(0,\mu
(\Omega))$ and by symmetry,%
\[
G(t)=G(1-t)\leq cI(1-t)=cI(t),
\]
and we are done.
\end{proof}

\chapter{Embedding into continuous functions\label{contchap}}

\section{Introduction and Summary}

In this chapter we obtain a general version of the Morrey-Sobolev theorem on
metric measure spaces $\left(  \Omega,d,\mu\right)  $ satisfying the usual assumptions.

\subsection{Inequalities for signed rearrangements\label{sigre}}

Let $\left(  \Omega,d,\mu\right)  $ be a metric measure space satisfying the
usual assumptions. We collect a few more facts about signed rearrangements.
First let us note that for $c\in\mathbb{R},$%
\begin{equation}
\left(  f+c\right)  _{\mu}^{\ast}(t)=f_{\mu}^{\ast}(t)+c. \label{rela}%
\end{equation}
Moreover, if $X(\Omega)$ is a r.i. space$,$ we have
\[
\Vert\left|  f\right|  _{\mu}^{\ast}\Vert_{\bar{X}(0,\mu(\Omega))}=\left\|
\left|  f\right|  \right\|  _{X({\Omega})}=\Vert f\Vert_{X({\Omega})}=\Vert
f_{\mu}^{\ast}\Vert_{\bar{X}(0,\mu(\Omega))},
\]
where $\bar{X}(0,\mu(\Omega))$ is the representation space of $X\left(
\Omega\right)  .$

The results of the previous chapter can be easily formulated in terms of
signed rearrangements. In particular, we shall now discuss in detail the
following extension (variant) of Theorem \ref{main2}.

\begin{theorem}
\label{main2sig}Let $X$ be a r.i. space on $\Omega.$ Then, for all $f\in
X+S_{X},$ we have,
\begin{equation}
f_{\mu}^{\ast\ast}(t)-f_{\mu}^{\ast}(t)\leq8\frac{K\left(  \psi(2t),f;X,S_{X}%
\right)  }{\phi_{X}(t)},\text{ }0<t\leq\mu(\Omega)/2, \label{polaka2sig}%
\end{equation}
where $\psi(t):=\psi_{X,\Omega}(t)=\frac{\phi_{X}(t)}{s}\left\|  \frac
{s}{I_{\Omega}(s)}\chi_{(0,t)}(s)\right\|  _{\bar{X}^{^{\prime}}}$ is the
function introduced in [(\ref{polaka3}), Chapter \ref{main}]).
\end{theorem}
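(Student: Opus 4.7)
The plan is to bootstrap from Theorem \ref{main2}, which handles $|f|$, to the signed version by exploiting two invariances: both sides of (\ref{polaka2sig}) are unchanged when a constant is added to $f$. On the left, this is (\ref{rela}): if $g = f + c$, then $g_\mu^\ast(t) = f_\mu^\ast(t) + c$ and hence $g_\mu^{\ast\ast}(t) - g_\mu^\ast(t) = f_\mu^{\ast\ast}(t) - f_\mu^\ast(t)$. On the right, constants lie in $S_X$ (indeed $\mu(\Omega) < \infty$ gives $c \in X$, and $|\nabla c| \equiv 0$), so the substitution $h \mapsto h + c$ is a bijection between competing decompositions of $f$ and of $f + c$ that preserves both $\|f - h\|_X$ and $\||\nabla h|\|_X$; hence $K(\psi(2t), f + c; X, S_X) = K(\psi(2t), f; X, S_X)$.

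First I would dispatch the bounded case: if $f \in L^\infty$, set $c = \|f\|_{L^\infty}$ and $g = f + c \geq 0$. Then $|g| = g$, so Theorem \ref{main2} applied to $g$ yields
\[
g_\mu^{\ast\ast}(t) - g_\mu^\ast(t) = |g|_\mu^{\ast\ast}(t) - |g|_\mu^\ast(t) \leq 8\,\frac{K(\psi(2t), g; X, S_X)}{\phi_X(t)},
\qquad 0 < t \leq \mu(\Omega)/2.
\]
The two invariances observed above convert this directly into (\ref{polaka2sig}) for $f$.

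For general $f \in X + S_X$, I would mimic the truncation argument used at the end of the proof of Theorem \ref{main2}: set $f_n = \max(\min(f, n), -n)$, so each $f_n$ is bounded and the bounded case gives
\[
(f_n)_\mu^{\ast\ast}(t) - (f_n)_\mu^\ast(t) \leq 8\,\frac{K(\psi(2t), f_n; X, S_X)}{\phi_X(t)}.
\]
I would then show the monotonicity
\[
K(\psi(2t), f_n; X, S_X) \leq K(\psi(2t), f; X, S_X).
\]
Indeed, for any competing decomposition $f = (f - h) + h$ with $h \in S_X$, the truncation $h_n = \max(\min(h, n), -n)$ satisfies $h_n \in S_X$ with $|\nabla h_n| \leq |\nabla h|$ a.e., and since the map $x \mapsto \max(\min(x, n), -n)$ is $1$-Lipschitz on $\mathbb{R}$, $|f_n - h_n| \leq |f - h|$ pointwise; hence $\|f_n - h_n\|_X \leq \|f - h\|_X$, and infimising gives the claimed bound.

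To pass to the limit, I would use that $f_n \to f$ a.e. with $|f_n - f| \searrow 0$, together with Lemma \ref{garlem} (noting that $f \in X + S_X \subset L^1(\Omega) + S_X$, so after splitting $f = f_0 + f_1$ and truncating each piece separately if necessary, $L^1$-convergence of the relevant approximants is obtained by dominated convergence). Lemma \ref{garlem} then gives $(f_n)_\mu^{\ast\ast}(t) \to f_\mu^{\ast\ast}(t)$ and $(f_n)_\mu^\ast(t) \to f_\mu^\ast(t)$ at continuity points of $f_\mu^\ast$; at the (at most countably many) discontinuity points one handles the limit by right-continuity of $f_\mu^\ast$. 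I expect the main obstacle to be precisely this last step: the definition of $S_X$ does not require its elements to be in $L^1$, so one must be careful to split $f$ and truncate each piece so that Lemma \ref{garlem}'s hypotheses are genuinely met; once that is arranged, the rest of the argument is formal and reduces, via the two invariances, to Theorem \ref{main2}.
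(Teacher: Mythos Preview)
Your proposal is correct and follows essentially the same strategy as the paper. The only cosmetic differences are that the paper takes ``bounded below'' as the intermediate case (subtracting $c=\inf_\Omega f$) and then approximates general $f$ by the one-sided truncations $f_n=\max(f,-n)$, whereas you take ``bounded'' and use two-sided truncations; both reduce to Theorem~\ref{main2} via the same translation invariances. Your caution about verifying the $L^1$ hypothesis of Lemma~\ref{garlem} is well placed; the paper handles this simply by observing $|f_n|\le|f|$ and invoking dominated convergence, so your more elaborate splitting is unnecessary once one notes that $|f|$ itself dominates the truncations.
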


\begin{proof}
Let us first further assume that $f$ is bounded below, and let $c=\inf
_{\Omega}f.$ We can then apply Theorem \ref{main2} to the positive function
$f-c$, and we obtain
\begin{equation}
\left(  f-c\right)  _{\mu}^{\ast\ast}(t)-\left(  f-c\right)  _{\mu}^{\ast
}(t)\leq8\frac{K\left(  \psi(2t),f-c;X,S_{X}\right)  }{\phi_{X}(t)}.
\label{caru}%
\end{equation}
We can simplify the left hand side of (\ref{caru}) using (\ref{rela})
\[
\left(  f-c\right)  _{\mu}^{\ast\ast}(t)-\left(  f-c\right)  _{\mu}^{\ast
}(t)=f_{\mu}^{\ast\ast}(t)-f_{\mu}^{\ast}(t).
\]
On the other hand, the sub-additivity of the $K-$functional, and the fact that
it is zero on constant functions, allows us to estimate the right hand side of
(\ref{caru}) as follows
\begin{align*}
K\left(  \psi(t),f-c;X,S_{X}\right)   &  \leq K\left(  \psi(t),f;X,S_{X}%
\right)  +K\left(  \psi(t),c;X,S_{X}\right) \\
&  =K\left(  \psi(t),f;X,S_{X}\right)  .
\end{align*}
Combining these observations we see that,
\[
f_{\mu}^{\ast\ast}(t)-f_{\mu}^{\ast}(t)\leq8\frac{K\left(  \psi(2t),f;X,S_{X}%
\right)  }{\phi_{X}(t)}.
\]
If $f$ is not bounded from below, we use an approximation argument. Let
\[
f_{n}=\max(f,-n),\text{ \ }n=1,2,\ldots
\]
Then by the previous discussion we have
\[
(f_{n})_{\mu}^{\ast\ast}(t)-(f_{n})_{\mu}^{\ast}(t)\leq8\frac{K\left(
\psi(2t),f_{n};X,S_{X}\right)  }{\phi_{X}(t)}.
\]
Now $f_{n}(x)\rightarrow f(x)$ $\ \mu-$a.e., with $\left|  f_{n}\right|
\leq\left|  f\right|  ,$ therefore, $L^{1}$ convergence follows by dominated
convergence, and we can then apply Lemma \ref{garlem} to conclude that%
\[
f_{\mu}^{\ast\ast}(t)-f_{\mu}^{\ast}(t)\leq8\frac{K\left(  \psi(2t),f_{n}%
;X,S_{X}\right)  }{\phi_{X}(t)}.
\]
We estimate the right hand side as follows. Given $\varepsilon>0,$ select
$h^{\varepsilon}\in S_{X}$ such that
\begin{equation}
\left\|  f-h^{\varepsilon}\right\|  _{X}+\psi(t)\left\|  \left|  \nabla
h^{\varepsilon}\right|  \right\|  _{X}\leq K\left(  \psi(t),f;X,S_{X}\right)
+\varepsilon. \label{ss0}%
\end{equation}
For each $n\in\mathbb{N}$ let us define $h_{n}^{\varepsilon}=\max
(h^{\varepsilon},-n).$ Then
\begin{equation}
h_{n}^{\varepsilon}\in S_{X}\text{ \ with \ }\left|  \nabla h_{n}%
^{\varepsilon}\right|  \leq\left|  \nabla h^{\varepsilon}\right|  .
\label{ss1}%
\end{equation}
By a straightforward analysis of all possible cases we see that
\begin{equation}
\left\|  f_{n}-h_{n}^{\varepsilon}\right\|  _{X}\leq\left\|  f-h^{\varepsilon
}\right\|  _{X}. \label{ss2}%
\end{equation}
Therefore, combining (\ref{ss0}), (\ref{ss1}) and (\ref{ss2}), we obtain
\[
K\left(  \psi(t),f_{n};X,S_{X}\right)  \leq K\left(  \psi(t),f;X,S_{X}\right)
,
\]
thus
\[
f_{\mu}^{\ast\ast}(t)-f_{\mu}^{\ast}(t)\leq8\frac{K\left(  \psi(2t),f;X,S_{X}%
\right)  }{\phi_{X}(t)},\text{ }0<t\leq\mu(\Omega)/2.
\]
and (\ref{polaka2sig}) follows.
\end{proof}

\section{Continuity via rearrangement inequalities}

In this section we consider the following problems: Characterize, in terms of
$K-$functional conditions, the functions in $f\in X(\Omega)+S_{X}(\Omega)$
that are bounded, or essentially continuous. One can rephrase these questions
as suitable embedding theorems for Besov type spaces.

We consider boundedness first.

\begin{lemma}
\label{bound}Let $\left(  \Omega,d,\mu\right)  $ be a metric measure space$,$
and let $X(\Omega)$ be a r.i. space$.$ Then if $f\in X+S_{X}$ is such that
\[
\int_{0}^{\mu(\Omega)}\frac{K\left(  \Psi(t),f;X,S_{X}\right)  }{\phi
_{X}(t/2)}\frac{dt}{t}<\infty,
\]
then $f$ is essentially bounded, where $\Psi(t):=\Psi_{X,\Omega}(t)=\phi
_{X}(t)\left\Vert \frac{1}{I_{\Omega}(s)}\chi_{(0,t)}(s)\right\Vert _{\bar
{X}^{^{\prime}}}$ is the function introduced in [(\ref{polaka3sig}), Chapter
\ref{main}].
\end{lemma}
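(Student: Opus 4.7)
The plan is to combine Theorem \ref{main2sig} with the elementary inequality $f_\mu^*(t/2) - f_\mu^*(t) \leq 2(f_\mu^{**}(t) - f_\mu^*(t))$ from (\ref{davobonee}) and then telescope dyadically to control $f_\mu^*(0^+) = \operatorname*{ess\,sup}_\Omega f$.

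More precisely, I would first use Theorem \ref{main2sig} together with Lemma \ref{ellemadelapsi}(iii), namely $\psi_X \leq \Psi_X$, plus monotonicity of the $K$-functional in its first argument, to obtain
\[
f_\mu^*(t/2) - f_\mu^*(t) \;\leq\; 16\,\frac{K(\Psi(2t),f;X,S_X)}{\phi_X(t)},\qquad 0<t\leq \mu(\Omega)/4.
\]
Next I would convert the right-hand side into an integral average. Since $K(\Psi(2r),f;X,S_X)$ is nondecreasing in $r$ (both $\Psi$ and the $K$-functional being monotone), and since $\phi_X$ is concave with $\phi_X(2r)\leq 2\phi_X(r)$, a direct comparison over the interval $[t,2t]$ gives
\[
\frac{K(\Psi(2t),f;X,S_X)}{\phi_X(t)} \;\leq\; \frac{2}{\ln 2}\int_t^{2t} \frac{K(\Psi(2r),f;X,S_X)}{\phi_X(r)}\,\frac{dr}{r}.
\]

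I would then telescope: fixing $t_0 = \mu(\Omega)/4$ and $t_k = t_0/2^k$, the dyadic intervals $[t_k,2t_k]$ are pairwise disjoint and have union $(0,\mu(\Omega)/2)$, so
\[
f_\mu^*(t_{n+1}) - f_\mu^*(t_0) \;=\; \sum_{k=0}^{n}\bigl(f_\mu^*(t_k/2)-f_\mu^*(t_k)\bigr)
\;\leq\; \frac{32}{\ln 2}\int_{t_{n+1}}^{\mu(\Omega)/2} \frac{K(\Psi(2r),f;X,S_X)}{\phi_X(r)}\,\frac{dr}{r}.
\]
Letting $n\to\infty$ and changing variables $t=2r$ (so $dr/r=dt/t$) yields
\[
f_\mu^*(0^+) - f_\mu^*(\mu(\Omega)/4) \;\leq\; \frac{32}{\ln 2}\int_0^{\mu(\Omega)} \frac{K(\Psi(t),f;X,S_X)}{\phi_X(t/2)}\,\frac{dt}{t},
\]
which is finite by hypothesis. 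Since $f\in X+S_X\subset L^1(\Omega)$ (as $\mu(\Omega)<\infty$, every r.i.\ space sits inside $L^1$), $f_\mu^*(\mu(\Omega)/4)$ is finite, so $\operatorname*{ess\,sup}_\Omega f<\infty$.

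To conclude that $f$ is essentially bounded (rather than merely bounded above), I would apply exactly the same argument to $-f$: since $\|-g\|_X=\|g\|_X$ and $|\nabla(-h)|=|\nabla h|$, the decompositions of $f$ and $-f$ into $X+S_X$ are in bijection with identical norms, so $K(s,-f;X,S_X)=K(s,f;X,S_X)$ and the integrability hypothesis transfers verbatim to $-f$. This gives $\operatorname*{ess\,sup}_\Omega(-f) = -\operatorname*{ess\,inf}_\Omega f<\infty$, hence $f\in L^\infty$. The only real technicality is matching the factor $\phi_X(t/2)$ in the hypothesis against the $\phi_X(t)$ arising naturally from (\ref{polaka2sig}); this is precisely what the change of variable $t=2r$ handles, so I do not expect any substantive obstacle beyond bookkeeping with the dyadic endpoints and the interval of validity $t\leq\mu(\Omega)/4$ imposed by Theorem \ref{main2sig}.
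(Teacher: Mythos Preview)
Your argument is correct, but it is more circuitous than the paper's. The paper works with the \emph{unsigned} oscillation inequality from Theorem~\ref{main2}, which (after the substitution $t\mapsto t/2$ and the comparison $\psi_X\le\Psi_X$) gives
\[
|f|_{\mu}^{\ast\ast}(t/2)-|f|_{\mu}^{\ast}(t/2)\le 8\,\frac{K(\Psi(t),f;X,S_X)}{\phi_X(t/2)},\qquad 0<t\le\mu(\Omega).
\]
Since $\dfrac{d}{du}\bigl(-|f|_{\mu}^{\ast\ast}(u)\bigr)=\dfrac{|f|_{\mu}^{\ast\ast}(u)-|f|_{\mu}^{\ast}(u)}{u}$, one simply integrates $\frac{dt}{t}$ from $r$ to $\mu(\Omega)$ and invokes the fundamental theorem of calculus to obtain $|f|_{\mu}^{\ast\ast}(r/2)-|f|_{\mu}^{\ast\ast}(\mu(\Omega)/2)$ on the left. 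Letting $r\to 0$ gives $\|f\|_{L^\infty}$ bounded by the integral in the hypothesis plus $\|f\|_{L^1}$. No dyadic telescoping, no appeal to (\ref{davobonee}), and no separate treatment of $-f$ are needed, because working with $|f|$ handles both the sup and inf at once.

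Your route---signed version via Theorem~\ref{main2sig}, then (\ref{davobonee}), then dyadic summation, then repetition for $-f$---is the Garsia-style argument the paper uses later in Chapter~\ref{Garetal} (see (\ref{fornaio}) and the telescoping that follows). It is perfectly valid here, and the change of variable $t=2r$ does indeed reconcile $\phi_X(r)$ with the $\phi_X(t/2)$ in the hypothesis. The trade-off is that you introduce three extra steps that the direct integration avoids; what you gain is an argument that is reusable verbatim when one genuinely needs control of the signed rearrangement rather than just $\|f\|_{L^\infty}$.
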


\begin{proof}
To simplify the notation we shall let $K(t,f):=K\left(  t,f;X,S_{X}\right)  .$
By Lemma \ref{ellemadelapsi} (i) and Theorem \ref{main2}, we have
\begin{equation}
\left|  f\right|  _{\mu}^{\ast\ast}(t/2)-\left|  f\right|  _{\mu}^{\ast
}(t/2)\leq8\frac{K\left(  \Psi(t),f\right)  }{\phi_{X}(t/2)},\text{ }0\text{
}<t\leq\mu(\Omega). \label{integra}%
\end{equation}
Fix $0<r<\frac{\mu(\Omega)}{2};$ then integrating both sides of (\ref{integra}%
) from $r$ to $\mu(\Omega),$ we find%
\[
\int_{r}^{\mu(\Omega)}\left(  \left|  f\right|  _{\mu}^{\ast\ast}(t/2)-\left|
f\right|  _{\mu}^{\ast}(t/2)\right)  \frac{dt}{t}\leq8\int_{r}^{\mu(\Omega
)}\frac{K\left(  \Psi(t),f\right)  }{\phi_{X}(t/2)}\frac{dt}{t}.
\]
We can compute the left hand side using the fundamental theorem of calculus%
\[
\int_{r}^{\frac{\mu(\Omega)}{2}}\left(  \left|  f\right|  _{\mu}^{\ast\ast
}(u)-\left|  f\right|  _{\mu}^{\ast}(u)\right)  \frac{du}{u}=f^{\ast\ast
}(r)-f^{\ast\ast}\left(  \frac{\mu(\Omega)}{2}\right)  ,
\]
thus,%
\[
f^{\ast\ast}(r)-f^{\ast\ast}\left(  \frac{\mu(\Omega)}{2}\right)  \leq
8\int_{r}^{\mu(\Omega)}\frac{K\left(  \Psi(t),f\right)  }{\phi_{X}(t/2)}%
\frac{dt}{t}.
\]

Therefore,%
\[
f^{\ast\ast}(r)\leq\frac{2}{\mu(\Omega)}\int_{0}^{\frac{\mu(\Omega)}{2}%
}\left|  f\right|  _{\mu}^{\ast}(s)ds+8\int_{r}^{\mu(\Omega)}\frac{K\left(
\Psi(t),f;X,S_{X}\right)  }{\phi_{X}(t/2)}\frac{dt}{t}.
\]
Thus, letting $r\rightarrow0,$%
\[
\left\|  f\right\|  _{L^{\infty}}\leq\frac{2}{\mu(\Omega)}\int_{0}^{\frac
{\mu(\Omega)}{2}}\left|  f\right|  _{\mu}^{\ast}(s)ds+8\int_{0}^{\mu(\Omega
)}\frac{K\left(  \Psi(t),f;X,S_{X}\right)  }{\phi_{X}(t/2)}\frac{dt}{t},
\]
and the result follows.
\end{proof}

To study essential continuity it will be useful to introduce some notation.
Let $G$ be an open subset of $\Omega.$ Recall (cf. Section \ref{secc:ri},
Chapter \ref{preliminar}) that $X_{r}(G)=X(G,d_{\mid G},\mu_{\mid G}).$ When
the open set $G$ is understood from the context, we shall simply
write$\ X_{r}$ and $S_{X_{r}}.$ We shall denote by $\bar{X}_{r}=\bar{X}%
_{r}(0,\mu(G))$ the representation space of $X_{r},$ and we let $X_{r}%
^{\prime}$ denote the corresponding associated space of $X_{r}$ (For more
information see Chapter \ref{preliminar} Section \ref{secc:ri} -
(\ref{dualres})).

If $f\in X(\Omega)+S_{X}(\Omega),$ then we obviously have that $f\chi_{G}\in
X_{r}(G)+S_{X_{r}}(G).$ However, we can not apply our fundamental inequalities
[(\ref{polaka1}), Chapter \ref{main}] or (\ref{polaka3sig})] since we are now
working in the metric space $(G,d_{\mid G},\mu_{\mid G})$ and therefore the
isoperimetric profile has changed.

Given $G\subset\Omega$ an open subset, and let $A\subset G.$ The
\textbf{perimeter} of $A$ \textbf{relative }to $G$ is defined by
\[
P(A;G)=\lim\inf_{h\rightarrow0}\frac{\mu\left(  A_{h}\right)  -\mu\left(
A\right)  }{h},
\]
where $A_{h}=\left\{  x\in G:d(x,A)<h\right\}  .$ Obviously
\[
P(A;G)\leq P(A;\Omega).
\]

The \textbf{relative isoperimetric profile }of\textbf{ }$G\subset\Omega$ is
defined by (see for example \cite{Amb} and the references quoted therein)
\[
I_{G}(s)=I_{(G,d,\mu)}(s)=\inf\left\{  P(A;G):\text{ }A\subset G,\text{ }%
\mu(A)=s\right\}  ,\text{ \ \ }0<s<\mu(G).
\]
We say that an \textbf{isoperimetric inequality relative }to $G$ holds true if
there exists a positive constant $C_{G}$ such that
\[
I_{G}(s)\geq C_{G}\min(I_{\Omega}(s),I_{\Omega}(\mu(G)-s))=J_{G}(t),\text{
\ \ }0<s<\mu(G),
\]
where $I_{\Omega}$ is the isoperimetric profile of $(\Omega,d,\mu)$. Notice
that, if $\mu(G)\leq\mu(\Omega)/2,$ then $J_{G}:[0,\mu(G)]\rightarrow\left[
0,\infty\right)  $ is increasing on $(0,\mu(G)/2),$ symmetric around the point
$\mu(G)/2,$ and\ such that
\[
I_{G}\geq J_{G},
\]
i.e. $J_{G}$ is an isoperimetric estimator for the metric space $(G,d_{\mid
G},\mu_{\mid G}).$

\begin{definition}
We will say that a metric measure space $\left(  \Omega,d,\mu\right)  $ has
the \textbf{relative uniform isoperimetric property }if\ there is a constant
$C$ such that for any ball $B\ $in $\Omega,$ its \textbf{relative
isoperimetric profile }$I_{B}$ satisfies:
\[
I_{B}(s)\geq C\min(I_{\Omega}(s),I_{\Omega}(\mu(B)-s)),\text{ \ \ }%
0<s<\mu(B).
\]

\end{definition}

The following proposition will be useful in what follows

\begin{proposition}
\label{esti}Let $J$ be an isoperimetric estimator of $(\Omega,d,\mu).$ Let
$G\subset\Omega$ be an open set with $\mu(G)\leq\mu(\Omega)/2$, and let
$Z=Z([0,\mu(G)])$ be a r.i. space on $[0,\mu(G)].$ Then
\[
R(t):=\left\Vert \frac{1}{\min(J(s),J(\mu(G)-s))}\chi_{(0,t)}(s)\right\Vert
_{Z}\leq2\left\Vert \frac{1}{J(s)}\chi_{(0,t)}(s)\right\Vert _{Z},\text{
\ \ \ }0<t\leq\mu(G).
\]

\end{proposition}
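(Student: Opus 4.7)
The plan is to exploit the two structural features of $J$ that are built into the definition of isoperimetric estimator: $J$ is increasing on $(0,\mu(\Omega)/2)$ and symmetric about $\mu(\Omega)/2$. Since by hypothesis $\mu(G)\leq\mu(\Omega)/2$, for every $s\in(0,\mu(G))$ both $s$ and $\mu(G)-s$ lie in $(0,\mu(\Omega)/2)$, the regime in which $J$ is monotone increasing. Consequently
\[
\min\bigl(J(s),J(\mu(G)-s)\bigr)=J\bigl(\min(s,\mu(G)-s)\bigr),
\]
so the weight $1/\min(J(s),J(\mu(G)-s))$ is the ``U-shaped'' reflection of $1/J$ about the vertical axis $s=\mu(G)/2$: it coincides with $1/J(s)$ on $(0,\mu(G)/2]$ and with $1/J(\mu(G)-s)$ on $[\mu(G)/2,\mu(G))$.

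I would then split into two cases according to the location of $t$. If $t\leq\mu(G)/2$, then $\min(s,\mu(G)-s)=s$ throughout $(0,t)$, so $R(t)=\|\tfrac{1}{J(s)}\chi_{(0,t)}(s)\|_{Z}$ and the factor $2$ is free. If instead $t>\mu(G)/2$, I decompose the indicator $\chi_{(0,t)}=\chi_{(0,\mu(G)/2]}+\chi_{(\mu(G)/2,t)}$ and apply the triangle inequality in $Z$ to obtain
\[
R(t)\leq \Bigl\|\tfrac{1}{J(s)}\chi_{(0,\mu(G)/2)}(s)\Bigr\|_{Z}+\Bigl\|\tfrac{1}{J(\mu(G)-s)}\chi_{(\mu(G)/2,t)}(s)\Bigr\|_{Z}.
\]
The first summand is pointwise dominated by $\tfrac{1}{J(s)}\chi_{(0,t)}(s)$, so the lattice property of $Z$ bounds it by $\|\tfrac{1}{J(s)}\chi_{(0,t)}(s)\|_{Z}$.

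The main step is handling the second summand, and here the rearrangement invariance of $Z$ does the work. The map $s\mapsto \mu(G)-s$ is a measure-preserving involution of $[0,\mu(G)]$, so $\tfrac{1}{J(\mu(G)-s)}\chi_{(\mu(G)/2,t)}(s)$ has the same distribution function, and hence the same $Z$-norm, as $\tfrac{1}{J(u)}\chi_{(\mu(G)-t,\mu(G)/2)}(u)$. Using $\mu(G)-t\geq 0$ and $\mu(G)/2\leq t$, this last function is pointwise dominated by $\tfrac{1}{J(u)}\chi_{(0,t)}(u)$, so its $Z$-norm is at most $\|\tfrac{1}{J(s)}\chi_{(0,t)}(s)\|_{Z}$. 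Adding the two contributions gives the factor $2$, as claimed. The only delicate point is the use of the reflection together with the rearrangement invariance of $Z$; everything else is monotonicity of $J$ on $(0,\mu(\Omega)/2)$ together with the Banach lattice property of $Z$.
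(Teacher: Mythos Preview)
Your argument is correct and follows essentially the same route as the paper's proof: split $\chi_{(0,t)}$ at $\mu(G)/2$, identify $\min(J(s),J(\mu(G)-s))$ with $J(s)$ on the left half, and for the right half use that the reflection $s\mapsto\mu(G)-s$ is measure-preserving to reduce to $\tfrac{1}{J(u)}\chi_{(\mu(G)-t,\mu(G)/2)}(u)$. Your version is in fact slightly tidier than the paper's: you treat the trivial case $t\leq\mu(G)/2$ explicitly (the paper's chain of inequalities tacitly assumes $t\geq\mu(G)/2$), and after the reflection you bound the second summand directly via the inclusion $(\mu(G)-t,\mu(G)/2)\subset(0,t)$, whereas the paper passes through the decreasing rearrangement and the monotonicity of $1/J$ to reach the same conclusion.
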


\begin{proof}
Since $\min(I_{\Omega}(s),I_{\Omega}(\mu(G)-s))$ is an isoperimetric estimator
for $(G,d_{\mid G},\mu_{\mid G}),$ we have
\begin{align*}
R(t)  &  \leq\left\Vert \frac{1}{\min(I_{\Omega}(s),I_{\Omega}(\mu(G)-s))}%
\chi_{(0,\mu(G)/2)}(s)\right\Vert _{Z}+\left\Vert \frac{1}{\min(I_{\Omega
}(s),I_{\Omega}(\mu(G)-s))}\chi_{(\mu(G)/2,t)}(s)\right\Vert _{Z}\\
&  =\left\Vert \frac{1}{I_{\Omega}(s)}\chi_{(0,\mu(G)/2)}(s)\right\Vert
_{Z}+\left\Vert \frac{1}{I_{\Omega}(\mu(G)-s)}\chi_{(\mu(G)/2,t)}%
(s)\right\Vert _{Z}\\
&  \leq\left\Vert \frac{1}{I_{\Omega}(s)}\chi_{(0,t)}(s)\right\Vert
_{Z}+\left\Vert \frac{1}{I_{\Omega}(\mu(G)-s)}\chi_{(\mu(G)/2,t)}%
(s)\right\Vert _{Z}.
\end{align*}
To estimate the second term on the right hand side, notice that, by the
properties of $I_{\Omega}(s),$ the functions
\[
\frac{1}{I_{\Omega}(\mu(G)-s)}\chi_{(\mu(G)/2,t)}(s)\text{ and }\frac
{1}{I_{\Omega}(s)}\chi_{(\mu(G)-t,\mu(G)/2)}(s)\text{ }%
\]
are equimeasurable (with respect to the Lebesgue measure), consequently
\begin{align*}
\left\Vert \frac{1}{I_{\Omega}(\mu(G)-s)}\chi_{(\mu(G)/2,t)}(s)\right\Vert
_{Z}  &  =\left\Vert \frac{1}{I_{\Omega}(s)}\chi_{(\mu(G)-t,\mu(G)/2)}%
(s)\right\Vert _{Z}\\
&  =\left\Vert \left(  \frac{1}{I_{\Omega}(s)}\chi_{(\mu(G)-t,\mu
(G)/2)}\right)  ^{\ast}(s)\right\Vert _{Z}\text{.}%
\end{align*}
Since $1/I_{\Omega}(s)$ is decreasing on $(0,\mu(G)/2),$
\[
\left\Vert \left(  \frac{1}{I_{\Omega}(s)}\chi_{(\mu(G)-t,\mu(G)/2)}\right)
^{\ast}(s)\right\Vert _{Z}\leq\left\Vert \frac{1}{I_{\Omega}(s)}\chi
_{(0,t-\mu(G)/2)}(s)\right\Vert _{Z}\leq\left\Vert \frac{1}{I_{\Omega}(s)}%
\chi_{(0,t)}(s)\right\Vert _{Z}.
\]
Consequently,
\[
R(t)\leq2\left\Vert \frac{1}{I_{\Omega}(s)}\chi_{(0,t)}(s)\right\Vert _{Z}.
\]

\end{proof}

\begin{theorem}
\label{continuo}Let $\left(  \Omega,d,\mu\right)  $ be a metric measure space
with the relative uniform isoperimetric property\textbf{ }and let $X$ be a
r.i. space on $\Omega.$ Then all the functions in $f\in X+S_{X}$ that satisfy
the condition
\begin{equation}
\int_{0}^{\mu(\Omega)}\frac{K\left(  \Psi_{X,\Omega}(t),f;X,S_{X}\right)
}{\phi_{X}(t)}\frac{dt}{t}<\infty, \label{conticondi}%
\end{equation}
are essentially continuos.
\end{theorem}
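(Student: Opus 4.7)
The plan is to combine Lemma \ref{bound} applied on $\Omega$ with the same lemma applied (in signed form) on each ball $B \subset \Omega$, and then use the relative uniform isoperimetric property to transfer the finite global integral hypothesis to a uniform local control that forces $\mathrm{Osc}(f;B) \to 0$ as $\mu(B) \to 0$. Absolute continuity of the Lebesgue integral then provides the desired continuous representative.

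The essential boundedness $f \in L^{\infty}(\Omega)$ follows immediately from Lemma \ref{bound}, since $\phi_X(t) \simeq \phi_X(t/2)$ makes its hypothesis equivalent to (\ref{conticondi}). To get oscillation control, fix a ball $B$ with $\mu(B) \leq \mu(\Omega)/2$ and view $f\chi_B$ on the restricted metric measure space $(B, d|_B, \mu|_B)$. Apply the signed rearrangement inequality Theorem \ref{main2sig} in that space, integrate $\tfrac{dt}{t}$ on $(r, \mu(B)/2)$ exactly as in the proof of Lemma \ref{bound}, let $r \to 0^{+}$, repeat the argument for $-f$ (using that the $K$-functional is invariant under sign change), and add the resulting inequalities. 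Using $\phi_{X_r(B)} = \phi_X$ from (\ref{galio}) and Lemma \ref{ellemadelapsi}(i) to pass from $\psi$ to $\Psi$, this produces
\[
\mathrm{Osc}(f; B) \leq c \int_0^{\mu(B)} \frac{K\bigl(\Psi_{X_r(B),B}(t),\, f\chi_B;\, X_r(B),\, S_{X_r(B)}\bigr)}{\phi_X(t)} \frac{dt}{t}.
\]

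It remains to dominate the right-hand side by the finite global integral (\ref{conticondi}), uniformly in $B$. The relative uniform isoperimetric property provides $I_B(s) \geq C\min(I_\Omega(s), I_\Omega(\mu(B)-s))$ with $C$ independent of $B$; combining this with Proposition \ref{esti} applied to $Z = \bar{X_r(B)}^{\prime}$ and $J = I_\Omega$, together with the identification $\|h\|_{\bar{X_r(B)}^{\prime}} = \|\tilde h\|_{\bar{X}^{\prime}}$ (extension by zero) that comes from (\ref{dualres}), yields $\Psi_{X_r(B),B}(t) \leq (2/C)\,\Psi_{X,\Omega}(t)$ for all $0 < t \leq \mu(B)$, uniformly in $B$. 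Moreover, any global decomposition of $f$ restricts to a competing decomposition for $f\chi_B$, so $K(s, f\chi_B; X_r(B), S_{X_r(B)}) \leq K(s, f; X, S_X)$; concavity and monotonicity of $s \mapsto K(s, f; X, S_X)$ in its first argument then give
\[
\mathrm{Osc}(f; B) \leq c^{\prime} \int_0^{\mu(B)} \frac{K(\Psi_{X,\Omega}(t), f; X, S_X)}{\phi_X(t)} \frac{dt}{t}.
\]
Since the integrand is Lebesgue-integrable on $(0, \mu(\Omega))$ by (\ref{conticondi}), the right-hand side tends to $0$ as $\mu(B) \to 0$. Applying this with $B = B(x_0, r)$ for arbitrary $x_0 \in \Omega$ and $r \to 0$ produces a representative of $f$ that is continuous at every point. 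The main technical obstacle is the bookkeeping in the second step: one must verify that the three comparisons between restricted and global quantities (the fundamental function, the associated norm appearing in $\Psi$, and the $K$-functional) all go through with constants independent of $B$, and the \emph{uniform} clause in the relative isoperimetric hypothesis is exactly what makes this possible.
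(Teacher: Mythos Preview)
Your strategy matches the paper's, and the three comparisons you identify between restricted and global quantities (fundamental function via (\ref{galio}), the associated norm inside $\Psi$ via Proposition \ref{esti} and the uniform relative isoperimetric constant, and the $K$-functional under restriction) are exactly the ones the paper makes, with the same tools.

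There is, however, one genuine gap. When you ``add the resulting inequalities'' for $f$ and $-f$, the oscillation bound does \emph{not} follow directly: integrating Theorem \ref{main2sig} on $B$ yields
\[
\mathrm{ess\,sup}\,(f\chi_B)-(f\chi_B)_\mu^{\ast\ast}(\mu(B)/2)\leq c\int_0^{\mu(B)}\frac{K(\Psi_{X_r(B),B}(t),f\chi_B;\ldots)}{\phi_X(t)}\,\frac{dt}{t}
\]
and the analogous estimate for $-f$, but adding these leaves the residual term
\[
(f\chi_B)_\mu^{\ast\ast}(\mu(B)/2)+(-f\chi_B)_\mu^{\ast\ast}(\mu(B)/2),
\]
which is nonnegative (it equals $2[(f\chi_B)_\mu^{\ast\ast}(\mu(B)/2)-(f\chi_B)_\mu^{\ast\ast}(\mu(B))]$) and can be as large as $\mathrm{Osc}(f;B)$ itself. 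The paper handles this by applying the pointwise inequality (\ref{qqqq}) at $t=\mu(B)/2$ to both $\pm f\chi_B$ and adding; the $\ast$-values cancel because $(-g)_\mu^\ast(\mu(B)/2)=-g_\mu^\ast(\mu(B)/2)$ for signed rearrangements on $B$, leaving a single term $\preceq K(\Psi_X(\mu(B)),f)/\phi_X(\mu(B))$, which is then absorbed into an integral over $[\mu(B),2\mu(B)]$ using the monotonicity of $K\circ\Psi_X$ and of $\phi_{X'}$. This is precisely why the paper's final bound reads $\int_0^{2\mu(B)}$ rather than your $\int_0^{\mu(B)}$; without this step your displayed oscillation inequality is not justified.
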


\begin{proof}
We will show that there exists a universal constant $c>0$ such that for any
function that satisfies (\ref{conticondi}) we have: For all balls $B$
$\mu(B)<\mu(\Omega)/2,$
\[
\left|  f(x)-f(y)\right|  \leq c\int_{0}^{2\mu(B)}\frac{K\left(
\Psi_{X,\Omega}(t),f;X,S_{X}\right)  }{\phi_{X}(t)}\frac{dt}{t},
\]
for $\mu-$almost every $x,y\in B.$

We shall use the following notation: $X=X(\Omega),$ $S_{X}=$ $S_{X}(\Omega),$
$X_{r}=X_{r}(B),$ $S_{X_{r}}=S_{X_{r}}(B)$, $\psi_{B}(t)=\psi_{B,X_{r}}(t),$
and $\psi_{B}=\psi_{X_{r},B},\Psi_{B}(t)=\Psi_{X_{r},B},\Psi_{X}%
=\Psi_{X,\Omega}$

Given $f\in X+S_{X},$ then $f\chi_{B}\in X_{r}+S_{X_{r}}.$ By Theorem
\ref{main2sig}
\begin{align}
\left(  f\chi_{B}\right)  _{\mu}^{\ast\ast}(t)-\left(  f\chi_{B}\right)
_{\mu}^{\ast}(t)  &  =\left(  f\chi_{B}\right)  _{\mu_{\mid B}}^{\ast\ast
}(t)-\left(  f\chi_{B}\right)  _{\mu_{\mid B}}^{\ast}(t)\label{qqqq}\\
&  \leq8\frac{K\left(  \psi_{B}(2t),f\chi_{B};X_{r},S_{X_{r}}\right)  }%
{\phi_{X_{r}}(t)},\text{ }0\leq t\leq\mu(B)/2.\nonumber
\end{align}
By [(\ref{galio}), Chapter \ref{preliminar}],%
\[
\frac{K\left(  \psi_{B}(2t),f\chi_{B};X_{r},S_{X_{r}}\right)  }{\phi_{X_{r}%
}(t)}\leq\frac{K\left(  \psi_{B}(2t),f;X,S_{X}\right)  }{\phi_{X}(t)},\text{
}0\leq t\leq\mu(B)/2.
\]
On the other hand, for $0\leq t\leq\mu(B)/2,$
\begin{align}
\psi_{B}(2t)  &  =\frac{\phi_{X_{r}}(2t)}{2t}\left\|  \frac{s}{I_{B}(s)}%
\chi_{(0,2t)}(s)\right\|  _{\bar{X}_{r}^{\prime}}\label{mas}\\
&  \leq\phi_{X_{r}}(2t)\left\|  \frac{1}{I_{B}(s)}\chi_{(02,t)}(s)\right\|
_{\bar{X}_{r}^{\prime}}\nonumber\\
&  =\phi_{X}(2t)\left\|  \frac{1}{I_{B}(s)}\chi_{(0,2t)}(s)\right\|  _{\bar
{X}^{\prime}}\nonumber\\
&  \leq c\phi_{X}(2t)\left\|  \frac{1}{\min(I_{\Omega}(B),I_{\Omega}%
(\mu(B)-s))}\chi_{(0,2t)}(s)\right\|  _{\bar{X}^{\prime}}\nonumber\\
&  \leq c\phi_{X}(2t)\left\|  \frac{1}{I_{\Omega}(s)}\chi_{(0,2t)}(s)\right\|
_{\bar{X}^{\prime}}\text{ \ (by Proposition \ref{esti})}\nonumber\\
&  =\text{ }c\Psi_{X}(2t).\nonumber
\end{align}

Consequently for\ $0<t\leq\mu(B)/2,$ we have%
\begin{align}
\left(  f\chi_{B}\right)  _{\mu}^{\ast\ast}(t)-\left(  f\chi_{B}\right)
_{\mu}^{\ast}(t)  &  \leq8\frac{K\left(  c\Psi_{X}(2t),f;X,S_{X}\right)
}{\phi_{X}(t)}\nonumber\\
&  \leq8C\frac{K\left(  \Psi_{X}(2t),f;X,S_{X}\right)  }{\phi_{X}%
(t)}\nonumber\\
&  =A(2t). \label{obam}%
\end{align}
Change variables: Let $t=zr,$ with $r=\frac{\mu(B)}{2\mu(\Omega)}$,
$0<z\leq\mu(\Omega).$ Then,
\[
\left(  f\chi_{B}\right)  _{\mu}^{\ast\ast}\left(  zr\right)  -\left(
f\chi_{B}\right)  _{\mu}^{\ast}\left(  zr\right)  \leq A(2zr),\text{
\ \ \ }0<z\leq\mu(\Omega).
\]
Integrating the previous inequality we obtain%
\begin{align*}
\int_{0}^{\mu(\Omega)}\left[  \left(  f\chi_{B}\right)  _{\mu}^{\ast\ast
}\left(  zr\right)  -\left(  f\chi_{B}\right)  _{\mu}^{\ast}\left(  zr\right)
\right]  \frac{dz}{z}  &  \leq\int_{0}^{\mu(\Omega)}A(2zr)\frac{dz}{z}\\
&  =\int_{0}^{\mu(B)}A(u)\frac{du}{u}.
\end{align*}
Using the formula
\[
\frac{d}{dz}\left\{  -\left(  f\chi_{B}\right)  _{\mu}^{\ast\ast}(zr)\right\}
=\frac{\left(  f\chi_{B}\right)  _{\mu}^{\ast\ast}(zr)-\left(  f\chi
_{B}\right)  _{\mu}^{\ast}(zr)}{z},
\]
we get
\begin{align*}
ess\sup(f\chi_{B})-\left(  f\chi_{B}\right)  _{\mu}^{\ast\ast}(\mu(B)/2)  &
=\left(  f\chi_{B}\right)  _{\mu}^{\ast\ast}(0)-\left(  f\chi_{B}\right)
_{\mu}^{\ast\ast}(\mu(B)/2)\\
&  =\int_{0}^{\mu(\Omega)}\left[  \left(  f\chi_{B}\right)  _{\mu}^{\ast\ast
}(zr)-\left(  f\chi_{B}\right)  _{\mu}^{\ast}(zr)\right]  \frac{dz}{z}\\
&  \leq\int_{0}^{\mu(B)}A(z)\frac{dz}{z}.
\end{align*}

Similarly, considering $-f\chi_{B},$ instead of $f\chi_{B},$ we obtain
\[
ess\sup(-f\chi_{B})-\left(  -f\chi_{B}\right)  _{\mu}^{\ast\ast}(\mu
(B)/2)\leq\int_{0}^{\mu(B)}A(z)\frac{dz}{z}.
\]
Adding both inequalities%
\begin{equation}
ess\sup(f\chi_{B})-ess\inf(f\chi_{B})\leq2\int_{0}^{\mu(B)}A(z)\frac{dz}%
{z}+\left[  \left(  f\chi_{B}\right)  _{\mu}^{\ast\ast}(\mu(B)/2)+\left(
-f\chi_{B}\right)  _{\mu}^{\ast\ast}(\mu(B)/2)\right]  \label{insertada}%
\end{equation}
To estimate the last term on the right hand side we let $t=\mu(B)/2$ in
(\ref{qqqq}),%
\[
\left[  \left(  f\chi_{B}\right)  _{\mu}^{\ast\ast}(\mu(B)/2)+\left(
-f\chi_{B}\right)  _{\mu}^{\ast\ast}(\mu(B)/2)\right]  \leq8\frac{K\left(
\psi_{B}(\mu(B)),f\chi_{B};X_{r},S_{X_{r}}\right)  }{\phi_{X_{r}}(\mu(B)/2)},
\]
and we do the same thing for the corresponding estimate for $-f,$
\[
\left[  \left(  -f\chi_{B}\right)  _{\mu}^{\ast\ast}(\mu(B)/2)+\left(
-f\chi_{B}\right)  _{\mu}^{\ast\ast}(\mu(B)/2)\right]  \leq8\frac{K\left(
\psi_{B}(\mu(B)),f\chi_{B};X_{r},S_{X_{r}}\right)  }{\phi_{X_{r}}(\mu(B)/2)}.
\]
Adding these inequalities, and recalling that, since $(-f\chi_{B})_{\mu}%
^{\ast}(t)=(-f\chi_{B})_{\mu_{\mid B}}^{\ast}(t)=-(f\chi_{B})_{\mu_{\mid B}%
}^{\ast}(\mu(B)-t)=-(f\chi_{B})_{\mu}^{\ast}(\mu(B)-t),$ we have
\[
(-f\chi_{B})_{\mu}^{\ast}(\mu(B)/2)=-(f\chi_{B})_{\mu}^{\ast}(\mu(B)/2),
\]
we see that
\[
\left(  f\chi_{B}\right)  _{\mu}^{\ast\ast}(\mu(B)/2)+\left(  -f\chi
_{B}\right)  _{\mu}^{\ast\ast}(\mu(B)/2)\leq16\frac{K\left(  \psi_{B}%
(\mu(B)),f\chi_{B};X_{r},S_{X_{r}}\right)  }{\phi_{X_{r}}(\mu(B)/2)}.
\]
Inserting this information back in (\ref{insertada}) we obtain%
\begin{align*}
ess\sup(f\chi_{B})-ess\inf(f\chi_{B})  &  \leq2\int_{0}^{\mu(B)}A(z)\frac
{dz}{z}+16\frac{K\left(  \psi_{B}(\mu(B)),f\chi_{B};X_{r},S_{X_{r}}\right)
}{\phi_{X_{r}}(\mu(B)/2)}\\
&  \leq C\left(  \int_{0}^{\mu(B)}\frac{K\left(  \Psi_{X}(t),f;X,S_{X}\right)
}{\phi_{X}(t/2)}\frac{dz}{z}+16\frac{K\left(  \Psi_{X}(\mu(B)),f;X,S_{X}%
\right)  }{\phi_{X}(\mu(B)/2)}\right) \\
&  \leq C\left(  \int_{0}^{\mu(B)}\frac{K\left(  \Psi_{X}(t),f;X,S_{X}\right)
}{\phi_{X}(t)}\frac{dz}{z}+\frac{K\left(  \Psi_{X}(\mu(B)),f;X,S_{X}\right)
}{\phi_{X}(\mu(B))}\right)  .
\end{align*}
Elementary considerations show that the second term on the right hand side can
be controlled by the first term. Indeed, we use that $K(t,f)$ increases and
$\Psi_{X}(t)$ increases (cf. Lemma \ref{ellemadelapsi} (iv) above) to derive
that $K\left(  \Psi_{X}(t),f;X,S_{X}\right)  $ increases, which we combine
with the fact that $\phi_{X^{\prime}}(t)$ increases, and $\phi_{X^{\prime}%
}(t)\phi_{X}(t)=t,$ then we obtain%
\begin{align*}
\int_{0}^{2\mu(B)}\frac{K\left(  \Psi_{X}(t),f;X,S_{X}\right)  }{\phi_{X}%
(t)}\frac{dt}{t}  &  =\int_{0}^{2\mu(B)}\frac{\phi_{X^{\prime}}(t)K\left(
\Psi_{X}(t),f;X,S_{X}\right)  }{t}\frac{dt}{t}\\
&  \geq\int_{\mu(B)}^{2\mu(B)}\frac{\phi_{X^{\prime}}(t)K\left(  \Psi
_{X}(t),f;X,S_{X}\right)  }{t}\frac{dt}{t}\\
&  \geq\phi_{X^{\prime}}(\mu(B))K\left(  \Psi_{X}(\mu(B)),f;X,S_{X}\right)
\int_{\mu(B)}^{2\mu(B)}\frac{1}{t}\frac{dt}{t}\\
&  =K\left(  \Psi_{X}(\mu(B)),f;X,S_{X}\right)  \frac{\phi_{X^{\prime}}%
(\mu(B))}{2\mu(B)}\\
&  =\frac{1}{2}\frac{K\left(  \Psi_{X}(\mu(B)),f;X,S_{X}\right)  }{\phi
_{X}(\mu(B))}%
\end{align*}
Thus,%
\[
ess\sup(f\chi_{B})-ess\inf(f\chi_{B})\leq c\int_{0}^{2\mu(B)}\frac{K\left(
\Psi_{X}(t),f;X,S_{X}\right)  }{\phi_{X}(t)}\frac{dt}{t}.
\]
It follows that for $\mu-$almost every $x,y\in B,$
\[
\left|  f(x)-f(y)\right|  \leq c\int_{0}^{2\mu(B)}\frac{K\left(  \Psi
_{X}(t),f;X,S_{X}\right)  }{\phi_{X}(t)}\frac{dt}{t},
\]
and the essential continuity of $f$ follows.
\end{proof}

\chapter{Examples and Applications\label{capitap}}

\section{Summary}

We verify the relative uniform isoperimetric property for a number of concrete
examples. As a consequence we shall show in detail how our methods provide a
unified treatment of embeddings of Sobolev and Besov spaces into spaces of
continuous functions in different contexts.

\section{Euclidean domains\label{class}}

Let $\Omega\subset\mathbb{R}^{n}$ be a bounded domain (i.e. a bounded, open
and connected set). For a measurable function $u:\Omega\rightarrow\mathbb{R},$
let
\[
u^{+}=\max(u,0)\text{ and }u^{-}=\min(u,0).
\]
Let $X=X(\Omega)$ be a r.i. space on $\Omega.$ The Sobolev space $W_{X}%
^{1}(\Omega):=W_{X}^{1}$ is the space of real-valued weakly differentiable
functions on $\Omega$ that, together with their first order derivatives,
belong to $X.$

In this setting the basic rearrangement inequality holds for all $f\in
W_{L^{1}}^{1}$%
\begin{equation}
\left|  f\right|  ^{\ast\ast}(t)-\left|  f\right|  ^{\ast}(t)\leq\frac
{t}{I_{\Omega}(t)}\frac{1}{t}\int_{0}^{t}\left|  \nabla f\right|  ^{\ast
}(s)ds,\text{ }0<t<\left|  \Omega\right|  , \label{sobu}%
\end{equation}
(rearrangements are taken with respect to the Lebesgue measure). We indicate
briefly the proof using the method of \cite{mamiadv}.

It is well known (see for example \cite{bakr}, \cite[Theorem 2.1.4]{Zie}) that
if $u\in W_{L^{1}}^{1}$ ($=W_{1}^{1})$ then $u^{+},$ $u^{-}\in W_{1}^{1}$ and
\[
\nabla u^{+}=\nabla u\chi_{\{u>0\}}\text{ and }\nabla u^{-}=\nabla
u\chi_{\{u<0\}}.
\]
For given a measurable function $g$ and $0<t_{1}<t_{2},$ the truncation
$g_{t_{1}}^{t_{2}}$ of\ $g$ is defined by
\[
g_{t_{1}}^{t_{2}}=\min\{\max\{0,g-t_{1}\},t_{2}-t_{2}\}\}.
\]
It follows that if $g\in W_{1}^{1}$, then $g_{t_{1}}^{t_{2}}\in W_{1}^{1}$
and, in fact,
\[
\nabla g_{t_{1}}^{t_{2}}=\nabla g\chi_{\{t_{1}<g<t_{2}\}}.
\]
In other words, $W_{1}^{1}$ is invariant by truncation. On the other hand,
given $g\in W_{1}^{1},$ the Federer-Fleming-Rishel co-area formula (cf.
\cite{flem}) states that
\[
\int_{\Omega}\left|  \nabla g(x)\right|  dx=\int_{-\infty}^{\infty}P_{\Omega
}(g>s)ds.
\]
Applying this result to $\left|  g\right|  _{t_{1}}^{t_{2}},$ we get
\begin{align}
\int_{\{t_{1}<\left|  g\right|  <t_{2}\}}\left|  \nabla\left|  g\right|
(x)\right|  dx  &  =\int_{0}^{\infty}P_{\Omega}(\left|  g\right|  _{t_{1}%
}^{t_{2}}>s)ds\label{A1}\\
&  \geq\int_{0}^{\infty}I_{\Omega}(\mu_{\left|  g\right|  _{t_{1}}^{t_{2}}%
}(s))ds\text{ (isoperimetric inequality)}\nonumber\\
&  =\int_{0}^{t_{2}-t_{1}}I_{\Omega}(\mu_{\left|  g\right|  _{t_{1}}^{t_{2}}%
}(s))ds.\nonumber
\end{align}

Observe that, for $0<s<t_{2}-t_{1}$,
\[
\left\vert \left\{  \left\vert f\right\vert \geq t_{2}\right\}  \right\vert
\leq\mu_{\left\vert f\right\vert _{t_{1}}^{t_{2}}}(s)\leq\left\vert \left\{
\left\vert f\right\vert >t_{1}\right\}  \right\vert .
\]
Consequently, by the properties of $I_{\Omega}$, we have
\[
\int_{0}^{t_{2}-t_{1}}I(\mu_{\left\vert g\right\vert _{t_{1}}^{t_{2}}%
}(s))ds\geq(t_{2}-t_{1})\min\left(  I_{\Omega}(\left\vert \left\{  \left\vert
g\right\vert \geq t_{1}\right\}  \right\vert ),I_{\Omega}(\left\vert \left\{
\left\vert g\right\vert \geq t_{2}\right\}  \right\vert \right)  .
\]
For $s>0$ and $h>0,$ pick $t_{1}=\left\vert g\right\vert ^{\ast}(s+h),$
$t_{2}=\left\vert g\right\vert ^{\ast}(s),$ then
\begin{equation}
s\leq\left\vert \left\{  \left\vert g\right\vert \geq\left\vert g\right\vert
^{\ast}(s)\right\}  \right\vert \leq\mu_{\left\vert g\right\vert _{t_{1}%
}^{t_{2}}}(s)\leq\left\vert \left\{  \left\vert g(x)\right\vert >\left\vert
g\right\vert ^{\ast}(s+h)\right\}  \right\vert \leq s+h. \label{A2}%
\end{equation}
Combining (\ref{A1}) and (\ref{A2}) we have,
\[
\left(  \left\vert g\right\vert ^{\ast}(s)-\left\vert g\right\vert ^{\ast
}(s+h)\right)  \min(I_{\Omega}(s+h),I_{\Omega}(s))\leq\int_{\left\{
\left\vert g\right\vert ^{\ast}(s+h)<\left\vert g\right\vert <\left\vert
g\right\vert ^{\ast}(s)\right\}  }\left\vert \nabla\left\vert g\right\vert
(x)\right\vert dx.
\]
At this stage we can continue as in \cite{mamiadv}, and we obtain that if
$f\in W_{1}^{1},$ then (\ref{sobu}) holds. Moreover, $\left\vert f\right\vert
^{\ast}$ is locally absolutely continuous, and
\begin{equation}
\int_{0}^{t}\left\vert (-\left\vert f\right\vert ^{\ast})^{\prime}%
(\cdot)I_{\Omega}(\cdot)\right\vert ^{\ast}(s)\leq\int_{0}^{t}\left\vert
\nabla f\right\vert ^{\ast}(s)ds. \label{sobre}%
\end{equation}
From here, using the same approximation method provided in the proof of
Theorem \ref{main2sig} Chapter \ref{contchap}, we find that, if $f\in
W_{1}^{1},$ then for $0<t<\left\vert \Omega\right\vert ,$
\begin{equation}
f^{\ast\ast}(t)-f^{\ast}(t)\leq\frac{t}{I_{\Omega}(t)}\frac{1}{t}\int_{0}%
^{t}\left\vert \nabla f\right\vert ^{\ast}(s)ds. \label{sobs}%
\end{equation}
Indeed, first assume that $f$ is bounded from below, and let $c=\inf_{\Omega
}f,$ then, since $f-c\geq0,$ we can apply (\ref{sobu}) to $f-c,$ and we
obtain
\[
\left\vert f-c\right\vert ^{\ast\ast}(t)-\left\vert f-c\right\vert ^{\ast
}(t)\leq\frac{t}{I_{\Omega}(t)}\frac{1}{t}\int_{0}^{t}\left\vert \nabla\left(
f-c\right)  \right\vert ^{\ast}(s)ds.
\]
Since $\left\vert f-c\right\vert ^{\ast}(t)=\left(  f-c\right)  ^{\ast}(t)$
and $f^{\ast\ast}(t)-f^{\ast}(t)=\left(  f-c\right)  ^{\ast\ast}(t)-\left(
f-c\right)  ^{\ast}(t),$ we get
\[
f^{\ast\ast}(t)-f^{\ast}(t)\leq\frac{t}{I_{\Omega}(t)}\frac{1}{t}\int_{0}%
^{t}\left\vert \nabla f\right\vert ^{\ast}(s)ds.
\]
If $f$ is not bounded from below, let $f_{n}=\max(f,-n),$ \ $n=1,2,\ldots$ The
previous discussion gives
\begin{align*}
(f_{n})^{\ast\ast}(t)-(f_{n})^{\ast}(t)  &  \leq\frac{t}{I_{\Omega}(t)}%
\frac{1}{t}\int_{0}^{t}\left\vert \nabla f_{n}\right\vert ^{\ast}(s)ds\\
&  \leq\frac{t}{I_{\Omega}(t)}\frac{1}{t}\int_{0}^{t}\left\vert \nabla
f\right\vert ^{\ast}(s)ds.
\end{align*}
We now take limits. To compute the left hand side we observe that
$f_{n}(x)\rightarrow f(x)$ $\ \mu-$a.e., and $\left\vert f_{n}\right\vert
\leq\left\vert f\right\vert ,$ then by dominated convergence $f_{n}%
\rightarrow_{L^{1}}f.$ Consequently, by Lemma \ref{garlem}, we have the
pointwise convergence $(f_{n})^{\ast\ast}(t)-(f_{n})^{\ast}%
(t)\underset{n\rightarrow\infty}{\rightarrow}f^{\ast\ast}(t)-f^{\ast}(t),$
($0<t<\mu(\Omega)),$ concluding the proof.

Let $X=X(\Omega)$ be a r.i. space on $\Omega.$ The homogeneous Sobolev space
$\dot{W}_{X}^{1}$ is defined by means of the seminorm
\[
\left\Vert u\right\Vert _{\dot{W}_{X}^{1}}:=\left\Vert \left\vert \nabla
u\right\vert \right\Vert _{X}.
\]
We consider the corresponding $K-$functional
\[
K(t,f;X,\dot{W}_{X}^{1})=\inf\{\left\Vert f-g\right\Vert _{X}+t\left\Vert
g\right\Vert _{\dot{W}_{X}^{1}}\}.
\]
The previous discussion shows that all the results of Chapters \ref{main} and
\ref{contchap} remain valid for functions in $\dot{W}_{X}^{1}$ or $X+\dot
{W}_{X}^{1}$ .

\subsection{\label{eudom}Sobolev spaces defined on Lipschitz domains of
$\mathbb{R}^{n}$}

We now discuss assumptions on the domain that translate into good estimates
for the corresponding isoperimetric profiles.

In this section we consider Sobolev spaces defined on Lipschitz domains of
$\mathbb{R}^{n}.$

Let $\Omega\subset\mathbb{R}^{n}$ be a bounded Lipschitz domain$.$ Then the
isoperimetric profile, satisfies (see for example \cite{ma})
\begin{equation}
I_{\Omega}(t)=c(n)\left(  \min(t,\left\vert \Omega\right\vert -t)\right)
^{\frac{n-1}{n}}. \label{perlip}%
\end{equation}
For any open ball that $B_{\alpha}\subset\Omega$ with $\left\vert B_{\alpha
}\right\vert =\alpha,$ we know that (see for example \cite{ma} or \cite{Zie})
\[
I_{B_{\alpha}}(t)\geq q(n)\left(  \min(t,\alpha-t)\right)  ^{\frac{n-1}{n}%
},\text{ \ \ \ }0<t<\alpha,
\]
where $q(n)\ $is a constant that only depends on $n$. Moreover, since
\[
c(n)\left(  \min(t,\alpha-t)\right)  ^{\frac{n-1}{n}}=\min(I_{\Omega
}(t),I_{\Omega}(\alpha-t))\text{ \ \ \ }0<t<\alpha,
\]
we see that there is a constant $C=C(n)$ such that
\[
I_{B_{\alpha}}(t)\geq C\min(I_{\Omega}(t),I_{\Omega}(\alpha-t))\text{
\ \ \ }0<t<\alpha.
\]
In particular the metric space $\left(  \Omega,\left\vert \cdot\right\vert
,dm\right)  $ has the relative uniform isoperimetric property.

\begin{theorem}
\label{sobcon}Let $X=X(\Omega)$ be a r.i. space on $\Omega,$ then
\[
\dot{W}_{X}^{1}(\Omega)\subset L^{\infty}\Leftrightarrow\left\Vert
t^{1/n-1}\chi_{(0,\left\vert \Omega\right\vert /2)}\right\Vert _{\bar
{X}^{\prime}}<\infty\Leftrightarrow\dot{W}_{X}^{1}(\Omega)\subset C_{b}%
(\Omega).
\]
(Here $C_{b}(\Omega)$ denotes the space of real valued continuous bounded
functions defined on $\Omega$).
\end{theorem}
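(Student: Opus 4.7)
The plan is to close the cycle
$\dot{W}_X^1\subset C_b\Rightarrow \dot{W}_X^1\subset L^\infty\Rightarrow(\text{norm condition})\Rightarrow \dot{W}_X^1\subset C_b$.
The first implication is immediate since $C_b(\Omega)\subset L^\infty(\Omega)$. The explicit Lipschitz profile (\ref{perlip}), $I_\Omega(t)=c(n)(\min(t,|\Omega|-t))^{(n-1)/n}$, together with the symmetry of $I_\Omega$ around $|\Omega|/2$, shows that the stated condition $\|t^{1/n-1}\chi_{(0,|\Omega|/2)}\|_{\bar X'}<\infty$ is equivalent, up to multiplicative constants, to $\|1/I_\Omega\|_{\bar X'}<\infty$, and I shall switch freely between the two formulations.

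For $\dot{W}_X^1\subset L^\infty\Rightarrow(\text{norm condition})$, I would first invoke the closed graph theorem (applied modulo constants) to obtain a universal constant $C>0$ with $\|f-f_\Omega\|_\infty\le C\|\nabla f\|_X$ for every $f\in \dot{W}_X^1$. Next, fix a ball $B=B(x_0,r_0)\subset\Omega$ with $|B|\le |\Omega|/2$ and test against the family of radial tents
\[
f(x)=\chi_B(x)\int_{|x-x_0|}^{r_0}\phi(\rho)\,d\rho,\qquad \phi\ge 0\text{ decreasing on }[0,r_0].
\]
A polar-coordinate computation identifies $|\nabla f|^*(t)=\phi(\omega_n^{-1/n}t^{1/n})$ on $(0,|B|)$ and
\[
\|f\|_\infty=\frac{1}{n\omega_n^{1/n}}\int_0^{|B|}|\nabla f|^*(t)\,t^{1/n-1}\,dt,
\]
while the fact that $f\ge 0$ is supported in $B$ with $|B|\le|\Omega|/2$ yields $f_\Omega\le\tfrac12\|f\|_\infty$, hence $\|f-f_\Omega\|_\infty\ge\tfrac12\|f\|_\infty$. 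As $\phi$ varies, $|\nabla f|^*$ sweeps out all nonnegative decreasing functions on $(0,|B|)$; combining the previous displays with the $L^\infty$ estimate produces a bound $\int_0^{|B|}g(t)\,t^{1/n-1}\,dt\le C'\|g\|_{\bar X}$ for every such $g$, which by Hardy--Littlewood and r.i.\ duality is equivalent to $\|t^{1/n-1}\chi_{(0,|B|)}\|_{\bar X'}<\infty$. The function $t^{1/n-1}$ is bounded on $[|B|,|\Omega|/2]$, so the norm condition on the full interval follows.

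For the closing implication I would use that Lipschitz domains have the relative uniform isoperimetric property, already verified in this section: every ball $B\subset\Omega$ satisfies $I_B(s)\ge q(n)(\min(s,|B|-s))^{(n-1)/n}$. Inequality (\ref{sobre}) together with the signed-rearrangement approximation from the proof of Theorem~\ref{main2sig} gives $\|(-f^*)'I_\Omega\|_{\bar X}\le \|\nabla f\|_X$ for all $f\in \dot{W}_X^1$. Applying this inequality to the restriction $f|_B$ with $I_B$ in place of $I_\Omega$, then H\"older in $\bar X_r'$ and Proposition~\ref{esti} applied to the estimator $J(s)=c\min(s,|B|-s)^{(n-1)/n}$, one arrives at
\[
Osc(f;B)\le c\,\|\nabla f\|_{X(B)}\,\|s^{(1-n)/n}\chi_{(0,|B|/2)}\|_{\bar X'}.
\]
The expected main obstacle is showing that this quantity tends to $0$ as $|B|\to 0$, since the norm condition only furnishes finiteness of the second factor. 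I would resolve this by a dichotomy: if $X$ has absolutely continuous norm, then $\|\nabla f\|_{X(B)}\to 0$ for any $f\in \dot{W}_X^1$ by the very definition of absolute continuity, while the second factor remains bounded; the only residual cases ($X=L^\infty$, where $f$ is already globally Lipschitz, and $X=L^1$ with $n=1$, where $\dot{W}_{L^1}^1$ coincides with absolutely continuous functions) are continuous for trivial reasons. This produces a continuous essential representative for every $f\in \dot{W}_X^1$.
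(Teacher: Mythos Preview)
Your cycle of implications matches the paper's, and the radial-tent test functions for $\dot W_X^1\subset L^\infty\Rightarrow\|t^{1/n-1}\chi_{(0,|\Omega|/2)}\|_{\bar X'}<\infty$ are a correct, self-contained alternative to the paper's citation of \cite{mamiadv} and \cite{mamipot}.

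The gap is your dichotomy in the continuity step. You assert that the only r.i.\ spaces without absolutely continuous norm are $L^\infty$ and $L^1$, but this is false: for any $p>n$ the space $X=L^{p,\infty}$ satisfies the norm condition (here $\bar X'=L^{p',1}$ and $\int_0^{|\Omega|/2}s^{1/n-1/p-1}\,ds<\infty$), yet $L^{p,\infty}$ lacks absolutely continuous norm. Taking $|\nabla f|(x)\sim|x-x_0|^{-n/p}$ near a point $x_0$ one even gets $\|\nabla f\|_{X(B)}$ bounded away from zero on arbitrarily small balls $B\ni x_0$, so neither branch of your dichotomy applies. (Incidentally, $L^1$ \emph{does} have absolutely continuous norm, so it is not a residual case at all.) The repair is to avoid the premature H\"older split: from the signed version of (\ref{sobre}) on $B$, Hardy--Littlewood rearrangement together with the majorization principle against the decreasing weight $s^{1/n-1}$ give
\[
Osc(f;B)\ \le\ c(n)\int_0^{|B|}|\nabla f|^*(s)\,s^{1/n-1}\,ds,
\]
and since $|\nabla f|^*\in\bar X$ and $s^{1/n-1}\in\bar X'$ force $|\nabla f|^*(s)\,s^{1/n-1}\in L^1(0,|\Omega|)$, the right-hand side is the tail of a fixed convergent integral and tends to $0$ with $|B|$, no case analysis needed. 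The paper reaches the same product-form oscillation bound and then writes only ``The essential continuity of $f$ follows,'' so the point is genuinely delicate there as well.
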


\begin{proof}
Let us first observe that the condition of the Theorem can be reformulated in
terms of the isoperimetric profile of $\ \Omega$ as follows,%
\begin{equation}
\left\|  t^{1/n-1}\chi_{(0,\left|  \Omega\right|  /2)}\right\|  _{\bar
{X}^{\prime}}\simeq\left\|  \frac{1}{I_{\Omega}(t)}\right\|  _{\bar
{X}^{^{\prime}}}<\infty. \label{deantes}%
\end{equation}
Indeed, since $I_{\Omega}$ is symmetric around the point $\left|
\Omega\right|  /2,$ it follows that
\begin{align*}
\left\|  \frac{1}{I_{\Omega}(s)}\right\|  _{\bar{X}^{\prime}}  &  \leq\left\|
\frac{1}{I_{\Omega}(s)}\chi_{(0,\left|  \Omega\right|  /2)}\right\|  _{\bar
{X}^{\prime}}+\left\|  \frac{1}{I_{\Omega}(s)}\chi_{(\left|  \Omega\right|
/2,\left|  \Omega\right|  )}\right\|  _{\bar{X}^{\prime}}\\
&  =2\left\|  \frac{1}{I_{\Omega}(s)}\chi_{(0,\left|  \Omega\right|
/2)}\right\|  _{\bar{X}^{\prime}}\\
&  \leq2\left\|  \frac{1}{I_{\Omega}(s)}\right\|  _{\bar{X}^{\prime}}.
\end{align*}
Now (\ref{deantes}) follows since, in view of (\ref{perlip}), we have
\[
\left\|  \frac{1}{I_{\Omega}(s)}\chi_{(0,\left|  \Omega\right|  /2)}\right\|
_{\bar{X}^{\prime}}\approx\left\|  t^{1/n-1}\chi_{(0,\left|  \Omega\right|
/2)}\right\|  _{\bar{X}^{\prime}}.
\]
Suppose that $\left\|  \frac{1}{I_{\Omega}(t)}\right\|  _{\bar{X}^{^{\prime}}%
}\simeq\left\|  t^{1/n-1}\chi_{(0,\left|  \Omega\right|  /2)}\right\|
_{\bar{X}^{\prime}}<\infty.$ Let $f\in\dot{W}_{X}^{1}(\Omega).$ Since we have
shown in the previous section that $\left|  f\right|  ^{\ast}$ is locally
absolutely continuous (cf. \cite{leoni} and \cite{mamiadv}) we can write
\begin{align}
\left\|  f\right\|  _{L^{\infty}}-\left|  f\right|  ^{\ast}(\left|
\Omega\right|  )  &  =\left|  f\right|  ^{\ast}(0)-\left|  f\right|  ^{\ast
}(\left|  \Omega\right|  )=\int_{0}^{\left|  \Omega\right|  }(-\left|
f^{\ast}\right|  )^{\prime}(s)ds\label{iinf}\\
&  =\int_{0}^{\left|  \Omega\right|  }(-\left|  f^{\ast}\right|  )^{\prime
}(s)I_{\Omega}(s)\frac{ds}{I_{\Omega}(s)}\nonumber\\
&  \leq\left\|  (-\left|  f^{\ast}\right|  )^{\prime}(s)I_{\Omega}(s)\right\|
_{\bar{X}}\left\|  \frac{1}{I_{\Omega}(t)}\right\|  _{\bar{X}^{^{\prime}}%
}\text{ \ (by H\"{o}lder's inequality)}\nonumber\\
&  \leq\left\|  \left|  \nabla f\right|  \right\|  _{X}\left\|  \frac
{1}{I_{\Omega}(t)}\right\|  _{\bar{X}^{^{\prime}}}\text{ \ \ (by
(\ref{sobre})).}\nonumber
\end{align}
We have thus obtained%
\[
\left\|  f\right\|  _{L^{\infty}}\leq\left\|  f\right\|  _{L^{1}}+\left\|
\left|  \nabla f\right|  \right\|  _{X}\left\|  \frac{1}{I_{\Omega}%
(t)}\right\|  _{\bar{X}^{^{\prime}}},
\]
which applied to $f-\int_{\Omega}f$ yields%
\begin{align*}
\left\|  f-\int_{\Omega}f\right\|  _{L^{\infty}}  &  \leq\left\|
f-\int_{\Omega}f\right\|  _{L^{1}}+\left\|  \left|  \nabla f\right|  \right\|
_{X}\left\|  \frac{1}{I_{\Omega}(t)}\right\|  _{\bar{X}^{^{\prime}}}\\
&  \leq c(\left|  \Omega\right|  )\left\|  \left|  \nabla f\right|  \right\|
_{L^{1}}+\left\|  \left|  \nabla f\right|  \right\|  _{X}\left\|  \frac
{1}{I_{\Omega}(t)}\right\|  _{\bar{X}^{^{\prime}}}\text{ (by Poincar\'{e}'s
inequality)}\\
&  \leq c(\left|  \Omega\right|  )\phi_{X^{\prime}}(\left|  \Omega\right|
)\left\|  \left|  \nabla f\right|  \right\|  _{X}+\left\|  \left|  \nabla
f\right|  \right\|  _{X}\left\|  \frac{1}{I_{\Omega}(t)}\right\|  _{\bar
{X}^{^{\prime}}}\text{(by H\"{o}lder's inequality)}\\
&  =C(\left|  \Omega\right|  )\left\|  \frac{1}{I_{\Omega}(t)}\right\|
_{\bar{X}^{^{\prime}}}\left\|  \left|  \nabla f\right|  \right\|  _{X},
\end{align*}
where $C(\left|  \Omega\right|  )$ is a constant that depends on $X$ and the
measure of $\Omega.$

Conversely, suppose that $\dot{W}_{X}^{1}(\Omega)\subset L^{\infty},$ then
\[
\left\Vert f-\int_{\Omega}f\right\Vert _{L^{\infty}}\leq c\left\Vert
\left\vert \nabla f\right\vert \right\Vert _{X}.
\]
Since $\Omega$ has bounded Lipschitz boundary, this is equivalent (cf.
\cite{mamiadv} and \cite[Theorem 2]{mamipot}) to the existence of an absolute
constant $C>0$ such that for all $g\in\bar{X},$ $g\geq0$
\[
\left\Vert \int_{t}^{\left\vert \Omega\right\vert /2}\frac{g(s)}{I_{\Omega
}(s)}ds\right\Vert _{L^{\infty}}\leq C\left\Vert g\right\Vert _{\bar{X}%
}\text{.}%
\]
Thus,
\[
\sup_{\left\Vert g\right\Vert _{\bar{X}}\leq1}\int_{0}^{\left\vert
\Omega\right\vert /2}\frac{\left\vert g\right\vert (s)}{I_{\Omega}(s)}%
ds=\sup_{\left\Vert g\right\Vert _{\bar{X}}\leq1}\int_{0}^{\left\vert
\Omega\right\vert }\left\vert g\right\vert (s)\frac{\chi_{(0,\left\vert
\Omega\right\vert /2)}(s)}{I_{\Omega}(s)}ds<C,
\]
which, by duality, implies that
\[
\left\Vert \frac{1}{I_{\Omega}(s)}\chi_{(0,\left\vert \Omega\right\vert
/2)}\right\Vert _{\bar{X}^{\prime}}<\infty.
\]
To conclude the proof we show that (\ref{deantes}) and the relative uniform
isoperimetric property imply that
\[
\dot{W}_{X}^{1}(\Omega)\subset C_{b}(\Omega).
\]
Let $f\in\dot{W}_{X}^{1}(\Omega).$ Consider any open ball $B_{\alpha}$
contained in $\Omega$ with $\left\vert B_{\alpha}\right\vert =\alpha.$\ An
easy computation shows that
\[
\left\Vert \frac{1}{\min(t,\alpha-t)^{\frac{n-1}{n}}}\right\Vert _{\bar
{X}^{^{\prime}}}\simeq\left\Vert t^{1/n-1}\chi_{(0,\alpha/2)}\right\Vert
_{\bar{X}^{\prime}}.
\]
Applying the inequality (\ref{sobs}) to $f\chi_{B_{\alpha}}$ and integrating,
we get
\begin{align*}
ess\sup\left(  f\chi_{B_{\alpha}}\right)  -\frac{1}{\alpha}\int f\chi
_{B_{\alpha}}(x)dx  &  =\int_{0}^{\alpha}\left(  \left(  f\chi_{B_{\alpha}%
}\right)  ^{\ast\ast}(t)-\left(  f\chi_{B_{\alpha}}\right)  ^{\ast}(t)\right)
\frac{dt}{t}\\
&  \leq\int_{0}^{\alpha}\left(  \frac{t}{I_{B_{\alpha}}(t)}\frac{1}{t}\int%
_{0}^{t}\left\vert \nabla f\chi_{B_{\alpha}}\right\vert ^{\ast}(s)ds\right)
\frac{dt}{t}\\
&  \leq\left\Vert \frac{1}{t}\int_{0}^{t}\left\vert \nabla f\chi_{B_{\alpha}%
}\right\vert ^{\ast}(s)ds\right\Vert _{\bar{X}}\left\Vert \frac{1}%
{I_{B_{\alpha}}(t)}\right\Vert _{\bar{X}^{^{\prime}}}\\
&  \leq c(n,X)\left\Vert \left\vert \nabla f\chi_{B_{\alpha}}\right\vert
\right\Vert _{X}\left\Vert t^{1/n-1}\chi_{(0,\alpha/2)}\right\Vert _{\bar
{X}^{\prime}}.
\end{align*}
Similarly, considering $-f\chi_{B_{\alpha}},$ we get
\[
-ess\text{ }\inf f\left(  f\chi_{B_{\alpha}}\right)  +\frac{1}{\alpha}\int
f\chi_{B_{\alpha}}(x)dx\leq c(n,X)\left\Vert \left\vert \nabla f\chi
_{B_{\alpha}}\right\vert \right\Vert _{X}\left\Vert t^{1/n-1}\chi
_{(0,\alpha/2)}\right\Vert _{\bar{X}^{\prime}}.
\]
Adding these inequalities we see that
\[
ess\text{ }\inf\left(  f\chi_{B_{\alpha}}\right)  -ess\text{ }\inf\left(
f\chi_{B_{\alpha}}\right)  \leq c\left\Vert \left\vert \nabla f\chi
_{B_{\alpha}}\right\vert \right\Vert _{X}\left\Vert t^{1/n-1}\chi
_{(0,\alpha/2)}\right\Vert _{\bar{X}^{\prime}}.
\]
Thus, for almost every $x,y\in B_{\alpha},$%
\[
\left\vert f(x)-f(y)\right\vert \leq c(n)\left\Vert \left\vert \nabla
f\chi_{B_{\alpha}}\right\vert \right\Vert _{X}\left\Vert t^{1/n-1}%
\chi_{(0,\alpha/2)}\right\Vert _{\bar{X}^{\prime}}.
\]
The essential continuity of $f$ follows.
\end{proof}

\begin{remark}
\label{rema}Let us consider the case when $X=L^{p},$ with $p>n.$ An elementary
computation shows that
\[
\left\Vert t^{1/n-1}\chi_{(0,\alpha/2)}\right\Vert _{p^{\prime}}\leq
c_{(n,p)}\alpha^{\frac{1}{n}(1-\frac{n}{p})}.
\]
Let $B_{\alpha}$ be a ball with $\left\vert B_{\alpha}\right\vert =\alpha$,
then $\alpha^{1/n}$ is $c_{n}$ times the radius of the ball, thus, for almost
every $y,z\in$ $B_{\alpha}$ such that $\left\vert y-z\right\vert =c_{n}%
\alpha^{1/n},$ we get
\[
\left\vert f(y)-f(z)\right\vert \leq c(n,p)\left\vert y-z\right\vert
^{(1-\frac{n}{p})}\left\Vert \left\vert \nabla f\right\vert \right\Vert _{p}.
\]
The method of proof fails if $p=n.$ However, if we consider the smaller
Lorentz\footnote{See [(\ref{montana}), (\ref{vermont}), Chapter \ref{chapbmo}%
].} space $X=L^{n,1}\subset L^{n},$ then $X^{\prime}=L^{\frac{n}{n-1},\infty}%
$, and
\[
\left\Vert t^{1/n-1}\chi_{(0,\alpha/2)}\right\Vert _{L^{\frac{n}{n-1},\infty}%
}=\sup_{0<s<\alpha/2}s^{\frac{1}{n}-1}s^{1-\frac{1}{n}}=1.
\]
Thus, for almost every $y,z\in$ $B_{\alpha}$ such that $\left\vert
y-z\right\vert =c_{n}\alpha^{1/n},$ we have that
\[
\left\vert f(y)-f(z)\right\vert \leq c(n)\left\Vert \left\vert \nabla
f\chi_{B_{\alpha}}\right\vert \right\Vert _{L^{n,1}}=c(n)\int_{0}^{\left\vert
y-z\right\vert ^{n}}s^{1/n}\left\vert \nabla f\right\vert ^{\ast}(s)\frac
{ds}{s}.
\]
The essential continuity of $f$ follows. Thus we recover the classical result
independently due to Stein \cite{ST1} and C. P. Calder\'{o}n \cite{calx}.
\end{remark}

\begin{remark}
See \cite{Chi} for a related result, using a different method and involving
Orlicz norms.
\end{remark}

\subsection{Spaces defined in terms of the modulus of continuity on Lipschitz
domains of $\mathbb{R}^{n}$}

For Euclidean domains $\Omega$ with Lipschitz boundary it is known that (cf.
\cite[Theorem 1]{js}, \cite[Chapter 5, exercise 13, pag. 430]{bs}),
\begin{equation}
K(t,g;X\left(  {\Omega}\right)  ,\dot{W}_{X}^{1}(\Omega))\simeq\omega
_{X}(g,t),\;0<t<\left|  \Omega\right|  ,\nonumber
\end{equation}
where
\[
\omega_{X}(f,t)=\sup_{0<\left|  h\right|  \leq t}\left\|  (f(\cdot
+h)-f(\cdot))\chi_{\Omega(h)}\right\|  _{L^{p}(\Omega)},
\]
with $\Omega(h)=\left\{  x\in\Omega:x+\rho h\in\Omega,\text{ }0\leq\rho
\leq1\right\}  $ and $h\in\mathbb{R}^{n}$.

Moreover, as we have seen, $\left(  \Omega,\left\vert \cdot\right\vert
,dm\right)  $ has the relative uniform isoperimetric property. Consequently,
by Theorem \ref{continuo}, we have

\begin{theorem}
\label{pepe}Let $X$ be a r.i. space on $\Omega.$ If $f\in X+\dot{W}_{X}^{1}$
satisfies
\[
\int_{0}^{\mu(\Omega)}\frac{\omega_{X}\left(  f,\phi_{X}(t)\left\Vert \frac
{1}{\left(  \min(t,\left\vert \Omega\right\vert -t)\right)  ^{\frac{n-1}{n}}%
}\chi_{(0,t)}(s)\right\Vert _{\bar{X}^{^{\prime}}}\right)  }{\phi_{_{X}}%
(t)}\frac{dt}{t}<\infty,
\]
then, $f$ is essentially bounded and essentially continuous.
\end{theorem}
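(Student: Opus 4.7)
The plan is to deduce Theorem \ref{pepe} as a direct corollary of Theorem \ref{continuo} by specializing all the ingredients to the Lipschitz Euclidean setting. Two facts specific to this setting make the reduction immediate: first, by (\ref{perlip}) the isoperimetric profile satisfies $I_{\Omega}(s)\simeq(\min(s,|\Omega|-s))^{(n-1)/n}$, and second, as noted in Section~\ref{eudom}, the metric measure space $(\Omega,|\cdot|,dm)$ has the relative uniform isoperimetric property. Thus Theorem \ref{continuo} is applicable to $\Omega$.

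First I would rewrite the auxiliary function $\Psi_{X,\Omega}(t)=\phi_{X}(t)\Vert\frac{1}{I_{\Omega}(s)}\chi_{(0,t)}(s)\Vert_{\bar{X}'}$ from Definition~\ref{ladefdelapsi}. Substituting the equivalence for $I_{\Omega}$ yields
\[
\Psi_{X,\Omega}(t)\simeq\phi_{X}(t)\left\Vert \frac{1}{(\min(s,|\Omega|-s))^{(n-1)/n}}\chi_{(0,t)}(s)\right\Vert _{\bar{X}'},
\]
which is exactly the argument of $\omega_{X}(f,\cdot)$ appearing inside the integral of the hypothesis.

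Next I would invoke the equivalence $K(t,g;X(\Omega),\dot{W}_{X}^{1}(\Omega))\simeq\omega_{X}(g,t)$ available for bounded Lipschitz domains (cited from \cite{js} just before the statement). Applied with $t$ replaced by $\Psi_{X,\Omega}(t)$ this gives
\[
K\bigl(\Psi_{X,\Omega}(t),f;X,\dot{W}_{X}^{1}\bigr)\simeq\omega_{X}\bigl(f,\Psi_{X,\Omega}(t)\bigr),
\]
so, modulo multiplicative constants, the hypothesis of Theorem \ref{pepe} is identical to the integrability condition (\ref{conticondi}) of Theorem \ref{continuo} (with $S_{X}$ replaced by $\dot{W}_{X}^{1}$, for which, as noted at the end of Section~\ref{class}, the entire Chapter \ref{contchap} machinery remains valid). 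Applying Theorem \ref{continuo} then yields the essential boundedness and essential continuity of $f$.

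There is no substantive obstacle: the proof is purely a translation step. The only point requiring a moment of care is to check that the constant absorbed when passing from $K$ to $\omega_{X}$, and from $I_{\Omega}(s)$ to $(\min(s,|\Omega|-s))^{(n-1)/n}$, can be pulled outside the integral without affecting finiteness, which is clear because $\omega_{X}(f,\cdot)$ is quasi-subadditive and increasing in its second argument. Thus the convergence of the integral in the hypothesis is equivalent to (\ref{conticondi}), and the conclusion follows.
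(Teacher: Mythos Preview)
Your proposal is correct and matches the paper's approach exactly: the paper states Theorem \ref{pepe} as an immediate consequence of Theorem \ref{continuo}, prefacing it only with the $K$-functional/modulus equivalence from \cite{js} and the remark that $(\Omega,|\cdot|,dm)$ has the relative uniform isoperimetric property. Your write-up actually supplies more detail than the paper does (in particular the remark about absorbing constants via the monotonicity and quasi-subadditivity of $\omega_X(f,\cdot)$), but the underlying argument is identical.
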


In particular, when $X=L^{p},$ we obtain

\begin{theorem}
\label{Besri}If$\ n/p<1,$ then there exists a constant $c>0,$ such that
\[
\left|  f(y)-f(z)\right|  \leq C\int_{0}^{\left|  y-z\right|  }\frac
{\omega_{p}(f,t)}{t^{n/p}}\frac{dt}{t}.
\]

\end{theorem}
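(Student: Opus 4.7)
The plan is to derive Theorem \ref{Besri} from Theorem \ref{pepe}, and more precisely from the pointwise oscillation estimate that underlies its proof via Theorem \ref{continuo}, by specializing to $X = L^p$ and carrying out the resulting explicit computations.

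First I would fix distinct points $y,z\in\Omega$ and choose a Euclidean ball $B\subset\Omega$ of radius comparable to $|y-z|$ that contains both of them. Since $\Omega$ is a bounded Lipschitz domain, such a $B$ exists for $|y-z|$ sufficiently small; moreover, as established in Section \ref{eudom}, $\Omega$ has the relative uniform isoperimetric property, so the machinery of Theorem \ref{continuo} applies. In particular, that proof actually yields the pointwise bound
\[
|f(y)-f(z)| \leq c\int_0^{2|B|} \frac{K(\Psi_{L^p,\Omega}(t), f; L^p, \dot{W}_{L^p}^1)}{\phi_{L^p}(t)} \frac{dt}{t}
\]
for almost every $y,z\in B$.

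The next step is to evaluate $\Psi_{L^p,\Omega}(t) = \phi_{L^p}(t)\|I_\Omega^{-1}\chi_{(0,t)}\|_{L^{p'}}$ explicitly. Since $\Omega$ is Lipschitz, $I_\Omega(s)\simeq\min(s, |\Omega|-s)^{(n-1)/n}$ and $\phi_{L^p}(t)=t^{1/p}$. For $t$ small, the assumption $p>n$ (equivalently $p'(n-1)/n<1$) guarantees
\[
\left\|I_\Omega^{-1}\chi_{(0,t)}\right\|_{L^{p'}} \simeq \left\|s^{-(n-1)/n}\chi_{(0,t)}\right\|_{L^{p'}} \simeq t^{1/p' - (n-1)/n},
\]
and hence $\Psi_{L^p,\Omega}(t) \simeq t^{1/p + 1/p' - (n-1)/n} = t^{1/n}$.

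Substituting this back, and invoking the equivalence $K(s,f;L^p,\dot{W}_{L^p}^1)\simeq \omega_p(f,s)$ valid on Lipschitz domains (see the discussion preceding Theorem \ref{pepe}), the pointwise estimate becomes
\[
|f(y)-f(z)| \leq c\int_0^{c'|y-z|^n} \frac{\omega_p(f, t^{1/n})}{t^{1/p}}\,\frac{dt}{t},
\]
and the change of variables $u = t^{1/n}$ then delivers
\[
|f(y)-f(z)| \leq C\int_0^{c''|y-z|} \frac{\omega_p(f,u)}{u^{n/p}}\,\frac{du}{u},
\]
which is the desired bound. The main technical subtlety is ensuring that the outer limit of integration in the pointwise estimate is of order $|y-z|^n$, rather than $\mu(\Omega)$; this is precisely the payoff of restricting to a ball $B$ of controlled size, which is licensed by the relative uniform isoperimetric property already verified in Section \ref{eudom}.
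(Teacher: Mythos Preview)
Your proposal is correct and follows essentially the same route as the paper: apply the pointwise oscillation estimate from the proof of Theorem \ref{continuo} on a ball of radius $\simeq |y-z|$, compute $\Psi_{L^p,\Omega}(t)\simeq t^{1/n}$ using $p>n$, replace the $K$-functional by $\omega_p$ via the Johnen--Scherer equivalence, and change variables $u=t^{1/n}$. The only cosmetic difference is that the paper writes the computation in terms of $\|I_{B_\alpha}^{-1}\chi_{(0,t)}\|_{L^{p'}}$ rather than $\Psi_{L^p,\Omega}$, but these coincide up to constants by the relative uniform isoperimetric property.
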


\begin{proof}
Let $B_{\alpha}$ an open ball contained in $\Omega$ of measure $\alpha
\leq\left\vert \Omega\right\vert /2.$ Since $p>n$ an elementary computation
shows that
\[
\left\Vert \frac{1}{I_{B_{\alpha}}(s)}\chi_{(0,t)}(s)\right\Vert
_{L^{p^{\prime}}(B_{\alpha})}\leq c_{(n,p)}t^{\frac{1}{n}-\frac{1}{p}}.
\]
Thus, Theorem \ref{continuo} ensures for almost every $y,z\in B_{\alpha},$%
\begin{align*}
\left\vert f(y)-f(z)\right\vert  &  \leq c\int_{0}^{2\alpha}\frac{K\left(
t^{1/n},f;L^{p}(\Omega),\dot{W}_{p}^{1}(\Omega)\right)  }{t^{1/p}}\frac{dt}%
{t}\\
&  \simeq c\int_{0}^{2\alpha}\frac{\omega_{p}(f,t^{1/n})}{t^{1/p}}\frac{dt}%
{t}\\
&  =c\int_{0}^{\left(  2\alpha\right)  ^{1/n}}\frac{\omega_{p}(f,t)}{t^{n/p}%
}\frac{dt}{t}.
\end{align*}

Since $\left\vert B_{\alpha}\right\vert =\alpha$, $\alpha^{1/n}$ is a constant
times the radius of the ball, therefore, for almost every $y,z\in$ $B_{\alpha
}$ such that $\left\vert y-z\right\vert =c\alpha^{1/n},$
\[
\left\vert f(y)-f(z)\right\vert \preceq\int_{0}^{\left\vert y-z\right\vert
}\frac{\omega_{p}(f,t)}{t^{n/p}}\frac{dt}{t}.
\]
The essential continuity of $f$ follows.
\end{proof}

\begin{remark}
In Chapter \ref{Garetal} we shall discuss the connection with A. Garsia's work.
\end{remark}

\begin{theorem}
Let $X$ be a r.i. space such that $\overline{\alpha}_{\Lambda(X^{\prime}%
)}<\frac{1}{n}.$ If $f$ satisfies
\[
\int_{0}^{\left|  \Omega\right|  }\frac{\omega_{X}(f,t)}{\phi_{X}(t^{1/n}%
)}\frac{dt}{t}<\infty,
\]
then, $f$ is essentially continuous.
\end{theorem}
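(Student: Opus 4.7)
The plan is to invoke Theorem \ref{continuo} applied to $(\Omega,\left\vert\cdot\right\vert,dm)$. Since $\Omega$ is a bounded Lipschitz domain, it was verified in Subsection \ref{eudom} that the isoperimetric profile satisfies $I_{\Omega}(s)\simeq\min(s,\left\vert\Omega\right\vert-s)^{(n-1)/n}$ and that the space has the relative uniform isoperimetric property. Thus it suffices to check the convergence of
\[
\int_{0}^{\left\vert\Omega\right\vert}\frac{K\!\left(\Psi_{X,\Omega}(t),f;X,\dot{W}_{X}^{1}\right)}{\phi_{X}(t)}\frac{dt}{t},
\]
with $\Psi_{X,\Omega}(t)=\phi_{X}(t)\bigl\Vert I_{\Omega}(s)^{-1}\chi_{(0,t)}(s)\bigr\Vert_{\bar{X}^{\prime}}$, under the assumption of the theorem.

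Using the Johnen--Scherer equivalence $K(t,f;X(\Omega),\dot{W}_{X}^{1}(\Omega))\simeq\omega_{X}(f,t)$ on Lipschitz Euclidean domains (cf.~\cite{js}, and as already used in the proof of Theorem \ref{pepe}), the integrability condition rewrites as
\[
\int_{0}^{\left\vert\Omega\right\vert}\frac{\omega_{X}(f,\Psi_{X,\Omega}(t))}{\phi_{X}(t)}\frac{dt}{t}<\infty.
\]
Plugging the Lipschitz isoperimetric asymptotics into the definition of $\Psi_{X,\Omega}$ gives, for $t\leq\left\vert\Omega\right\vert/2$,
\[
\Psi_{X,\Omega}(t)\simeq\phi_{X}(t)\bigl\Vert s^{-(n-1)/n}\chi_{(0,t)}(s)\bigr\Vert_{\bar{X}^{\prime}}.
\]
The function $s\mapsto s^{-(n-1)/n}\chi_{(0,t)}(s)$ is decreasing, so its $\bar{X}^{\prime}$-norm is controlled by its $\Lambda(\bar{X}^{\prime})$-norm, and this is precisely where the hypothesis $\overline{\alpha}_{\Lambda(X^{\prime})}<1/n$ enters: it is the dimensional threshold ensuring that the Calder\'on-type Hardy operator associated with the weight $s^{-(n-1)/n}$ is bounded on $\Lambda(\bar{X}^{\prime})$, and this boundedness controls the decreasing rearrangement on $(0,t)$ in a scale-correct way.

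The remaining step is a change of variables that, combined with the concavity/subadditivity of $\omega_{X}(f,\cdot)$ (which gives $\omega_{X}(f,\lambda u)\leq(1+\lambda)\omega_{X}(f,u)$) and the Hardy bound above, majorizes
\[
\int_{0}^{\left\vert\Omega\right\vert}\frac{\omega_{X}(f,\Psi_{X,\Omega}(t))}{\phi_{X}(t)}\frac{dt}{t}\leq c\int_{0}^{\left\vert\Omega\right\vert}\frac{\omega_{X}(f,t)}{\phi_{X}(t^{1/n})}\frac{dt}{t},
\]
which is finite by hypothesis; Theorem \ref{continuo} then delivers essential continuity. The main technical obstacle will be the middle step: extracting from the single Boyd-index assumption $\overline{\alpha}_{\Lambda(X^{\prime})}<1/n$ the precise Hardy--Calder\'on comparison between the $\bar{X}^{\prime}$-norm of $s^{-(n-1)/n}\chi_{(0,t)}$ and $\phi_{X^{\prime}}$ evaluated at powers of $t$. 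Once this comparison is in place, the resulting expression for $\Psi_{X,\Omega}(t)$ can be reconciled with the scale $t^{1/n}$ appearing in the hypothesis, and the change-of-variables computation closes the loop.
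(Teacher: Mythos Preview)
Your approach is essentially the same as the paper's: reduce to Theorem~\ref{pepe} (the Lipschitz-domain instance of Theorem~\ref{continuo}) and show that the whole matter comes down to the single estimate $\left\Vert s^{1/n-1}\chi_{(0,t)}\right\Vert_{\bar X^{\prime}}\preceq \phi_{X^{\prime}}(t)\,t^{1/n-1}$, which gives $\Psi_{X,\Omega}(t)\preceq t^{1/n}$. The paper makes your ``Hardy--Calder\'on comparison'' concrete by first bounding the $\bar X^{\prime}$-norm by the $\Lambda(X^{\prime})$-norm exactly as you suggest, and then invoking Sharpley's result that the index hypothesis forces $d\phi_{X^{\prime}}(s)\simeq \phi_{X^{\prime}}(s)\,ds/s$ together with the almost-increase of $\phi_{X^{\prime}}(s)/s^{\gamma}$ for a suitable $\gamma>1-1/n$, which turns $\int_0^t s^{1/n-1}\,d\phi_{X^{\prime}}(s)$ into an elementary computation.
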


\begin{proof}
It is enough to prove
\[
R(t)=\left\Vert \frac{1}{\left(  \min(s,(\left\vert \Omega\right\vert
-s))\right)  ^{\frac{n-1}{n}}}\chi_{(0,t)}(s)\right\Vert _{\bar{X}^{\prime}%
}\leq c_{(n,X)}t^{\frac{1}{n}}\phi_{\bar{X}},(t),\text{ \ \ }0<t<\left\vert
\Omega\right\vert .
\]

Recall that if $\underline{\alpha}_{\Lambda(X^{\prime})}>0,$ the fundamental
function of $X^{^{\prime}}$ satisfies (see \cite[Theorem 2.4]{Sh})
\[
d\phi_{X^{^{\prime}}}(s)\simeq\frac{\phi_{X^{^{\prime}}}(s)}{s},
\]
and, moreover, for every $0<\gamma<\underline{\alpha}_{\Lambda(X^{\prime})}$
the function $\phi_{X^{^{\prime}}}(s)/s^{\gamma}$ is almost increasing (i.e.
$\exists c>0$ s.t. $\phi_{X^{^{\prime}}}(s)/s^{\gamma}\leq c\phi_{X^{^{\prime
}}}(t)/t^{\gamma}$ whenever $t\geq s).$ Pick $0<\beta\ $such that (notice that
$\overline{\alpha}_{\Lambda(X)}<\frac{1}{n}$ implies that $\underline{\alpha
}_{\Lambda(X^{\prime})}>1-\frac{1}{n})$
\[
1-\frac{1}{n}+\beta<\underline{\alpha}_{\Lambda(X^{\prime})}.
\]
Since $I_{\Omega}$ is symmetric around the point $\left\vert \Omega\right\vert
/2,$ we get
\[
R(t)\simeq\left\Vert \frac{1}{s^{\frac{n-1}{n}}}\chi_{(0,t)}(s)\right\Vert
_{\bar{X}^{\prime}},
\]
and
\begin{align*}
R(t)  &  \leq\int_{0}^{t}s^{\frac{1}{n}-1}d\phi_{X^{^{\prime}}}(s)\\
&  \simeq\int_{0}^{t}s^{\frac{1}{n}-1}\frac{\phi_{X^{^{\prime}}}(s)}{s}ds\\
&  =\int_{0}^{t}\frac{\phi_{X^{^{\prime}}}(s)}{s^{1-\frac{1}{n}+\beta}}%
\frac{ds}{s^{1-\beta}}\\
&  \preceq\frac{\phi_{X^{^{\prime}}}(t)}{t^{1-\frac{1}{n}+\beta}}\int_{0}%
^{t}\frac{ds}{s^{1-\beta}}\\
&  \simeq\frac{\phi_{X^{^{\prime}}}(t)}{t^{1-\frac{1}{n}}}\\
&  =t^{\frac{1}{n}}\phi_{\bar{X}}(t).
\end{align*}

\end{proof}

\section{Domains of Maz'ya's class $\mathcal{J}_{\alpha}$ ($1-1/n\leq\alpha
<1$)\label{Mazclass}}

\begin{definition}
(See \cite{maz}, \cite{ma}) A domain $\Omega\subset\mathbb{R}^{n}$ (with
finite measure) belongs to the class $\mathcal{J}_{\alpha}$ ($1-1/n\leq
\alpha<1$) if there exists a constant $M\in(0,\left\vert \Omega\right\vert )$
such that
\[
U_{\alpha}(M)=\sup\frac{\left\vert \mathcal{S}\right\vert ^{\alpha}}%
{P_{\Omega}(\mathcal{S})}<\infty,
\]
where the sup is taken over all $\mathcal{S}$ open bounded subsets of $\Omega$
such that $\Omega\cap\partial\mathcal{S}$ is a manifold of class $C^{\infty}$
and $\left\vert \mathcal{S}\right\vert \leq M,$ (in which case we will say
that $\mathcal{S}$ is an admissible subset) and where for a measurable set
$E\subset\Omega,$ $P_{\Omega}(E)$ is the De Giorgi perimeter of $E$ in
$\Omega$ defined by
\[
P_{\Omega}(E)=\sup\left\{  \int_{E}div\varphi\text{ }dx:\varphi\in\lbrack
C_{0}^{1}\left(  \Omega\right)  ]^{n},\text{ }\left\Vert \varphi\right\Vert
_{L^{\infty}(\Omega)}\leq1\right\}  .
\]
By an approximation process it follows that if $\Omega$ is a bounded domain in
$\mathcal{J}_{\alpha},$ then there exists a constant $c_{\Omega}>0$ such that,
for all measurable set $E\subset\Omega$ with $\left\vert E\right\vert
\leq\left\vert \Omega\right\vert /2,$ we have
\[
P_{\Omega}(E)\geq c_{\Omega}\left\vert E\right\vert ^{\alpha}.
\]
Since $E$ and $\Omega\backslash E$ have the same boundary measure, we obtain
the following isoperimetric inequality
\[
I_{\Omega}(t)\geq c_{\Omega}\left(  \min(t,\left\vert \Omega\right\vert
-t)\right)  ^{\alpha}:=J_{\Omega}(t),\text{ \ \ }0<t<\left\vert \Omega
\right\vert .
\]

\end{definition}

\begin{example}
If $\Omega$ is a bounded domain, star shaped with respect to a ball, then
$\Omega$ belongs to the class $\mathcal{J}_{1-1/n}$; if $\Omega$ is a bounded
domain with the cone property then $\Omega$ belongs to the class
$\mathcal{J}_{1-1/n}$; if $\Omega$ is a bounded Lipschitz domain, then
$\Omega$ belongs to the class $\mathcal{J}_{1-1/n}$; if $\Omega$ is a $s-$John
domain, then $\ \Omega\in\mathcal{J}_{(n-1)s/n}$; if $\Omega$ is a domain with
one $\beta-$cusp ($\beta\geq1),$ then it belongs to the Maz'ya class
$\mathcal{J}_{\frac{\beta\left(  n-1\right)  }{\beta(n-1)+1}}$.
\end{example}

\begin{theorem}
Let $\Omega$ be a domain in the Maz'ya class$\mathcal{J}_{\alpha}$, and let
$X$ a r.i. space on $\Omega.$ Suppose that
\begin{equation}
\left\Vert \frac{1}{J_{\Omega}(t)}\right\Vert _{X^{\prime}}<\infty.
\label{coo}%
\end{equation}
Then,

\begin{enumerate}
\item
\[
\dot{W}_{X}^{1}(\Omega)\subset C_{b}(\Omega).
\]

\item If $f\in X+\dot{W}_{X}^{1}$ satisfies
\[
\int_{0}^{\mu(\Omega)}\frac{K\left(  \phi_{X}(t)\left\Vert \frac{1}{J_{\Omega
}(t)}\chi_{(0,t)}(s)\right\Vert _{\bar{X}^{^{\prime}}},f;X,\dot{W}_{X}%
^{1}\right)  }{\phi_{_{X}}(t)}\frac{dt}{t}<\infty,
\]
then $f$ is essentially bounded and essentially continuous.
\end{enumerate}

\begin{proof}
Part 1. The inclusion $\dot{W}_{X}^{1}(\Omega)\subset L^{\infty}$ follows in
the same way as the corresponding part of Theorem \ref{sobcon} (cf. inequality
(\ref{iinf})). To prove the essential continuity we proceed as follows. Let
$B$ be any open ball contained in $\Omega$ with $\left\vert B\right\vert
\leq\min(1,\left\vert \Omega\right\vert /2).$ Notice that if $f\in\dot{W}%
_{X}^{1}(\Omega),$ then $f\chi_{B}\in$ $\dot{W}_{X}^{1}(B).$ Now, since $B$ is
a Lip domain, by Theorem \ref{sobcon} we just need to verify that
\[
\left\Vert t^{1/n-1}\chi_{(0,\left\vert B\right\vert /2)}\right\Vert _{\bar
{X}^{\prime}}<\infty.
\]
Since $1-1/n\leq\alpha<1,$ and $0<t<\left\vert B\right\vert /2<1,$ we have
\[
\sup_{0<t<\left\vert B\right\vert /2}t^{\alpha+1/n-1}=\left(  \frac{\left\vert
B\right\vert }{2}\right)  ^{\alpha+1/n-1}.
\]
Thus,%
\begin{align*}
\left\Vert t^{1/n-1}\chi_{(0,\left\vert B\right\vert /2)}\right\Vert _{\bar
{X}^{\prime}}  &  =\left\Vert \frac{t^{\alpha+1/n-1}}{t^{\alpha}}%
\chi_{(0,\left\vert B\right\vert /2)}\right\Vert _{\bar{X}^{\prime}}\\
&  \leq\left(  \frac{\left\vert B\right\vert }{2}\right)  ^{\alpha
+1/n-1}\left\Vert \frac{1}{t^{\alpha}}\chi_{(0,\left\vert B\right\vert
/2)}\right\Vert _{\bar{X}^{\prime}}<\infty\text{ \ \ (by (\ref{coo})).}%
\end{align*}

Part 2. Let $B$ be an open ball contained in $\Omega$ with $\left\vert
B\right\vert \leq\min(1,\left\vert \Omega\right\vert /2).$ Then
\[
K\left(  t,f\chi_{B};X(B),\dot{W}_{X}^{1}(B)\right)  \leq K\left(
t,f;X,\dot{W}_{X}^{1}\right)  .
\]
Using the same argument given in the first part of the proof, we obtain
\begin{align*}
\left\Vert \frac{\chi_{(0,t)}(s)}{\left(  \min(t,\left\vert B\right\vert
-t)\right)  ^{\frac{n-1}{n}}}\right\Vert _{\bar{X}^{^{\prime}}}  &
\leq2\left\Vert \frac{\chi_{(0,t)}(s)}{t^{\frac{n-1}{n}}}\right\Vert _{\bar
{X}^{^{\prime}}}\\
&  \preceq\left\Vert \frac{1}{J_{\Omega}(t)}\chi_{(0,t)}(s)\right\Vert
_{\bar{X}^{^{\prime}}},\text{ \ \ \ }0<t<\left\vert B\right\vert .
\end{align*}
Therefore,%
\[
\int_{0}^{\left\vert B\right\vert }\frac{K\left(  \phi_{X}(t)\left\Vert
\frac{\chi_{(0,t)}(s)}{\left(  \min(t,\left\vert B\right\vert -t)\right)
^{\frac{n-1}{n}}}\right\Vert _{\bar{X}^{^{\prime}}},f\chi_{B};X(B),\dot{W}%
_{X}^{1}(B)\right)  }{\phi_{_{X}}(t)}\frac{dt}{t}<\infty,
\]
and Theorem \ref{pepe} applies.
\end{proof}
\end{theorem}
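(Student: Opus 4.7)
My plan is to reduce both assertions to the Lipschitz-domain case already handled in Theorem \ref{sobcon} and in Theorem \ref{continuo}, by localizing $f$ to arbitrary open balls $B\subset\Omega$. This localization is the right move because the hypothesis placed on $\Omega$ is purely an integrability condition on the global isoperimetric estimator $J_{\Omega}(t)=c_{\Omega}(\min(t,|\Omega|-t))^{\alpha}$, while a generic Maz'ya-class domain need not have the relative uniform isoperimetric property required by Theorem \ref{continuo}. Each open ball $B\subset\Omega$, however, is itself Lipschitz, so all the Lipschitz-case machinery applies internally on $B$; the task reduces to checking that the global hypothesis transfers to each ball. The key quantitative input is the inequality $\alpha\geq 1-1/n$, which is what makes the Euclidean-type profile $I_{B}(t)\asymp(\min(t,|B|-t))^{(n-1)/n}$ compatible with $J_{\Omega}$ on small scales.

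For Part 1, I would first establish the embedding $\dot{W}_{X}^{1}(\Omega)\subset L^{\infty}(\Omega)$ directly from the oscillation inequality [(\ref{reod00}), Chapter \ref{preliminar}] by the same mechanism that appears in (\ref{iinf}): since $I_{\Omega}\geq J_{\Omega}$, we have $(-|f|^{\ast})'\,I_{\Omega}\in\bar{X}$ with norm controlled by $\|\,|\nabla f|\,\|_{X}$ via (\ref{sobre}); then Hölder's inequality paired against $1/I_{\Omega}\in\bar{X}'$ (which is legitimate because $\|1/J_{\Omega}\|_{\bar X'}<\infty$ forces $\|1/I_{\Omega}\|_{\bar X'}<\infty$ as well) gives $\|f\|_{\infty}\leq|f|^{\ast}(|\Omega|)+\|1/J_{\Omega}\|_{\bar X'}\|\,|\nabla f|\,\|_{X}$; applying this to $f-f_{\Omega}$ and using Poincaré yields the $L^{\infty}$ bound in terms of $\|\,|\nabla f|\,\|_{X}$ alone. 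For essential continuity, I would restrict to an arbitrary open ball $B\subset\Omega$ of measure $\leq\min(1,|\Omega|/2)$ and invoke Theorem \ref{sobcon} on $B$. The hypothesis of Theorem \ref{sobcon} on $B$ reads $\|t^{1/n-1}\chi_{(0,|B|/2)}\|_{\bar{X}'}<\infty$, and this follows by writing
\[
t^{1/n-1}=t^{\alpha+1/n-1}\cdot t^{-\alpha},
\]
observing that $\alpha+1/n-1\geq 0$ makes the first factor bounded on $(0,|B|/2)$ (using $|B|\leq 1$), and bounding the remaining $t^{-\alpha}$ against $1/J_{\Omega}(t)$ up to a constant, which is in $\bar{X}'$ by hypothesis.

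For Part 2, I would apply Theorem \ref{continuo} on an arbitrary open ball $B\subset\Omega$ with $|B|\leq\min(1,|\Omega|/2)$. Each such $B$ is Lipschitz and so possesses the relative uniform isoperimetric property. What needs to be checked is that the global integrability hypothesis on $\Omega$ implies its analogue on $B$, i.e. that
\[
\int_{0}^{|B|}\frac{K\!\left(\phi_{X}(t)\left\Vert \frac{1}{I_{B}(s)}\chi_{(0,t)}(s)\right\Vert _{\bar{X}^{\prime}},\,f\chi_{B};X(B),\dot{W}_{X}^{1}(B)\right)}{\phi_{X}(t)}\frac{dt}{t}<\infty.
\]
This is handled in two pieces: first, the restriction inequality $K(t,f\chi_{B};X(B),\dot{W}_{X}^{1}(B))\leq K(t,f;X,\dot{W}_{X}^{1})$; second, the same Maz'ya-class comparison used in Part 1 gives $\|\chi_{(0,t)}/I_{B}\|_{\bar{X}'}\preceq\|\chi_{(0,t)}/J_{\Omega}\|_{\bar{X}'}$ on $(0,|B|/2)$ (bootstrapping from $I_{B}(t)\asymp t^{(n-1)/n}$ near zero, via $t^{1/n-1}\leq C\, t^{-\alpha}$), and a routine use of the concavity of $K$ in its first argument converts the constant multiplier into a bounded factor. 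Theorem \ref{continuo} on $B$ then gives essential continuity of $f$ on $B$, and since balls are arbitrary we conclude essential continuity on $\Omega$. The main obstacle throughout is the bookkeeping of these profile-comparison estimates across three different scales ($J_{\Omega}$, $I_{\Omega}$, $I_{B}$) and the verification that the constants produced by localization are independent of the specific ball $B$ once $|B|$ is bounded away from $|\Omega|$.
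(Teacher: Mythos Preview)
Your proposal is correct and follows essentially the same route as the paper's own proof: localize to small balls $B\subset\Omega$, use the Lipschitz-domain machinery (Theorem \ref{sobcon} for Part 1, the $K$-functional continuity criterion for Part 2) on $B$, and verify the requisite hypotheses on $B$ via the factorization $t^{1/n-1}=t^{\alpha+1/n-1}\cdot t^{-\alpha}$ together with the assumption (\ref{coo}). The only cosmetic difference is that for Part 2 the paper cites Theorem \ref{pepe} (the Lipschitz-domain version stated with the modulus $\omega_{X}$) rather than Theorem \ref{continuo}; since $K(t,f;X,\dot{W}_{X}^{1})\simeq\omega_{X}(t,f)$ on Lipschitz domains, these are the same statement in this context.
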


\section{Ahlfors Regular Metric Measure Spaces\label{ahlfor}}

Let $(\Omega,d,\mu)$ be a complete connected metric Borel measure space and
let $k>1.$ We shall say that $(\Omega,d,\mu)$ is Ahlfors $k-$regular if there
exist absolute constants $c_{\Omega},C_{\Omega}$ such that
\begin{equation}
c_{\Omega}r^{k}\leq\mu\left(  B(x,r)\right)  \leq C_{\Omega}r^{k},\text{
\ \ }\forall x\in\Omega,\text{ \ }r\in(0,diam(\Omega)). \label{AR}%
\end{equation}

We will consider Ahlfors $k-$regular spaces $(\Omega,d,\mu)$ that support a
weak $(1,1)-$Poincar\'{e} inequality. In other words we shall assume the
existence of constants $C>0$ and $\lambda\geq1$ such that for all $u\in
Lip(\Omega),$
\begin{equation}
\int_{B(x,r)}\left\vert u(y)-u_{B_{x,r}}\right\vert d\mu(y)\leq Cr\int%
_{B(x,\lambda r)}\left\vert \nabla u(y)\right\vert d\mu(y), \label{onepoin}%
\end{equation}
where $u_{B_{x,r}}$ denotes the mean value of $u$ in $B$, i.e. $u_{B_{x,r}%
}=\frac{1}{\mu\left(  B(x,r)\right)  }\int_{B(x,r)}u(y)d\mu(y).$

Examples of spaces supporting a (weak) $(1,1)-$Poincar\'{e} inequality include
Riemannian manifolds with nonnegative Ricci curvature, Carnot-Carath\'{e}odory
groups, and more generally (in the case of doubling spaces)
Carnot-Carath\'{e}odory spaces associated to smooth (or locally Lipschitz)
vector fields satisfying H\"{o}rmander's condition (see for example
\cite{Amb}, \cite{HK} and the references quoted therein).

By a well known result of Hajlasz and Koskela (cf. \cite{HK}), (\ref{AR}) and
(\ref{onepoin}) imply%
\[
\left(  \int_{B(x,r)}\left\vert u(y)-u_{B_{x,r}}\right\vert ^{k/(k-1)}%
d\mu(y)\right)  ^{(k-1)/k}\leq D\int_{B(x,2\lambda r)}\left\vert \nabla
u(y)\right\vert d\mu(y),
\]
with $C=\left(  2C\right)  ^{(k-1)/k}.$

According to \cite{Mira} (see also the references quoted therein), given a
Borel set $E\subset\Omega$, and $A\subset\Omega$ open, the relative perimeter
of $E$ in $A$, denoted by $P(E,A),$ is defined by%
\[
P(E,A)=\inf\left\{  \lim\inf_{h\rightarrow\infty}\int_{A}\left\vert \nabla
u_{h}\right\vert d\mu:u_{h}\in Lip_{loc}(A),\text{ }u_{h}\rightarrow\chi
_{E}\text{ in }L_{loc}^{1}(A)\right\}  .
\]

\begin{lemma}
\label{lempo1}The following relative isoperimetric inequality holds:%
\begin{equation}
\min\left(  \mu(E\cap B(x,r)),\mu(E^{c}\cap B(x,r))\right)  \leq D\left(
P(E,B(x,r))\right)  ^{k/(k-1)}. \label{porfinsale}%
\end{equation}

\end{lemma}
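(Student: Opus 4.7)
The plan is to derive the relative isoperimetric inequality by applying the Hajlasz--Koskela Sobolev--Poincar\'e inequality to Lipschitz approximations of $\chi_{E}$ and then passing to the limit, with the crucial reduction being that when one computes the left--hand side of the Sobolev--Poincar\'e inequality for $\chi_{E}$ on a ball, one obtains precisely a quantity comparable to $\min(\mu(E\cap B),\mu(E^{c}\cap B))^{(k-1)/k}$.

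First, by the very definition of $P(E,B(x,r))$, I would select a sequence $u_{h}\in Lip_{loc}(B(x,r))$ with $u_{h}\to\chi_{E}$ in $L^{1}_{loc}(B(x,r))$ such that $\int_{B(x,r)}|\nabla u_{h}|\,d\mu$ converges to $P(E,B(x,r))$ (up to an $\varepsilon$ that we send to zero at the end). Next, I would apply the Sobolev--Poincar\'e inequality stated just before the lemma to each $u_{h}$, yielding
\[
\Bigl(\int_{B(x,r)}|u_{h}-(u_{h})_{B(x,r)}|^{k/(k-1)}\,d\mu\Bigr)^{(k-1)/k}\leq D\int_{B(x,2\lambda r)}|\nabla u_{h}|\,d\mu.
\]
Letting $h\to\infty$, the averages $(u_{h})_{B(x,r)}$ converge to $a:=\mu(E\cap B(x,r))/\mu(B(x,r))$, and by Fatou we get
\[
\int_{B(x,r)}|\chi_{E}-a|^{k/(k-1)}\,d\mu=(1-a)^{k/(k-1)}\mu(E\cap B(x,r))+a^{k/(k-1)}\mu(E^{c}\cap B(x,r))
\]
as a lower bound for the limit of the left--hand side.

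The key elementary estimate is that if WLOG $\mu(E\cap B(x,r))\leq\mu(E^{c}\cap B(x,r))$ then $a\leq 1/2$, so $1-a\geq 1/2$, and the preceding display is bounded below by $(1/2)^{k/(k-1)}\mu(E\cap B(x,r))=(1/2)^{k/(k-1)}\min(\mu(E\cap B(x,r)),\mu(E^{c}\cap B(x,r)))$. Combining with the Sobolev--Poincar\'e bound and raising to the power $k/(k-1)$ would give the claimed inequality, up to an absolute constant.

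The main obstacle is the mismatch between the ball $B(x,r)$ appearing in the lemma and the dilated ball $B(x,2\lambda r)$ appearing on the right--hand side of the Hajlasz--Koskela inequality. To absorb this factor while keeping the perimeter on the same ball as the measure, the clean way is to work from the start with a ball $B(x,r/(2\lambda))$ on the left, use doubling and Ahlfors regularity to compare $\mu(B(x,r/(2\lambda)))$ with $\mu(B(x,r))$, and then invoke the standard reformulation (available in this Ahlfors regular + weak Poincar\'e setting) of the Sobolev--Poincar\'e inequality that allows the same ball on both sides at the cost of a larger absolute constant $D$; this is where one uses the full strength of the doubling hypothesis hidden in \eqref{AR}. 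All other steps are routine computations with characteristic functions.
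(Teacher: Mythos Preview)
Your approach is correct and is precisely the standard argument: the paper does not give an independent proof but simply refers to \cite[Theorem 4.3]{Amb}, whose proof proceeds exactly as you outline---apply the $(k/(k-1),1)$ Sobolev--Poincar\'e inequality to Lipschitz approximants of $\chi_{E}$, compute the left-hand side for the characteristic function, and pass to the limit. Your identification of the ball-dilation mismatch and its resolution via the strong (same-ball) form of the Sobolev--Poincar\'e inequality, available in the Ahlfors regular plus weak Poincar\'e setting, is also the standard way this is handled.
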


\begin{proof}
The proof of this result is contained in the proof of \cite[Theorem 4.3]{Amb}
\end{proof}

\begin{theorem}
Let $k>1$ be the exponent satisfying (\ref{AR}), and let $B:=B(x,r)\ $\ be a
ball. The following statements are equivalent.

\begin{enumerate}
\item For every set of finite perimeter $E$ in $\Omega,$
\[
c(k,C)\left(  \min(\mu(E\cap B),\mu(E^{c}\cap B))\right)  ^{\frac{k}{k-1}}\leq
P(E,B),
\]
where the constant $c(k,C)$ does not depend on $B.$

\item $\forall u\in Lip(\Omega),$ the function $\left\vert u\chi
_{B}\right\vert _{\mu}^{\ast}$ is locally absolutely continuous and, for
$0<t<\mu(B),$
\[
c(k,C)\int_{0}^{t}\left\vert \left(  \left(  -\left\vert u\chi_{B}\right\vert
\right)  _{\mu}^{\ast}\right)  ^{\prime}(\cdot)\left(  \min(\cdot,\mu
(B)-\cdot)\right)  ^{\frac{k-1}{k}}\right\vert ^{\ast}(s)ds\leq\int_{0}%
^{t}\left\vert \nabla u\chi_{B}\right\vert _{\mu}^{\ast}(s)ds\text{.}%
\]

\item Oscillation inequality: $\forall u\in Lip(\Omega)$ and, for
$0<t<\mu(B),$%
\[
(\left\vert u\chi_{B}\right\vert _{\mu}^{\ast\ast}(t)-\left\vert u\chi
_{B}\right\vert _{\mu}^{\ast}(t))\leq\frac{t}{c(k,C)\left(  \min
(t,\mu(B)-t)\right)  ^{\frac{k-1}{k}}}\left\vert \nabla u\chi_{B}\right\vert
_{\mu}^{\ast\ast}(t).
\]

\end{enumerate}
\end{theorem}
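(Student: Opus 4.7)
The plan is to run the cycle $(1)\Rightarrow(2)\Rightarrow(3)\Rightarrow(1)$, instantiating on the restricted metric measure space $(B,d_{\vert B},\mu_{\vert B})$ the familiar equivalence of isoperimetry, Maz'ya-type pointwise rearrangement inequalities, and oscillation estimates developed in \cite{mamiadv}. Throughout, write $J_B(s):=(\min(s,\mu(B)-s))^{(k-1)/k}$, so that (1) asserts exactly that $c(k,C)J_B$ is a relative isoperimetric estimator on $B$.

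For $(1)\Rightarrow(2)$ I would imitate the Maz'ya truncation argument of the Euclidean prototype in Section \ref{eudom} (cf.\ (\ref{A1})--(\ref{A2})): for $u\in Lip(\Omega)$, feed the truncations $(\vert u\chi_B\vert)_{t_1}^{t_2}$ into the metric BV co-area formula of \cite{Mira} to bound $\int_{\{t_1<\vert u\vert<t_2\}\cap B}\vert\nabla\vert u\vert\vert\,d\mu$ below by $(t_2-t_1)\min\bigl(I_B^{\mathrm{rel}}(\mu_{\{\vert u\vert\ge t_1\}\cap B}),I_B^{\mathrm{rel}}(\mu_{\{\vert u\vert\ge t_2\}\cap B})\bigr)$, invoke (1) in the form $I_B^{\mathrm{rel}}\ge c(k,C)J_B$, specialize $t_1=\vert u\chi_B\vert_\mu^\ast(s+h)$ and $t_2=\vert u\chi_B\vert_\mu^\ast(s)$, and let $h\downarrow 0$. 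This simultaneously yields the local absolute continuity of $\vert u\chi_B\vert_\mu^\ast$ and the layered pointwise bound $c(k,C)\bigl(-\vert u\chi_B\vert_\mu^\ast\bigr)'(s)\,J_B(s)\le$ (a Federer-level $\vert\nabla u\vert\chi_B$ integrand), which the Hardy--Littlewood--P\'olya majorization principle (\ref{mayorante}) converts into (2).

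The step $(2)\Rightarrow(3)$ is a short computation: integration by parts gives
\[
t\bigl(\vert u\chi_B\vert_\mu^{\ast\ast}(t)-\vert u\chi_B\vert_\mu^\ast(t)\bigr) \;=\; \int_0^t \frac{s}{J_B(s)}\,G(s)\,ds,\qquad G(s):=\bigl(-\vert u\chi_B\vert_\mu^\ast\bigr)'(s)\,J_B(s)\ge0,
\]
and since $s\mapsto s/J_B(s)$ is increasing on $(0,\mu(B))$, this is $\le (t/J_B(t))\int_0^t G\le (t/J_B(t))\int_0^t G^\ast\le t^{2}\,\vert\nabla u\chi_B\vert_\mu^{\ast\ast}(t)/\bigl(c(k,C)J_B(t)\bigr)$, where the last inequality uses (2); dividing by $t$ yields (3). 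For $(3)\Rightarrow(1)$, I would mimic the proof of Theorem \ref{ref teo 2}: given a set of finite perimeter $E$ with $\mu(E\cap B)\le\mu(B)/2$, pick (via McShane extension and truncation) $u_n\in Lip(\Omega)$ with $u_n\chi_B\to\chi_{E\cap B}$ in $L^1(\Omega)$ and $\int_B\vert\nabla u_n\vert\,d\mu\to P(E,B)$, apply (3) at $s\in(\mu(E\cap B),\mu(B)/2)$, use Lemma \ref{garlem} to compute $\lim_n(\vert u_n\chi_B\vert_\mu^{\ast\ast}(s)-\vert u_n\chi_B\vert_\mu^\ast(s))=\mu(E\cap B)/s$, and combine with $\vert\nabla u_n\chi_B\vert_\mu^{\ast\ast}(s)\le s^{-1}\int_B\vert\nabla u_n\vert\,d\mu$; letting $s\downarrow\mu(E\cap B)$ and using continuity of $J_B$ delivers (1) for this $E$, and the case $\mu(E\cap B)>\mu(B)/2$ follows from $P(E,B)=P(E^c,B)$.

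The main obstacle will be $(1)\Rightarrow(2)$: legitimizing the co-area inequality and the local absolute continuity of $\vert u\chi_B\vert_\mu^\ast$ within the restricted metric setting. The Euclidean argument of Section \ref{eudom} relies on the Federer--Fleming--Rishel formula, whereas here one must lean on the metric BV framework of \cite{Mira} and carefully handle boundary effects arising from restricting a $Lip(\Omega)$-function to the open ball $B$; once this technical platform is assembled, the subsequent derivation of (2) proceeds exactly as in \cite{mamiadv}.
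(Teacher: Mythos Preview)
Your proposal is correct and is essentially the same as the paper's approach: the paper observes that on the restricted space $(B,d_{\mid B},\mu_{\mid B})$ the three statements are precisely the isoperimetric/P\'olya--Szeg\H{o}/oscillation equivalence of \cite[Theorem~1]{mamiadv}, and simply cites that result rather than re-deriving the cycle $(1)\Rightarrow(2)\Rightarrow(3)\Rightarrow(1)$ you sketch. Your outline faithfully reproduces the content of that cited theorem, and the technical obstacle you flag (metric co-area and local absolute continuity on the restricted ball) is exactly what is handled in \cite{mamiadv}.
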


\begin{proof}
Consider the metric space $\left(  B,d_{\mid B},\mu_{\mid B}\right)  ,$ then
the Theorem is a particular case of Theorem 1 of \cite{mamiadv}.
\end{proof}

The local version of Theorem \ref{main2sig} is

\begin{theorem}
Let $X$ be a r.i. space on $\Omega$ and let $B\subset\Omega$ be an open ball.
Then, for each $f\in X+S_{X},$%
\[
\left(  f\chi_{B}\right)  _{\mu}^{\ast\ast}(t/2)-\left(  f\chi_{B}\right)
_{\mu}^{\ast}(t/2)\leq4\frac{K\left(  \psi(t),f\chi_{B};X,S_{X}\right)  }%
{\phi_{X}(t)},\text{ }0<t<\mu(B),
\]
where
\[
\psi(t)=\frac{\phi_{X}(t)}{t}\left\|  \frac{s}{c(k,C)\left(  \min
(s,\mu(B)-s)\right)  ^{\frac{k-1}{k}}}\chi_{(0,t)}(s)\right\|  _{\bar
{X}^{^{\prime}}}.
\]

\end{theorem}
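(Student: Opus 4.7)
The plan is to view the open ball $B$, endowed with the restricted metric $d_{\mid B}$ and restricted measure $\mu_{\mid B}$, as a metric measure space in its own right and to replay the proof of Theorem~\ref{main2sig} in that restricted setting. Since $f\in X(\Omega)+S_X(\Omega)$ implies $f\chi_B\in X_r(B)+S_{X_r}(B)$, and since the fundamental function identity $\phi_{X_r(B)}(s)=\phi_X(s)$ holds for $s\in[0,\mu(B)]$ by [(\ref{galio}), Chapter~\ref{preliminar}] together with the duality identification $(X_r(B))'=(X')_r(B)$ of [(\ref{dualres}), Chapter~\ref{preliminar}], every quantity that appears in the statement on the $X(\Omega)$--side will match the corresponding quantity computed intrinsically on $B$.

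Concretely, the first step is to invoke the oscillation inequality from item~3 of the preceding theorem, which is precisely the version of [(\ref{reod00}), Chapter~\ref{preliminar}] available on the ball with the Ahlfors relative isoperimetric estimator $J_B(s):=c(k,C)\bigl(\min(s,\mu(B)-s)\bigr)^{(k-1)/k}$ playing the role of $I_\Omega$. Since (by the Ahlfors relative isoperimetric inequality of Lemma~\ref{lempo1}) one has $I_B\geq J_B$, every step in the proof of Theorems~\ref{main2} and~\ref{main2sig} goes through on $B$ with $J_B$ in place of $I_\Omega$. In particular, for any Lipschitz $h$ on $B$, one controls $|h|_{\mu_{\mid B}}^{\ast\ast}(t)-|h|_{\mu_{\mid B}}^{\ast}(t)$ by integration by parts and H\"older's inequality against $\bar X_r(B)'$, producing the factor
\[
\psi_B(t)=\frac{\phi_{X_r(B)}(t)}{t}\Bigl\|\tfrac{s}{J_B(s)}\chi_{(0,t)}(s)\Bigr\|_{\bar X_r(B)'}=\psi(t),
\]
which is exactly the function $\psi(t)$ in the statement.

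Next I would combine this with the decomposition argument used in Theorem~\ref{main2sig}: write $f\chi_B=(f\chi_B-h)+h$ with $0\leq h\leq\|f\chi_B\|_\infty$ (reducing to the bounded case by truncating from below, exactly as at the end of the proof of Theorem~\ref{main2sig}), estimate $|(f\chi_B)-h|_{\mu_{\mid B}}^{\ast\ast}(t/2)$ by H\"older's inequality against $\bar X_r(B)'$ to obtain the $1/\phi_X(t)$ factor, and estimate the $h$--term by the isoperimetric--Hardy bound described above. The result is the pointwise inequality on $B$
\[
(f\chi_B)_{\mu_{\mid B}}^{\ast\ast}(t/2)-(f\chi_B)_{\mu_{\mid B}}^{\ast}(t/2)\leq 4\,\frac{K\bigl(\psi(t),\,f\chi_B;X_r(B),S_{X_r}(B)\bigr)}{\phi_X(t)},\quad 0<t<\mu(B),
\]
where the constant $4$ (rather than $8$) reflects that, for decompositions by the restriction to $B$, the $\phi_{X_r(B)}(t/2)$--versus--$\phi_X(t)$ bookkeeping of [(\ref{galio}), Chapter~\ref{preliminar}] lets one absorb one of the factors of $2$ that appears in the original global argument.

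Finally, to get the statement in terms of the $K$--functional on the ambient space $(X,S_X)$, I would observe that every competing decomposition $f\chi_B=(f\chi_B-g)+g$ with $g\in S_X(\Omega)$ restricts to a decomposition of $f\chi_B$ in $X_r(B)+S_{X_r}(B)$ satisfying
\[
\|f\chi_B-g\|_{X_r(B)}+t\bigl\||\nabla g|\bigr\|_{X_r(B)}\leq\|f\chi_B-g\|_{X}+t\bigl\||\nabla g|\bigr\|_{X},
\]
hence $K(t,f\chi_B;X_r(B),S_{X_r}(B))\leq K(t,f\chi_B;X,S_X)$. Substituting this into the previous display yields the claimed inequality. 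The main obstacle is the careful bookkeeping in the restricted r.i.\ space --- in particular, verifying that the identifications of fundamental functions, associated norms, and $K$--functionals under restriction are compatible with the signed--rearrangement approximation argument that allows passage from bounded $f$ to general $f\in X+S_X$, as well as making sure the constant reduction from $8$ to $4$ is justified by the restriction structure rather than hidden inside an absolute constant.
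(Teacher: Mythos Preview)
Your approach is essentially identical to the paper's: the paper's proof simply observes that $f\chi_B\in X_r(B)+S_{X_r}(B)$, that by Lemma~\ref{lempo1} the function $J_B(t)=c(k,C)(\min(t,\mu(B)-t))^{(k-1)/k}$ is an isoperimetric estimator for $(B,d_{|B},\mu_{|B})$, and then says ``finish the proof in the same way as in Theorem~\ref{main2sig}''. Your only caveat is the constant: your argument that restriction bookkeeping converts the $8$ of Theorem~\ref{main2sig} into a $4$ does not actually work (swapping $\phi_X(t/2)$ for $\phi_X(t)$ goes the wrong way), but the paper does not justify the $4$ either---it is either a typo or absorbed into the informal ``same way'' reference, so this is not a gap in your understanding of the argument.
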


\begin{proof}
Let $f\in X+S_{X},$ then $f\chi_{B}\in X_{r}(B)+S_{X_{r}}(B),$ where $B$ is
the metric space $\left(  B,d_{\mid B},\mu_{\mid B}\right)  .$ By Lemma
\ref{lempo1} we know that
\[
c(k,C)\left(  \min(\mu(E\cap B),\mu(E^{c}\cap B))\right)  ^{\frac{k}{k-1}}\leq
P(E,B).
\]
Thus, for any Borel set $E\subseteq B,$%
\[
c(k,C)\left(  \min(\mu(E),\mu(B)-\mu(E))\right)  ^{\frac{k}{k-1}}\leq
P_{B}(E).
\]
Consequently, $J_{B}(t)=c(k,C)\left(  \min(t,\mu(B)-t)\right)  ^{\frac{k}%
{k-1}}$ ($0<t<\mu(B))$ is an isoperimetric estimator of $\left(  B,d_{\mid
B},\mu_{\mid B}\right)  ,$ and now we finish the proof in the same way as in
Theorem \ref{main2sig}.
\end{proof}

\begin{theorem}
Let $f\in X+S_{X}$ and let $B$ be an open ball, if
\[
\int_{0}^{\mu(B)}\frac{K\left(  \phi_{X}(t)\left\|  \left(  \min
(s,\mu(B)-s)\right)  ^{1-1/k}\chi_{(0,t)}(s)\right\|  _{\bar{X}^{^{\prime}}%
},f\chi_{B};X,S_{X}\right)  }{\phi_{_{X}}(t)}\frac{dt}{t}<\infty
\]
then, $f\chi_{B}$ is essentially bounded and essentially continuous.
\end{theorem}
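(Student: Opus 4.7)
The plan is to mimic the proofs of Lemma~\ref{bound} and Theorem~\ref{continuo} in the metric measure space $(B,d_{\mid B},\mu_{\mid B})$, interpreting the quantity inside the $K$-functional in the hypothesis as
\[
\Psi_{B}(t):=\phi_X(t)\left\|\frac{1}{J_{B}(s)}\chi_{(0,t)}(s)\right\|_{\bar X'},\qquad J_{B}(s):=c(k,C)\bigl(\min(s,\mu(B)-s)\bigr)^{(k-1)/k}.
\]
By Lemma~\ref{lempo1} applied to $B$---and more generally to any open sub-ball $B'\subset B$---the analogous function $J_{B'}$ is an isoperimetric estimator for $(B',d_{\mid B'},\mu_{\mid B'})$, so the preceding local oscillation theorem yields
\[
(f\chi_{B'})_\mu^{\ast\ast}(t/2)-(f\chi_{B'})_\mu^{\ast}(t/2)\le 4\,\frac{K\!\left(\psi_{B'}(t),f\chi_{B'};X,S_X\right)}{\phi_X(t)},\quad 0<t<\mu(B'),
\]
with $\psi_{B'}(t)\le\Psi_{B'}(t)$ by Lemma~\ref{ellemadelapsi}(iii).

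For essential boundedness I specialize to $B'=B$ and repeat the proof of Lemma~\ref{bound}: integrating the oscillation estimate against $dt/t$ from $r$ to $\mu(B)$, using $\frac{d}{dt}\{-g_\mu^{\ast\ast}(t)\}=(g_\mu^{\ast\ast}(t)-g_\mu^{\ast}(t))/t$, and letting $r\downarrow 0$, I obtain
\[
\|f\chi_B\|_{L^\infty}\le\frac{2}{\mu(B)}\int_0^{\mu(B)/2}|f\chi_B|_\mu^{\ast}(s)\,ds+C\int_0^{\mu(B)}\frac{K(\Psi_B(t),f\chi_B;X,S_X)}{\phi_X(t)}\,\frac{dt}{t},
\]
which is finite by hypothesis.

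For essential continuity I follow the proof of Theorem~\ref{continuo}. Given an open sub-ball $B'\subset B$, I apply the local oscillation estimate to both $f\chi_{B'}$ and $-f\chi_{B'}$, integrate against $dt/t$ over $(0,\mu(B'))$, and sum. The rescaling computation carried out in (\ref{qqqq})--(\ref{obam})---now with $B$ playing the role of the ambient space $\Omega$ and Proposition~\ref{esti} applied to $J_B$---produces
\[
\underset{B'}{\text{ess sup}}\,f-\underset{B'}{\text{ess inf}}\,f\le C\int_0^{2\mu(B')}\frac{K(\Psi_B(t),f\chi_B;X,S_X)}{\phi_X(t)}\,\frac{dt}{t},
\]
and the right-hand side tends to $0$ as $\mu(B')\downarrow 0$ by the integrability hypothesis, yielding essential continuity of $f\chi_B$.

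The main technical step is to pass from the local data attached to $B'$---namely $\psi_{B'}$ and $K(\cdot,f\chi_{B'};X_r(B'),S_{X_r}(B'))$---to the global data $\Psi_{B}$ and $K(\cdot,f\chi_B;X,S_X)$, uniformly in $B'\subset B$. This rests on three routine facts: (i) $K(\cdot,f\chi_{B'};X_r(B'),S_{X_r}(B'))\le K(\cdot,f\chi_B;X,S_X)$ by restriction of admissible decompositions; (ii) the comparison $\Psi_{B'}(t)\le C\,\Psi_B(t)$ for $0<t<\mu(B')$, obtained by applying Proposition~\ref{esti} (with $J=J_B$) inside $B$ together with the observation that $J_{B'}(s)\ge c\min(J_B(s),J_B(\mu(B')-s))$; and (iii) the identity $\phi_{X_r(B')}=\phi_X$ from [(\ref{galio}), Chapter~\ref{preliminar}]. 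Once these comparisons are in place the proof reduces to the by-now-routine integration argument of Theorem~\ref{continuo}.
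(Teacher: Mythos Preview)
Your proposal is correct and follows essentially the same route as the paper: the key observation in both is that $J_{B'}(s)\ge c\min(J_B(s),J_B(\mu(B')-s))$ for every sub-ball $B'\subset B$ (this is exactly the relative uniform isoperimetric property for $(B,d_{\mid B},\mu_{\mid B})$, and it comes from Lemma~\ref{lempo1} applied to each $B'$). The difference is only in packaging: the paper simply verifies this property and then invokes Theorem~\ref{continuo} directly for the ambient space $(B,d_{\mid B},\mu_{\mid B})$, whereas you reprove the content of Lemma~\ref{bound} and Theorem~\ref{continuo} in this setting. Your argument is sound, but once you have established the relative uniform isoperimetric property for $B$, citing Theorem~\ref{continuo} saves all the integration-and-rescaling work you outline.
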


\begin{proof}
By the proof of the previous Theorem we know that
\[
J_{B}(t)=c(k,C)\left(  \min\left(  t,\mu(B)-t\right)  \right)  ^{\frac{k}%
{k-1}}%
\]
is an isoperimetric estimator of $\left(  B,d_{\mid B},\mu_{\mid B}\right)  .$
For any open ball $B(x,r)\subset B,$ it follows from Lemma \ref{lempo1} that,
for $0<s<\mu(Q_{B(x,r)}),$
\begin{align*}
c(k,C)\left(  \min\left(  t,\mu(B(x,r))-t\right)  \right)  ^{\frac{k}{k-1}}
&  \preceq c(k,C)\min(J_{B}(t),J_{B}(\mu(Q_{B(x,r)})-t))\\
&  \leq P_{B(x,r)}(s).
\end{align*}
Therefore $\left(  B,d_{\mid B},\mu_{\mid B}\right)  $ has the relative
isoperimetric property and Theorem \ref{continuo} applies.
\end{proof}

\begin{remark}
In the particular case $X=L^{p},$ we can thus use the same argument given in
Theorem \ref{Besri} to obtain that for $k/p<1,$ there exists an absolute
constant such that
\[
\left\vert f(y)-f(z)\right\vert \preceq\int_{0}^{\left\vert y-z\right\vert
}\frac{K\left(  t,f\chi_{B};X,S_{X}\right)  }{t^{k/p}}\frac{dt}{t},\text{
\ \ }y,z\in B.
\]

\end{remark}

\chapter{Fractional Sobolev inequalities in Gaussian measures
\label{chapgauss}}

\section{Introduction and Summary}

As another application of our theory, in this chapter we consider in detail
fractional logarithmic Sobolev inequalities. We will deal not only with
Gaussian measures but also with measures that interpolate between Gaussian and exponential.

In the context of classical Gaussian measures a typical result in this chapter
is given by the following fractional logarithmic Sobolev inequality. Let
$d\gamma_{n}$ be the Gaussian measure on $\mathbb{R}^{n}$, let $1\leq
q<\infty,$ $\theta\in(0,1)$; then, there exists an absolute constant $c>0,$
independent of the dimension, such that (cf. Theorem \ref{fustado} below)%
\begin{equation}
\left\{  \int_{0}^{1/2}\left\vert f\right\vert _{\gamma_{n}}^{\ast}%
(t)^{q}\left(  \log\frac{1}{t}\right)  ^{\frac{q\theta}{2}}dt\right\}
^{1/q}\leq c\left\Vert f\right\Vert _{B_{L^{q}}^{\theta,q}(\gamma_{n})},
\label{fusta}%
\end{equation}
where $B_{L^{q}}^{\theta,q}(\gamma_{n})\ $is the Gaussian Besov space, see
(\ref{besgaus}) below. Note that if $q=2,$ (\ref{fusta}) interpolates between
the embedding that follows from the classical logarithmic Sobolev inequality
(which corresponds to the case $\theta=1$) and the trivial embedding
$L^{2}\subset L^{2}$ (the case $\theta=0$). For related inequalities using
semigroups see \cite{bakrmey} and also \cite{fei}.

More generally, we will also prove fractional Sobolev inequalities for tensor
products of measures that, on the real line are defined as follows. Let
$\alpha\geq0,$ \ $r\in\left[  1,2\right]  $ and $\gamma=\exp(2\alpha/(2-r)),$
($\alpha=0$ if $r=2$) and let
\[
d\mu_{r,\alpha}(x)=Z_{r,\alpha}^{-1}\exp\left(  -\left\vert x\right\vert
^{r}(\log(\gamma+\left\vert x\right\vert )^{\alpha}\right)  dx,
\]%
\[
\mu_{r,\alpha,n}=\mu_{p,\alpha}^{\otimes n},
\]
where $Z_{r,\alpha}^{-1}$ is chosen to ensure that $\mu_{r,\alpha
}(\mathbb{R)=}1.$ The corresponding results are apparently new and give
fractional Sobolev inequalities, that just like the logarithmic Sobolev
inequalities of \cite{mamiadv}, exhibit logarithmic gains of integrability
that are directly related to the corresponding isoperimetric profiles. For
example, if $\alpha=0,$ then the corresponding fractional Sobolev inequalities
take the following form. Let $1\leq q<\infty,$ $\theta\in(0,1)$, then there
exists an absolute constant $c>0,$ independent of the dimension, such that
(cf. Theorem \ref{fustado} below)%
\[
\left(  \int_{0}^{1/4}\left\vert f\right\vert _{\mu_{r,0,n}}^{\ast}%
(t)^{q}\left(  \log\frac{1}{t}\right)  ^{q\theta\left(  1-1/r\right)
}dt\right)  ^{1/q}\leq c\left\Vert f\right\Vert _{B_{L^{q}}^{\theta,q}%
(\mu_{r,0,n})}.
\]

Likewise, for $q=\infty$ (cf. (\ref{vera1}) below)%
\[
\sup_{t\in(0,\frac{1}{4})}\left(  \left|  f\right|  _{\mu_{r,0,n}}^{\ast\ast
}(t)-\left|  f\right|  _{\mu_{r,0,n}}^{\ast}(t)\right)  \left(  \log\frac
{1}{t}\right)  ^{(1-\frac{1}{r})\theta}\leq c\left\|  f\right\|  _{\dot
{B}_{L^{\infty}}^{\theta,\infty}(\mu_{r,0,n})}.
\]

We also explore the scaling of fractional inequalities for Gaussian Besov
spaces based on exponential Orlicz spaces. We show that in this context the
gain of integrability can be measured directly in the power of the exponential.

We start by considering the corresponding embeddings of Gaussian-Sobolev
spaces into $L^{\infty}$.

\section{Boundedness of functions in Gaussian-Sobolev spaces}

Let $\alpha\geq0,$ \ $r\in\left[  1,2\right]  $ and $\gamma=\exp
(2\alpha/(2-r))$ ($\alpha=0$ if $r=2$), and let $\mu_{r,\alpha}$ be the
probability measure on $\mathbb{R}$ defined by%
\[
d\mu_{r,\alpha}(x)=Z_{r,\alpha}^{-1}\exp\left(  -\left|  x\right|  ^{r}%
(\log(\gamma+\left|  x\right|  )^{\alpha}\right)  dx=\varphi_{r,\alpha
}(x)dx,\text{ }x\in\mathbb{R}\text{,}%
\]
where $Z_{r,\alpha}^{-1}$ is chosen to ensure that $\mu_{r,\alpha
}(\mathbb{R)=}1.$ Then we let%
\[
\varphi_{\alpha,r}^{n}(x)=\varphi_{r,\alpha}(x_{1})\cdots\varphi_{r,\alpha
}(x_{n}),\text{ }x\in\mathbb{R}^{n},
\]
and $\mu_{r,\alpha,n}=\mu_{r,\alpha}^{\otimes n}.$ In other words%
\[
d\mu_{r,\alpha,n}(x)=\varphi_{r,\alpha}^{n}(x)dx.
\]
In particular, $\mu_{2,0,n}=\gamma_{n}$ (Gaussian measure).

It is known that the isoperimetric problem for $\mu_{r,\alpha}$ is solved by
half-lines (cf. \cite{Bor} and \cite{Bob}) and the isoperimetric profile is
given by
\[
I_{\mu_{r,\alpha}}(t)=\varphi\left(  H^{-1}(\min(t,1-t)\right)  =\varphi
\left(  H^{-1}(t\right)  ),\text{ \ \ \ }t\in\lbrack0,1],
\]
where $H:\mathbb{R}\rightarrow(0,1)$ is the increasing function given by
\[
H(r)=\int_{-\infty}^{r}\varphi(x)dx.
\]
Moreover (cf. \cite{bar} and \cite{BCR1}), there exist constants $c_{1},c_{2}$
such that, for all $t\in\lbrack0,1]$,
\begin{equation}
c_{1}L_{\mu_{r,\alpha}}(t)\leq I_{\mu_{r,\alpha}}(t)\leq c_{2}L_{\mu
_{r,\alpha}}(t), \label{qq}%
\end{equation}
where
\[
L_{\mu_{r,\alpha}}(t)=\min(t,1-t)\left(  \log\frac{1}{\min(t,1-t)}\right)
^{1-\frac{1}{r}}\left(  \log\log\left(  e+\frac{1}{\min(t,1-t)}\right)
\right)  ^{\frac{\alpha}{r}}.
\]

Moreover, we have%
\begin{equation}
I_{\mu_{r,\alpha,n}}(t)\simeq t\left(  \log\frac{1}{t}\right)  ^{1-\frac{1}%
{r}}\left(  \log\log\left(  e+\frac{1}{t}\right)  \right)  ^{\alpha/r},\text{
for }t\in\left(  0,\frac{1}{2}\right)  . \label{debase1}%
\end{equation}

For the rest of the section we shall let $\mu$ denote the measure
$\mu_{r,\alpha,n}$ on $\mathbb{R}^{n}.$ For a given r.i. space $X:=X\left(
\mathbb{R}^{n},\mu\right)  ,$ let $W_{X}^{1}\left(  \mathbb{R}^{n},\mu\right)
$ be the classical Sobolev space endowed with the norm $\left\|  u\right\|
_{W_{X}^{1}(\mathbb{R}^{n},\mu)}=\left\|  u\right\|  _{X}+\left\|  \left|
\nabla u\right|  \right\|  _{X}.$ The homogeneous Sobolev space $\dot{W}%
_{X}^{1}(\mathbb{R}^{n},\mu)$ is defined by means of the quasi norm $\left\|
u\right\|  _{\dot{W}_{X}^{1}}:=\left\|  \left|  \nabla u\right|  \right\|
_{X}.$

The discussion of Chapter \ref{capitap} - Section \ref{class} applies and
therefore we see that $W_{L_{1}}^{1}(\mathbb{R}^{n},\mu)$ is invariant under
truncation. Moreover, if $u\in W_{L_{1}}^{1}(\mathbb{R}^{n},\mu)$ then the
following co-area formula holds:
\[
\int_{\mathbb{R}^{n}}\left|  \nabla u(x)\right|  d\mu(x)=\int_{\mathbb{R}^{n}%
}\left|  \nabla u(x)\right|  \varphi_{\alpha,p}^{n}(x)dx=\int_{-\infty
}^{\infty}P_{\mu}(u>s)ds.
\]
From here we see that inequalities [Chapter \ref{capitap}, (\ref{sobu}),
(\ref{sobre}) and (\ref{sobs})] hold for all $W_{L_{1}}^{1}(\mathbb{R}^{n}%
,\mu)$ functions (of course, the rearrangements are now with respect to the
measure $\mu).$ Finally, if we consider
\[
K(t,f;X,\dot{W}_{X}^{1})=\inf\{\left\|  f-g\right\|  _{X}+t\left\|  g\right\|
_{\dot{W}_{X}^{1}(\mathbb{R}^{n},\mu)}\},
\]
all the results that we have obtained in Chapter \ref{main}, remain true.

\begin{theorem}
If $\mu=\mu_{r,\alpha,n}$ then
\[
\dot{W}_{X}^{1}(\mathbb{R}^{n},\mu)\nsubseteq L^{\infty}.
\]

\end{theorem}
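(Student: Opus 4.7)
The plan is to exhibit a single explicit counterexample, valid for every rearrangement invariant space $X$: the first coordinate function $f(x)=x_{1}$. The whole content of the theorem is the observation that, in contrast with the bounded Lipschitz setting of Theorem \ref{sobcon}, the probability measure $\mu=\mu_{r,\alpha,n}$ has unbounded support in $\mathbb{R}^{n}$, so affine functions are unbounded while having constant gradient.

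First I would check that $f\in\dot{W}_{X}^{1}(\mathbb{R}^{n},\mu)$. Since $\nabla f=e_{1}$, we have $|\nabla f|\equiv 1$ $\mu$-a.e. Because $(\mathbb{R}^{n},\mu)$ is a probability space and $X$ is a r.i. space on it, the continuous inclusion $L^{\infty}(\mathbb{R}^{n},\mu)\hookrightarrow X(\mathbb{R}^{n},\mu)$ recalled in Section \ref{secc:ri} of Chapter \ref{preliminar} gives $\|1\|_{X}=\phi_{X}(1)<\infty$, so $\|f\|_{\dot{W}_{X}^{1}}=\||\nabla f|\|_{X}<\infty$, as required.

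Next I would verify $f\notin L^{\infty}(\mathbb{R}^{n},\mu)$. The one-dimensional density $\varphi_{r,\alpha}(t)=Z_{r,\alpha}^{-1}\exp(-|t|^{r}(\log(\gamma+|t|))^{\alpha})$ is strictly positive on $\mathbb{R}$, hence $\mu_{r,\alpha}\{t>M\}>0$ for every $M$. By the tensor-product structure of $\mu=\mu_{r,\alpha}^{\otimes n}$, the slab $\{x\in\mathbb{R}^{n}:x_{1}>M\}$ has $\mu$-measure $\mu_{r,\alpha}\{t>M\}>0$, so $f(x)=x_{1}$ is not $\mu$-essentially bounded.

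There is no hard step here; the argument turns on the single structural observation that for probability measures of unbounded support on $\mathbb{R}^{n}$, a coordinate function already defeats the naive Morrey--Sobolev embedding. The genuinely nontrivial question, addressed by the remaining results of this chapter, is to locate the correct (weighted, logarithmically refined) r.i. target space into which $\dot{W}_{X}^{1}(\mathbb{R}^{n},\mu)$ actually does embed.
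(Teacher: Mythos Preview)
Your proof is correct and is genuinely different from the paper's. The paper does not exhibit a counterexample; instead it invokes \cite[Theorem 6]{mamiadv}, which says that the embedding $\dot{W}_{X}^{1}(\mathbb{R}^{n},\mu)\subset L^{\infty}$ is equivalent to a Hardy-type inequality for the isoperimetric Hardy operator, and then observes that this inequality would force $\int_{0}^{1/2}\frac{ds}{L_{\mu_{r,\alpha,n}}(s)}<\infty$, which is false because the profile $L_{\mu_{r,\alpha,n}}(s)\simeq s(\log\frac{1}{s})^{1-1/r}(\log\log(e+\frac{1}{s}))^{\alpha/r}$ makes $1/L_{\mu_{r,\alpha,n}}$ non-integrable near zero.

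Your argument is more elementary and self-contained: it needs nothing beyond the definition of $\dot{W}_{X}^{1}$ and the trivial fact that $\mu$ has full support on $\mathbb{R}^{n}$. The paper's route, by contrast, pinpoints the structural reason for the failure within the isoperimetric framework developed throughout (compare condition (\ref{ciso}) and Theorem \ref{sobcon}), namely that Gaussian-type profiles vanish linearly at zero and so never satisfy $\int_{0}^{1/2}ds/I(s)<\infty$. That perspective is what motivates the logarithmic target spaces in the rest of the chapter, but for the bare non-embedding statement your coordinate function $x_{1}$ is the quickest proof.
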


\begin{proof}
By \cite[Theorem 6]{mamiadv} the embedding $\dot{W}_{X}^{1}(\mathbb{R}^{n}%
,\mu)\subset L^{\infty}$ is equivalent to the existence of a positive constant
$c>0,$ such that for all $f\in\bar{X},$ supported on $(0,\frac{1}{2})$ we
have
\[
\sup_{t\geq0}\int_{t}^{1/2}\frac{\left\vert f(s)\right\vert }{L_{\mu
_{r,\alpha,n}}(s)}ds\leq c\left\Vert f\right\Vert _{\bar{X}}.
\]
In particular this implies that
\[
\int_{0}^{1/2}\frac{ds}{L_{\mu_{r,\alpha,n}}(s)}\leq c.
\]
But this is not possible since $1/L_{\mu_{r,\alpha,n}}(s)\notin L^{1}.$
\end{proof}

It follows that the results of Chapter \ref{capitap} cannot be applied
directly to obtain the continuity of functions in the space $\dot{W}_{X}^{1}.$

\begin{remark}
Let us remark that since continuity is a local property, a weak version of the
Morrey-Sobolev theorem (that depends on the dimension) is available. Let
$\mu=\mu_{r,\alpha,n},$ and let $X=X\left(  \mathbb{R}^{n},\mu\right)  $ be a
r.i space on $\left(  \mathbb{R}^{n},\mu\right)  $ such that
\[
\left\Vert \frac{1}{\min(1,1-t)^{1-1/n}}\right\Vert _{\bar{X}^{\prime}}%
<\infty.
\]
Then every function in $\dot{W}_{X}^{1}(\mathbb{R}^{n},\mu)$ is essentially continuous.
\end{remark}

\begin{proof}
Let $f\in\dot{W}_{X}^{1}(\mathbb{R}^{n},\mu)$ and let $B\subset\mathbb{R}^{n}$
be an arbitrary ball with Lebesgue measure equal to $1.$ To prove that $f$ is
continuous on $B$ let us note that $f\chi_{B}\in\dot{W}_{X}^{1}(B,\mu),$ i.e.
\[
\left\Vert \left\vert \nabla f\chi_{B}\right\vert \right\Vert _{X}<\infty.
\]
Let $m$ be the Lebesgue measure on $\mathbb{R}^{n},$ it is plain that for all
$t>0,$
\[
c_{B}m\left\{  x\in B:\left\vert \nabla f\right\vert >t\right\}  \leq
\mu\left\{  x\in B:\left\vert \nabla f\right\vert >t\right\}  \leq
C_{B}m\left\{  x\in B:\left\vert \nabla f\right\vert >t\right\}  ,
\]
where $c_{B}=\inf_{x\in B}\varphi_{\alpha,p}^{n}(x)$ and $C_{B}=\max_{x\in
B}\varphi_{\alpha,p}^{n}(x).$ Therefore,
\[
c_{Q}\left\Vert \left\vert \nabla f\chi_{B}\right\vert \right\Vert
_{X(B,m)}\leq\left\Vert \left\vert \nabla f\chi_{B}\right\vert \right\Vert
_{X(\mathbb{R}^{n},\mu)}\leq C_{Q}\left\Vert \left\vert \nabla f\chi
_{B}\right\vert \right\Vert _{X(B,m)}.
\]
Consequently, $f\chi_{B}\in\dot{W}_{X}^{1}(B,m),$ and by Theorem \ref{sobcon},
$f\chi_{B}\in C(B).$
\end{proof}

\section{Embeddings of Gaussian Besov spaces}

In what follows unless it is necessary to be more specific we shall let
$\mu:=\mu_{r,\alpha,n}.$ We consider the Besov spaces $\dot{B}_{X}^{\theta
,q}(\mu),$ $B_{X}^{\theta,q}(\mu)$ can be defined using real interpolation
(cf. \cite{bl}, \cite{tr}). In other words for $1\leq q\leq\infty,$ $\theta
\in(0,1),$ and let%
\[
\dot{B}_{X}^{\theta,q}(\mu)=\{f:\left\Vert f\right\Vert _{\dot{B}_{X}%
^{\theta,q}(\mu)}<\infty\},
\]%
\begin{equation}
B_{X}^{\theta,q}(\mu)=\{f:\left\Vert f\right\Vert _{B_{X}^{\theta,q}(\mu
)}=\left\Vert f\right\Vert _{\dot{B}_{X}^{\theta,q}(\mu)}+\left\Vert
f\right\Vert _{X}<\infty\}, \label{besgaus}%
\end{equation}
where%
\[
\left\Vert f\right\Vert _{\dot{B}_{X}^{\theta,q}(\mu)}=\left\{
\begin{array}
[c]{cc}%
\left(  \int_{0}^{1}\left(  K\left(  s,f;X(\mu),\dot{W}_{X}^{1}(\mu)\right)
s^{-\theta}\right)  ^{q}\frac{ds}{s}\right)  ^{1/q} & \text{if }q<\infty\\
\sup_{s}\left(  K\left(  s,f;X(\mu),\dot{W}_{X}^{1}(\mu)\right)  s^{-\theta
}\right)  & \text{if }q=\infty.
\end{array}
\right.
\]

The embeddings we prove in this section will follow from%
\begin{equation}
\left|  f\right|  _{\mu}^{\ast\ast}(t)-\left|  f\right|  _{\mu}^{\ast}(t)\leq
c\frac{K\left(  \frac{t}{I_{\mu}(t)},f;X(\mu),\dot{W}_{X}^{1}(\mu)\right)
}{\phi_{X}(t)},\text{ }0<t\leq1/2,\text{ \ }\left(  \text{ }f\in X+\dot{W}%
_{X}^{1}\right)  . \label{debase}%
\end{equation}

To simplify the presentation we shall state and prove our results only for the
Gaussian measures $\mu_{r,0,n},$ $r\in(1,2],$ which include the most important
examples: Gaussian measures and the so called interpolation measures between
exponential and Gaussian.

\begin{theorem}
\label{fustado}Let $1\leq q<\infty,$ $\theta\in(0,1),$ $r\in(1,2]$. Then there
exists an absolute constant $c=c(q,\theta,r)>0$ such that,%
\begin{equation}
\left\{  \int_{0}^{1/2}\left\vert f\right\vert _{\mu_{r,0,n}}^{\ast}%
(t)^{q}\left(  \log\frac{1}{t}\right)  ^{q\theta\left(  1-1/r\right)
}dt\right\}  ^{1/q}\leq c\left\Vert f\right\Vert _{B_{L^{q}}^{\theta,q}%
(\mu_{r,0,n})}. \label{vera}%
\end{equation}
Let $q=\infty,$ then there exists an absolute constant $c=c(\theta,r)>0$ such
that%
\begin{equation}
\sup_{t\in(0,\frac{1}{2})}\left(  \left\vert f\right\vert _{\mu_{r,0,n}}%
^{\ast\ast}(t)-\left\vert f\right\vert _{\mu_{r,0,n}}^{\ast}(t)\right)
\left(  \log\frac{1}{t}\right)  ^{(1-\frac{1}{r})\theta}\leq c\left\Vert
f\right\Vert _{\dot{B}_{L^{\infty}}^{\theta,\infty}(\mu_{r,0,n})}.
\label{vera1}%
\end{equation}

\end{theorem}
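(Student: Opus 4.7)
The plan is to combine the fundamental oscillation inequality (\ref{debase}) with the explicit isoperimetric profile (\ref{debase1}) for $\alpha = 0$. Writing $\mu := \mu_{r, 0, n}$ and $\beta := 1 - 1/r \in (0, 1/2]$, one has $t/I_\mu(t) \simeq \varphi(t) := (\log(1/t))^{-\beta}$.

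For the case $q = \infty$ the argument is direct. Since the fundamental function $\phi_{L^\infty} \equiv 1$, inequality (\ref{debase}) with $X = L^\infty$ reads $|f|^{\ast\ast}_\mu(t) - |f|^{\ast}_\mu(t) \leq c\, K(\varphi(t), f; L^\infty(\mu), \dot{W}_{L^\infty}^1(\mu))$. Multiplying both sides by $\varphi(t)^{-\theta} = (\log(1/t))^{\theta\beta}$ and substituting $\tau = \varphi(t)$ on the right turns it into $c\, \tau^{-\theta} K(\tau, f)$; taking $\sup$ over $t \in (0, 1/2)$ yields $c \|f\|_{\dot{B}_{L^\infty}^{\theta, \infty}(\mu)}$, which is (\ref{vera1}).

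For $1 \leq q < \infty$, a direct raise-to-$q$ approach loses one logarithmic factor, because the Besov norm, after substituting $\tau = \varphi(t)$, equals $\beta \int K(\varphi(t), f)^q (\log(1/t))^{q\theta\beta - 1}\, dt/t$, whereas integrating (\ref{debase}) raised to the $q$-th power against $(\log(1/t))^{q\theta\beta}\, dt$ yields $\int K^q (\log(1/t))^{q\theta\beta}\, dt/t$, strictly bigger. The remedy is to proceed by real interpolation. Specializing (\ref{debase}) to $f \in \dot{W}_{L^q}^1(\mu)$ via the estimate $K(\varphi(t), f; L^q, \dot{W}_{L^q}^1) \leq \varphi(t)\|f\|_{\dot{W}_{L^q}^1}$ gives $(|f|^{\ast\ast}_\mu - |f|^{\ast}_\mu)(t) \preceq t^{-1/q}\varphi(t)\|f\|_{\dot{W}_{L^q}^1}$; integrating the identity $-\tfrac{d}{dt}|f|^{\ast\ast}_\mu(t) = (|f|^{\ast\ast}_\mu(t) - |f|^{\ast}_\mu(t))/t$ from $t$ to $1/2$ and using the asymptotic $\int_t^{1/2} s^{-1 - 1/q}(\log(1/s))^{-\beta}\, ds \sim q\, t^{-1/q}(\log(1/t))^{-\beta}$ (dominated by the contribution near $s = t$) yields the endpoint estimate
\[
\sup_{t \in (0, 1/2)} t^{1/q}(\log(1/t))^\beta |f|^{\ast}_\mu(t) \preceq \|f\|_{\dot{W}_{L^q}^1(\mu)},
\]
that is, an inclusion of $\dot{W}_{L^q}^1(\mu)$ into the Lorentz--Zygmund (Marcinkiewicz) space $L^{q, \infty}(\log L)^\beta(\mu)$.

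Since $B_{L^q}^{\theta, q}(\mu) = (L^q(\mu), \dot{W}_{L^q}^1(\mu))_{\theta, q}$ by definition, applying the real interpolation functor $(\cdot, \cdot)_{\theta, q}$ to the trivial embedding $L^q(\mu) \hookrightarrow L^q(\mu)$ together with the endpoint embedding above yields $B_{L^q}^{\theta, q}(\mu) \hookrightarrow \bigl(L^q(\mu), L^{q, \infty}(\log L)^\beta(\mu)\bigr)_{\theta, q}$. The principal obstacle is then the reiteration identity
\[
\bigl(L^q(\mu), L^{q, \infty}(\log L)^\beta(\mu)\bigr)_{\theta, q} = L^{q, q}(\log L)^{\theta \beta}(\mu),
\]
which identifies the interpolation space with the Lorentz--Zygmund space whose norm is precisely the left-hand side of (\ref{vera}). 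This is a standard Holmstedt-type computation of the $K$-functional for the Lorentz--Zygmund couple with matching first index $p = q$, obtained by splitting $f^{\ast}$ at the level $\sigma(t)$ where the two endpoint norm contributions balance. Dimension-independence of the constant in (\ref{vera}) is inherited from the dimension-free character of (\ref{debase1}).
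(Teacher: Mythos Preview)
Your treatment of the case $q=\infty$ is correct and coincides with the paper's argument.

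For $1\leq q<\infty$, however, the interpolation step does not go through. The identity you invoke,
\[
\bigl(L^{q}(\mu),\,L^{q,\infty}(\log L)^{\beta}(\mu)\bigr)_{\theta,q}=L^{q}(\log L)^{\theta\beta}(\mu),
\]
is \emph{not} a Holmstedt-type result: both endpoints carry the same primary index $q$, so the classical reiteration formula (which requires distinct power indices) is unavailable. In fact the inclusion you need fails. Take $r=2$, $\beta=1/2$, $q>2$ (so that $\beta>1/q$), and test with $f^{\ast}(t)=t^{-1/q}(\log\tfrac{1}{t})^{-\gamma}$. A direct computation of the $K$-functional for the couple $(L^{q},L^{q,\infty}(\log L)^{\beta})$ gives
\[
K(s,f^{\ast})\simeq s^{(\gamma-1/q)/(\beta-1/q)}\qquad(s\to 0),
\]
so $f^{\ast}\in(L^{q},L^{q,\infty}(\log L)^{\beta})_{\theta,q}$ iff $\gamma>(1-\theta)/q+\theta\beta$, whereas $f^{\ast}\in L^{q}(\log L)^{\theta\beta}$ iff $\gamma>1/q+\theta\beta$. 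The two thresholds differ by $\theta/q>0$, so the interpolation space is strictly larger than the target. In other words, a Marcinkiewicz (second index $\infty$) endpoint combined with the trivial $L^{q}\hookrightarrow L^{q}$ cannot produce a strong-type (second index $q$) logarithmic gain; this is the same obstruction that prevents one from interpolating weak $(1,1)$ against strong $(1,1)$.

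The paper's proof avoids this by not passing through a weak endpoint at all. It uses the sharper scale-by-scale $L^{q}$ estimate (\ref{laizquierda}) coming from Theorem~\ref{teo:elotro} (applicable since $\underline{\alpha}_{L^{q}}=1/q>0$), namely
\[
\int_{0}^{s}\bigl(|f|_{\mu}^{\ast}(t)-|f|_{\mu}^{\ast}(s)\bigr)^{q}\,dt\ \preceq\ K\Bigl((\log\tfrac{1}{s})^{-\beta},f\Bigr)^{q},
\]
which already carries the correct $L^{q}$ integrability at each scale. This is precisely the ingredient your Marcinkiewicz endpoint loses. If you want to salvage the interpolation route, you would need the strong endpoint $\dot W^{1}_{L^{q}}\hookrightarrow L^{q}(\log L)^{\beta}$ (second index $q$), after which Stein--Weiss interpolation of weighted $L^{q}$ spaces gives the target immediately; but proving that strong endpoint essentially amounts to the paper's direct argument.
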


\begin{proof}
We shall let $\mu=:\mu_{r,0,n},$ $K\left(  s,f\right)  :=$ $K\left(
s,f;L^{q}(\mu),\dot{W}_{L^{q}}^{1}(\mu)\right)  .$ Suppose that $1\leq
q<\infty.$ We start by rewriting the term we want to estimate%
\begin{align*}
&  \left\{  \int_{0}^{1/2}\left|  f\right|  _{\mu}^{\ast}(t)^{q}\left(
\log\frac{1}{t}\right)  ^{q\theta\left(  1-1/r\right)  }dt\right\}  ^{1/q}\\
&  \preceq\left\{  \int_{0}^{1/2}\left|  f\right|  _{\mu}^{\ast}(t)^{q}\left(
\int_{t}^{1/2}\left(  \log\frac{1}{s}\right)  ^{q\theta\left(  1-1/r\right)
-1}\frac{ds}{s}+\left(  \log2\right)  ^{q\theta\left(  1-1/r\right)  }\right)
dt\right\}  ^{1/q}\\
&  \preceq\left\{  \int_{0}^{1/2}\left(  \log\frac{1}{s}\right)
^{q\theta\left(  1-1/r\right)  -1}\frac{1}{s}\int_{0}^{s}\left|  f\right|
_{\mu}^{\ast}(t)^{q}dtds\right\}  ^{1/q}+\left\{  \int_{0}^{1/2}\left|
f\right|  _{\mu}^{\ast}(t)^{q}dt\right\}  ^{1/q}\\
&  =(I)+(II)
\end{align*}
The term $(II)$ is under control since%
\[
(II)\leq\left\|  f\right\|  _{L^{q}}\leq\left\|  f\right\|  _{B_{L^{q}%
}^{\theta,q}(\mu)}.
\]
To estimate $(I)$ we first note that the elementary inequality\footnote{Which
follows readily by Jensen's inequality.}: $\left|  x\right|  ^{q}\leq
2^{q-1}(\left|  x-y\right|  ^{q}+\left|  y\right|  ^{q}),$ yields%
\[
\frac{1}{s}\int_{0}^{s}\left|  f\right|  _{\mu}^{\ast}(t)^{q}dt\preceq\frac
{1}{s}\int_{0}^{s}\left(  f_{\mu}^{\ast}(t)-f_{\mu}^{\ast}(s)\right)
^{q}dt+f_{\mu}^{\ast}(s)^{q}.
\]
Consequently,
\begin{align*}
(I)  &  \preceq\left\{  \int_{0}^{1/2}\left(  \log\frac{1}{s}\right)
^{q\theta\left(  1-1/r\right)  -1}\left(  \frac{1}{s}\int_{0}^{s}\left(
\left|  f\right|  _{\mu}^{\ast}(t)-\left|  f\right|  _{\mu}^{\ast}(s)\right)
^{q}dt\right)  ds\right\}  ^{1/q}\\
&  +\left\{  \int_{0}^{1/2}\left(  \log\frac{1}{s}\right)  ^{q\theta\left(
1-1/r\right)  -1}\left|  f\right|  _{\mu}^{\ast}(s)^{q}ds\right\}  ^{1/q}\\
&  =(I_{1})+(I_{2}),\text{ say.}%
\end{align*}
To control $(I_{1})$ we first use Example \ref{laizquierda} in Chapter
\ref{main} and (\ref{debase1}) to estimate the inner integral as follows%
\[
\frac{1}{s}\int_{0}^{s}\left(  \left|  f\right|  _{\mu}^{\ast}(t)-\left|
f\right|  _{\mu}^{\ast}(s)\right)  ^{q}dt\preceq\frac{1}{s}\left(  K\left(
\left(  \log\frac{1}{s}\right)  ^{1/r-1},f)\right)  \right)  ^{q},\text{
\ }0<s\leq1/2.
\]
Thus,%
\[
(I_{1})\preceq\left\{  \int_{0}^{1/2}\left(  \left(  \log\frac{1}{s}\right)
^{\theta\left(  1-1/r\right)  }\left(  K\left(  \left(  \log\frac{1}%
{s}\right)  ^{1/r-1},f)\right)  \right)  \right)  ^{q}\frac{ds}{s\log\frac
{1}{s}}\right\}  ^{1/q}%
\]
The change of variables $u=(\log\frac{1}{s})^{1/r-1}$ then yields%
\[
(I_{1})\preceq\left\|  f\right\|  _{B_{L^{q}}^{\theta,q}(\mu)}.
\]
It remains to estimate $(I_{2}).$ We write%
\[
(I_{2})\leq\left\{  \int_{0}^{1/2}\left(  \log\frac{1}{s}\right)
^{q\theta\left(  1-1/r\right)  -1}\left|  f\right|  _{\mu}^{\ast\ast}%
(s)^{q}ds\right\}  ^{1/q},
\]
then, using the fundamental theorem of calculus, we have%
\begin{align*}
(I_{2})  &  \leq\left\{  \int_{0}^{1/2}\left(  \log\frac{1}{s}\right)
^{q\theta\left(  1-1/r\right)  -1}\left(  \int_{s}^{1/2}\left(  \left|
f\right|  _{\mu}^{\ast\ast}(z)-\left|  f\right|  _{\mu}^{\ast}(z)\right)
\frac{dz}{z}+\left|  f\right|  _{\mu}^{\ast\ast}(1/2)\right)  ^{q}ds\right\}
^{1/q}\\
&  \leq\left\{  \int_{0}^{1/2}\left(  \left(  \log\frac{1}{s}\right)
^{\theta\left(  1-1/r\right)  -1/q}\int_{s}^{1/2}\left(  \left|  f\right|
_{\mu}^{\ast\ast}(z)-\left|  f\right|  _{\mu}^{\ast}(z)\right)  \frac{dz}%
{z}\right)  ^{q}ds\right\}  ^{1/q}\\
&  +\left|  f\right|  _{\mu}^{\ast\ast}(1/2)\left\{  \int_{0}^{1/2}\left(
\log\frac{1}{t}\right)  ^{q\theta\left(  1-1/r\right)  }dt\right\}  ^{1/q}\\
&  =(A)+(B),\text{ say.}%
\end{align*}

To use the Hardy logarithmic inequality of \cite[(6.7)]{br} we first write%
\[
(A)=\left\{  \int_{0}^{1/2}\left(  \left(  \log\frac{1}{s}\right)
^{\theta\left(  1-1/r\right)  -1/q}s^{1/q}\int_{s}^{1/2}\left(  \left|
f\right|  _{\mu}^{\ast\ast}(z)-\left|  f\right|  _{\mu}^{\ast}(z)\right)
\frac{dz}{z}\right)  ^{q}\frac{ds}{s}\right\}  ^{1/q}%
\]
and then find that%
\[
(A)\preceq\left\{  \int_{0}^{1/2}\left(  \left(  \left|  f\right|  _{\mu
}^{\ast\ast}(s)-\left|  f\right|  _{\mu}^{\ast}(s)\right)  s^{1/q}\left(
\log\frac{1}{s}\right)  ^{\theta\left(  1-1/r\right)  -1/q}\right)  ^{q}%
\frac{ds}{s}\right\}  ^{1/q}.
\]
Now we use the fact that in the region of integration $s^{1/q}\leq1,$ combined
with (\ref{debase}) and (\ref{debase1}), to conclude that%
\begin{align*}
&  \left\{  \int_{0}^{1/2}\left(  \left(  \left|  f\right|  _{\mu}^{\ast\ast
}(s)-\left|  f\right|  _{\mu}^{\ast}(s)\right)  s^{1/q}\left(  \log\frac{1}%
{s}\right)  ^{\theta\left(  1-1/r\right)  -1/q}\right)  ^{q}\frac{ds}%
{s}\right\}  ^{1/q}\\
&  \preceq\left\{  \int_{0}^{1/2}\left(  K\left(  \left(  \log\frac{1}%
{s}\right)  ^{\frac{1}{r}-1},f\right)  \right)  ^{q}\left(  \log\frac{1}%
{s}\right)  ^{q\theta\left(  1-1/r\right)  }\frac{ds}{s\left(  \log\frac{1}%
{s}\right)  }\right\}  ^{1/q}\\
&  \simeq\left\{  \int_{0}^{\left(  \log2\right)  ^{\frac{1}{r}-1}}\left(
K(u,f)\right)  ^{q}u^{-\theta q}\frac{du}{u}\right\}  ^{1/q}\text{ }\left(
\text{change of variables }u=\left(  \log\frac{1}{s}\right)  ^{\frac{1}{r}%
-1}\right) \\
&  \leq\left\|  f\right\|  _{B_{L^{q}}^{\theta,q}(\mu)}.
\end{align*}
Finally it remains to estimate $(B):$%
\begin{align*}
(B)  &  =\frac{1}{2}\left(  2\left|  f\right|  _{\mu}^{\ast\ast}(1/2)\right)
\left\{  \int_{0}^{1/2}t^{1/2}\left(  \log\frac{1}{t}\right)  ^{q\theta\left(
1-1/r\right)  -1}\frac{dt}{t^{1/2}}\right\}  ^{1/q}\\
&  \leq4\left\|  f\right\|  _{L^{1}}\left(  \sup_{t\in(0,1/2]}t^{1/2}\left(
\log\frac{1}{t}\right)  ^{q\theta\left(  1-1/r\right)  -1}\right)  \left\{
\int_{0}^{1/2}\frac{dt}{t^{1/2}}\right\} \\
&  \preceq\left\|  f\right\|  _{L^{1}}\leq\left\|  f\right\|  _{L^{q}}\\
&  \leq\left\|  f\right\|  _{B_{L^{q}}^{\theta,q}(\mu)}.
\end{align*}

We consider now the case $q=\infty.$ We apply (\ref{debase}), observing that
for $X=L^{\infty},$ we have $\phi_{L^{\infty}}(t)=1,$ and obtain that for
$t\in\left(  0,\frac{1}{2}\right]  ,$%
\begin{align*}
\left|  f\right|  _{\mu}^{\ast\ast}(t)-\left|  f\right|  _{\mu}^{\ast}(t)  &
\preceq K\left(  \left(  \log\frac{1}{t}\right)  ^{\frac{1}{r}-1},f\right) \\
&  =K\left(  \left(  \log\frac{1}{t}\right)  ^{\frac{1}{r}-1},f\right)
\left(  \log\frac{1}{t}\right)  ^{-(\frac{1}{r}-1)\theta}\left(  \log\frac
{1}{t}\right)  ^{(\frac{1}{r}-1)\theta}\\
&  \leq\left(  \log\frac{1}{t}\right)  ^{(\frac{1}{r}-1)\theta}\left(
\sup_{u}\left(  K(u,f)u^{-\theta}\right)  \right) \\
&  =\left(  \log\frac{1}{t}\right)  ^{(\frac{1}{r}-1)\theta}\left\|
f\right\|  _{\dot{B}_{L^{\infty}}^{\theta,\infty}(\mu_{r})},
\end{align*}
as we wished to show.
\end{proof}

\begin{remark}
Gaussian measure corresponds to $r=2,$ in this case, for $q=2,$ (\ref{vera})
yields the logarithmic Sobolev inequality%
\[
\left\{  \int_{0}^{1/2}\left\vert f\right\vert _{\gamma_{n}}^{\ast}%
(t)^{2}\left(  \log\frac{1}{t}\right)  ^{\theta}dt\right\}  ^{1/2}\leq
c\left\Vert f\right\Vert _{B_{L^{2}}^{\theta,2}(\gamma_{n})},\text{ }\theta
\in(0,1).
\]
Formally the case $\theta=1$ corresponds to an $L^{2}$ Logarithmic Sobolev
inequality, while the case $\theta=0,$ corresponds to the trivial
$L^{2}\subset L^{2}$ embedding. One could formally approach such inequalities
by complex interpolation (cf. \cite{bakrmey} as well as the calculations
provided in \cite{mey})%
\[
\lbrack L^{2},\dot{W}_{L^{2}}^{1}]_{\theta}\subset\lbrack L^{2},L^{2}%
LogL]_{\theta}=L^{2}(LogL)^{\theta}.
\]
The case $r=2,$ $q=1,$ corresponds to a fractional version of Ledoux's
inequality (cf. \cite{le}). Besides providing a unifying approach our method
can be applied to deal with more general domains and measures.
\end{remark}

\begin{remark}
When $q=\infty$ the inequality (\ref{vera1}) reflects a refined estimate of
the exponential integrability of $f.$ In particular, note that the case
$\theta=1,$ formally gives the following inequality (cf. \cite{bogo} and the
references therein)%
\[
\left\|  f\right\|  _{L^{[\infty,\infty]}}\simeq\sup_{t\in(0,\frac{1}{2}%
]}\left(  \left|  f\right|  _{\gamma_{n}}^{\ast\ast}(t)-\left|  f\right|
_{\gamma_{n}}^{\ast}(t)\right)  \leq c\left\|  f\right\|  _{\dot{W}_{e^{L^{2}%
}}^{1}(\gamma_{n})}%
\]
(cf. [(\ref{vermont}, Chapter \ref{chapbmo}] below for the definition of the
$L^{[p,q]}$ spaces ). The previous inequality can be proved readily using%
\[
\left|  f\right|  _{\gamma_{n}}^{\ast\ast}(t)-\left|  f\right|  _{\gamma_{n}%
}^{\ast}(t)\leq c\frac{1}{(\log\frac{1}{t})^{1/2}}\left|  \nabla f\right|
_{\gamma_{n}}^{\ast\ast}(t),\text{ }t\in(0,1/2].
\]

\end{remark}

\begin{remark}
Using the transference principle of \cite{mamiadv} the Gaussian results can be
applied to derive results related to the dimensionless Sobolev inequalities on
Euclidean cubes studied by Krbec-Schmeisser (cf. \cite{krb0}, \cite{krb}) and
Triebel \cite{tr1}.
\end{remark}

\section{Exponential Classes}

There is a natural connection between Gaussian measure and the exponential
class $e^{L^{2}}$. Likewise, this is also true with more general exponential
measures and other exponential spaces. Although there are many nice
inequalities associated with this topic that follow from our theory, we will
not develop the matter in great detail here. Instead, we shall only give a
flavor of possible results by considering Besov embeddings connected with the
Sobolev space $\dot{W}_{e^{L^{2}}}^{1}:=\dot{W}_{e^{L^{2}}}^{1}(\mathbb{R}%
^{n},\gamma_{n}).$

In this setting (\ref{debase}) takes the form%
\[
\left|  f\right|  _{\gamma_{n}}^{\ast\ast}(t)-\left|  f\right|  _{\gamma_{n}%
}^{\ast}(t)\leq c\frac{K\left(  \left(  \log\frac{1}{t}\right)  ^{-\frac{1}%
{2}},f;e^{L^{2}},\dot{W}_{e^{L^{2}}}^{1}\right)  }{\phi_{e^{L^{2}}}(t)}%
,\,t\in\left(  0,\frac{1}{2}\right]  .
\]
Now, since $\phi_{e^{L^{2}}}(t)=\left(  \log\frac{1}{t}\right)  ^{-\frac{1}%
{2}},$ $t\in(0,\frac{1}{2})$ we formally have%
\begin{align*}
\left(  \left|  f\right|  _{\gamma_{n}}^{\ast\ast}(t)-\left|  f\right|
_{\gamma_{n}}^{\ast}(t)\right)   &  \leq cK\left(  \left(  \log\frac{1}%
{t}\right)  ^{-\frac{1}{2}},f;e^{L^{2}},\dot{W}_{e^{L^{2}}}^{1}\right)
\left(  \log\frac{1}{t}\right)  ^{\frac{1}{2}}\\
&  \leq c\left\|  f\right\|  _{\dot{B}_{e^{L^{2}},\infty}^{1}(\gamma_{n})},
\end{align*}
or%
\begin{equation}
\left\|  f\right\|  _{L^{[\infty,\infty]}(\gamma_{n})}\leq c\left\|
f\right\|  _{\dot{B}_{e^{L^{2}},\infty}^{1}(\gamma_{n})}. \label{dependita}%
\end{equation}
More generally,%
\begin{align*}
\left(  \left|  f\right|  _{\gamma_{n}}^{\ast\ast}(t)-\left|  f\right|
_{\gamma_{n}}^{\ast}(t)\right)  \left(  \log\frac{1}{t}\right)  ^{-\frac{1}%
{2}+\frac{\theta}{2}}  &  \leq cK\left(  \left(  \log\frac{1}{t}\right)
^{-\frac{1}{2}},f;e^{L^{2}},\dot{W}_{e^{L^{2}}}^{1}\right)  \left(  \log
\frac{1}{t}\right)  ^{\frac{\theta}{2}}\\
&  \leq c\left\|  f\right\|  _{\dot{B}_{e^{L^{2}},\infty}^{\theta}},
\end{align*}
which shows directly the improvement on the exponential integrability in the
$\dot{B}_{e^{L^{2}},\infty}^{\theta}$ scale.

\chapter{On limiting Sobolev embeddings and $BMO$\label{chapbmo}}

\section{Introduction and Summary}

The discussion in this chapter is connected with the role of $BMO$ in some
limiting Sobolev inequalities. We start by reviewing some definitions, and
then proceed to describe Sobolev inequalities which follow readily from our
symmetrization inequalities, and will be relevant for our discussion.

Let $(\Omega,d,\mu)$ be a metric measure space satisfying the usual
assumptions, including the relative uniform isoperimetric property. The space
$BMO(\Omega)=BMO,$ introduced by John-Nirenberg, is the space of integrable
functions $f:\Omega\rightarrow\mathbb{R}$, such that%
\[
\left\Vert f\right\Vert _{BMO}=\sup_{B}\left\{  \inf_{c}\left(  \frac{1}%
{\mu(B)}\int_{B}\left\vert f-c\right\vert d\mu\right)  :B\text{ ball in
}\Omega\right\}  <\infty.
\]
In fact, it is enough to consider averages $f_{B}=\frac{1}{\mu(B)}\int%
_{B}fd\mu,$ or a median $m(f)$ of $f$ (cf. [Definition \ref{mediandef},
Chapter \ref{main}]),
\[
\left\Vert f\right\Vert _{BMO}\simeq\sup_{B}\left\{  \frac{1}{\mu(B)}\int%
_{B}\left\vert f-f_{B}\right\vert d\mu:B\text{ ball in }\Omega\right\}
<\infty.
\]
To obtain a norm we may set%
\[
\left\Vert f\right\Vert _{BMO_{\ast}}=\left\Vert f\right\Vert _{BMO}%
+\left\Vert f\right\Vert _{L^{1}}.
\]

\begin{remark}
One can also control $\left\Vert f\right\Vert _{\ast}$ through the use of
maximal operators (cf. \cite{fs}, \cite{coifweiss}, \cite{aalto}). Let%
\[
f^{\#}(x)=\sup_{B\backepsilon x}\frac{1}{\mu(B)}\int_{B}\left\vert
f-f_{B}\right\vert d\mu,
\]
where the sup is taken over all open balls containing $x.$ Then we have%
\[
\left\Vert f\right\Vert _{BMO}\simeq\left\Vert f^{\#}\right\Vert _{\infty}.
\]

\end{remark}

Let $\theta\in(0,1),1\leq p\leq\infty,1\leq q\leq\infty.$ Consider the Besov
spaces $\dot{b}_{p}^{\theta,q}(\Omega)$ (resp. $b_{p}^{\theta,q}(\Omega)),$
defined by%
\begin{align}
\left\Vert f\right\Vert _{\dot{b}_{p}^{\theta,q}(\Omega)}  &  =\left(
\int_{0}^{\mu(\Omega)}\left(  t^{-\theta}K(t,f;L^{p}(\Omega),S_{L^{p}}%
(\Omega)\right)  ^{q}\frac{dt}{t}\right)  ^{1/q}\label{besovmetrico}\\
\left\Vert f\right\Vert _{b_{p}^{\theta,q}(\Omega)}  &  =\left\Vert
f\right\Vert _{\dot{b}_{p,q}^{\theta}(\Omega)}+\left\Vert f\right\Vert
_{L^{p}}.\nonumber
\end{align}

For ready comparison with classical embedding theorems, from now on in this
section, unless explicitly stated to the contrary, we shall consider metric
measure spaces $(\Omega,d,\mu)$ such that the corresponding isoperimetric
profiles satisfy%
\begin{equation}
t^{1-1/n}\preceq I_{\Omega}(t),\text{ }t\in(0,\mu(\Omega)/2).
\label{lapropiedad}%
\end{equation}
We now recall the definition of the $L^{p,q}$ spaces. Moreover, in order to
incorporate in a meaningful way the limiting cases that correspond to the
index $p=\infty,$ we also recall the definition of the modified $L^{[p,q]}$
spaces\footnote{The $L^{p,q}$ and $L^{[p,q]}$ spaces are equivalent for
$p<\infty.$}. Let $1\leq p<\infty,1\leq q\leq\infty$ (cf. \cite{bds},
\cite{bmr})$,$ and let\footnote{with the usual modifications when $q=\infty.$}%
\begin{equation}
L^{p,q}(\Omega)=\left\{  f:\left\Vert f\right\Vert _{L^{p,q}}=\left(  \int%
_{0}^{\mu(\Omega)}\left(  \left\vert f\right\vert _{\mu}^{\ast}(s)s^{1/p}%
\right)  ^{q}\frac{ds}{s}\right)  ^{1/q}<\infty\right\}  . \label{montana}%
\end{equation}
For $1\leq p\leq\infty,1\leq q\leq\infty,$ we let
\begin{equation}
L^{[p,q]}(\Omega)=\left\{  f:\left\Vert f\right\Vert _{L^{[p,q]}}=\left(
\int_{0}^{\mu(\Omega)}\left(  (\left\vert f\right\vert _{\mu}^{\ast\ast
}(s)-\left\vert f\right\vert _{\mu}^{\ast}(s))s^{1/p}\right)  ^{q}\frac{ds}%
{s}\right)  ^{1/q}<\infty\right\}  . \label{vermont}%
\end{equation}
It is known that (cf. \cite{mamicon} and the references therein)
\[
L^{p,q}(\Omega)=L^{[p,q]}(\Omega),\text{ for }1\leq p<\infty,1\leq q\leq
\infty.
\]

Then, under our current assumptions on the isoperimetric profile of $\Omega,$
Theorem \ref{main1} states that%
\begin{equation}
\left\vert f\right\vert _{\mu}^{\ast\ast}(t)-\left\vert f\right\vert _{\mu
}^{\ast}(t)\leq c\frac{K(t^{1/n},f;L^{p}(\Omega),S_{L^{p}}(\Omega))}{t^{1/p}%
},\text{ }t\in(0,\mu(\Omega)/2). \label{basicaa}%
\end{equation}
The following basic version of the Sobolev embedding follows readily

\begin{proposition}%
\begin{equation}
b_{p}^{\theta,q}(\Omega)\subset L^{\bar{p},q}(\Omega),\text{ where }\frac
{1}{\bar{p}}=\frac{1}{p}-\frac{\theta}{n},\text{ }\theta\in(0,1),\text{ }1\leq
q\leq\infty,\text{ }\theta p\leq n. \label{edita}%
\end{equation}

\end{proposition}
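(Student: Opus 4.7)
The plan is to derive (\ref{edita}) directly from the basic oscillation inequality (\ref{basicaa}), which under the hypothesis (\ref{lapropiedad}) on $I_\Omega$ yields
\[
|f|_\mu^{\ast\ast}(t)-|f|_\mu^{\ast}(t)\leq c\,\frac{K(t^{1/n},f;L^p(\Omega),S_{L^p}(\Omega))}{t^{1/p}},\qquad t\in(0,\mu(\Omega)/2].
\]
Multiplying both sides by $t^{1/\bar p}$ and using the defining relation $1/\bar p-1/p=-\theta/n$, this reads
\[
\bigl(|f|_\mu^{\ast\ast}(t)-|f|_\mu^{\ast}(t)\bigr)\,t^{1/\bar p}\leq c\,K(t^{1/n},f;L^p,S_{L^p})\,t^{-\theta/n},\qquad t\in(0,\mu(\Omega)/2].
\]
This is the correct scaling to feed into the $L^{[\bar p,q]}$ quasi-norm, which (since $L^{\bar p,q}=L^{[\bar p,q]}$ for $\bar p<\infty$, and $L^{\bar p,q}$ is interpreted as $L^{[\infty,q]}$ when $\theta p=n$) is the target of the embedding.

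Next I would raise to the $q$-th power, integrate $dt/t$ over $(0,\mu(\Omega))$, and split at $\mu(\Omega)/2$. On the main piece $(0,\mu(\Omega)/2]$, substituting $s=t^{1/n}$ (so $dt/t=n\,ds/s$ and $t^{-\theta/n}=s^{-\theta}$) converts the right-hand side into
\[
n\int_0^{(\mu(\Omega)/2)^{1/n}}\!\bigl(K(s,f;L^p,S_{L^p})\,s^{-\theta}\bigr)^q\frac{ds}{s},
\]
which is bounded by a constant times $\|f\|_{\dot b_p^{\theta,q}(\Omega)}^q$ by the very definition (\ref{besovmetrico}). For the tail $t\in(\mu(\Omega)/2,\mu(\Omega))$ I would use the trivial bound
\[
|f|_\mu^{\ast\ast}(t)-|f|_\mu^{\ast}(t)\leq |f|_\mu^{\ast\ast}(\mu(\Omega)/2)\leq \frac{2}{\mu(\Omega)}\|f\|_{L^1}\leq c\,\|f\|_{L^p},
\]
which contributes at most a finite constant times $\|f\|_{L^p}$ to $\|f\|_{L^{[\bar p,q]}}$. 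Adding the two contributions gives
\[
\|f\|_{L^{\bar p,q}(\Omega)}\leq c\bigl(\|f\|_{\dot b_p^{\theta,q}(\Omega)}+\|f\|_{L^p(\Omega)}\bigr)=c\|f\|_{b_p^{\theta,q}(\Omega)},
\]
as desired.

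The case $q=\infty$ is handled the same way with suprema replacing the $L^q(dt/t)$ norms: inequality (\ref{basicaa}) multiplied by $t^{1/\bar p}$ gives
$\sup_t(|f|_\mu^{\ast\ast}(t)-|f|_\mu^{\ast}(t))\,t^{1/\bar p}\lesssim \sup_s K(s,f)\,s^{-\theta}=\|f\|_{\dot b_p^{\theta,\infty}}$ on the main range, while the tail is again controlled by $\|f\|_{L^p}$. I do not anticipate a genuine technical obstacle here: the argument is essentially a direct change of variables combined with the trivial tail bound. The only point requiring care is the correct identification of $L^{\bar p,q}$ with $L^{[\bar p,q]}$ at the endpoint $\theta p=n$, where $\bar p=\infty$; this is precisely the motivation for the $L^{[p,q]}$ scale introduced in (\ref{vermont}), and allows the statement of the proposition to hold uniformly across the full range $\theta p\leq n$.
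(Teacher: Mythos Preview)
Your proof is correct and follows essentially the same route as the paper's: multiply the oscillation inequality (\ref{basicaa}) by $t^{1/\bar p}$, then take the supremum (for $q=\infty$) or raise to the $q$-th power and integrate with the change of variables $s=t^{1/n}$. You have supplied more detail than the paper on the tail piece $t\in(\mu(\Omega)/2,\mu(\Omega))$ and on the identification $L^{\bar p,q}=L^{[\bar p,q]}$ at the endpoint $\theta p=n$, but the underlying argument is the same.
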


\begin{proof}
Indeed, from the relationship between the indices and (\ref{basicaa}), we can
write
\[
\left(  \left\vert f\right\vert _{\mu}^{\ast\ast}(t)-\left\vert f\right\vert
_{\mu}^{\ast}(t)\right)  t^{1/\bar{p}}\preceq t^{-\frac{\theta}{n}}%
K(t^{1/n},f;L^{p}(\Omega),S_{L^{p}}(\Omega)),\text{ }t\in(0,\mu(\Omega)/2).
\]
If $q=\infty$, (\ref{edita}) follows taking supremum on both sides of the
inequality above. Likewise, if $q<\infty,$ then the desired result follows
raising both sides to the power $q$ and integrating from $0$ to $\mu
(\Omega)/2.$ In reference to the role of the $L^{[\infty,q]}$ spaces here let
us remark that, in the limiting case $\theta p=n,$ we have $\bar{p}=\infty.$
\end{proof}

We consider the limiting case, $\theta=\frac{n}{p},$ $p>n,$ in more detail. In
this case (\ref{edita}) reads (cf. \cite{mamiproc})%
\[
b_{p}^{n/p,q}(\Omega)\subset L^{[\infty,q]}(\Omega),\text{ }p>n,\text{ }1\leq
q\leq\infty.
\]
Note that when $q=1,$ $L^{[\infty,1]}(\Omega)=L^{\infty}(\Omega),$ and we
recover the well known result (for Euclidean domains),%
\begin{equation}
b_{p}^{n/p,1}(\Omega)\subset L^{\infty}(\Omega). \label{auto1}%
\end{equation}
On the other hand, when $q=\infty,$ from (\ref{edita}) we only get%
\begin{equation}
\dot{b}_{p}^{n/p,\infty}(\Omega)\subset L^{[\infty,\infty]}(\Omega).
\label{auto2}%
\end{equation}

In the Euclidean world better results are known. Recall that given a domain
$\Omega\subset\mathbb{R}^{n}$ the Besov spaces $\dot{B}_{p}^{\theta,q}%
(\Omega)$ (resp. $B_{p}^{\theta,q}(\Omega)),$ are defined by%
\begin{align}
\left\Vert f\right\Vert _{\dot{B}_{p}^{\theta,q}(\Omega)}  &  =\left(
\int_{0}^{\left\vert \Omega\right\vert }\left(  t^{-\theta}K(t,f;L^{p}%
(\Omega),\dot{W}_{L^{p}}^{1}(\Omega)\right)  ^{q}\frac{dt}{t}\right)
^{1/q}\label{besoveuclideo}\\
\left\Vert f\right\Vert _{B_{p}^{\theta,q}(\Omega)}  &  =\left\Vert
f\right\Vert _{\dot{B}_{p}^{\theta,q}(\Omega}+\left\Vert f\right\Vert _{L^{p}%
}.\nonumber
\end{align}
Indeed, for smooth domains, we have a better result than (\ref{auto1}),
namely
\begin{equation}
B_{p}^{n/p,1}(\Omega)\subset C(\Omega), \label{auto3}%
\end{equation}
and, moreover, it is well known that (cf. \cite{bourdon})%
\begin{equation}
\dot{B}_{p}^{n/p,\infty}([\Omega)\subset BMO(\Omega). \label{auto4}%
\end{equation}
We note that since we have\footnote{This is an easy consequence of
(\ref{bendevsha}) below.} $BMO([0,1]^{n})\subset L^{\left[  \infty
,\infty\right]  }:$ i.e.
\[
\sup_{t}\left(  \left\vert f\right\vert ^{\ast\ast}(t)-\left\vert f\right\vert
^{\ast}(t)\right)  \leq C\left\Vert f\right\Vert _{BMO},
\]
then (\ref{auto4}) is stronger than (\ref{auto2}).

In Chapter \ref{contchap} we have shown that for Sobolev and Besov spaces that
are based on metric probability spaces with the relative uniform isoperimetric
property, the rearrangement inequality (\ref{auto1}) self-improves to
(\ref{auto3}). Let $X$ be a r.i. space on $\Omega,$ we will show that the
$K-$Poincar\'{e} inequality (cf. [Theorem \ref{teobmo}, Chapter \ref{main}])
\begin{equation}
\frac{1}{\mu(\Omega)}\int_{\Omega}\left|  f-f_{\Omega}\right|  d\mu\leq
c\frac{K\left(  \frac{\mu(\Omega)/2}{I_{\Omega}(\mu(\Omega)/2)},f;X,S_{X}%
\right)  }{\phi_{X}(\mu(\Omega))},f\in X+S_{X}, \label{delotro}%
\end{equation}
combined with the relative uniform isoperimetric property self improves to
(\ref{auto4}). In fact, the self improve result reads%
\begin{equation}
\left\|  f\right\|  _{BMO(\Omega)}\leq C\sup_{0<t<\mu(\Omega)}\frac{K\left(
\frac{\mu(\Omega)/2}{I_{\Omega}(\mu(\Omega)/2)},f;X,S_{X}\right)  }{\phi
_{X}(\mu(\Omega))}, \label{muchomas}%
\end{equation}
and is valid for our the general class of isoperimetric profiles considered in
this paper. Indeed, the result exhibits a new connection between the geometry
of the ambient space and the embedding of Besov and BMO spaces. For example,
for an Ahlfors $k-$regular space $\left(  \Omega,d,\mu\right)  $ (cf. [Section
\ref{ahlfor}, Chapter \ref{capitap}]) given a ball $B,$ consider the metric
space $\left(  B,d_{\mid B},\mu_{\mid B}\right)  $ then%
\begin{equation}
\left\|  f\right\|  _{BMO(B)}\leq c\left\|  f\right\|  _{\dot{b}%
_{p}^{k/p,\infty}(B)},\text{ }p>k. \label{dependitalpha}%
\end{equation}
We shall also discuss a connection between our development in this paper and a
characterization of $BMO$ provided by John \cite{jhn} and Stromberg
\cite{stro}.

Finally, re-interpreting $BMO$ as a limiting Lip space we were lead to an
analog of [Chapter \ref{main}, (\ref{polaka1})] which we now describe. We
argue that in $\mathbb{R}^{n}$ the natural replacement of [(\ref{cuatro}),
Chapter \ref{intro}] involving the space $BMO$ is given by the
Bennett-DeVore-Sharpley inequality (cf. \cite{bds}, \cite{bs}, \cite{aalto},
\cite{aalto1})%
\begin{equation}
\left\vert f\right\vert ^{\ast\ast}(t)-\left\vert f\right\vert ^{\ast}(t)\leq
c(f^{\#})^{\ast}(t),\text{ }0<t<\frac{\left\vert B\right\vert }{6},\text{
where }B\text{ is a ball on }\mathbb{R}^{n}. \label{bendevsha}%
\end{equation}
Variants of this inequality are known to hold in more general contexts. For
our purposes here the following inequality will suffice%
\begin{equation}
\left\vert f\right\vert _{\mu}^{\ast\ast}(t)-\left\vert f\right\vert _{\mu
}^{\ast}(t)\leq C\left\Vert f\right\Vert _{BMO},\text{ }0<t<\mu(\Omega).
\label{weakinfty}%
\end{equation}
We shall therefore assume for this particular discussion that our metric
measure space $(\Omega,d,\mu)$ also satisfies the following condition: There
exists a constant $C>0$ such that (\ref{weakinfty}) holds for all $f\in BMO.$
For example, in \cite[see (3.8)]{saghschv} it is shown that (\ref{weakinfty})
holds for doubling measures on\ Euclidean domains. More general results can be
found in \cite{aalto}.

Assuming the validity of (\ref{weakinfty}), and using the method of the proof
of Theorem \ref{main1}, we will show below (cf. Theorem \ref{teoremarkao})
that if $X(\Omega)$ is a r.i. space, then we have\footnote{On $\mathbb{R}^{n}$
(\ref{bendevsha1}) is known and can be obtained by combining (\ref{bendevsha})
with \cite[theorem 8.8]{bs}.}%
\begin{equation}
\left\vert f\right\vert ^{\ast\ast}(t)-\left\vert f\right\vert ^{\ast}(t)\leq
c\frac{K(\phi_{X}(t),f;X(\Omega),BMO(\Omega))}{\phi_{X}(t)},\text{ }%
0<t<\mu(\Omega)/2. \label{bendevsha1}%
\end{equation}
This result should be compared with Theorem \ref{main1} above. For
perspective, we now show a different road to a special case of
(\ref{bendevsha1}). Recall that for Euclidean domains it is shown in
\cite[(8.11)]{bs} that%
\[
\frac{K(t,f;L^{1},BMO)}{t}\simeq(f^{\#})^{\ast}(t).
\]
Combining this inequality with (\ref{bendevsha}), we obtain a different
approach to (\ref{bendevsha1}) in the special case $X=L^{1},$ at least when
$t$ is close to zero.

\subsection{Self Improving inequalities and $BMO$}

We show that (\ref{delotro}) combined with the relative uniform isoperimetric
property yields the following embedding

\begin{theorem}
\label{bmomarkao}Let $\left(  \Omega,d,\mu\right)  $ be a metric space
satisfying the standard assumptions and with the relative uniform
isoperimetric property\textbf{. }Let $X$ be a r.i. space on $\Omega$, then,
there exists an absolute constant $C>0$ such that,
\[
\left\Vert f\right\Vert _{BMO(\Omega)}\leq C\sup_{0<t<\mu(\Omega)}%
\frac{K\left(  \frac{t/2}{I_{\Omega}(t/2)},f;X,S_{X}\right)  }{\phi_{X}(t)}.
\]

\end{theorem}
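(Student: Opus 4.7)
The plan is to apply the $K$-Poincaré inequality of Theorem \ref{teobmo} to each ball $B \subset \Omega$ viewed as its own metric measure space $(B, d_{\mid B}, \mu_{\mid B})$, and then use the relative uniform isoperimetric property to convert the intrinsic quantity $I_B$ appearing inside the $K$-functional back into the ambient $I_{\Omega}$. Finally, take the supremum over balls $B$.

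First, fix an open ball $B \subset \Omega$. The relative uniform isoperimetric property guarantees that $I_B(s) \geq C \min(I_\Omega(s), I_\Omega(\mu(B)-s))$, so $I_B$ is bounded below by an isoperimetric estimator satisfying the standard hypotheses on $(B, d_{\mid B}, \mu_{\mid B})$, and Theorem \ref{teobmo} applies in that ambient space to $f\chi_B$ with the r.i. space $X_r(B)$. This gives
\[
\frac{1}{\mu(B)} \int_B |f - f_B|\,d\mu \;\leq\; 2\,\frac{K\!\left(\dfrac{\mu(B)/2}{I_B(\mu(B)/2)},\, f\chi_B;\, X_r(B),\, S_{X_r}(B)\right)}{\phi_{X_r(B)}(\mu(B))}.
\]

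Next, rewrite everything on the right in terms of the ambient data. By \eqref{galio}, $\phi_{X_r(B)}(\mu(B)) = \phi_X(\mu(B))$. For the $K$-functional, any competing decomposition $f = (f-g)+g$ in $X+S_X$ produces, by restriction to $B$, a decomposition of $f\chi_B$ in $X_r(B) + S_{X_r}(B)$ with seminorms controlled by the original ones, hence
\[
K(s, f\chi_B; X_r(B), S_{X_r}(B)) \;\leq\; K(s, f; X, S_X), \quad s>0.
\]
Apply the relative uniform isoperimetric property at $s = \mu(B)/2$: by symmetry of the minimum at the midpoint we get $I_B(\mu(B)/2) \geq C\, I_\Omega(\mu(B)/2)$, and therefore
\[
\frac{\mu(B)/2}{I_B(\mu(B)/2)} \;\leq\; \frac{1}{C}\,\frac{\mu(B)/2}{I_\Omega(\mu(B)/2)}.
\]
Since $K(\cdot,f;X,S_X)$ is concave and vanishes at $0$, we have $K(\lambda u, f) \leq \max(1,\lambda)\, K(u, f)$ for every $\lambda > 0$; choosing $\lambda = 1/C$ absorbs the constant into a universal factor. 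Combining these observations with $t := \mu(B)$ yields
\[
\frac{1}{\mu(B)} \int_B |f - f_B|\,d\mu \;\leq\; C' \,\frac{K\!\left(\dfrac{t/2}{I_\Omega(t/2)},\, f;\, X,\, S_X\right)}{\phi_X(t)} \;\leq\; C' \sup_{0<t<\mu(\Omega)} \frac{K\!\left(\dfrac{t/2}{I_\Omega(t/2)},\, f;\, X,\, S_X\right)}{\phi_X(t)}.
\]
Since the right-hand side is independent of $B$, taking the supremum over all balls completes the proof.

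The main obstacle is that Theorem \ref{teobmo} is formulated for a metric measure space satisfying the standard assumptions on its isoperimetric profile; one must check that the restricted space $(B, d_{\mid B}, \mu_{\mid B})$ satisfies those assumptions uniformly in $B$. This is precisely what the relative uniform isoperimetric property provides, since it supplies an isoperimetric estimator $J_B(s) = C\min(I_\Omega(s), I_\Omega(\mu(B)-s))$ that is continuous, concave, symmetric about $\mu(B)/2$ and vanishes at $0$ (inheriting these properties from $I_\Omega$); after that all the remaining steps amount to routine bookkeeping with constants and the standard properties of $K$-functionals and fundamental functions recorded in Chapter \ref{preliminar}.
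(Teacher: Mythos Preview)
Your proof is correct and follows essentially the same approach as the paper's: apply Theorem~\ref{teobmo} to $f\chi_B$ on the restricted space $(B,d_{\mid B},\mu_{\mid B})$, then use the relative uniform isoperimetric property (at the midpoint $\mu(B)/2$) together with $\phi_{X_r(B)}=\phi_X$ and the restriction inequality for $K$-functionals to pass back to ambient quantities, and finally take the supremum over balls. Your write-up is in fact more detailed than the paper's own proof, which compresses the passage from $I_B$ to $I_\Omega$ and from $f\chi_B$ to $f$ into a single displayed chain; your explicit handling of the constant via concavity of $K$ and your remark on why $(B,d_{\mid B},\mu_{\mid B})$ admits an isoperimetric estimator satisfying the standard hypotheses are welcome clarifications.
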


\begin{proof}
Given an integrable function $f$ and a ball $B\ $in $\Omega,$ consider
$f\chi_{B}.$ By Theorem \ref{teobmo}, applied to the metric space $(B,d_{\mid
B},\mu_{\mid B})$, we have%
\begin{align*}
\frac{1}{\mu(B)}\int_{B}\left\vert f-f_{B}\right\vert d\mu &  \leq
c\frac{K\left(  \frac{\mu(B)/2}{I_{B}(\mu(B)/2)},f\chi_{B};X_{r}(B),S_{X_{r}%
}(B)\right)  }{\phi_{X_{r}}(\mu(B))}.\\
&  .
\end{align*}
Since $\left(  \Omega,d,\mu\right)  $ has the relative uniform isoperimetric
property, we have
\begin{align*}
\frac{K\left(  \frac{\mu(B)/2}{I_{B}(\mu(B)/2)},f\chi_{B};X_{r}(B),S_{X_{r}%
}(B)\right)  }{\phi_{X_{r}}(\mu(B))}  &  \leq C\frac{K\left(  \frac{\mu
(B)/2}{I_{\Omega}(\mu(B)/2)},f\chi_{B};X,S_{X}\right)  }{\phi_{X}(\mu(B))}\\
&  \leq C\sup_{0<t<\mu(\Omega)}\frac{K\left(  \frac{t/2}{I_{\Omega}%
(t/2)},f;X,S_{X}\right)  }{\phi_{X}(t)}.
\end{align*}
Consequently,%
\[
\sup_{B}\frac{1}{\mu(B)}\int_{B}\left\vert f-f_{B}\right\vert d\mu\leq
C\sup_{0<t<\mu(\Omega)}\frac{K\left(  \frac{t/2}{I_{\Omega}(t/2)}%
,f;X,S_{X}\right)  }{\phi_{X}(t)}.
\]

\end{proof}

We now give a concrete application of the previous result.

\begin{corollary}
\label{corolariomarkao}Let $\Omega\subset\mathbb{R}^{n}$ be a bounded domain
that belongs to the Maz'ya's class $\mathcal{J}_{1-1/n}$ (cf. [Section
\ref{Mazclass}, Chapter \ref{capitap}])$.$ Suppose that $p>n,$ then
\[
\dot{B}_{p}^{n/p,\infty}(\Omega)\subset BMO(\Omega).
\]

\end{corollary}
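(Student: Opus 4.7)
The plan is to apply Theorem \ref{bmomarkao} with the r.i.\ space $X=L^{p}(\Omega)$, for which $\phi_{X}(t)=t^{1/p}$, and then translate the resulting supremum condition into the Besov seminorm $\|f\|_{\dot B_{p}^{n/p,\infty}(\Omega)}$. Before doing so, the first step is to verify that $\Omega\in\mathcal{J}_{1-1/n}$ satisfies the relative uniform isoperimetric property needed by Theorem \ref{bmomarkao}. By definition of $\mathcal{J}_{1-1/n}$ we have $I_{\Omega}(t)\asymp\bigl(\min(t,|\Omega|-t)\bigr)^{1-1/n}$, and every Euclidean open ball $B_{\alpha}\subset\Omega$ with $|B_{\alpha}|=\alpha$ satisfies the sharp Euclidean relative isoperimetric inequality $I_{B_{\alpha}}(t)\geq q(n)\,(\min(t,\alpha-t))^{1-1/n}$. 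Comparing these two estimates as in \S\ref{eudom} yields a constant $C=C(n,\Omega)$ such that $I_{B_{\alpha}}(t)\geq C\min(I_{\Omega}(t),I_{\Omega}(\alpha-t))$ for every ball $B_{\alpha}\subset\Omega$, which is precisely the relative uniform isoperimetric property.

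The second step is the key pointwise estimate on the quantity $s(t):=\tfrac{t/2}{I_{\Omega}(t/2)}$. Using $I_{\Omega}(t)\succeq\min(t,|\Omega|-t)^{1-1/n}$, one checks that $s(t)\preceq t^{1/n}$ for $t\in(0,|\Omega|/2]$ and $s(t)\preceq 1$ (bounded) for $t\in(|\Omega|/2,|\Omega|)$. Combined with the elementary facts that $K(\cdot,f;L^{p},S_{L^{p}})$ is non-decreasing and concave (so $K(\lambda s,f)\leq\max(\lambda,1)K(s,f)$), this gives for $t\leq|\Omega|/2$
\[
\frac{K\!\left(s(t),f;L^{p},S_{L^{p}}\right)}{\phi_{X}(t)}
\;\leq\; c\,\frac{K(t^{1/n},f;L^{p},S_{L^{p}})}{(t^{1/n})^{n/p}}
\;\leq\; c\,\|f\|_{\dot B_{p}^{n/p,\infty}(\Omega)},
\]
where the last inequality is just the definition of the Besov seminorm (identifying $K(\cdot,f;L^{p},S_{L^{p}})$ with the standard Euclidean $K(\cdot,f;L^{p},\dot W_{L^{p}}^{1})$ on Maz'ya-class domains, since Lipschitz functions are dense in $\dot W_{L^{p}}^{1}$ there). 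For $t\in(|\Omega|/2,|\Omega|)$, $\phi_{X}(t)$ is bounded below by $(|\Omega|/2)^{1/p}$ and $s(t)$ by a constant $c_{0}$, so $K(s(t),f)\leq K(c_{0},f)\leq c_{0}^{n/p}\|f\|_{\dot B_{p}^{n/p,\infty}}$, yielding the same bound up to a constant depending on $\Omega$ and $n,p$.

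Taking the supremum over $t\in(0,\mu(\Omega))$ and invoking Theorem \ref{bmomarkao} gives
\[
\|f\|_{BMO(\Omega)}\;\leq\; C\sup_{0<t<\mu(\Omega)}\frac{K(s(t),f;L^{p},S_{L^{p}})}{\phi_{X}(t)}\;\leq\; C'\,\|f\|_{\dot B_{p}^{n/p,\infty}(\Omega)},
\]
which is exactly the desired embedding.

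There is no serious obstacle, only bookkeeping: the only non-trivial points are (i) identifying the metric $K$-functional used in Theorem \ref{bmomarkao} with the Euclidean one entering $\dot B_{p}^{n/p,\infty}$ on $\mathcal{J}_{1-1/n}$ domains, and (ii) handling the range $t>|\Omega|/2$ where the profile estimate degenerates. Both are routine and follow the pattern already used for Lipschitz domains in \S\ref{eudom}.
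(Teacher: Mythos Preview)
Your proof is correct and follows essentially the same route as the paper: apply the $K$-Poincar\'e inequality on each ball, use the Euclidean lower bound $I_B(t)\succeq(\min(t,|B|-t))^{1-1/n}$, and then read off the $\dot B_p^{n/p,\infty}$ seminorm. Two small remarks. First, the Maz'ya class $\mathcal J_{1-1/n}$ only gives the \emph{lower} bound $I_\Omega(t)\succeq(\min(t,|\Omega|-t))^{1-1/n}$; the matching upper bound you need to verify the relative uniform isoperimetric property is not ``by definition'' but follows because $\Omega$ is a Euclidean domain (take small balls as competitors). Second, the paper does not invoke Theorem~\ref{bmomarkao} as a black box but re-runs its argument, precisely because it wants the inequality for the pair $(L^p,\dot W^1_{L^p})$ rather than $(L^p,S_{L^p})$; your density remark handles this as well, so the difference is only organizational.
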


\begin{proof}
Since $\Omega$ belongs to the Maz'ya's class $\mathcal{J}_{1-1/n}$ the
following isoperimetric estimate holds.
\[
t^{1-1/n}\preceq I_{\Omega}(t),\text{ }t\in(0,\left\vert \Omega\right\vert
/2).
\]
On the other hand, for any ball $B$ in $\Omega,$ we have
\[
I_{B}(t)\geq c(n)\min(s,(\left\vert B\right\vert -s))^{\frac{n-1}{n}},\text{
\ \ \ }0<s<\left\vert B\right\vert .
\]
Since $\Omega\subset\mathbb{R}^{n}$, using the same argument we provided in
Section \ref{class}, it follows readily that the inequality (\ref{poinbesov})
remains valid for all functions in $f\in X+\dot{W}_{X}^{1}$, i.e.%
\begin{equation}
\frac{1}{\mu(\Omega)}\int_{\Omega}\left\vert f-f_{\Omega}\right\vert d\mu\leq
c\frac{K\left(  \frac{\mu(\Omega)/2}{I_{\Omega}(\mu(\Omega)/2)},f;X,\dot
{W}_{X}^{1}\right)  }{\phi_{X}(\mu(\Omega))}. \label{besovbmo}%
\end{equation}
Thus, by the argument given in the previous Theorem, we see that%
\begin{align*}
\frac{1}{\mu(B)}\int_{B}\left\vert f-f_{B}\right\vert d\mu &  \leq
C(n)\sup_{0<t<\left(  \left\vert \Omega\right\vert /2\right)  ^{1/n}}%
\frac{K\left(  \left(  t/2\right)  ^{1/n},f;L^{p},\dot{W}_{L^{p}}^{1}\right)
}{t^{1/p}}\\
&  \leq C(n)\sup_{0<t<\left\vert \Omega\right\vert }t^{-n/p}K\left(
t,f;L^{p},\dot{W}_{L^{p}}^{1}\right) \\
&  =C(n)\left\Vert f\right\Vert _{\dot{B}_{p}^{n/p,\infty}(\Omega)}.
\end{align*}

\end{proof}

\section{On the John-Stromberg characterization of $BMO$ \label{john}}

Our discussion in this chapter is closely connected with a characterization of
$BMO([0,1]^{n})$ using rearrangements due to John \cite{jhn} and Stromberg
\cite{stro}. Let $\lambda\in(0,\frac{1}{2}],$ then
\[
\left\|  f\right\|  _{BMO_{\ast}}\simeq\sup_{Q\subset\lbrack0,1]^{n}}%
\inf_{c\in\mathbb{R}}\left(  (f-c)\chi_{Q}\right)  ^{\ast}(\lambda\left|
Q\right|  ).
\]
See also Jawerth-Torchinsky \cite{jat}, Lerner \cite{ler}, \cite{css}, and the
references therein.

\begin{theorem}
\label{median}Let $(\Omega,d,\mu)$ be a measure metric space satisfying our
standard assumptions, let $f:\Omega\rightarrow\mathbb{R}$ be an integrable
function. For a measurable set $Q\subset\Omega,$ $\left(  f\chi_{Q}\right)
_{\mu}^{\ast}(\mu(Q)/2)\ $is a median of $f$ on $Q.$
\end{theorem}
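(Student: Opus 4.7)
The approach is to unpack the definition of the signed decreasing rearrangement as an infimum and then verify the two defining inequalities for a median directly using the continuity properties of the distribution function on the restricted measure space $(Q,\mu_{\mid Q})$. Interpreting $(f\chi_{Q})_{\mu}^{\ast}$ at the point $\mu(Q)/2$ as the value of the signed rearrangement of $f$ computed on the measure space $(Q,\mu_{\mid Q})$, the formula for the signed rearrangement recorded in Chapter \ref{preliminar} gives
\[
m:=(f\chi_{Q})_{\mu}^{\ast}(\mu(Q)/2)=\inf\{t\in\mathbb{R}:\mu_{\mid Q}\{f>t\}\leq\mu(Q)/2\}.
\]

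For the first median inequality, the distribution function $t\mapsto\mu_{\mid Q}\{f>t\}$ is right-continuous: if $t_{n}\downarrow m$, then $\{f>m\}=\bigcap_{n}\{f>t_{n}\}$ is a decreasing intersection of sets of finite measure, so continuity of $\mu_{\mid Q}$ from above yields $\mu_{\mid Q}\{f>m\}=\lim_{n}\mu_{\mid Q}\{f>t_{n}\}\leq\mu(Q)/2$, because each $t_{n}$ can be chosen with $\mu_{\mid Q}\{f>t_{n}\}\leq\mu(Q)/2$ by definition of $m$ as an infimum. For the second median inequality, the defining infimum forces $\mu_{\mid Q}\{f>t\}>\mu(Q)/2$ for every $t<m$. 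Letting $t\uparrow m$ and invoking $\{f\geq m\}=\bigcup_{t<m}\{f>t\}$ together with continuity of measure from below, one obtains $\mu_{\mid Q}\{f\geq m\}\geq\mu(Q)/2$, whence $\mu_{\mid Q}\{f<m\}=\mu(Q)-\mu_{\mid Q}\{f\geq m\}\leq\mu(Q)/2$.

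Combining the two steps, $m$ satisfies the two inequalities defining a median of $f$ on $(Q,\mu_{\mid Q})$ in the sense of Definition \ref{mediandef}. The argument is purely measure-theoretic and uses only the equimeasurability of the signed rearrangement with $f$ and the standard continuity of finite measures along monotone sequences of sets; none of the metric, isoperimetric, or Sobolev structure of $(\Omega,d,\mu)$ intervenes. The only real pitfall is the convention for the signed rearrangement: one must use $\inf\{t\in\mathbb{R}:\mu_{u}(t)\leq s\}$ rather than $\inf\{t\geq 0:\mu_{u}(t)\leq s\}$ (which would compute the rearrangement of $|u|$ and destroy the possibility of capturing negative medians). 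Once this convention is fixed the proof is immediate, and no serious obstacle is anticipated.
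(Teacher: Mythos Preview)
Your overall strategy is sound, but the monotonicity is reversed in both steps. When $t_n\downarrow m$ with $t_n>m$, the sets $\{f>t_n\}$ are \emph{increasing} (larger threshold means smaller superlevel set), and their \emph{union} is $\{f>m\}$; the correct appeal is to continuity from below, not from above. Symmetrically, when $t\uparrow m$ with $t<m$, the sets $\{f>t\}$ are \emph{decreasing}, their \emph{intersection} is $\{f\geq m\}$, and one needs continuity from above (using $\mu(Q)<\infty$). With these corrections your argument goes through and yields $\mu_{\mid Q}\{f>m\}\leq\mu(Q)/2$ and $\mu_{\mid Q}\{f<m\}\leq\mu(Q)/2$, as desired.

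For comparison, the paper avoids unwinding the infimum entirely. It takes as known the basic rearrangement property $\mu\{g>g_{\mu}^{\ast}(s)\}\leq s$ (applied with $s=\mu(Q)/2$), which immediately gives the first inequality. For the second, the paper exploits the reflection identity for signed rearrangements on $(Q,\mu_{\mid Q})$,
\[
(-f\chi_{Q})_{\mu}^{\ast}(t)=-(f\chi_{Q})_{\mu}^{\ast}(\mu(Q)-t),
\]
which at $t=\mu(Q)/2$ gives $(-f\chi_{Q})_{\mu}^{\ast}(\mu(Q)/2)=-(f\chi_{Q})_{\mu}^{\ast}(\mu(Q)/2)$; the second median inequality then reduces to the first applied to $-f$. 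This is shorter and highlights the symmetry that makes the midpoint $\mu(Q)/2$ special. Your corrected direct argument has the advantage of being entirely self-contained and not requiring the reflection formula.
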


\begin{proof}
It is easy to convince oneself that Definition \ref{mediandef} of median in is
equivalent to%
\[
\mu\{f>m(f)\}\leq\mu(Q);\text{ and }\mu\{f<m(f)\}\leq\mu(Q).
\]
Now%
\[
\mu\{f\chi_{Q}<(f\chi_{Q})_{\mu}^{\ast}(\mu(Q)/2)\}=\mu\{-f\chi_{Q}%
>-(f\chi_{Q})_{\mu}^{\ast}(\mu(Q)/2)\}
\]
But since%
\[
\left(  -f\chi_{Q}\right)  _{\mu}^{\ast}(t)=-(f\chi_{Q})_{\mu}^{\ast}%
(\mu(Q)-t)
\]
it follows that%
\[
\left(  -f\chi_{Q}\right)  _{\mu}^{\ast}(\mu(Q)/2)=-(f\chi_{Q})_{\mu}^{\ast
}(\mu(Q)/2).
\]
Consequently%
\begin{align*}
\mu\{f\chi_{Q}  &  <(f\chi_{Q})_{\mu}^{\ast}(\mu(Q)/2)\}=\mu\{-f\chi
_{Q}>-(f\chi_{Q})_{\mu}^{\ast}(\mu(Q)/2)\}\\
&  =\mu\{-f\chi_{Q}>\left(  -f\chi_{Q}\right)  _{\mu}^{\ast}(\mu(Q)/2)\}\\
&  \leq\mu(Q)/2\text{ (by definition).}%
\end{align*}
Therefore $(f\chi_{Q})_{\mu}^{\ast}(\mu(Q)/2)$ is a median as we wished to show.
\end{proof}

As a consequence we have the following John-Stromberg inequality: for any ball
$B,$
\begin{equation}
\left(  (f\chi_{B})_{\mu}^{\ast\ast}(\mu(B)/2)-(f\chi_{B})_{\mu}^{\ast}%
(\mu(B)/2)\right)  \leq\frac{1}{2}\left\|  f\right\|  _{BMO}. \label{mediana}%
\end{equation}

\begin{theorem}
Let $\left(  \Omega,d,\mu\right)  $ be a metric space satisfying the standard
assumptions. Then there exists a constant $C>0$ such that for all $f,$ it
holds
\[
\left\Vert f\right\Vert _{BMO(\Omega)}\leq C\sup_{B\text{ ball in }%
\Omega\text{\ }}\left\{  (f\chi_{B})_{\mu}^{\ast\ast}(\mu(B))-(f\chi_{B}%
)_{\mu}^{\ast}(\mu(B))\right\}  .
\]

\end{theorem}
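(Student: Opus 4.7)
The plan is to reduce the $BMO$ norm to mean oscillations against medians, compute those medians via signed rearrangements using Theorem \ref{median}, and then bound each one-sided integral by the quantity on the right-hand side. First, inequality (\ref{basicamed}) gives $\|f\|_{BMO(\Omega)}\le 3\sup_B \frac{1}{\mu(B)}\int_B|f-m_B|\,d\mu$ for any choice of median $m_B$ of $f$ on $B$. Theorem \ref{median} allows me to pick $m_B:=(f\chi_B)_\mu^*(\mu(B)/2)$, and it suffices to bound this mean deviation by a constant times $(f\chi_B)_\mu^{**}(\mu(B)) - (f\chi_B)_\mu^*(\mu(B))$, uniformly in $B$.

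Splitting $\int_B|f-m_B|\,d\mu$ into positive and negative parts, the first part is controlled via the layer-cake identity $\int_\Omega (g-c)^+\,d\mu = \int_0^{\mu\{g>c\}}(g_\mu^*(s)-c)\,ds$. Taking $g=f\chi_B$ and $c=m_B$, and using that $m_B$ is a median (so $\mu\{f\chi_B > m_B\}\le \mu(B)/2$ at least in the regime $m_B\ge 0$, which covers all balls with $\mu(B)\le 2\mu(\Omega)/3$), this yields
\[
\int_B(f-m_B)^+\,d\mu\;\le\;\tfrac{\mu(B)}{2}\bigl[(f\chi_B)_\mu^{**}(\mu(B)/2)-(f\chi_B)_\mu^*(\mu(B)/2)\bigr].
\]
The symmetric bound for $\int_B(m_B-f)^+\,d\mu$ follows from applying Theorem \ref{median} to $-f$ and running the same layer-cake argument with $(-f)\chi_B$ in place of $f\chi_B$.

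To finish, I exploit the classical monotonicity that $t\mapsto t\,[g_\mu^{**}(t)-g_\mu^*(t)] = \int_0^t(g_\mu^*(s)-g_\mu^*(t))\,ds$ is nondecreasing in $t$ (used already after (\ref{davobonee}) in the Introduction). This immediately gives $g_\mu^{**}(\mu(B)/2)-g_\mu^*(\mu(B)/2)\le 2[g_\mu^{**}(\mu(B))-g_\mu^*(\mu(B))]$ for $g=f\chi_B$ and $g=-f\chi_B$, which upgrades the $\mu(B)/2$ estimates of the previous paragraph to $\mu(B)$ estimates; taking the supremum over $B$ then closes the inequality.

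The main obstacle is the asymmetry of the right-hand side under $f\mapsto -f$: the bound for the ``below the median'' piece $\int_B(m_B-f)^+\,d\mu$ involves naturally the signed rearrangement of $(-f)\chi_B$, whereas the stated right-hand side uses $f\chi_B$ alone. Absorbing this contribution into the supremum of the $f\chi_B$ quantity requires a reindexing argument showing that balls which realize a large below-median oscillation of $f$ are precisely those which realize a large above-median oscillation of $-f$; this, combined with the translation invariance $\|f\|_{BMO}=\|{-f}\|_{BMO}$, is the only additional technical ingredient needed to convert the outline above into a complete proof.
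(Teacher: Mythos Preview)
Your proposal takes a genuinely different route from the paper and runs into a real gap. The paper's argument never touches medians: it works directly with the average $f_B$ and uses the elementary identity
\[
\int_{\{x\in B:\,f(x)>f_B\}}(f-f_B)\,d\mu=\frac12\int_B|f-f_B|\,d\mu,
\]
which is symmetric under $f\mapsto -f$ by construction. The only further ingredient is the observation that $(f\chi_B)_\mu^{*}(\mu(B))$ (with rearrangement on $B$) is at most $f_B$, so that
\[
\mu(B)\bigl[(f\chi_B)_\mu^{**}(\mu(B))-(f\chi_B)_\mu^{*}(\mu(B))\bigr]
=\int_B\bigl(f-(f\chi_B)_\mu^{*}(\mu(B))\bigr)^{+}\,d\mu
\ge \int_{\{f>f_B\}}(f-f_B)\,d\mu,
\]
which gives the pointwise-in-$B$ bound $\frac{1}{2\mu(B)}\int_B|f-f_B|\,d\mu\le (f\chi_B)_\mu^{**}(\mu(B))-(f\chi_B)_\mu^{*}(\mu(B))$ with no asymmetry to resolve.

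The obstacle you flag is genuine, and your proposed fix does not work: the quantity $(-f\chi_B)_\mu^{**}(\mu(B))-(-f\chi_B)_\mu^{*}(\mu(B))$ equals $\mathrm{ess\,sup}_B f - f_B$, while $(f\chi_B)_\mu^{**}(\mu(B))-(f\chi_B)_\mu^{*}(\mu(B))$ equals $f_B-\mathrm{ess\,inf}_B f$, and there is no universal inequality between these two for a fixed ball; invoking $\|f\|_{BMO}=\|-f\|_{BMO}$ does not help because the right-hand side of the theorem is \emph{not} invariant under $f\mapsto -f$. Your median approach can in fact be rescued, but not by reindexing: one should instead write $\int_B(m_B-f)^+ = \int_B(f-m_B)^+ - \mu(B)(f_B-m_B)$ and bound $|f_B-m_B|$ by the right-hand quantity (using $m_B\ge(f\chi_B)_\mu^*(\mu(B))$ for one direction and $m_B\le (f\chi_B)_\mu^{**}(\mu(B)/2)$ together with the monotonicity of $t\mapsto t[g^{**}(t)-g^*(t)]$ for the other). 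That said, this detour is strictly more work than the paper's two-line reduction via averages.
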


\begin{proof}
For $t\in(0,\mu(\Omega))$ let us write%
\begin{align*}
t(f_{\mu}^{\ast\ast}(t)-f_{\mu}^{\ast}(t))  &  =\int_{0}^{t}(f_{\mu}^{\ast
}(x)-f_{\mu}^{\ast}(t))d\mu\\
&  =\int_{0}^{\mu(\Omega)}\max\left(  0,f_{\mu}^{\ast}(x)-f_{\mu}^{\ast
}(t)\right)  d\mu\\
&  =\int_{\{s:f(s)>f_{\mu}^{\ast}(t)\}}\max\left(  0,f(x)-f_{\mu}^{\ast
}(t)\right)  d\mu.
\end{align*}
Fix a ball $B$ and apply the preceding equality to $f\chi_{B}$ and $t=\mu(B):$%
\begin{align*}
&  \mu(B)\left(  (f\chi_{B})_{\mu}^{\ast\ast}(\mu(B))-(f\chi_{B})_{\mu}^{\ast
}(\mu(B))\right) \\
&  =\int_{\{s\in B:f(s)>(f\chi_{B})_{\mu}^{\ast}(\mu(B))\}}\max(0,f\chi
_{B}(s)-(f\chi_{B})_{\mu}^{\ast}(\mu(B)))d\mu.
\end{align*}
To estimate the right hand side from below we observe that%
\[
f_{B}:=\frac{1}{\mu(B)}\int_{B}f(x)d\mu=\frac{1}{\mu(B)}\int_{0}^{\mu
(B)}(f\chi_{B})_{\mu}^{\ast}(s)ds\geq(f\chi_{B})_{\mu}^{\ast}(\mu(B)),
\]
therefore
\[
f(s)-(f\chi_{Q})_{\mu}^{\ast}(\mu(B))\geq f(s)-f_{B}.
\]
Consequently,%
\begin{align*}
&  \int_{\{s\in B:f(s)>(f\chi_{B})_{\mu}^{\ast}(\mu(B))\}}\max(0,f\chi
_{B}(s)-(f\chi_{B})_{\mu}^{\ast}(\mu(B)))d\mu.\\
&  \geq\int_{\left\{  s\in B:\text{ }f(s)>f_{B}\right\}  }\max(0,f\chi
_{B}(s)-(f\chi_{B})_{\mu}^{\ast}(\mu(B)))d\mu\\
&  \geq\int_{\left\{  x\in B:\text{ }f(s)>f_{B}\right\}  }\left[
f(s)-f_{B}\right]  d\mu.
\end{align*}
We will verify in a moment that%
\begin{align}
&  \frac{1}{\mu(B)}\int_{\left\{  x\in B:\text{ }f(s)>f_{B}\right\}  }\left[
f(s)-f_{B}\right]  d\mu\label{lamitad}\\
&  =\frac{1}{2}\frac{1}{\mu(B)}\int_{B}\left|  f(s)-f_{B}\right|
d\mu\nonumber
\end{align}
Combining (\ref{lamitad}) with the previous estimates we see that%
\[
\left(  (f\chi_{B})_{\mu}^{\ast\ast}(\mu(B))-(f\chi_{B})_{\mu}^{\ast}%
(\mu(B))\right)  \geq\frac{1}{2}\frac{1}{\mu(B)}\int_{B}\left|  f(s)-f_{B}%
\right|  d\mu.
\]
Hence%
\begin{align*}
\left\|  f\right\|  _{BMO}  &  =\sup_{B}\frac{1}{\mu(B)}\int_{B}\left|
f(s)-f_{B}\right|  d\mu\\
&  \leq2\sup_{B}\left(  (f\chi_{B})_{\mu}^{\ast\ast}(\mu(B))-(f\chi_{B})_{\mu
}^{\ast}(\mu(B))\right)  ,
\end{align*}
as we wished to show.

It remains to see (\ref{lamitad}). Since%
\[
\int_{\left\{  x\in B:\text{ }f(s)>f_{B}\right\}  }\left[  f(s)-f_{B}\right]
d\mu+\int_{\left\{  x\in B:\text{ }f(s)<f_{B}\right\}  }\left[  f(s)-f_{B}%
\right]  d\mu=0,
\]
we have that%
\[
\int_{\left\{  x\in B:\text{ }f(s)>f_{B}\right\}  }\left[  f(s)-f_{B}\right]
d\mu=\int_{\left\{  x\in B:\text{ }f(s)<f_{B}\right\}  }\left[  f_{B}%
-f(s)\right]  d\mu.
\]
Consequently,%
\begin{align*}
\int_{B}\left\vert f(s)-f_{B}\right\vert d\mu &  =\int_{\left\{  x\in B:\text{
}f(s)>f_{B}\right\}  }\left[  f(s)-f_{B}\right]  d\mu+\int_{\left\{  x\in
B:\text{ }f(s)<f_{B}\right\}  }\left[  f_{B}-f(s)\right]  d\mu\\
&  =2\int_{\left\{  x\in B:\text{ }f(s)>f_{B}\right\}  }\left[  f(s)-f_{B}%
\right]  d\mu.
\end{align*}

\end{proof}

\section{Oscillation, $BMO$ and $K-$functionals}

As is well known in the Euclidean world or even for fairly general metric
spaces (cf. \cite{coifweiss}) one can realize $BMO$ as a limiting $Lip$ space.
The easiest way to see this is through the equivalence%
\[
\left\|  f\right\|  _{Lip_{\alpha}}\simeq\sup_{Q}\frac{1}{\left|  Q\right|
^{1-\alpha/n}}\int_{Q}\left|  f-f_{Q}\right|  dx<\infty.
\]
From this point of view $BMO$ corresponds to a Lip space of order $\alpha=0.$

This observation leads naturally to consider the analogs of the results of
Chapter \ref{contchap} in the context of $BMO.$

\begin{theorem}
\label{teoremarkao}Suppose that $(\Omega,d,\mu)$ is a metric space with finite
measure and such that there exists an absolute constant $C>0$ such that for
all $f\in L^{1}(\Omega)$ we have%
\begin{equation}
\left\vert f\right\vert _{\mu}^{\ast\ast}(t)-\left\vert f\right\vert _{\mu
}^{\ast}(t)\leq C\left\Vert f\right\Vert _{BMO},\text{ }0<t<\mu(\Omega).
\label{sagsh}%
\end{equation}
Then, for every r.i. space $X(\Omega)$ there exists a constant $c>0$ such that%
\begin{equation}
\left\vert f\right\vert _{\mu}^{\ast\ast}(t/2)-\left\vert f\right\vert _{\mu
}^{\ast}(t/2)\leq c\frac{K(\phi_{X}(t),f;X(\Omega),BMO(\Omega))}{\phi_{X}%
(t)},\text{ }0<t<\mu(\Omega), \label{bmoada}%
\end{equation}
where%
\[
K(t,f;X(\Omega),BMO(\Omega))=\inf_{h\in BMO}\{\left\Vert f-h\right\Vert
_{X}+t\left\Vert h\right\Vert _{BMO}\}.
\]

\end{theorem}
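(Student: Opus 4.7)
The plan is to adapt the proof of Theorem \ref{main1} (Part 1), replacing the use of the isoperimetric/gradient inequality (\ref{reod00}) by the standing hypothesis (\ref{sagsh}). Fix $t\in(0,\mu(\Omega))$ and an arbitrary $h\in BMO(\Omega)$, and decompose $|f|=(|f|-|h|)+|h|$. The same manipulations that produced (\ref{flaca})---subadditivity of the maximal average together with $(u+v)_\mu^\ast(s)\leq u_\mu^\ast(s/2)+v_\mu^\ast(s/2)$ applied to $|h|=|f|+(|h|-|f|)$---yield
\[
|f|_\mu^{\ast\ast}(t/2)-|f|_\mu^\ast(t/2)\leq 2\,||f|-|h||_\mu^{\ast\ast}(t/2)+\bigl(|h|_\mu^{\ast\ast}(t/2)-|h|_\mu^\ast(t)\bigr).
\]
Splitting the last bracket as $(|h|_\mu^{\ast\ast}(t/2)-|h|_\mu^{\ast\ast}(t))+(|h|_\mu^{\ast\ast}(t)-|h|_\mu^\ast(t))$ and using that $s\mapsto s(|h|_\mu^{\ast\ast}(s)-|h|_\mu^\ast(s))$ is non-decreasing, the same computation that showed $(II)\leq(III)$ in the proof of Theorem \ref{main1} bounds the first piece by the second, producing
\[
|f|_\mu^{\ast\ast}(t/2)-|f|_\mu^\ast(t/2)\leq 2\,||f|-|h||_\mu^{\ast\ast}(t/2)+2\bigl(|h|_\mu^{\ast\ast}(t)-|h|_\mu^\ast(t)\bigr).
\]

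I would then bound the two terms on the right separately. For the first, by H\"older's inequality together with the pointwise triangle bound $||f|-|h||\leq|f-h|$ and the concavity estimate $\phi_X(t)\leq 2\phi_X(t/2)$,
\[
||f|-|h||_\mu^{\ast\ast}(t/2)\leq\frac{\||f|-|h|\|_X}{\phi_X(t/2)}\leq\frac{2\|f-h\|_X}{\phi_X(t)}.
\]
For the second, I would invoke the hypothesis (\ref{sagsh}) applied to $h$ at the point $t$ to get $|h|_\mu^{\ast\ast}(t)-|h|_\mu^\ast(t)\leq C\|h\|_{BMO}$. Combining and collecting constants gives
\[
|f|_\mu^{\ast\ast}(t/2)-|f|_\mu^\ast(t/2)\leq\frac{c}{\phi_X(t)}\bigl(\|f-h\|_X+\phi_X(t)\|h\|_{BMO}\bigr),
\]
and taking the infimum over $h\in BMO(\Omega)$ yields (\ref{bmoada}).

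The main obstacle one might have anticipated---an analog of Part 2 of the proof of Theorem \ref{main1} that required a separate Poincar\'e-type argument (\ref{poinbesov}) for the large-$t$ regime---does not actually arise here: because (\ref{sagsh}) is already uniform in $t\in(0,\mu(\Omega))$, the Part 1 template covers the entire admissible range in one shot. The only mildly delicate step is the rescaling of the Part 1 manipulation from the $(t,2t)$-window of Theorem \ref{main1} to the $(t/2,t)$-window appropriate for (\ref{bmoada}), which is a routine recomputation using that $s\mapsto s(|h|_\mu^{\ast\ast}(s)-|h|_\mu^\ast(s))$ is non-decreasing on $(0,\mu(\Omega))$.
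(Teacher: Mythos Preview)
Your argument is correct and follows the same template as the paper's proof: both adapt the Part~1 calculation of Theorem~\ref{main1}, replacing the gradient oscillation bound by the standing hypothesis~(\ref{sagsh}), and both observe that no separate large-$t$ argument is needed. The only cosmetic difference is in how the $|h|$-term is split: you write $|h|_\mu^{\ast\ast}(t/2)-|h|_\mu^\ast(t)=(|h|_\mu^{\ast\ast}(t/2)-|h|_\mu^{\ast\ast}(t))+(|h|_\mu^{\ast\ast}(t)-|h|_\mu^\ast(t))$ and reuse the $(II)\leq(III)$ trick, whereas the paper splits $|h|_\mu^{\ast\ast}(t)-|h|_\mu^\ast(2t)=(|h|_\mu^{\ast\ast}(t)-|h|_\mu^\ast(t))+(|h|_\mu^\ast(t)-|h|_\mu^\ast(2t))$ and invokes~(\ref{davobonee}) for the second piece before applying~(\ref{sagsh}) twice; either route works.
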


\begin{proof}
The proof follows exactly the same lines as the proof of Theorem \ref{main1},
so we shall be brief. We start by noting three important properties that
functional $\left\Vert f\right\Vert _{BMO}$ shares with $\left\Vert \left\vert
\nabla f\right\vert \right\Vert :$ $(i)$ For any constant $c,$ $\left\Vert
f+c\right\Vert _{BMO}=\left\Vert f\right\Vert _{BMO}$, $(ii)$ $\left\Vert
\left\vert f\right\vert \right\Vert _{BMO}\leq\left\Vert f\right\Vert _{BMO},$
and more generally $(iii)$ for any Lip $1$ function $\Psi,$ $\left\Vert
\Psi(f)\right\Vert _{BMO}\leq\left\Vert f\right\Vert _{BMO}.$ Let $t>0,$ then
using the corresponding arguments in Theorem \ref{main1} shows that we have
\begin{equation}
\inf_{0\leq h\in BMO}\left\{  \left\Vert \left\vert f\right\vert -h\right\Vert
_{X}+\phi_{X}(t)\left\Vert h\right\Vert _{BMO}\right\}  \leq K(\phi
_{X}(t),f;X(\Omega),BMO(\Omega)), \label{primbmo}%
\end{equation}

To prove (\ref{bmoada}) we proceed as in the proof of Theorem \ref{main1}
until we arrive to%
\[
\left\vert f\right\vert _{\mu}^{\ast\ast}(t/2)-\left\vert f\right\vert _{\mu
}^{\ast}(t/2)\leq\left\vert \left\vert f\right\vert -h\right\vert _{\mu}%
^{\ast\ast}(t)+\left\vert \left\vert f\right\vert -h\right\vert _{\mu}^{\ast
}(t)+\left\vert h\right\vert _{\mu}^{\ast\ast}(t)-\left\vert h\right\vert
_{\mu}^{\ast}(2t)
\]
which we now estimate as%
\[
\left\vert f\right\vert _{\mu}^{\ast\ast}(t/2)-\left\vert f\right\vert _{\mu
}^{\ast}(t/2)\leq2\left\vert \left\vert f\right\vert -h\right\vert _{\mu
}^{\ast\ast}(t)+\left(  \left\vert h\right\vert _{\mu}^{\ast\ast
}(t)-\left\vert h\right\vert _{\mu}^{\ast}(t)\right)  +\left(  \left\vert
h\right\vert _{\mu}^{\ast}(t)-\left\vert h\right\vert _{\mu}^{\ast
}(2t)\right)
\]
Note that%
\begin{align*}
\left(  \left\vert f\right\vert -h\right)  _{\mu}^{\ast\ast}(t/2)  &
=\frac{2}{t}\int_{0}^{t}\left(  \left\vert f\right\vert -h\right)  _{\mu
}^{\ast}(s)ds\\
&  \leq2\frac{\left\Vert \left\vert f\right\vert -h\right\Vert _{X}%
\phi_{X^{\prime}}(t)}{t}\text{ (H\"{o}lder's inequality)}\\
&  =2\frac{\left\Vert \left\vert f\right\vert -h\right\Vert _{X}}{\phi_{X}%
(t)}\text{ (since }\phi_{X^{\prime}}(t)\phi_{X}(t)=t).
\end{align*}
On the other hand, by (\ref{sagsh})%
\[
\left\vert h\right\vert _{\mu}^{\ast\ast}(t)-\left\vert h\right\vert _{\mu
}^{\ast}(t)\leq C\left\Vert h\right\Vert _{BMO}.
\]
While by [(\ref{davobonee}), Chapter \ref{intro}]%
\begin{align*}
\left\vert h\right\vert _{\mu}^{\ast}(t)-\left\vert h\right\vert _{\mu}^{\ast
}(2t)  &  \leq2\left(  \left\vert h\right\vert _{\mu}^{\ast\ast}%
(2t)-\left\vert h\right\vert _{\mu}^{\ast}(2t)\right) \\
&  \leq C\left\Vert h\right\Vert _{BMO}.\text{ (again by (\ref{sagsh}))}%
\end{align*}
Therefore, combining our findings we see that%
\begin{align*}
\left\vert f\right\vert _{\mu}^{\ast\ast}(t/2)-\left\vert f\right\vert _{\mu
}^{\ast}(t/2)  &  \leq C\inf_{0\leq h\in BMO}\left\{  \frac{\left\Vert
\left\vert f\right\vert -h\right\Vert _{X}}{\phi_{X}(t)}+\left\Vert
h\right\Vert _{BMO}\right\} \\
&  =\frac{C}{\phi_{X}(t)}\inf_{0\leq h\in BMO}\{\left\Vert \left\vert
f\right\vert -h\right\Vert _{X}+\phi_{X}(t)\left\Vert h\right\Vert _{BMO}\}\\
&  \leq\frac{C}{\phi_{X}(t)}K(\phi_{X}(t),f;X(\Omega),BMO(\Omega))\text{ (by
(\ref{primbmo}).}%
\end{align*}

\end{proof}

\chapter{Estimation of growth \textquotedblleft envelopes\textquotedblright%
\label{triebel}}

\section{Summary}

Triebel and his school, in particular we should mention here the extensive
work of Haroske, have studied the concept of *envelopes* (cf. \cite{haroske},
a book mainly devoted to the computation of growth and continuity envelopes of
function spaces defined on $\mathbb{R}^{n}$). On the other hand, as far as we
are aware, the problem of estimating growth envelopes for Sobolev or Besov
spaces based on general measure spaces has not been treated systematically in
the literature. For a function space $Z(\Omega),$ which we should think as
measuring smoothness, one attempts to find precise estimates of (*the growth
envelope*)
\[
E^{Z}(t)=\sup_{\left\|  f\right\|  _{Z(\Omega)}\leq1}\left|  f\right|  ^{\ast
}(t).
\]
A related problem is the estimation of *continuity envelopes* (cf.
\cite{haroske}). For example, suppose that $Z:=Z(\mathbb{R}^{n})\subset
C(\mathbb{R}^{n}),$ then we let%
\[
E_{C}^{Z}(t)=\sup_{\left\|  f\right\|  _{Z}\leq1}\frac{\omega_{L^{\infty}%
}(t,f)}{t},
\]
and the problem at hand is to obtain precise estimates of $E_{C}^{Z}(t).$

In this chapter we estimate growth envelopes of function spaces based on
metric probability spaces using our symmetrization inequalities. Most of the
results we shall obtain, including those for Gaussian function spaces, are
apparently new. Our method, moreover, gives a unified approach.

In a somewhat unrelated earlier work \cite{mamiconver}, we proposed some
abstract ideas on how to study certain convergence and compactness properties
in the context of interpolation scales. In Section \ref{sec:markao} we shall
briefly show a connection with the estimation of envelopes.

\section{Spaces defined on measure spaces with Euclidean type profile}

To fix ideas, and for easier comparison, in this section we consider metric
probability\footnote{Note that, when we are dealing with domains $\Omega$ with
finite measure, we can usually assume without loss that we are dealing with
functions such that $|f|^{\ast\ast}(\infty)=0.$ Indeed, we have%
\[
\left\vert f\right\vert ^{\ast\ast}(t)\leq\frac{1}{t}\left\Vert f\right\Vert
_{L^{1}(\Omega)}.
\]
For the usual function spaces on $\mathbb{R}^{n},$ we can usually work with
functions in $C_{0}(\mathbb{R}^{n}),$ which again obviously satisfy
$|f|^{\ast\ast}(\infty)=0.$} spaces $(\Omega,d,\mu)$ satisfying the standard
assumptions and such that the corresponding profiles satisfy%
\begin{equation}
t^{1-1/n}\preceq I_{\Omega}(t),\text{ }t\in(0,1/2). \label{cota}%
\end{equation}
Particular examples are the $\mathcal{J}_{1-\frac{1}{n}}-$Maz'ya domains on
$\mathbb{R}^{n}.$ By the L\'{e}vy-Gromov isoperimetric inequality, Riemannian
manifolds with positive Ricci curvature also satisfy (\ref{cota}).

In this context the basic rearrangement inequalities (cf. (\ref{cuatro}),
(\ref{desiK})) take the following form, if $f\in Lip(\Omega),$ then
\begin{equation}
\left|  f\right|  _{\mu}^{\ast\ast}(t)-\left|  f\right|  _{\mu}^{\ast
}(t)\preceq t^{1/n}\left|  \nabla f\right|  _{\mu}^{\ast\ast}(t),\text{ }%
t\in(0,1/2),\text{ } \label{lareemplazada}%
\end{equation}
and, if $f\in X+S_{X},$ then
\begin{equation}
\left|  f\right|  _{\mu}^{\ast\ast}(t)-\left|  f\right|  _{\mu}^{\ast
}(t)\preceq\frac{K(t^{1/n},f,X,S_{X})}{\phi_{X}(t)},\text{ }t\in(0,1/2).\text{
} \label{lanueva}%
\end{equation}

\begin{theorem}
Let $X=X(\Omega)$ be a r.i. space on $\Omega,$ and let $\bar{S}_{X}(\Omega)$
be defined by
\[
\bar{S}_{X}(\Omega)=\left\{  f\in Lip(\Omega):\left\|  f\right\|  _{\bar
{S}_{X}(\Omega)}=\left\|  \left|  \nabla f\right|  \right\|  _{X}+\left\|
f\right\|  _{X}<\infty\right\}  .
\]

Then,%
\begin{equation}
E^{\bar{S}_{X}(\Omega)}(t)\preceq\int_{t}^{1}s^{1/n-1}\phi_{\bar{X}^{\prime}%
}(s)\frac{ds}{s},\text{ }t\in(0,1/2). \label{eresfea}%
\end{equation}
(ii) In particular, if $X=L^{p},1\leq p<n,$ then (compare with \cite{haroske}
and see also Remark \ref{trece} below)%
\begin{equation}
E^{\bar{S}_{L^{p}}(\Omega)}(t)\preceq t^{1/n-1/p},\text{ }t\in(0,1/2).
\label{eresmuyfea}%
\end{equation}

\end{theorem}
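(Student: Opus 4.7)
The plan is to combine the basic oscillation inequality \eqref{lareemplazada} with the fundamental theorem of calculus for $|f|^{\ast\ast}$, apply H\"{o}lder's inequality to pass from $|\nabla f|^{\ast\ast}$ to the norm $\||\nabla f|\|_X$, and then deal with the boundary term at $t = 1/2$ using $\|f\|_X$.

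First, I would note that for $f \in \bar{S}_X(\Omega)$ and $s \in (0,1/2)$, the assumed profile bound \eqref{cota} together with \eqref{lareemplazada} yields
\[
|f|_\mu^{\ast\ast}(s) - |f|_\mu^{\ast}(s) \preceq s^{1/n}\,|\nabla f|_\mu^{\ast\ast}(s).
\]
By generalized H\"{o}lder's inequality on the representation space,
\[
|\nabla f|_\mu^{\ast\ast}(s) \;=\; \frac{1}{s}\int_0^s |\nabla f|_\mu^{\ast}(u)\,du \;\leq\; \frac{\||\nabla f|\|_X\,\phi_{X'}(s)}{s}.
\]

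Next, using $\tfrac{d}{ds}(-|f|_\mu^{\ast\ast}(s)) = (|f|_\mu^{\ast\ast}(s) - |f|_\mu^{\ast}(s))/s$ and integrating from $t$ to $1/2$,
\[
|f|_\mu^{\ast\ast}(t) - |f|_\mu^{\ast\ast}(1/2) \;=\; \int_t^{1/2}\bigl(|f|_\mu^{\ast\ast}(s) - |f|_\mu^{\ast}(s)\bigr)\frac{ds}{s} \;\preceq\; \||\nabla f|\|_X \int_t^{1/2} s^{1/n-1}\phi_{X'}(s)\,\frac{ds}{s}.
\]
For the boundary term I would estimate
\[
|f|_\mu^{\ast\ast}(1/2) \leq 2\int_0^{1/2}|f|_\mu^{\ast}(u)\,du \leq 2\|f\|_X\,\phi_{X'}(1/2) \preceq \|f\|_X\,\phi_{X'}(1),
\]
and observe that, since $s^{1/n-2}$ is bounded above and below on $[1/2,1]$ and $\phi_{X'}(s)\simeq \phi_{X'}(1)$ there (by concavity),
\[
\int_{1/2}^{1} s^{1/n-1}\phi_{X'}(s)\,\frac{ds}{s} \;\simeq\; \phi_{X'}(1).
\]
Combining, and using $|f|_\mu^{\ast}(t) \leq |f|_\mu^{\ast\ast}(t)$, we obtain
\[
|f|_\mu^{\ast}(t) \;\preceq\; \bigl(\||\nabla f|\|_X + \|f\|_X\bigr)\int_t^{1} s^{1/n-1}\phi_{X'}(s)\,\frac{ds}{s},
\]
which is \eqref{eresfea}. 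For (ii), I would simply substitute $\phi_{L^{p'}}(s) = s^{1-1/p}$ and evaluate $\int_t^1 s^{1/n-1/p-1}\,ds \simeq t^{1/n-1/p}/(1/p-1/n)$ for $p < n$ and $t$ small, yielding \eqref{eresmuyfea}.

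The only technical point requiring care is the treatment near $t=1/2$, where the oscillation inequality \eqref{lareemplazada} ceases to be sharp because $I_\Omega$ is only comparable to $t^{1-1/n}$ on $(0,1/2)$; this is handled cleanly by splitting at $1/2$ and absorbing the residual $|f|_\mu^{\ast\ast}(1/2)$ into the $\|f\|_X$ part of the $\bar{S}_X$-norm via the estimate $\phi_{X'}(1) \simeq \int_{1/2}^1 s^{1/n-1}\phi_{X'}(s)\,ds/s$, so no genuine obstacle arises.
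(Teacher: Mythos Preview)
Your proof is correct and follows essentially the same approach as the paper: the fundamental theorem of calculus for $|f|_\mu^{\ast\ast}$, the oscillation inequality \eqref{lareemplazada}, and H\"{o}lder's inequality to pass to $\||\nabla f|\|_X\,\phi_{X'}(s)/s$. The only cosmetic difference is in the treatment of the boundary term $|f|_\mu^{\ast\ast}(1/2)$: the paper bounds it by $2\|f\|_{L^1}\leq 2c\|f\|_X$ and carries an additive constant (writing the final bound as $c(\int_t^1 s^{1/n-1}\phi_{\bar X'}(s)\,\tfrac{ds}{s}+1)$, then checking $1\preceq t^{1/n-1/p}$ separately in part (ii)), whereas you absorb the constant directly into the integral via $\phi_{X'}(1)\simeq\int_{1/2}^1 s^{1/n-1}\phi_{X'}(s)\,\tfrac{ds}{s}$, which is a slightly cleaner bookkeeping.
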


\begin{proof}
Let $f$ $\ $be such that $\left\|  f\right\|  _{\bar{S}_{X}(\Omega)}\leq1.$
Using the fundamental theorem of Calculus we can write
\begin{equation}
\left|  f\right|  _{\mu}^{\ast\ast}(t)-\left|  f\right|  _{\mu}^{\ast\ast
}(1/2)=\int_{t}^{1/2}\left(  \left|  f\right|  _{\mu}^{\ast\ast}(s)-\left|
f\right|  _{\mu}^{\ast}(s)\right)  \frac{ds}{s}. \label{porfavor}%
\end{equation}
This representation combined with (\ref{lareemplazada}) and H\"{o}lder's
inequality\footnote{Write
\begin{align*}
s\left|  \nabla f\right|  ^{\ast\ast}(s)  &  =\int_{0}^{s}\left|  \nabla
f\right|  ^{\ast}(u)du\\
&  \leq\left\|  \left|  \nabla f\right|  \chi_{(0,s)}\right\|  _{\bar{X}%
}\left\|  \chi_{(0,s)}\right\|  _{\bar{X}^{\prime}}\\
&  =\left\|  \left|  \nabla f\right|  \chi_{(0,s)}\right\|  _{\bar{X}}%
\phi_{\bar{X}^{\prime}}(s).
\end{align*}
}, yields%
\begin{align*}
\left|  f\right|  _{\mu}^{\ast\ast}(t)  &  \leq c_{n}\int_{t}^{1/2}%
s^{1/n}\left|  \nabla f\right|  _{\mu}^{\ast\ast}(s)\frac{ds}{s}+\left|
f\right|  _{\mu}^{\ast\ast}(1/2)\\
&  \leq c_{n}\int_{t}^{1}s^{1/n-1}\left\|  \left|  \nabla f\right|
\chi_{(0,s)}\right\|  _{\bar{X}}\phi_{\bar{X}^{\prime}}(s)\frac{ds}%
{s}+2\left\|  f\right\|  _{L^{1}}\\
&  \leq\left\|  f\right\|  _{\bar{S}_{X}}c_{n}\int_{t}^{1}s^{1/n-1}\phi
_{\bar{X}^{\prime}}(s)\frac{ds}{s}+2\left\|  f\right\|  _{L^{1}}\\
&  \leq c_{n}\int_{t}^{1}s^{1/n-1}\phi_{\bar{X}^{\prime}}(s)\frac{ds}{s}+2c,
\end{align*}
where in the last step we used the fact that $\left\|  f\right\|  _{\bar
{S}_{X}(\Omega)}\leq1,$ and $\left\|  f\right\|  _{L^{1}}\leq c\left\|
f\right\|  _{X}.$ Therefore,
\begin{align*}
E^{\bar{S}_{X}(\Omega)}(t)  &  =\sup_{\left\|  f\right\|  _{\bar{S}_{X}%
(\Omega)}\leq1}\left|  f\right|  _{\mu}^{\ast}(t)\\
&  \leq\sup_{\left\|  f\right\|  _{\bar{S}_{X}(\Omega)}\leq1}\left|  f\right|
_{\mu}^{\ast\ast}(t)\\
&  \leq c\left(  \int_{t}^{1}s^{1/n-1}\phi_{\bar{X}^{\prime}}(s)\frac{ds}%
{s}+1\right)  ,\text{ \ \ }t\in(0,1/2).
\end{align*}
The second part of the result follows readily by computation since, if
$X=L^{p},$ $1\leq p<n,$ then
\begin{align*}
\int_{t}^{1}s^{1/n-1}\phi_{\bar{X}^{\prime}}(s)\frac{ds}{s}  &  =\int_{t}%
^{1}s^{1/n-1}s^{1-1/p}\frac{ds}{s}\\
&  \leq\int_{t}^{\infty}s^{1/n-1}s^{1-1/p}\frac{ds}{s}\\
&  \simeq t^{1/n-1/p},
\end{align*}
and%
\[
1\leq ct^{1/n-1/p},\text{ for }t\in(0,1/2).
\]

\end{proof}

\begin{remark}
\label{trece} We can also deal in the same fashion with infinite measures. For
comparison with \cite{haroske} let us consider the case of $\mathbb{R}^{n}$
provided with Lebesgue measure. In this case $I_{\mathbb{R}^{n}}%
(t)=c_{n}t^{1-1/n},$ for $t>0,$ and (\ref{lareemplazada}) is known to hold for
all $t>0,$ and for all functions in $C_{0}(\mathbb{R}^{n})$ (cf.
\cite{mmlog}). For functions in $C_{0}(\mathbb{R}^{n})$ we can replace
(\ref{porfavor}) by
\[
\left|  f\right|  ^{\ast\ast}(t)=\int_{t}^{\infty}\left(  \left|  f\right|
^{\ast\ast}(s)-\left|  f\right|  ^{\ast}(s)\right)  \frac{ds}{s}.
\]
Suppose further that $X$ is such that%
\[
\int_{t}^{\infty}s^{1/n-1}\phi_{\bar{X}^{\prime}}(s)\frac{ds}{s}<\infty.
\]
Then, proceeding with the argument given in the proof above, we see that there
is no need to restrict the range of $t^{\prime}s$ for the validity of
(\ref{eresfea}), (\ref{eresmuyfea}), etc. Therefore, for $1\leq p<n,$ we have
(compare with \cite[Proposition 3.25]{haroske})%
\begin{equation}
E^{S_{X}^{1}(\mathbb{R}^{n})}(t)\leq c\left(  \int_{t}^{\infty}s^{1/n-1}%
\phi_{\bar{X}^{\prime}}(s)\frac{ds}{s}+1\right)  ,\text{ }t>0.
\label{greenspan}%
\end{equation}

\end{remark}

The use of H\"{o}lder's inequality as effected in the previous theorem does
not give the sharp result at the end point $p=n.$ Indeed, following the
previous method for $p=n,$ we only obtain%
\begin{align*}
E^{W_{L^{n}}^{1}(\Omega)}(t)  &  \preceq\int_{t}^{1}s^{1/n-1}s^{1-1/n}%
\frac{ds}{s}\\
&  \preceq\ln\frac{1}{t},\text{ }t\in(0,1/2).
\end{align*}
Our next result shows that using (\ref{lareemplazada}) in a slightly different
form (applying H\"{o}lder's inequality on the left hand side) we can obtain
the sharp estimate in the limiting cases (compare with \cite[Proposition
3.27]{haroske}).

\begin{theorem}
\label{precisado}$E^{\bar{S}_{L^{n}}(\Omega)}(t)\preceq\left(  \ln\frac{1}%
{t}\right)  ^{1/n^{\prime}},$ for $t\in(0,1/2).$
\end{theorem}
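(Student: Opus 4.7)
The plan is to refine the proof of the previous theorem by postponing the use of H\"older's inequality: rather than estimating $|\nabla f|^{\ast\ast}_\mu(s)$ pointwise by applying H\"older to $\frac{1}{s}\int_0^s|\nabla f|^{\ast}_\mu(u)\,du$, I will apply H\"older on the outer integral (with respect to the measure $ds/s$) that appears when we integrate the oscillation inequality.

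Concretely, fix $f$ with $\|f\|_{\bar{S}_{L^n}(\Omega)}\le 1$. Writing, as in the preceding proof,
\[
|f|^{\ast\ast}_\mu(t)-|f|^{\ast\ast}_\mu(1/2)=\int_{t}^{1/2}\bigl(|f|^{\ast\ast}_\mu(s)-|f|^{\ast}_\mu(s)\bigr)\frac{ds}{s},\qquad t\in(0,1/2),
\]
and inserting (\ref{lareemplazada}), one obtains
\[
|f|^{\ast\ast}_\mu(t)-|f|^{\ast\ast}_\mu(1/2)\preceq \int_{t}^{1/2} s^{1/n}|\nabla f|^{\ast\ast}_\mu(s)\,\frac{ds}{s}.
\]
Now apply H\"older's inequality with exponents $n,\,n'$ to this integral against the measure $ds/s$:
\[
\int_{t}^{1/2} s^{1/n}|\nabla f|^{\ast\ast}_\mu(s)\,\frac{ds}{s}
\le \left(\int_{t}^{1/2}\bigl(s^{1/n}|\nabla f|^{\ast\ast}_\mu(s)\bigr)^{n}\frac{ds}{s}\right)^{\!1/n}\!\!\left(\int_{t}^{1/2}\frac{ds}{s}\right)^{\!1/n'}.
\]
The second factor is comparable to $(\ln(1/t))^{1/n'}$. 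The first factor equals $\bigl(\int_t^{1/2}(|\nabla f|^{\ast\ast}_\mu(s))^n\,ds\bigr)^{1/n}$, which by the classical Hardy inequality for the maximal operator $f\mapsto f^{\ast\ast}$ on $L^n$ is bounded by $c_n\bigl\||\nabla f|\bigr\|_{L^n}\le c_n$.

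Combining these bounds with the trivial estimate $|f|^{\ast\ast}_\mu(1/2)\le 2\|f\|_{L^1}\preceq \|f\|_{L^n}\le 1$ yields
\[
|f|^{\ast}_\mu(t)\le|f|^{\ast\ast}_\mu(t)\preceq (\ln(1/t))^{1/n'}+1\preceq (\ln(1/t))^{1/n'},
\]
uniformly in $t\in(0,1/2)$, from which the asserted bound for $E^{\bar S_{L^n}(\Omega)}(t)$ follows. The main conceptual obstacle is simply noticing that the logarithmic loss $\ln(1/t)$ of the previous proof comes from a crude pointwise use of H\"older, and that it improves to the sharp exponent $1/n'$ once H\"older is applied with respect to $ds/s$ on the outer integration, with the $L^n$-boundedness of the maximal average $|\nabla f|^{\ast\ast}_\mu$ (Hardy's inequality) absorbing the inner norm.
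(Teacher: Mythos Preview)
Your proof is correct and is essentially the same as the paper's: both integrate the fundamental theorem of calculus identity, apply H\"older with exponents $n,n'$ against $ds/s$ to split off the factor $(\ln(1/t))^{1/n'}$, and control the remaining $L^n$ piece $\bigl(\int (|\nabla f|^{\ast\ast}_\mu)^n\,ds\bigr)^{1/n}$ via Hardy's inequality. The only cosmetic difference is that the paper first raises the oscillation inequality to the $n$th power and integrates before invoking H\"older, whereas you insert the pointwise bound and then apply H\"older; the computations are identical.
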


\begin{proof}
Suppose that $\left\|  f\right\|  _{\bar{S}_{L^{n}}(\Omega)}\leq1.$ First we
rewrite (\ref{lareemplazada}) as%
\[
\left(  \frac{\left|  f\right|  _{\mu}^{\ast\ast}(s)-\left|  f\right|  _{\mu
}^{\ast}(s)}{s^{1/n}}\right)  ^{n}\leq c_{n}\left(  \left|  \nabla f\right|
_{\mu}^{\ast\ast}(s)\right)  ^{n},s\in(0,1/2).
\]
Integrating, we thus find,%
\begin{align*}
\int_{t}^{1/2}\left(  \left|  f\right|  _{\mu}^{\ast\ast}(s)-\left|  f\right|
_{\mu}^{\ast}(s)\right)  ^{n}\frac{ds}{s}  &  =\int_{t}^{1/2}\left(
\frac{\left|  f\right|  _{\mu}^{\ast\ast}(s)-\left|  f\right|  _{\mu}^{\ast
}(s)}{s^{1/n}}\right)  ^{n}ds\\
&  \leq c_{n}\int_{t}^{1}\left(  \left|  \nabla f\right|  _{\mu}^{\ast\ast
}(s)\right)  ^{n}ds\\
&  \leq C_{n}\left\|  \left|  \nabla f\right|  \right\|  _{L^{n}}^{n}\text{
(by Hardy's inequality)}\\
&  \leq C_{n}.
\end{align*}
Now, for $t\in(0,1/2),$%
\begin{align*}
\left|  f\right|  _{\mu}^{\ast\ast}(t)-\left|  f\right|  _{\mu}^{\ast\ast
}(1/2)  &  =\int_{t}^{1/2}\left(  \left|  f\right|  _{\mu}^{\ast\ast
}(s)-\left|  f\right|  _{\mu}^{\ast}(s)\right)  \frac{ds}{s}\\
&  \leq\left(  \int_{t}^{1/2}\left(  \left|  f\right|  _{\mu}^{\ast\ast
}(s)-\left|  f\right|  _{\mu}^{\ast}(s)\right)  ^{n}\frac{ds}{s}\right)
^{1/n}\left(  \int_{t}^{1}\frac{ds}{s}\right)  ^{1/n^{\prime}}\text{
(H\"{o}lder's inequality)}\\
&  \leq C_{n}^{1/n}\left(  \ln\frac{1}{t}\right)  ^{1/n^{\prime}}.
\end{align*}
Therefore,
\begin{align*}
\left|  f\right|  _{\mu}^{\ast\ast}(t)  &  \leq C_{n}^{1/n}\left(  \ln\frac
{1}{t}\right)  ^{1/n^{\prime}}+\left\|  f\right\|  _{L^{1}}\\
&  \leq C_{n}^{1/n}\left(  \ln\frac{1}{t}\right)  ^{1/n^{\prime}}+\left\|
f\right\|  _{L^{n}}\\
&  \leq C_{n}^{1/n}\left(  \ln\frac{1}{t}\right)  ^{1/n^{\prime}},\text{ for
}t\in(0,1/2).
\end{align*}
Consequently,
\begin{equation}
E^{\bar{S}_{L^{n}}(\Omega)}(t)\preceq\left(  \ln\frac{1}{t}\right)
^{1/n^{\prime}},\text{ for }t\in(0,1/2). \label{bernanke}%
\end{equation}

\end{proof}

The case $p>n,$ is somewhat less interesting for the computation of growth
envelopes since we should have $E^{\bar{S}_{L^{p}}(\Omega)}(t)\leq c.$ We now
give a direct proof of this fact just to show that our method unifies all the cases.

\begin{proposition}
Let $p>n,$ then there exists a constant $c=c(n,p)$ such that%
\[
E^{\bar{S}_{L^{p}}(\Omega)}(t)\leq c,\text{ }t\in(0,1/2).
\]

\end{proposition}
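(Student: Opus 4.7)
The plan is to mirror exactly the argument used to prove the first theorem of this section (establishing (\ref{eresmuyfea})), and simply verify that in the range $p>n$ the resulting integrand is integrable all the way down to $0$, so that no additional logarithmic factor appears and the bound collapses to a constant. Concretely, I would start from the identity
\[
|f|_{\mu}^{\ast\ast}(t)-|f|_{\mu}^{\ast\ast}(1/2)=\int_{t}^{1/2}\bigl(|f|_{\mu}^{\ast\ast}(s)-|f|_{\mu}^{\ast}(s)\bigr)\frac{ds}{s},\qquad t\in(0,1/2),
\]
apply the fundamental symmetrization inequality (\ref{lareemplazada}) to dominate the integrand by $c_{n}s^{1/n}|\nabla f|_{\mu}^{\ast\ast}(s)$, and then use H\"older's inequality in the form $s|\nabla f|_{\mu}^{\ast\ast}(s)\le \||\nabla f|\|_{L^{p}}\,\phi_{L^{p'}}(s)=\||\nabla f|\|_{L^{p}}\,s^{1-1/p}$. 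This produces the pointwise bound
\[
|f|_{\mu}^{\ast\ast}(s)-|f|_{\mu}^{\ast}(s)\le c_{n}\||\nabla f|\|_{L^{p}}\,s^{1/n-1/p},\qquad s\in(0,1/2).
\]

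The second step is the simple observation that, since $p>n$, the exponent $1/n-1/p$ is strictly positive, so
\[
\int_{t}^{1/2}s^{1/n-1/p-1}\,ds\le\int_{0}^{1/2}s^{1/n-1/p-1}\,ds=\frac{(1/2)^{1/n-1/p}}{1/n-1/p}=:c(n,p)<\infty,
\]
uniformly in $t\in(0,1/2)$. Together with the trivial estimate $|f|_{\mu}^{\ast\ast}(1/2)\le 2\|f\|_{L^{1}}\le 2\|f\|_{L^{p}}$ (using $\mu(\Omega)=1$), this yields
\[
|f|_{\mu}^{\ast}(t)\le|f|_{\mu}^{\ast\ast}(t)\le c_{n}\,c(n,p)\,\||\nabla f|\|_{L^{p}}+2\|f\|_{L^{p}}\le C(n,p)\,\|f\|_{\bar{S}_{L^{p}}(\Omega)}.
\]
Taking the supremum over the unit ball of $\bar{S}_{L^{p}}(\Omega)$ gives $E^{\bar{S}_{L^{p}}(\Omega)}(t)\le C(n,p)$ for every $t\in(0,1/2)$, as desired.

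There is really no hard step here: the argument is an immediate specialisation of the one that produced (\ref{eresmuyfea}) and (\ref{bernanke}), and the only thing that has changed is the sign of the exponent $\tfrac{1}{n}-\tfrac{1}{p}$ in the integral $\int_{t}^{1/2}s^{1/n-1/p-1}ds$. The only mild subtlety worth flagging is that, when $p>n$, one could use the full Morrey estimate from Chapter \ref{capitap} (Theorem \ref{sobcon} and Remark \ref{rema}) to bound $\|f\|_{L^{\infty}}$ directly by $\|f\|_{\bar{S}_{L^{p}}}$, which gives an alternative, slightly sharper proof; but the point of the proposition, as the authors emphasise, is precisely that the unified integral method of this section recovers the expected boundedness in the supercritical range without any additional machinery.
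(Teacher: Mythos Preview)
Your proof is correct and follows essentially the same route as the paper: both start from the fundamental theorem of calculus identity, apply (\ref{lareemplazada}) together with H\"older's inequality to obtain the pointwise bound $|f|_{\mu}^{\ast\ast}(s)-|f|_{\mu}^{\ast}(s)\le c\,s^{1/n-1/p}$, and then exploit that the exponent $1/n-1/p$ is positive when $p>n$ so that $\int s^{1/n-1/p-1}\,ds$ is finite uniformly in $t$. The only cosmetic difference is that the paper bounds $\int_{t}^{1}s^{1/n-1/p-1}\,ds\le (1/n-1/p)^{-1}$ while you bound $\int_{t}^{1/2}\le\int_{0}^{1/2}$; both accomplish the same thing.
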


\begin{proof}
Suppose that $\left\|  f\right\|  _{\bar{S}_{L^{p}}(\Omega)}\leq1,$ and let
$s\in(0,1/2).$ We estimate as follows%
\begin{align*}
\left|  f\right|  _{\mu}^{\ast\ast}(t)-\left|  f\right|  _{\mu}^{\ast}(t)  &
\leq ct^{1/n-1}\int_{0}^{t}\left|  \nabla f\right|  _{\mu}^{\ast}(s)ds\\
&  \leq ct^{1/n-1}\left(  \int_{0}^{1}\left|  \nabla f\right|  _{\mu}^{\ast
}(s)^{p}ds\right)  ^{1/p}t^{1/p^{\prime}}\\
&  \leq ct^{1/n-1/p}.
\end{align*}
Thus, using a familiar argument, we see that for $t\in(0,1/2),$%
\begin{align*}
\left|  f\right|  _{\mu}^{\ast\ast}(t)-\left|  f\right|  _{\mu}^{\ast\ast
}(1/2)  &  \leq c\int_{t}^{1}s^{1/n-1/p-1}ds\\
&  \leq c\frac{1-t^{1/n-1/p}}{(1/n-1/p)}\\
&  \leq\frac{c}{(1/n-1/p)}.
\end{align*}
It follows that%
\begin{align*}
\left|  f\right|  _{\mu}^{\ast}(t)  &  \leq\left|  f\right|  _{\mu}^{\ast\ast
}(t)\leq\frac{c}{(1/n-1/p)}+\left|  f\right|  _{\mu}^{\ast\ast}(1/2)\\
&  \leq\frac{c}{(1/n-1/p)},
\end{align*}
and we obtain%
\[
E^{\bar{S}_{L^{p}}(\Omega)}(t)\leq c,\text{ }t\in(0,1/2).
\]

\end{proof}

The methods discussed above apply to Sobolev spaces based on general r.i.
spaces. As another illustration we now consider in detail the case of the
Sobolev spaces based on the Lorentz spaces $L^{n,q}\left(  \Omega\right)  ,$
where $\Omega\subset\mathbb{R}^{n}$ is a bounded Lip domain of measure $1$.
The interest here lies in the fact that in the critical case $p=n,$ the second
index $q$ plays an important role$.$ Indeed, for $q=1,$ as is well known, we
have $W_{L^{n,1}}^{1}\subset L^{\infty},$ while this is no longer true for
$W_{L^{n,q}}^{1}$ if $q>1.$ In particular, for the space $W_{L^{n,n}}%
^{1}=W_{L^{n}}^{1}.$ The next result thus extends Theorem \ref{precisado} and
provides an explanation of the situation we have just described through the
use of growth envelopes.

\begin{theorem}
\label{comoantes}Let $1\leq q\leq\infty,$ then $E^{W_{L^{n,q}}^{1}(\Omega
)}(t)\preceq\left(  \ln\frac{1}{t}\right)  ^{1/q^{\prime}},$ for
$t\in(0,1/2).$
\end{theorem}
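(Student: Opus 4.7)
The plan is to mimic the proof of Theorem \ref{precisado} but with a two-parameter H\"older/Hardy argument adapted to the Lorentz scale $L^{n,q}$. The starting point is the basic oscillation inequality (\ref{lareemplazada}), which under the assumption (\ref{cota}) reads
\[
|f|_\mu^{\ast\ast}(s) - |f|_\mu^{\ast}(s) \preceq s^{1/n}\,|\nabla f|_\mu^{\ast\ast}(s), \qquad s \in (0,1/2).
\]
Assume first that $1 \leq q < \infty$ and $\|f\|_{W^1_{L^{n,q}}(\Omega)} \leq 1$. Raising the inequality to the $q$-th power and multiplying by $ds/s$ gives
\[
(|f|_\mu^{\ast\ast}(s) - |f|_\mu^{\ast}(s))^{q}\,\frac{ds}{s} \preceq s^{q/n}\,(|\nabla f|_\mu^{\ast\ast}(s))^{q}\,\frac{ds}{s}.
\]
Integrating over $(t,1/2)$, and using Hardy's inequality (which applies because $n > 1$ is implicit in (\ref{cota}) and gives $\int_0^1 (|\nabla f|_\mu^{\ast\ast})^q s^{q/n-1}\,ds \preceq \||\nabla f|\|_{L^{n,q}}^{q}$), yields
\[
\int_t^{1/2} (|f|_\mu^{\ast\ast}(s) - |f|_\mu^{\ast}(s))^{q}\,\frac{ds}{s} \preceq \||\nabla f|\|_{L^{n,q}}^{q} \leq 1.
\]

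Next, as in the proof of Theorem \ref{precisado}, write
\[
|f|_\mu^{\ast\ast}(t) - |f|_\mu^{\ast\ast}(1/2) = \int_t^{1/2} \bigl(|f|_\mu^{\ast\ast}(s) - |f|_\mu^{\ast}(s)\bigr) \frac{ds}{s},
\]
and apply H\"older's inequality with exponents $q$ and $q'$:
\[
|f|_\mu^{\ast\ast}(t) - |f|_\mu^{\ast\ast}(1/2) \leq \left(\int_t^{1/2}(|f|_\mu^{\ast\ast}(s)-|f|_\mu^{\ast}(s))^{q}\frac{ds}{s}\right)^{1/q}\!\!\left(\int_t^{1/2}\frac{ds}{s}\right)^{1/q'} \!\!\preceq \left(\ln\tfrac{1}{t}\right)^{1/q'}.
\]
Since $|f|_\mu^{\ast\ast}(1/2) \leq 2\|f\|_{L^1} \preceq \|f\|_{L^{n,q}} \leq 1$ (using $\mu(\Omega)=1$ and $n>1$), and $(\ln\tfrac{1}{t})^{1/q'} \geq (\ln 2)^{1/q'}$ on $(0,1/2)$, the residual constant is absorbed. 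Taking the supremum over the unit ball of $W^1_{L^{n,q}}(\Omega)$ and using $|f|^{\ast}(t) \leq |f|^{\ast\ast}(t)$ gives the announced envelope bound.

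The endpoints require only cosmetic adjustments. For $q=\infty$ (so $1/q'=1$), one replaces the Hardy step by the pointwise bound $s^{1/n}|\nabla f|_\mu^{\ast\ast}(s) \preceq \||\nabla f|\|_{L^{n,\infty}}$ (equivalent, via Hardy for $n>1$, to $\sup_{s} s^{1/n}|\nabla f|_\mu^{\ast}(s)$), and direct integration yields the factor $\ln\tfrac{1}{t}$. For $q=1$ H\"older degenerates: the exponent $1/q'=0$ produces a uniform bound consistent with the known embedding $W^1_{L^{n,1}}(\Omega) \subset L^\infty(\Omega)$. The only genuine technical point, which I would flag as the main obstacle, is the passage between $|\nabla f|_\mu^{\ast\ast}$ (forced upon us by (\ref{lareemplazada})) and $|\nabla f|_\mu^{\ast}$ (which defines the $L^{n,q}$ quasi-norm); this is handled by Hardy's inequality, whose applicability hinges precisely on $n>1$ and thus reflects the Euclidean-type character of the profile (\ref{cota}).
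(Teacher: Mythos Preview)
Your proof is correct and follows essentially the same route as the paper's own argument: raise (\ref{lareemplazada}) to the $q$-th power, combine the fundamental theorem of calculus with H\"older's inequality (exponents $q$, $q'$) applied to $\int_t^{1/2}(\cdot)\,ds/s$, and handle the endpoint $q=\infty$ by the direct pointwise bound $s^{1/n}|\nabla f|_\mu^{\ast\ast}(s)\preceq\||\nabla f|\|_{L^{n,\infty}}$. The only difference is expository: you make explicit the Hardy step needed to pass from $|\nabla f|_\mu^{\ast\ast}$ to the $L^{n,q}$ quasi-norm (defined via $|\nabla f|_\mu^{\ast}$ in (\ref{montana})), whereas the paper absorbs this into the single line $\leq c\||\nabla f|\|_{L^{n,q}}$.
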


\begin{proof}
Consider first the case $1\leq q<\infty.$ Suppose that $\left\|  f\right\|
_{W_{L^{n,q}}^{1}(\Omega)}\leq1.$ From (\ref{lareemplazada}) we get%
\[
\left(  \left|  f\right|  ^{\ast\ast}(s)-\left|  f\right|  ^{\ast}(s)\right)
^{q}\leq c_{n}\left(  \left|  \nabla f\right|  ^{\ast\ast}(s)s^{1/n}\right)
^{q},\text{ }s\in(0,1/2).
\]
Then,
\begin{align*}
\left|  f\right|  _{\mu}^{\ast\ast}(t)-\left|  f\right|  _{\mu}^{\ast\ast
}(1/2)  &  =\int_{t}^{1/2}\left(  \left|  f\right|  _{\mu}^{\ast\ast
}(s)-\left|  f\right|  _{\mu}^{\ast}(s)\right)  \frac{ds}{s}\\
&  \leq\left(  \int_{t}^{1/2}\left(  \left|  f\right|  _{\mu}^{\ast\ast
}(s)-\left|  f\right|  _{\mu}^{\ast}(s)\right)  ^{q}\frac{ds}{s}\right)
^{1/q}\left(  \int_{t}^{1/2}\frac{ds}{s}\right)  ^{1/q^{\prime}}\\
&  \leq c\left(  \int_{t}^{1/2}\left(  \left|  \nabla f\right|  _{\mu}%
^{\ast\ast}(s)s^{1/n}\right)  ^{q}\frac{ds}{s}\right)  ^{1/q}\left(  \int%
_{t}^{1/2}\frac{ds}{s}\right)  ^{1/q^{\prime}}\\
&  \leq c\left\|  \left|  \nabla f\right|  \right\|  _{L^{n,q}}\left(
\ln\frac{1}{t}\right)  ^{1/q^{\prime}}.
\end{align*}
Therefore, as before%
\begin{align*}
\left|  f\right|  _{\mu}^{\ast\ast}(t)  &  \leq c\left\|  \left|  \nabla
f\right|  \right\|  _{L^{n,q}}\left(  \ln\frac{1}{t}\right)  ^{1/q^{\prime}%
}+\left|  f\right|  _{\mu}^{\ast\ast}(1/2)\\
&  \leq c\left(  \ln\frac{1}{t}\right)  ^{1/q^{\prime}},\text{ }t\in(0,1/2),
\end{align*}
and the desired estimate for $E^{W_{L^{n,q}}^{1}(\Omega)}(t)$ follows.

When $q=\infty,$ and $\left\|  f\right\|  _{W_{L^{n,\infty}}^{1}(\Omega)}%
\leq1$, we estimate%
\begin{align*}
\left|  f\right|  _{\mu}^{\ast\ast}(s)-\left|  f\right|  _{\mu}^{\ast}(s)  &
\leq c_{n}\left|  \nabla f\right|  ^{\ast\ast}(s)s^{1/n}\\
&  \leq c_{n}\left\|  \left|  \nabla f\right|  \right\|  _{L^{n,\infty}}\\
&  \leq c_{n}.
\end{align*}
Consequently,%
\[
\left|  f\right|  _{\mu}^{\ast\ast}(t)-\left|  f\right|  _{\mu}^{\ast\ast
}(1/2)\leq c_{n}\int_{t}^{1}\frac{ds}{s}%
\]
and we readily get%
\[
E^{W_{L^{n,\infty}}^{1}(\Omega)}(t)\leq c\left(  \ln\frac{1}{t}\right)
,\text{ }t\in(0,1/2).
\]

\end{proof}

\begin{remark}
Note that, in particular,
\[
E^{W_{L^{n,1}}^{1}(\Omega)}(t)\leq c,
\]
which again reflects the fact that $W_{L^{n,1}}^{1}(\Omega)\subset L^{\infty
}(\Omega).$
\end{remark}

\begin{remark}
As before, all the previous results hold for the $W_{L^{p,q}}^{1}%
(\mathbb{R}^{n})$ spaces.
\end{remark}

We now show that a similar method, replacing the use of (\ref{lareemplazada})
by (\ref{lanueva}), allow us to obtain sharp estimates for growth envelopes of
Besov spaces (see (\ref{besoveuclideo})).

\begin{theorem}
Let $p>n>1,$ $1\leq q\leq\infty.$ Then%
\[
E^{B_{p}^{n/p,q}([0,1]^{n})}(t)\preceq\left(  \log\frac{1}{t}\right)
^{1/q^{\prime}},\text{ }t\in(0,1/2).
\]

\end{theorem}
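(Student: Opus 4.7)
The plan is to imitate the argument of Theorem~\ref{comoantes}, replacing the classical Sobolev symmetrization inequality \eqref{lareemplazada} with the fractional (Besov) symmetrization inequality \eqref{lanueva} applied to $X=L^{p}$. Since $[0,1]^{n}$ is a Lipschitz domain with isoperimetric profile $I_{\Omega}(t)\simeq\min(t,1-t)^{1-1/n}$, the Chapter~\ref{main} inequality \eqref{lanueva} reads $|f|^{\ast\ast}(t)-|f|^{\ast}(t)\preceq t^{-1/p}K(t^{1/n},f;L^{p},\dot W_{L^{p}}^{1})$ for $t\in(0,1/2)$, and the right-hand side is, up to the natural change of variable $u=t^{1/n}$, precisely the integrand that defines $\|f\|_{\dot B_{p}^{n/p,q}}$ in \eqref{besoveuclideo}.

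Assume first $1\le q<\infty$ and $\|f\|_{B_{p}^{n/p,q}([0,1]^{n})}\le 1$. Raising the displayed inequality to the power $q$, integrating against $ds/s$, and performing the change of variable $u=s^{1/n}$ (so that $ds/s=n\,du/u$), I obtain
\[
\int_{0}^{1/2}\bigl(|f|^{\ast\ast}(s)-|f|^{\ast}(s)\bigr)^{q}\frac{ds}{s}\preceq\int_{0}^{1}\bigl(u^{-n/p}K(u,f;L^{p},\dot W_{L^{p}}^{1})\bigr)^{q}\frac{du}{u}=\|f\|_{\dot B_{p}^{n/p,q}}^{q}\le 1.
\]
Next, using the fundamental theorem of calculus together with H\"older's inequality with exponents $q$ and $q'$, I estimate
\[
|f|^{\ast\ast}(t)-|f|^{\ast\ast}(1/2)=\int_{t}^{1/2}\bigl(|f|^{\ast\ast}(s)-|f|^{\ast}(s)\bigr)\frac{ds}{s}\le\left(\int_{t}^{1/2}\bigl(|f|^{\ast\ast}(s)-|f|^{\ast}(s)\bigr)^{q}\frac{ds}{s}\right)^{\!1/q}\!\left(\int_{t}^{1/2}\frac{ds}{s}\right)^{\!1/q'}\!\preceq\left(\log\tfrac{1}{t}\right)^{1/q'}.
\]
Finally, $|f|^{\ast\ast}(1/2)\le 2\|f\|_{L^{1}}\preceq\|f\|_{L^{p}}\le\|f\|_{B_{p}^{n/p,q}}\le 1$, and since $t\in(0,1/2)$ this bounded contribution is absorbed into $(\log(1/t))^{1/q'}$, giving $E^{B_{p}^{n/p,q}([0,1]^{n})}(t)\preceq(\log(1/t))^{1/q'}$ as claimed.

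The endpoint $q=\infty$ (so $q'=1$) is handled more directly: the Besov norm bound yields $|f|^{\ast\ast}(t)-|f|^{\ast}(t)\preceq\|f\|_{\dot B_{p}^{n/p,\infty}}$ uniformly for $t\in(0,1/2)$, and integrating this uniform bound against $ds/s$ on $[t,1/2]$ produces the factor $\log(1/t)=(\log(1/t))^{1/q'}$. I do not anticipate a serious obstacle; the argument is a direct transcription of the Lorentz-norm proof of Theorem~\ref{comoantes} to the Besov setting, with the one mild verification being that the change of variable $u=s^{1/n}$ in the Besov integral yields the correct integrand, which is immediate from $\phi_{L^{p}}(t)=t^{1/p}$ and $t/I_{\Omega}(t)\simeq t^{1/n}$ on $(0,1/2)$.
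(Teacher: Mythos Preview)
Your proposal is correct and follows essentially the same approach as the paper: both start from the symmetrization inequality $|f|^{\ast\ast}(t)-|f|^{\ast}(t)\preceq t^{-1/p}K(t^{1/n},f;L^{p},\dot W_{L^{p}}^{1})$ (the paper phrases it via $\omega_{L^{p}}$, which is equivalent), integrate $(|f|^{\ast\ast}-|f|^{\ast})/s$ over $[t,1/2]$, and apply H\"older with exponents $q,q'$ together with the change of variable $u=s^{1/n}$ to recover the $\dot B_{p}^{n/p,q}$ norm. The only cosmetic difference is that you first bound $\int(|f|^{\ast\ast}-|f|^{\ast})^{q}\,ds/s$ and then apply H\"older (as in Theorem~\ref{comoantes}), whereas the paper applies H\"older directly to the integrand and then bounds the resulting $L^{q}$ factor; these are the same argument.
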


\begin{proof}
The starting point is%
\[
\left\vert f\right\vert _{\mu}^{\ast\ast}(t)-\left\vert f\right\vert _{\mu
}^{\ast}(t)\leq c\frac{\omega_{L^{p}}(t^{1/n},f)}{t^{1/p}},\text{ }%
t\in(0,1/2).
\]
Then,%
\begin{align*}
\left\vert f\right\vert _{\mu}^{\ast\ast}(t)-\left\vert f\right\vert _{\mu
}^{\ast\ast}(1/2)  &  =\int_{t}^{1/2}\left(  \left\vert f\right\vert _{\mu
}^{\ast\ast}(s)-\left\vert f\right\vert _{\mu}^{\ast}(s)\right)  \frac{ds}%
{s}\\
&  \leq c\int_{t}^{1/2}\frac{\omega_{L^{p}}(s^{1/n},f)}{s^{1/p}}\frac{ds}{s}\\
&  \leq c\left(  \int_{t}^{1}\left(  \frac{\omega_{L^{p}}(s^{1/n},f)}{s^{1/p}%
}\right)  ^{q}\frac{ds}{s}\right)  ^{1/q}\left(  \int_{t}^{1}\frac{ds}%
{s}\right)  ^{1/q^{\prime}}(\text{H\"{o}lder's inequality)}\\
&  \leq c\left\Vert f\right\Vert _{B_{p}^{n/p,q}([0,1]^{n})}\left(  \log
\frac{1}{t}\right)  ^{1/q^{\prime}}.
\end{align*}
Thus, a familiar argument now gives (compare with \cite[(1.9)]{haroske})%
\[
E^{B_{p}^{n/p,q}([0,1]^{n})}(t)\preceq\left(  \log\frac{1}{t}\right)
^{1/q^{\prime}},t\in(0,1/2).
\]

\end{proof}

\section{Continuity Envelopes}

Suppose that $Z:=Z(\mathbb{R}^{n})\subset C(\mathbb{R}^{n}),$ then (cf.
\cite{haroske} and the references therein) one defines the continuity envelope
by%
\[
E_{C}^{Z}(t)=\sup_{\left\|  f\right\|  _{Z(\mathbb{R}^{n})}\leq1}\frac
{\omega_{L^{\infty}}(t,f)}{t}.
\]

At this point it is instructive to recall some known interpolation
inequalities. Let $\left\Vert f\right\Vert _{\dot{W}_{L^{n,1}}^{1}}=\int%
_{0}^{\infty}\left\vert \nabla f\right\vert ^{\ast}(s)s^{1/n}\frac{ds}{s}$. We
interpolate the following known embeddings (cf. \cite{St}): for $f\in
C_{0}^{\infty}(\mathbb{R}^{n}),$ we have%
\[
\left\Vert f\right\Vert _{L^{\infty}(\mathbb{R}^{n})}\preceq\left\Vert
f\right\Vert _{\dot{W}_{L^{n,1}(\mathbb{R}^{n})}^{1}}.
\]
Consequently,%
\begin{equation}
K(t,f;L^{\infty}(\mathbb{R}^{n}),\dot{W}_{L^{\infty}}^{1}(\mathbb{R}%
^{n}))\preceq K(t,f,\dot{W}_{L^{n,1}}^{1}(\mathbb{R}^{n}),\dot{W}_{L^{\infty}%
}^{1}(\mathbb{R}^{n})). \label{orla}%
\end{equation}
It is well-known that for continuous functions we have (cf. \cite{bs})%
\[
K(t,f;L^{\infty}(\mathbb{R}^{n}),\dot{W}_{L^{\infty}}^{1}(\mathbb{R}%
^{n}))=\omega_{L^{\infty}}(t,f)\simeq\sup_{\left\vert h\right\vert \leq
t}\left\Vert f(\cdot+h)-f(\cdot)\right\Vert _{L^{\infty}}.
\]
On the other hand using \cite[Theorem 2]{mmletter} and Holmstedt's Lemma (see
\cite[Theorem 3.6.1]{bl}) we find\
\[
K(t,f,\dot{W}_{L^{n,1}}^{1}(\mathbb{R}^{n}),\dot{W}_{L^{\infty}}%
^{1}(\mathbb{R}^{n}))\simeq\int_{0}^{t^{n}}\left\vert \nabla f\right\vert
^{\ast}(s)s^{1/n}\frac{ds}{s}.
\]
Inserting this information back to (\ref{orla}) we find%
\[
\omega_{L^{\infty}}(t,f)\preceq\int_{0}^{t^{n}}\left\vert \nabla f\right\vert
^{\ast}(s)s^{1/n}\frac{ds}{s}.
\]
Therefore,%
\begin{align*}
\frac{\omega_{L^{\infty}}(t,f)}{t}  &  \preceq\frac{1}{t}\int_{0}^{t^{n}%
}\left\vert \nabla f\right\vert ^{\ast}(s)s^{1/n}\frac{ds}{s}\\
&  \preceq\frac{1}{t}\left\Vert \left\vert \nabla f\right\vert \right\Vert
_{L^{p,q}}\left(  \int_{0}^{t^{n}}s^{(1/n-1/p)q^{\prime}}\frac{ds}{s}\right)
^{1/q^{\prime}}\\
&  \preceq\frac{1}{t}\left\Vert \left\vert \nabla f\right\vert \right\Vert
_{L^{p,q}}t^{1-n/p}.
\end{align*}
Thus, we have (compare with \cite[(1.15)]{haroske}) that for $1\leq q<\infty,$%
\begin{equation}
E_{C}^{W_{L^{p,q}}^{1}(\mathbb{R}^{n})}(t)\preceq t^{-n/p}.
\label{orlacontinua}%
\end{equation}

\begin{remark}
It is actually fairly straightforward at this point to derive a general
relation between $E^{Z}$ and $E_{C}^{Z}.$ In \cite{mamiproc} we have shown for
$f\in C_{0}^{\infty}(\mathbb{R}^{n})$ we have%
\begin{equation}
\left|  f\right|  ^{\ast\ast}(t)-\left|  f\right|  ^{\ast}(t)\leq c_{n}%
\omega_{L^{\infty}}(t^{1/n},f),\text{ }t>0. \label{deducida}%
\end{equation}
We now proceed formally, although the details can be easily filled-in by the
interested reader. From (\ref{deducida}) we find%
\[
\frac{\left|  f\right|  ^{\ast\ast}(t)-\left|  f\right|  ^{\ast}(t)}{t}\leq
c_{n}\frac{\omega_{L^{\infty}}(t^{1/n},f)}{t},\text{ }t>0.
\]
Then%
\begin{align*}
\left|  f\right|  ^{\ast}(t)  &  \leq\left|  f\right|  ^{\ast\ast}(t)\\
&  =\int_{t}^{\infty}\frac{\left|  f\right|  ^{\ast\ast}(s)-\left|  f\right|
^{\ast}(s)}{s}ds\text{ (since }f^{\ast\ast}(\infty)=0)\\
&  \leq c_{n}\int_{t}^{\infty}\frac{\omega_{L^{\infty}}(s^{1/n},f)}{s}ds.
\end{align*}
Taking supremum over the unit ball of $Z(\mathbb{R}^{n})$ we obtain%
\[
E^{Z(\mathbb{R}^{n})}(t)\preceq\int_{t^{1/n}}^{\infty}E_{C}^{Z(\mathbb{R}%
^{n})}(s)ds.
\]
Thus, for example, from (\ref{orlacontinua}) we find that for $p<n$%
\begin{align*}
E^{W_{L^{p,q}}^{1}(\mathbb{R}^{n})}(t)  &  \preceq\int_{t^{1/n}}^{\infty
}s^{-n/p}ds\\
&  \simeq t^{1/n-1/p},
\end{align*}
which should be compared with (\ref{eresmuyfea}).
\end{remark}

\section{General isoperimetric profiles}

In the previous sections we have focussed mainly on function spaces on domains
with isoperimetric profiles of Euclidean type; but our inequalities also
provide a unified setting to study estimates for general profiles. For a
metric measure space $(\Omega,d,\mu)$ of finite measure we consider r.i.
spaces $X\left(  \Omega\right)  .$ Let $0<\theta<1$ and $1\leq q\leq\infty,$
the homogeneous Besov space $\dot{b}_{X,q}^{\theta}(\Omega)$ is defined by
\[
\dot{b}_{X}^{\theta,q}(\Omega)=\left\{  f\in X+S_{X}:\left\Vert f\right\Vert
_{\dot{b}_{X}^{\theta,q}(\Omega)}=\left(  \int_{0}^{\mu\left(  \Omega\right)
}\left(  K\left(  s,f;X,S_{X}\right)  s^{-\theta}\right)  ^{q}\frac{ds}%
{s}\right)  ^{1/q}<\infty\right\}  ,
\]
with the usual modifications when $q=\infty.$ The Besov space $b_{X,q}%
^{\theta}(\Omega,\mu)$ is defined by
\[
\left\Vert f\right\Vert _{b_{X}^{\theta,q}(\Omega,\mu)}=\left\Vert
f\right\Vert _{X}+\left\Vert f\right\Vert _{\dot{b}_{X}^{\theta,q}(\Omega)}.
\]
Notice that if $X=L^{p},$ then $\dot{b}_{L^{p}}^{\theta,q}(\Omega)=\dot{b}%
_{p}^{\theta,q}(\Omega)$ (resp. $b_{L^{p}}^{\theta,q}(\Omega)=b_{p}^{\theta
,q}(\Omega))$ (see (\ref{besovmetrico})).

\begin{theorem}
Let $X$ be a r.i. space on $\Omega.$ Let $g(s)=\frac{s}{I_{\Omega}(s)}$ where
$I_{\Omega}$ denotes the isoperimetric profile of $(\Omega,d,\mu).$ Let
$b_{X}^{\theta,q}(\Omega)$ be a Besov space ($0<\theta<1,$ $1<q<\infty)$, then
for $t\in(0,\mu(\Omega)/2)$ we have that%
\[
E^{b_{X}^{\theta,q}(\Omega)}(t)\leq c\left(  1+\left(  \int_{t}^{\mu
(\Omega)/2}\left(  g(s)^{\theta}\left(  \frac{g(s)}{g^{\prime}(s)}\right)
^{1/q}\right)  ^{q^{\prime}}\frac{ds}{\left(  s\phi_{X}(s)\right)
^{q^{\prime}}}\right)  \right)  ,\text{ }%
\]
where, as usual $1/q^{\prime}+1/q=1.$
\end{theorem}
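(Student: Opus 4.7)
The plan is to mimic the strategy used for the Euclidean/Besov envelopes earlier in this chapter (cf.\ Theorem \ref{comoantes} and its Besov analogue), but now carrying the general isoperimetric profile $I_{\Omega}$ through the computation. The starting point is the basic inequality [Theorem \ref{main1}, Chapter \ref{main}], written in the notation $g(s)=s/I_{\Omega}(s)$:
\[
\left\vert f\right\vert _{\mu}^{\ast\ast}(s)-\left\vert f\right\vert _{\mu}^{\ast}(s)\leq 16\,\frac{K(g(s),f;X,S_{X})}{\phi_{X}(s)},\qquad 0<s\leq\mu(\Omega)/2,
\]
valid for every $f\in X+S_{X}$.

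Next, I would integrate this pointwise estimate against $ds/s$ from $t$ to $\mu(\Omega)/2$, using the identity $\tfrac{d}{ds}(-|f|_{\mu}^{\ast\ast}(s))=(|f|_{\mu}^{\ast\ast}(s)-|f|_{\mu}^{\ast}(s))/s$, which yields
\[
\left\vert f\right\vert _{\mu}^{\ast\ast}(t)-\left\vert f\right\vert _{\mu}^{\ast\ast}(\mu(\Omega)/2)\leq c\int_{t}^{\mu(\Omega)/2}\frac{K(g(s),f;X,S_{X})}{\phi_{X}(s)}\,\frac{ds}{s}.
\]
The remainder term $|f|_{\mu}^{\ast\ast}(\mu(\Omega)/2)\leq (2/\mu(\Omega))\Vert f\Vert_{L^{1}}\preceq \Vert f\Vert_{X}$ will absorb into the constant ``$1$'' on the right-hand side of the envelope estimate once we take the supremum over the unit ball of $b_{X}^{\theta,q}(\Omega)$.

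The main step is a weighted H\"older inequality, chosen precisely so that a change of variables $u=g(s)$ turns the first factor into the $\dot{b}_{X}^{\theta,q}$ seminorm. Writing the integrand as
\[
\frac{K(g(s),f;X,S_{X})}{\phi_{X}(s)\,s}=\bigl(K(g(s),f)\,g(s)^{-\theta}\bigr)\cdot\frac{g(s)^{\theta}}{s\,\phi_{X}(s)},
\]
I would insert the weight $w(s)=g'(s)/g(s)$ (as $w^{1/q}\cdot w^{-1/q}$) and apply H\"older with exponents $q,q'$. The first factor becomes
\[
\left(\int_{t}^{\mu(\Omega)/2}\bigl(K(g(s),f)\,g(s)^{-\theta}\bigr)^{q}\frac{g'(s)}{g(s)}\,ds\right)^{1/q}=\left(\int_{g(t)}^{g(\mu(\Omega)/2)}\bigl(K(u,f)\,u^{-\theta}\bigr)^{q}\frac{du}{u}\right)^{1/q}\leq \Vert f\Vert_{\dot{b}_{X}^{\theta,q}(\Omega)},
\]
after the substitution $u=g(s)$. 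The second factor gives exactly
\[
\left(\int_{t}^{\mu(\Omega)/2}\left(g(s)^{\theta}\left(\frac{g(s)}{g'(s)}\right)^{1/q}\right)^{q'}\frac{ds}{(s\phi_{X}(s))^{q'}}\right)^{1/q'}.
\]
Combining, using $|f|_{\mu}^{\ast}(t)\leq|f|_{\mu}^{\ast\ast}(t)$, and taking the supremum over $\Vert f\Vert_{b_{X}^{\theta,q}(\Omega)}\leq 1$ yields the claimed estimate for $E^{b_{X}^{\theta,q}(\Omega)}(t)$.

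The only delicate point is justifying the change of variables $u=g(s)$, which requires $g$ to be (absolutely) continuous and a.e.\ differentiable with $g'>0$ on $(0,\mu(\Omega)/2)$. Since, under our standing assumptions on $I_{\Omega}$ (concave, continuous, increasing on $(0,\mu(\Omega)/2)$, vanishing at $0$), the function $g(s)=s/I_{\Omega}(s)$ is automatically strictly increasing and locally absolutely continuous on $(0,\mu(\Omega)/2)$, the substitution is legitimate (and in any case, a density/approximation argument for $g$ will take care of exceptional sets where $g'$ vanishes). The hard part is thus essentially book-keeping: identifying the correct weight $w(s)=g'(s)/g(s)$ so that H\"older's inequality converts the integrated form of (\ref{desiK}) into exactly the $\dot{b}_{X}^{\theta,q}$ seminorm plus the stated remainder.
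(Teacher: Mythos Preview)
Your proposal is correct and follows essentially the same route as the paper: start from the oscillation inequality of Theorem \ref{main1}, integrate against $ds/s$ via the fundamental theorem of calculus, then apply H\"older's inequality with the weight $(g'(s)/g(s))^{1/q}$ so that the change of variables $u=g(s)$ produces the $\dot{b}_{X}^{\theta,q}$ seminorm, and finally absorb the term $|f|_{\mu}^{\ast\ast}(\mu(\Omega)/2)$ into $\Vert f\Vert_{X}$. Your discussion of the legitimacy of the substitution (absolute continuity of $g$, possible null set where $g'=0$) is actually a bit more careful than the paper's, which simply notes that $g$ is differentiable because $I_{\Omega}$ is concave, continuous and increasing on $(0,\mu(\Omega)/2)$.
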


\begin{proof}
Let $f\in X+S_{X},$ and let us write $K\left(  t,f;X,S_{X}\right)  :=K\left(
t,f\right)  .$ By Theorem \ref{main1} we know that
\[
\left\vert f\right\vert _{\mu}^{\ast\ast}(t)-\left\vert f\right\vert _{\mu
}^{\ast}(t)\leq c\frac{K\left(  g(t),f\right)  }{\phi_{X}(t)},\text{ }%
0<t<\mu(\Omega).
\]
Taking in account that, $\left(  -\left\vert f\right\vert _{\mu}^{\ast\ast
}\right)  ^{\prime}(t)=\left(  \left\vert f\right\vert _{\mu}^{\ast\ast
}(t)-\left\vert f\right\vert _{\mu}^{\ast}(t)\right)  /t,$ we get
\[
\left\vert f\right\vert _{\mu}^{\ast\ast}(t)-\left\vert f\right\vert _{\mu
}^{\ast\ast}(\mu(\Omega)/2)=\int_{t}^{\mu(\Omega)/2}\left(  -\left\vert
f\right\vert _{\mu}^{\ast\ast}\right)  ^{\prime}(s)ds\leq c\int_{t}%
^{\mu(\Omega)/2}\frac{K\left(  g(s),f\right)  }{\phi_{X}(s)}\frac{ds}{s}.
\]
Since $I_{\Omega}(s)$ is a concave continuous increasing function on
$(0,\mu(\Omega)/2),$ $g(s)$ is differentiable on $(0,\mu(\Omega)/2).$ Then, by
H\"{o}lder's inequality we have%
\begin{align*}
R(t)  &  =\int_{t}^{\mu(\Omega)/2}\frac{K\left(  g(s),f\right)  }{\phi_{X}%
(s)}\frac{ds}{s}\\
&  =\int_{t}^{\mu(\Omega)/2}K\left(  g(s),f\right)  \left(  \frac{g(s)}%
{g(s)}\right)  ^{\theta}\left(  \frac{g^{\prime}(s)}{g(s)}\right)
^{1/q}\left(  \frac{g(s)}{g^{\prime}(s)}\right)  ^{1/q}\frac{ds}{s\phi_{X}%
(s)}\\
&  \leq R_{1}(t)R_{2}(t),
\end{align*}
where%
\[
R_{1}(t)=\left(  \int_{t}^{\mu(\Omega)/2}\left(  K\left(  g(s),f\right)
g(s)^{-\theta}\right)  ^{q}\left(  \frac{g^{\prime}(s)}{g(s)}\right)
ds\right)  ^{1/q},
\]
and
\begin{equation}
R_{2}(t)=\left(  \int_{t}^{\mu(\Omega)/2}\left(  g(s)^{\theta}\left(
\frac{g(s)}{g^{\prime}(s)}\right)  ^{1/q}\right)  ^{q^{\prime}}\frac
{ds}{\left(  s\phi_{X}(s)\right)  ^{q^{\prime}}}\right)  ^{1/q^{\prime}}.
\label{estex2}%
\end{equation}
By a change of variables
\begin{equation}
R_{1}(t)=\left(  \int_{g^{-1}(t)}^{g^{-1}(\mu(\Omega)/2)}\left(  K\left(
z,f\right)  z^{-\theta}\right)  ^{q}\frac{dz}{z}\right)  ^{1/q}\leq\left\Vert
f\right\Vert _{\dot{b}_{X,q}^{\theta}(\Omega)} \label{estex1}%
\end{equation}
Combining (\ref{estex2}) and (\ref{estex1}) we obtain
\begin{align*}
\left\vert f\right\vert _{\mu}^{\ast\ast}(t)  &  \leq c\left\Vert f\right\Vert
_{\dot{b}_{X,q}^{\theta}(\Omega)}R_{2}(t)+2\left\Vert f\right\Vert _{X}\\
&  \leq2c\left(  1+R_{2}(t)\right)  \left\Vert f\right\Vert _{b_{X,q}^{\theta
}(\Omega)}.
\end{align*}
Therefore, taking sup over all $f$ such that $\left\Vert f\right\Vert
_{b_{X,q}^{\theta}(\Omega)}\leq1$ we see that%
\[
E^{b_{X}^{\theta,q}(\Omega)}(t)\leq2c\left(  1+R_{2}(t)\right)  ,\,t\in
(0,\mu(\Omega)/2).
\]

\end{proof}

\begin{example}
Consider the Gaussian measure $(\mathbb{R}^{n},\gamma_{n})$. Then (cf.
\cite{Bob}) we can take as isoperimetric estimator
\[
I_{\gamma_{n}}(t)=t\left(  \log\frac{1}{t}\right)  ^{1/2},\text{ \ \ \ }%
t\in(0,1/2).
\]
Thus,
\[
\text{\ }g(t)=\frac{1}{\left(  \log\frac{1}{t}\right)  ^{1/2}}\text{\ \ and
\ }g^{\prime}(s)=\frac{1}{2\left(  \log\frac{1}{s}\right)  ^{\frac{3}{2}}s},
\]
and
\begin{align*}
&  \left(  1+\left(  \int_{t}^{1/2}\left(  g(s)^{\theta}\left(  \frac
{g(s)}{g^{\prime}(s)}\right)  ^{1/q}\right)  ^{q^{\prime}}\frac{ds}{\left(
s\phi_{X}(s)\right)  ^{q^{\prime}}}\right)  ^{1/q^{\prime}}\right) \\
&  =\left(  \int_{t}^{1/2}\left(  \log\frac{1}{s}\right)  ^{q^{\prime}\left(
1-\frac{\theta}{2}\right)  -1}\frac{ds}{s\left(  \phi_{X}(s)\right)
^{q^{\prime}}}\right)  ^{1/q^{\prime}}\\
&  \leq\frac{1}{\phi_{X}(t)}\left(  \int_{t}^{1/2}\left(  \log\frac{1}%
{s}\right)  ^{q^{\prime}\left(  1-\frac{\theta}{2}\right)  -1}\frac{ds}%
{s}\right)  ^{1/q^{\prime}}\\
&  \preceq\frac{1}{\phi_{X}(t)}\left(  \log\frac{1}{t}\right)  ^{\left(
1-\frac{\theta}{2}\right)  }.
\end{align*}
Therefore we find that
\[
E^{B_{X}^{\theta,q}(\mathbb{R}^{n},\gamma_{n})}(t)\preceq\frac{1}{\phi_{X}%
(t)}\left(  \log\frac{1}{t}\right)  ^{\left(  1-\frac{\theta}{2}\right)
},\text{ \ \ \ }t\in(0,1/2).
\]

\end{example}

\section{Envelopes for higher order spaces}

In general it is not clear how to define higher order Sobolev and Besov spaces
in metric spaces. On the other hand for classical domains (Euclidean,
Riemannian manifolds, etc.) there is a well developed theory of embeddings
that one can use to estimate growth envelopes. The underlying general
principle is very simple. Suppose that the function space $Z=Z\left(
\Omega\right)  $ is continuously embedded in $Y=Y\left(  \Omega\right)  $ and
$Y$ is a rearrangement invariant space, then, since (cf. \ref{tango}), Chapter
\ref{preliminar}) $Y\subset M\left(  Y\right)  ,$ where $M(Y)$ is the
Marcinkiewicz space associated with $Y$ (cf. Section \ref{secc:ri} in Chapter
\ref{preliminar}), we have (cf. (\ref{fier})) for all $f\in Z,$%
\[
\sup_{t}\left|  f\right|  ^{\ast}(t)\phi_{Y}(t)\leq\left\|  f\right\|
_{Y}\leq c\left\|  f\right\|  _{Z},
\]
where $\phi_{Y}(t)$ is the fundamental function of $Y,$ and $c$ is the norm of
the embedding $Z\subset Y.$ Consequently, for all $f\in Z,$ with $\left\|
f\right\|  _{Z}\leq1,$ for all $t>0,$
\[
\left|  f\right|  ^{\ast}(t)\leq\frac{c}{\phi_{Y}(t)}.
\]
Therefore,%
\[
E^{Z}(t)\preceq\frac{1}{\phi_{Y}(t)}.
\]

For example, suppose that $p<\frac{n}{k},$ then from
\[
W_{p}^{k}(\mathbb{R}^{n})\subset L^{q,p},\text{ with }\frac{1}{q}=\frac{1}%
{p}-\frac{k}{n}%
\]
and%
\[
\phi_{L^{q,p}}(t)=t^{1/q}=t^{1/p-k/n}%
\]
we get (compare with (\ref{eresmuyfea}) above and \cite[(1.7)]{haroske})
\[
E^{W_{p}^{k}(\mathbb{R}^{n})}(t)\preceq t^{k/n-1/p}.
\]
In the limiting case we have (cf. \cite{bmr}, \cite{mp})%
\[
W_{\frac{n}{k}}^{k}(\mathbb{R}^{n})\subset L^{[\infty,\frac{n}{k}]}.
\]
For comparison consider $W_{\frac{n}{k}}^{k}(\Omega),$ where $\Omega$ is a
domain on $\mathbb{R}^{n},$ with $\left|  \Omega\right|  =1.$ One can readily
estimate the decay of functions in $L^{[\infty,\frac{n}{k}]}$ as follows:
\begin{align*}
\left|  f\right|  ^{\ast\ast}(t)-\left|  f\right|  ^{\ast\ast}(1)  &
=\int_{t}^{1}\left(  \left|  f\right|  ^{\ast\ast}(s)-\left|  f\right|
^{\ast}(s)\right)  \frac{ds}{s}\\
&  \leq\left(  \int_{t}^{1}\left(  \left|  f\right|  ^{\ast\ast}(s)-\left|
f\right|  ^{\ast}(s)\right)  ^{\frac{n}{k}}\frac{ds}{s}\right)  ^{1/(\frac
{n}{k})}\left(  \int_{t}^{1}\frac{ds}{s}\right)  ^{1/(\frac{n}{k})^{\prime}}\\
&  \leq\left\|  f\right\|  _{L^{[\infty,\frac{n}{k}]}}\left(  \log\frac{1}%
{t}\right)  ^{1-\frac{k}{n}}.
\end{align*}
Combining these observations we see that for functions in the unit ball of
$W_{\frac{n}{k}}^{k}(\Omega)$ we have
\[
\left|  f\right|  ^{\ast\ast}(t)\preceq c\left(  \log\frac{1}{t}\right)
^{1-\frac{k}{n}},\text{ for }t\in(0,1/2).
\]
Consequently
\[
E^{W_{\frac{n}{k}}^{k}(\Omega)}(t)\preceq\left(  \log\frac{1}{t}\right)
^{1-\frac{k}{n}}.
\]
In particular, when $k=1$ then $1-\frac{k}{n}=\frac{1}{n^{\prime}},$ and the
result coincides with Theorem \ref{precisado} above.

Likewise we can deal with the case of general isoperimetric profiles but we
shall leave the discussion for another occasion.

\section{$K$ and $E$ functionals for families\label{sec:markao}}

It is of interest to point out a connection between the different
\textquotedblleft envelopes\textquotedblright\ discussed above and a more
general concept introduced somewhat earlier in \cite{mamiconver}, but in a
different context. One of the tools introduced in \cite{mamiconver} was to
consider the $K$ and $E$ functionals for families, rather than single elements.

Given a compatible pair of spaces $(X,Y)$ (cf. \cite{bl}), and a family of
elements, $F\subset X+Y,$ we can define the $K-$functional and $E-$%
functional\footnote{Recall that (cf. \cite{bl}, \cite{mamiconver}),%
\[
K(t,f;X,Y)=\inf\{\left\|  f-g\right\|  _{X}+t\left\|  g\right\|  _{Y}:g\in
Y\}
\]%
\[
E(t,f;X,Y)=\inf\{\left\|  f-g\right\|  _{Y}:\left\|  g\right\|  _{X}\leq t\}.
\]
} of the family $F$ by (cf. \cite{mamiconver})%
\[
K(t,F;X,Y)=\sup_{f\in F}K(t,f;X,Y).
\]%
\[
E(t,F;X,Y)=\sup_{f\in F}E(t,f;X,Y).
\]
The connection with the Triebel-Haroske envelopes can be seen from the
following known computations. If we let $\left\|  f\right\|  _{L^{0}}=\mu
\{$supp$f\},$ then
\[
\left|  f\right|  ^{\ast}(t)=E(t,f;L^{0},L^{\infty}).
\]
Therefore,%
\[
E^{Z(\Omega)}(t)=E(t,\text{unit ball of }Z(\Omega);L^{0}(\Omega),L^{\infty
}(\Omega).
\]
Moreover, since on Euclidean space we have
\[
\omega_{L^{\infty}}(t,f)\simeq K(t,f;L^{\infty}(\mathbb{R}^{n}),\dot
{W}_{L^{\infty}}^{1}(\mathbb{R}^{n}))
\]
we therefore see that%
\[
E_{C}^{Z(\mathbb{R}^{n})}(t)=K(t,f;\text{unit ball of }Z(\mathbb{R}%
^{n});L^{\infty}(\mathbb{R}^{n}),\dot{W}_{L^{\infty}}^{1}(\mathbb{R}^{n})).
\]
This suggests the general definition for metric spaces
\[
E_{C}^{Z(\Omega)}(t)=K(t,f;\text{unit ball of }Z(\Omega);L^{\infty}%
(\Omega),S_{L^{\infty}}(\Omega)).
\]
This provides a method to expand the known results to the metric setting using
the methods discussed in this paper. Another interesting aspect of the
connection we have established here lies in the fact, established in
\cite{mamiconver}, that one can reformulate classical convergence and
compactness criteria for function spaces (e.g. Kolmogorov's compactness
criteria for sets contained in $L^{p}$) in terms of conditions on these (new)
functionals. For example, according to the Kolmogorov criteria, for a set of
functions $F$ to be compact on $L^{p}(\mathbb{R}^{n})$ one needs the uniform
continuity on $F$ at zero of $\omega_{L^{p}}(t,F).$ In our formulation we
replace this condition by demanding the continuity at zero of%
\[
K(t,F;L^{p},W_{L^{p}}^{1}).
\]
Again, to develop this material in detail is a long paper on its own, however,
let us note in passing that the failure of compactness of the embedding
$W_{L^{p}}^{1}\left(  \Omega\right)  \subset L^{\bar{p}}\left(  \Omega\right)
,$ for $p=n,$ is consistent with the blow up at zero predicted by the fact
that the converse of (\ref{bernanke}) also holds. One should compare this with
the estimate (\ref{greenspan}) which is consistent with the Relich compactness
criteria for Sobolev embeddings on bounded domains, when $p<n.$

\chapter{Lorentz spaces with negative indices\label{negativo}}

\section{Introduction and Summary}

As we have shown elsewhere (cf. \cite{bmr}, \cite{mamicwk}), the basic
Euclidean inequality
\[
f^{\ast\ast}(t)-f^{\ast}(t)\leq c_{n}t^{1/n}\left|  \nabla f\right|
^{\ast\ast}(t)
\]
leads to the optimal Sobolev inequality%
\begin{equation}
\left\|  f\right\|  _{L^{[\bar{p},p]}}=\left\{  \int_{0}^{\infty}\left(
\left(  \left|  f\right|  ^{\ast\ast}(t)-\left|  f\right|  ^{\ast}(t)\right)
t^{1/\bar{p}}\right)  ^{p}\frac{dt}{t}\right\}  ^{1/p}\leq c_{n}\left\|
\left|  \nabla f\right|  \right\|  _{L^{p}}, \label{negativa}%
\end{equation}
where $1<p\leq n,$ $\frac{1}{\bar{p}}=\frac{1}{p}-\frac{1}{n}.$ The use of the
$L^{[\bar{p},p]}$ conditions makes it possible to consider the limiting case
$p=n$ in a unified way. Now (\ref{negativa}) is also meaningful when $p>n,$
albeit the only reason for the restriction $p\leq n,$ is that, if we don't
impose it, then $\bar{p}<0,$ and thus the condition defined by $\left\|
f\right\|  _{L^{[\bar{p},p]}}<\infty$ is not well understood$.$ It is was
shown in \cite{mp} that these conditions are meaningful. In this chapter we
show a connection between the Lorentz $L^{[\bar{p},p]}$ spaces with negative
indices and Morrey's theorem.

\subsection{Lorentz conditions}

Let $(\Omega,d,\mu)$ be a metric measure space. Let $0<q\leq\infty,$
$s\in\mathbb{R}.$ We define%
\[
L^{[s,q]}=L^{[s,q]}\left(  0,\mu\left(  \Omega\right)  \right)  =\left\{  f\in
L^{1}\left(  \Omega\right)  :\left\{  \int_{0}^{\mu\left(  \Omega\right)
}\left(  \left(  \left|  f\right|  _{\mu}^{\ast\ast}(t)-\left|  f\right|
_{\mu}^{\ast}(t)\right)  t^{1/s}\right)  ^{q}\frac{dt}{t}\right\}
^{1/q}<\infty\right\}  .
\]
For $0<q\leq\infty,$ $s\in\lbrack1,\infty],$ these spaces were defined in
Chapter \ref{chapbmo}. They coincide with the usual $L^{s,q}$ spaces when
$0<q\leq\infty,$ $s\in\lbrack1,\infty)$ (cf. \cite{mamicon}).

Our first observation is that $L^{[s,q]}\neq\emptyset.$ Indeed, for $s<0,$ we
have%
\begin{align*}
0  &  <\left\|  \chi_{A}\right\|  _{L^{[s,q]}}=\frac{\mu(A)}{\left(
q-q/s\right)  ^{1/q}}[\mu(A)^{q/s-q}-1]^{1/q}\\
&  \leq\frac{\mu(A)^{1/s}}{\left(  q-q/s\right)  ^{1/q}}.
\end{align*}
It is important to remark that the cancellation at zero afforded by $\left|
f\right|  _{\mu}^{\ast\ast}(t)-\left|  f\right|  _{\mu}^{\ast}(t)$ is crucial
here. Indeed, if we attempt to extend the usual definition of Lorentz spaces
by letting $s<0,$ then we find that $\left\|  \chi_{A}\right\|  _{L^{(s,q)}%
}=\int_{0}^{\mu(A)}t^{q/s}\frac{dt}{t}<\infty$ iff $\mu(A)=0.$

\section{The role of the $L^{[\bar{p},p]}$ spaces in Morrey's theorem}

For definiteness we work on $\mathbb{R}^{n}$ with Lebesgue measure $m.$ We
show that many arguments we have discussed in this paper are available in the
context of Lorentz spaces with negative index.

Let $f\in L^{[\bar{p},p]}$ where $\frac{1}{\bar{p}}=\frac{1}{p}-\frac{1}%
{n}<0.$ Then, for $0<t_{1}<t_{2},$ we can write%
\begin{align*}
f^{\ast\ast}(t_{1})-f^{\ast\ast}(t_{2})  &  =\int_{t_{1}}^{t_{2}}\left(
f^{\ast\ast}(t)-f^{\ast}(t)\right)  t^{1/\bar{p}}t^{-1/\bar{p}}\frac{dt}{t}\\
&  \leq\left(  \int_{t_{1}}^{t_{2}}\left(  \left(  f^{\ast\ast}(t)-f^{\ast
}(t)\right)  t^{1/\bar{p}}\right)  ^{p}\frac{dt}{t}\right)  ^{1/p}\left(
\int_{t_{1}}^{t_{2}}t^{-p^{\prime}/\bar{p}}\frac{dt}{t}\right)  ^{1/p^{\prime
}}\\
&  =\left\|  f\right\|  _{L^{[\bar{p},p]}}\left(  \int_{t_{1}}^{t_{2}%
}t^{-p^{\prime}/\bar{p}}\frac{dt}{t}\right)  ^{1/p^{\prime}}.
\end{align*}
Note that since $\frac{-p^{\prime}}{\bar{p}}-1=\frac{p}{p-1}(\frac{n-p}%
{np})-1=\frac{1}{p-1}[\frac{n-p-n}{n}]<0,$ the function $t^{-p^{\prime}%
/\bar{p}-1}$ is decreasing and therefore,%
\[
\int_{t_{1}}^{t_{2}}t^{-p^{\prime}/\bar{p}}\frac{dt}{t}\leq\int_{0}%
^{t_{2}-t_{1}}t^{-p^{\prime}/\bar{p}}\frac{dt}{t}=\frac{-\bar{p}}{p^{\prime}%
}\left|  t_{2}-t_{1}\right|  ^{\frac{-p^{\prime}}{\bar{p}}}.
\]
Thus,%
\begin{equation}
f^{\ast\ast}(t_{1})-f^{\ast\ast}(t_{2})\leq\left(  \frac{-\bar{p}}{p^{\prime}%
}\right)  ^{1/p^{\prime}}\left\|  f\right\|  _{L^{[\bar{p},p]}}\left|
t_{2}-t_{1}\right|  ^{\frac{-1}{\bar{p}}}. \label{nega2}%
\end{equation}
The localization property in this context takes the following form. Suppose
that $f\in L^{[\bar{p},p]}$ is such that there exists a constant $C>0,$ such
that $\forall B$ open ball$,$ it follows that $f\chi_{B}\in L^{[\bar{p}%
,p]}(0,m(B)),$ with $\left\|  f\chi_{B}\right\|  _{L^{[\bar{p},p]}}\leq
C\left\|  f\right\|  _{L^{[\bar{p},p]}}.$ Then, from (\ref{nega2}) we get%
\[
\left(  f\chi_{B}\right)  ^{\ast\ast}(t_{1})-\left(  f\chi_{B}\right)
^{\ast\ast}(t_{2})\leq C\left(  \frac{-\bar{p}}{p^{\prime}}\right)
^{1/p^{\prime}}\left\|  f\right\|  _{L^{[\bar{p},p]}}\left|  t_{2}%
-t_{1}\right|  ^{\frac{\alpha}{n}},
\]
where $\alpha=1-\frac{n}{p}.$ Applying this inequality replacing $t_{i}$ by
$t_{i}m(B),i=1,2;$ we get%
\[
\left(  f\chi_{B}\right)  ^{\ast\ast}(t_{1}m(B))-\left(  f\chi_{B}\right)
^{\ast\ast}(t_{2}m(B))\leq C\left(  \frac{-\bar{p}}{p^{\prime}}\right)
^{1/p^{\prime}}\left\|  f\right\|  _{L^{[\bar{p},p]}}\left|  t_{2}%
-t_{1}\right|  ^{\frac{\alpha}{n}}m(B)^{\frac{\alpha}{n}}.
\]
Letting $t_{1}\rightarrow0,t_{2}\rightarrow1,$ we then find%
\[
ess\sup_{B}\left(  f\right)  -\frac{1}{m(B)}\int_{B}f\leq C\left(  \frac
{-\bar{p}}{p^{\prime}}\right)  ^{1/p^{\prime}}\left\|  f\right\|
_{L^{[\bar{p},p]}}m(B)^{a/n}.
\]
Applying this inequality to $-f$ and adding we arrive at%
\[
ess\sup_{B}\left(  f\right)  -ess\text{ }\inf_{B}f\leq2C\left(  \frac{-\bar
{p}}{p^{\prime}}\right)  ^{1/p^{\prime}}\left\|  f\right\|  _{L^{[\bar{p},p]}%
}m(B)^{a/n}.
\]
Let $x,y\in\mathbb{R}^{n},$ and consider $B=B(x,3\left|  x-y\right|  )$ (i.e.
the ball centered at $x,$ with radius $3\left|  x-y\right|  $), then%
\begin{align*}
\left|  f(x)-f(y)\right|   &  \leq ess\sup_{B}f-ess\inf_{B}f\\
&  \leq c_{n}2\left(  \frac{-\bar{p}}{p^{\prime}}\right)  ^{1/p^{\prime}%
}C\left\|  f\right\|  _{L^{[\bar{p},p]}}\left|  x-y\right|  ^{\alpha}.
\end{align*}
At this point we could appeal to (\ref{negativa}) to conclude that%
\[
\left|  f(x)-f(y)\right|  \leq c_{n}2\left(  \frac{-\bar{p}}{p^{\prime}%
}\right)  ^{1/p^{\prime}}C\left\|  \left|  \nabla f\right|  \right\|
_{p}\left|  x-y\right|  ^{\alpha}.
\]

Similar arguments apply when dealing with Besov spaces. In this case the point
of departure is the corresponding replacement for (\ref{negativa}) that is
provided by the Besov embedding%
\[
\int\left[  \left(  \left|  f\right|  _{\mu}^{\ast\ast}(t)-\left|  f\right|
_{\mu}^{\ast}(t)\right)  t^{\frac{1}{p}-\frac{\theta}{n}}\right]  ^{q}%
\frac{dt}{t}\leq c\int\left[  t^{-\frac{\theta}{n}}K(t^{1/n},f;L^{p},\dot
{W}_{L^{p}}^{1})\right]  ^{q}\frac{dt}{t},
\]
where $\frac{1}{\bar{p}}=\frac{1}{p}-\frac{\theta}{n}.$ $\theta\in(0,1),$
$1\leq q\leq\infty.$ Notice that we don't assume anymore that $\theta p\leq
n.$

\begin{remark}
In the usual argument the use of the Lorentz spaces with negative indices was
implicit. The idea being that we can estimate $\left(  \int_{t_{1}}^{t_{2}%
}\left(  \left(  f^{\ast\ast}(t)-f^{\ast}(t)\right)  t^{1/\bar{p}}\right)
^{p}\frac{dt}{t}\right)  ^{1/p}$ through the use of%
\[
f^{\ast\ast}(t)-f^{\ast}(t)\leq c_{n}t^{1/n}\left|  \nabla f\right|
^{\ast\ast}(t).
\]
Namely,%
\begin{align*}
f^{\ast\ast}(t_{1})-f^{\ast\ast}(t_{2})  &  =\int_{t_{1}}^{t_{2}}\left(
f^{\ast\ast}(t)-f^{\ast}(t)\right)  \frac{dt}{t}\\
&  \leq\int_{t_{1}}^{t_{2}}t^{1/n-1}\left|  \nabla f\right|  ^{\ast\ast
}(t)dt\text{ (basic inequality)}\\
&  \leq\left(  \int_{t_{1}}^{t_{2}}\left|  \nabla f\right|  ^{\ast\ast}%
(t)^{p}dt\right)  ^{1/p}\left(  \int_{t_{1}}^{t_{2}}t^{p^{\prime}%
(1/n-1)}dt\right)  ^{1/p^{\prime}}\text{ (H\"{o}lder's inequality)}\\
&  \leq c_{p}\left\|  \left|  \nabla f\right|  \right\|  _{p}\left(
\int_{t_{1}}^{t_{2}}t^{p^{\prime}(1/n-1)}dt\right)  ^{1/p^{\prime}}\text{
(Hardy's inequality})\\
&  \leq c_{p}\left\|  \left|  \nabla f\right|  \right\|  _{p}\left(  \int%
_{0}^{\left|  t_{2}-t_{1}\right|  }t^{p^{\prime}(1/n-1)}dt\right)
^{1/p^{\prime}}\text{ (since }t^{p^{\prime}(1/n-1)}\text{ decreases})\\
&  =c_{p,n}\left\|  \left|  \nabla f\right|  \right\|  _{p}\left|  t_{2}%
-t_{1}\right|  ^{1/n-1/p}\\
&  =c_{p,n}\left\|  \left|  \nabla f\right|  \right\|  _{p}\left|  t_{2}%
-t_{1}\right|  ^{\alpha/n}.
\end{align*}

\end{remark}

At this point it is not difficult to reformulate many of the results in this
paper using the notion of Lorentz spaces with negative index. As an example we
simply state the following result and safely leave the details to the reader.

\begin{theorem}
Let $(\Omega,d,\mu)$ be a probability metric space that satisfies the relative
isoperimetric property and such that
\[
t^{1-1/n}\preceq I_{\Omega}(t),\text{ }t\in(0,1/2).
\]
Then, if $p>n$%
\[
b_{p}^{n/p,1}\left(  \Omega\right)  \subset L^{\bar{p},1}%
\]
where $\frac{1}{\bar{p}}=\frac{1}{p}-\frac{1}{n}.$ Moreover, if $f\in
b_{p}^{n/p,1}\left(  \Omega\right)  ,$ then $\forall B\subset\Omega,$
$f\chi_{B}\in L^{\bar{p},1},$ and $\left\Vert f\chi_{B}\right\Vert
_{L^{\bar{p},1}}\preceq\left\Vert f\right\Vert _{b_{p}^{n/p,1}\left(
\Omega\right)  },$ with constants independent of $B.$ In particular, it
follows that $f\in C(\Omega).$
\end{theorem}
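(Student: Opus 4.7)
The plan is to reduce the theorem to a uniform local estimate
\[
\Vert f\chi_B\Vert_{L^{[\bar p,1]}(B)} \preceq \Vert f\Vert_{b_p^{n/p,1}(\Omega)},\qquad B\subset\Omega\text{ a ball},
\]
with constants independent of $B$. Granted this, essential continuity follows by adapting the change-of-scale argument of Section~2. Writing $\bigl((f\chi_B)^{\ast\ast}_{\mu_B}-(f\chi_B)^{\ast}_{\mu_B}\bigr)(t)\,t^{1/\bar p}$ as the integrand of the $L^{[\bar p,1]}(B)$ quasi-norm and pulling out the factor $t^{-1/\bar p}$, which is increasing on $(0,\mu(B))$ since $1/\bar p<0$, one obtains
\[
(f\chi_B)^{\ast\ast}_{\mu_B}(t_1)-(f\chi_B)^{\ast\ast}_{\mu_B}(t_2)\leq t_2^{-1/\bar p}\,\Vert f\chi_B\Vert_{L^{[\bar p,1]}(B)}.
\]
Letting $t_1\to 0$, $t_2=\mu(B)$, then symmetrizing between $\pm f$ and invoking the uniform local estimate yields $\operatorname{osc}_B(f)\preceq \mu(B)^{-1/\bar p}\Vert f\Vert_{b_p^{n/p,1}(\Omega)}$, which vanishes as $\mu(B)\to 0$ since $-1/\bar p=1/n-1/p>0$.

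For the local estimate, fix a ball $B$ and work in $(B,d_{\mid B},\mu_{\mid B})$. The relative uniform isoperimetric property combined with $I_\Omega(t)\succeq t^{1-1/n}$ gives $I_B(t)\succeq \min(t,\mu(B)-t)^{1-1/n}$, so $t/I_B(t)\preceq t^{1/n}$ on $(0,\mu(B)/2)$. Applying Theorem~\ref{main1} with $X=L^p$ in the metric measure space $(B,d_{\mid B},\mu_{\mid B})$ yields
\[
(f\chi_B)^{\ast\ast}_{\mu_B}(t)-(f\chi_B)^{\ast}_{\mu_B}(t)\preceq \frac{K_B(t^{1/n},f\chi_B;L^p(B),S_{L^p}(B))}{t^{1/p}},\qquad t\in(0,\mu(B)/2).
\]
Restriction of admissible decompositions from $\Omega$ to $B$ gives $K_B(s,f\chi_B)\leq K_\Omega(s,f)$. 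Multiply by $t^{1/\bar p}$, integrate over $(0,\mu(B)/2)$, and exploit the algebraic identity $1/\bar p-1/p=-1/n$ together with the substitution $s=t^{1/n}$ to reshuffle the result into the form of the Besov seminorm $\Vert f\Vert_{b_p^{n/p,1}(\Omega)}$; the remaining piece on $(\mu(B)/2,\mu(B))$ is handled via the $K$-Poincar\'e bound of Theorem~\ref{teobmo}, whose right-hand side is absorbed into the Besov seminorm up to constants depending only on $\phi_{L^p}$.

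The principal obstacle is a weight mismatch at the change-of-variables step. The naive application of Theorem~\ref{main1} above produces, after $s=t^{1/n}$, an integral of the form $\int K_\Omega(s,f)\,s^{-1}\,ds/s$, whereas the Besov seminorm carries the strictly smaller weight $s^{-n/p}$. Since $p>n$, these differ by a factor $s^{n/p-1}$ that blows up at $s=0$, and a crude pointwise comparison fails (indeed, for a globally Lipschitz $f$ the naive integral diverges even though $\Vert f\chi_B\Vert_{L^{[\bar p,1]}(B)}$ is finite). Recovering the Besov weight requires exploiting a finer cancellation in $|f|_{\mu}^{\ast\ast}-|f|_{\mu}^{\ast}$ than Theorem~\ref{main1} supplies pointwise: the natural route is to replace it by the $\bar X$-valued estimate of Theorem~\ref{teo:elotro}, which controls $\bigl(|f|^{\ast}(\cdot)-|f|^{\ast}(t/2)\bigr)\chi_{(0,t/2)}$ in the $L^p$ norm, and then invoke H\"older's inequality and Fubini so that the dual factor $\phi_{(L^p)^{\prime}}(s)=s^{1/p^{\prime}}$ supplies the missing power $s^{1-n/p}$. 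Pushing this programme through with constants uniform in $B$ is the crux of the proof.
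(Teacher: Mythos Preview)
The paper gives no proof of this theorem---it is stated as an illustration and the details are explicitly ``left to the reader.'' The intended argument is the one sketched immediately above it in the chapter: start from the pointwise bound $|f|_\mu^{\ast\ast}(t)-|f|_\mu^{\ast}(t)\preceq t^{-1/p}K(t^{1/n},f;L^p,S_{L^p})$, multiply by $t^{1/\bar p}$, integrate against $dt/t$, change variables $s=t^{1/n}$, and then localize to balls via the relative uniform isoperimetric property and the change-of-scale argument of Section~2. Your plan follows this scheme exactly.

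The obstacle you isolate is genuine, and it is not a defect of your method but an inconsistency in the stated indices. The paper's own Besov embedding (the display just before the theorem) reads $b_p^{\theta,q}\subset L^{[\bar p,q]}$ with $1/\bar p=1/p-\theta/n$; setting $\theta=n/p$ forces $1/\bar p=0$, so the target is $L^{[\infty,1]}=L^\infty$, which is the embedding (\ref{auto1}) already recorded in Chapter~\ref{chapbmo}, not a negative-index statement at all. To hit $L^{[\bar p,1]}$ with $1/\bar p=1/p-1/n$ one would need $\theta=1$, outside the Besov range. Your computation showing that the substitution produces $\int s^{-1}K(s,f)\,ds/s$ rather than $\int s^{-n/p}K(s,f)\,ds/s$ is correct and is exactly this mismatch; the repair you propose via Theorem~\ref{teo:elotro} cannot close it, since that result still feeds $t/I_\Omega(t)\simeq t^{1/n}$ into the $K$-functional and therefore yields the same exponent after the change of variables---the dual factor $s^{1/p'}$ you hope to extract plays no role in shifting the Besov weight. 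With any consistent pair $(\theta,\bar p)$ satisfying $1/\bar p=1/p-\theta/n$ and $\theta\in(n/p,1)$, your direct approach goes through without obstruction and is precisely what the paper intends.
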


\section{An interpolation inequality}

In this section we formulate the basic argument of this chapter as in
interpolation inequality.

\begin{lemma}
Suppose that $(\Omega,d,\mu)$ is a probability measure. Let $s<0,1\leq
q\leq\infty,$ and suppose that $-q^{\prime}>s.$ Then for all $f\in
L^{1}\left(  \Omega\right)  $ we have,
\[
\left\|  f\right\|  _{L^{\infty}}\leq\left(  \frac{-s}{q^{\prime}}\right)
^{1/q^{\prime}}\left\|  f\right\|  _{L^{[s,q]}}+\left\|  f\right\|  _{L^{1}}.
\]

\end{lemma}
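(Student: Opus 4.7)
The strategy is to mimic, in the abstract setting of $L^{[s,q]}$ with $s<0$, the elementary telescoping computation that was used earlier in the chapter to obtain \eqref{nega2}. Since $\mu(\Omega)=1$, the goal is to bound $|f|_\mu^{\ast\ast}(0^+)-|f|_\mu^{\ast\ast}(1)=\|f\|_{L^\infty}-\|f\|_{L^1}$ by the $L^{[s,q]}$ seminorm.

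First I would, for $0<t_1<t_2<1$, use the fundamental-theorem-of-calculus identity $\tfrac{d}{dt}(-|f|_\mu^{\ast\ast}(t))=(|f|_\mu^{\ast\ast}(t)-|f|_\mu^{\ast}(t))/t$ to write
\begin{equation*}
|f|_\mu^{\ast\ast}(t_1)-|f|_\mu^{\ast\ast}(t_2)=\int_{t_1}^{t_2}\bigl(|f|_\mu^{\ast\ast}(t)-|f|_\mu^{\ast}(t)\bigr)\,t^{1/s}\cdot t^{-1/s}\,\frac{dt}{t}.
\end{equation*}
Applying H\"older's inequality with exponents $q$ and $q'$, the first factor is controlled by $\|f\|_{L^{[s,q]}}$ and the second becomes $\bigl(\int_{t_1}^{t_2} t^{-q'/s-1}\,dt\bigr)^{1/q'}$ (with the obvious sup-version when $q=\infty$).

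The key observation is that the hypothesis $-q'>s$, together with $s<0$, gives $0<q'/(-s)<1$, hence the exponent $-q'/s-1$ is strictly negative, so $t\mapsto t^{-q'/s-1}$ is decreasing on $(0,\infty)$. This lets me replace the interval $(t_1,t_2)$ by $(0,t_2-t_1)$ and integrate explicitly:
\begin{equation*}
\int_{t_1}^{t_2} t^{-q'/s-1}\,dt \;\le\; \int_0^{t_2-t_1} t^{-q'/s-1}\,dt \;=\; \frac{-s}{q'}\,(t_2-t_1)^{-q'/s}.
\end{equation*}
Combining the two displays yields
\begin{equation*}
|f|_\mu^{\ast\ast}(t_1)-|f|_\mu^{\ast\ast}(t_2)\le \Bigl(\tfrac{-s}{q'}\Bigr)^{1/q'}\|f\|_{L^{[s,q]}}(t_2-t_1)^{-1/s}.
\end{equation*}

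Finally I would let $t_1\to 0^+$ and $t_2\to 1^-$. Since $|f|_\mu^{\ast\ast}(0^+)=\operatorname{ess\,sup}|f|=\|f\|_{L^\infty}$ and $|f|_\mu^{\ast\ast}(1)=\int_\Omega|f|\,d\mu=\|f\|_{L^1}$, and since $(t_2-t_1)^{-1/s}\to 1$ as $t_2-t_1\to 1$ (note $-1/s>0$), we obtain the stated inequality; the case $q=\infty$ is handled analogously, pulling $\|f\|_{L^{[s,\infty]}}$ out as a sup and integrating $t^{-1/s-1}$ over $(0,t_2-t_1)$, which is legitimate precisely when $s<-1=-q'$. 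The only subtle point, which I would verify carefully, is the sign arithmetic that turns the constraint $-q'>s$ into the monotonicity of $t^{-q'/s-1}$ and hence into the legitimacy of the localization at $0$; everything else is a direct H\"older estimate.
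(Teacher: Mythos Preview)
Your proposal is correct and follows essentially the same route as the paper: the paper's proof explicitly says ``We use the argument of the previous section verbatim,'' writes the same fundamental-theorem-of-calculus identity, inserts $t^{1/s}t^{-1/s}$, applies H\"older with exponents $q,q'$, and then lets $t_1\to 0^+$, $t_2\to 1^-$. Your write-up is in fact slightly more explicit than the paper's about the monotonicity of $t^{-q'/s-1}$ (which justifies translating the interval to $(0,t_2-t_1)$) and about the $q=\infty$ endpoint, but the argument is the same.
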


\begin{proof}
We use the argument of the previous section verbatim. Let $0<t_{1}<t_{2}<1.$
By the fundamental theorem of calculus, we have%
\begin{align*}
\left|  f_{\mu}^{\ast\ast}\right|  (t_{1})-\left|  f\right|  _{\mu}^{\ast\ast
}(t_{2})  &  =\int_{t_{1}}^{t_{2}}\left(  \left|  f\right|  _{\mu}^{\ast\ast
}(t)-\left|  f\right|  _{\mu}^{\ast}(t)\right)  \frac{dt}{t}\\
&  =\int_{t_{1}}^{t_{2}}\left(  \left|  f\right|  _{\mu}^{\ast\ast}(t)-\left|
f\right|  _{\mu}^{\ast}(t)\right)  t^{1/s}t^{-1/s}\frac{dt}{t}\\
&  \leq\left\{  \int_{t_{1}}^{t_{2}}\left\{  \left(  \left|  f\right|  _{\mu
}^{\ast\ast}(t)-\left|  f\right|  _{\mu}^{\ast}(t)\right)  t^{1/s}\right\}
^{q}\frac{dt}{t}\right\}  ^{1/q}\left\{  \int_{t_{1}}^{t_{2}}t^{-q^{\prime}%
/s}\frac{dt}{t}\right\}  ^{1/q^{\prime}}\\
&  \leq\left(  \frac{-s}{q^{\prime}}\right)  ^{1/q^{\prime}}\left\|
f\right\|  _{L^{[s,q]}}\left|  t_{1}-t_{2}\right|  ^{-q^{\prime}/s}.
\end{align*}
Therefore letting $t_{1}\rightarrow0^{+},$ $t_{2}\rightarrow1^{-},$ we find%
\[
\left\|  f\right\|  _{L^{\infty}}-\left\|  f\right\|  _{L^{1}}\leq\left(
\frac{-s}{q^{\prime}}\right)  ^{1/q^{\prime}}\left\|  f\right\|  _{L^{[s,q]}%
}.
\]

\end{proof}

\section{Further remarks}

Good portions of the preceding discussion can be extended to the context of
real interpolation spaces. In this framework one can consider spaces that are
defined in terms of conditions on $\frac{K(t,f;\bar{X})}{t}-K^{\prime
}(t,f;\bar{X}),$ where $\bar{X}$ is a compatible pair of Banach spaces. An
example of such construction are the modified Lions-Peetre spaces defined, for
example, in \cite{ho}, \cite{jm} and the references therein. The usual
conditions defining these spaces are of the form%
\[
\left\|  f\right\|  _{[X_{0},X_{1}]_{\theta,q}}=\left\{  \int_{0}^{\infty
}(t^{-\theta}(K(t,f;X_{0},X_{1})-tK^{\prime}(t,f;X_{0},X_{1}))^{q}\frac{dt}%
{t}\right\}  ^{1/q}<\infty,
\]
where $\theta\in(0,1),q\in(0,\infty].$ Adding the end points $\theta=0,1,$
produces conditions that still make sense and are useful in analysis (cf.
\cite{mamicwk} and the references therein). Observe that when $\bar{X}%
=(L^{1},L^{\infty}),$ we have
\[
\frac{K(t,f;\bar{X})}{t}-K^{\prime}(t,f;\bar{X})=\left|  f\right|  ^{\ast\ast
}(t)-\left|  f\right|  ^{\ast}(t),
\]
and therefore%
\[
\lbrack X_{0},X_{1}]_{\theta,q}=L^{[\frac{1}{1-\theta},q]}.
\]
Therefore the discussion in this chapter suggests that it is of interest to
consider, more generally, the spaces $[X_{0},X_{1}]_{\theta,q},$ for
$\theta\in\mathbb{R}.$ In particular this may allow, in some cases, to treat
$L^{p}$ and $Lip$ conditions in a unified manner. For example, in \cite{cjm}
and \cite{mamicon} results are given that imply that for certain operators
\thinspace$T,$ that include gradients, an inequality of the form
\[
\left\|  f\right\|  _{[Y_{0},Y_{1}]_{\theta_{0},q_{0}}}\leq c\left\|
Tf\right\|  _{[X_{0},X_{1}]_{\theta_{1},q_{1}}}%
\]
can be extrapolated to a family of inequalities that involve the $[Y_{0}%
,Y_{1}]_{\theta_{0},q_{0}}$ spaces defined here. In particular%
\[
\left\|  f\right\|  _{L^{n^{\prime}}}\leq c\left\|  \nabla f\right\|  _{L^{1}%
},f\in C_{0}^{1}(R^{n})
\]
implies%
\[
K(t,f;L^{1},L^{\infty})-tK^{\prime}(t,f;L^{1},L^{\infty})\leq ct^{1/n}%
K(t,\nabla f;L^{1},L^{\infty}).
\]
Thus from one inequality we can extrapolate ``all'' the classical Sobolev
inequalities through the use of the $[Y_{0},Y_{1}]_{\theta,q}$ spaces with
$\theta$ possibly negative. To pursue this point further would take us too far
away from our main concerns in this paper, so we must leave more details and
applications for another occasion.

\chapter{Connection with the work of Garsia and his
collaborators\label{Garetal}\bigskip}

\setcounter{section}{1} In this section we shall discuss the connection of our
results with the work of Garsia and his collaborators (cf. \cite{garr},
\cite{garsiagrenoble}, \cite{garsia0}, \cite{garsiaind}, \cite{garsia},
\cite{park}, \cite{deland}...). We argue that our results can be seen as an
extension the work by Garsia \cite{garsia}, \cite{garsiaind}, and
some\footnote{The results of \cite{garsiagrenoble}, while very similar, are
formulated in terms of moduli of continuity that in some cases cannot be
readily identified with the ones we consider in this paper.} of the work by
Garsia-Rodemich \cite{garsiagrenoble}, to the metric setting. Indeed,
\cite{garsiaind}, \cite{garsia} were one of the original motivations behind
\cite{mamiproc} and some of our earlier writings.

In \cite{garsiaind} it is shown that for functions on $[0,1],$ if\footnote{The
case $p=1$ is also trivially true since%
\[
\int_{0}^{1}Q_{1}(\delta,f)\frac{d\delta}{\delta^{2}}<\infty,
\]
readily implies that $f$ is constant.} $p\geq1,$%
\begin{equation}
\left.
\begin{array}
[c]{c}%
f^{\ast}(x)-f^{\ast}(1/2)\\
f^{\ast}(1/2)-f^{\ast}(1-x)
\end{array}
\right\}  \leq\frac{4^{1/p}}{\log\frac{3}{2}}\int_{x}^{1}Q_{p}(\delta
,f)\frac{d\delta}{\delta^{1+1/p}}, \label{gar1}%
\end{equation}
(see Section \ref{sigre} in Chapter \ref{contchap} above), and where%
\[
Q_{p}(\delta,f)=\left\{  \frac{1}{\delta}\int\int_{\left\vert x-y\right\vert
<\delta}\left\vert f(x)-f(y)\right\vert ^{p}dxdy\right\}  ^{1/p}.
\]
In particular, if $\int_{0}^{1}Q_{p}(\delta,f)\frac{d\delta}{\delta^{1+1/p}%
}<\infty,$ then $f$ is essentially continuous, and in fact, $a.e.$
$x,y\in\lbrack0,1]$%
\begin{equation}
\left\vert f(x)-f(y)\right\vert \leq2\frac{4^{1/p}}{\log\frac{3}{2}}\int%
_{0}^{\left\vert x-y\right\vert }Q_{p}(\delta,f)\frac{d\delta}{\delta^{1+1/p}%
}. \label{gar2}%
\end{equation}
Moreover, in \cite{garsiaind} more general moduli of continuity based on
Orlicz spaces are considered: for a Young's function $A,$ normalized so that
$A(1)=1,$ let%
\[
Q_{A}(\delta,f)=\inf\left\{  \lambda>0:\frac{1}{\delta}\int\int_{\left\vert
x-y\right\vert <\delta}A\left(  \frac{\left\vert f(x)-f(y)\right\vert
}{\lambda}\right)  dxdy\leq1\right\}  .
\]
In \cite{garsiaind} and Deland \cite[(1.1),(1.3)]{deland} the following
analogues of (\ref{gar1}) and (\ref{gar2}) are shown to hold:%
\begin{equation}
\left.
\begin{array}
[c]{c}%
f^{\ast}(x)-f^{\ast}(1/2)\\
f^{\ast}(1/2)-f^{\ast}(1-x)
\end{array}
\right\}  \leq\frac{2}{\log\frac{3}{2}}\int_{x}^{1}Q_{A}(\delta,f)A^{-1}%
(\frac{4}{\delta})\frac{d\delta}{\delta} \label{gar3}%
\end{equation}
and%
\begin{equation}
\left\vert f(x)-f(y)\right\vert \leq c\int_{0}^{\left\vert x-y\right\vert
}Q_{A}(\delta,f)A^{-1}(\frac{4}{\delta})\frac{d\delta}{\delta}. \label{gar4}%
\end{equation}

We will show in a moment that our inequalities readily give the following
version of (\ref{gar3}) for all r.i. spaces $X[0,1]:$%
\begin{equation}
\left.
\begin{array}
[c]{c}%
f^{\ast}(x)-f^{\ast}(1/2)\\
f^{\ast}(1/2)-f^{\ast}(1-x)
\end{array}
\right\}  \leq c\int_{x}^{1}\frac{K(\delta,f;X,\dot{W}_{X}^{1})}{\phi
_{X}(\delta)}\frac{d\delta}{\delta}. \label{garneta}%
\end{equation}

To relate this inequality to Garsia's results we compare the modulus of
continuity to $K-$functionals. Thus, we let%
\[
\omega_{A}(\delta,f)=\inf\left\{  \lambda>0:\sup_{h\leq\delta}\int%
_{0}^{1-\delta}A\left(  \frac{\left\vert f(x+h)-f(x)\right\vert }{\lambda
}\right)  dx\leq1\right\}  ,\text{ }\delta\in(0,1).
\]
Then, as is well known (cf. \cite{bs}, \cite{mamiproc}),%
\begin{equation}
K(\delta,f;L_{A},\dot{W}_{L_{A}}^{1})\simeq\omega_{A}(\delta,f),
\label{garne2}%
\end{equation}
and we have

\begin{lemma}
sup$_{0<\sigma<\delta}Q_{A}(\sigma,f)\preceq K(\delta,f;L_{A},\dot{W}_{L_{A}%
}^{1}).$
\end{lemma}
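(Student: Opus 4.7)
The plan is to establish the stronger pointwise bound $Q_A(\sigma,f)\leq C\,K(\sigma,f;L_A,\dot W_{L_A}^{1})$ for each $\sigma\in(0,\delta)$; since the $K$-functional is non-decreasing in its first argument, the statement of the lemma then follows by taking the supremum over $\sigma<\delta$. Fix $\sigma\in(0,\delta)$, let $\varepsilon>0$, and choose a decomposition $f=g+h$ with $h\in\dot W_{L_A}^{1}$ such that
\[
\|g\|_{L_A}+\sigma\|h'\|_{L_A}\leq K(\sigma,f;L_A,\dot W_{L_A}^{1})+\varepsilon.
\]
For $x,y\in[0,1]$ with $|x-y|<\sigma$, the triangle inequality together with the fundamental theorem of calculus gives
\[
|f(x)-f(y)|\leq |g(x)|+|g(y)|+\Bigl|\int_{\min(x,y)}^{\max(x,y)}|h'(t)|\,dt\Bigr|.
\]

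With Jensen's inequality for the convex function $A$ applied to the three equal weights $1/3$, I obtain
\[
A\!\left(\tfrac{|f(x)-f(y)|}{\lambda}\right)\leq \tfrac13\Bigl[A\!\left(\tfrac{3|g(x)|}{\lambda}\right)+A\!\left(\tfrac{3|g(y)|}{\lambda}\right)+A\!\left(\tfrac{3|h(x)-h(y)|}{\lambda}\right)\Bigr].
\]
The two $g$-terms are handled using that for each $x$ the slice $\{y\in[0,1]:|x-y|<\sigma\}$ has measure at most $2\sigma$, so after integration they contribute at most $\tfrac{4\sigma}{3}\int_{0}^{1}A(3|g|/\lambda)\,dx$. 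For the $h$-term, for $x<y$ I apply Jensen to the average of $|h'|$ on $[x,y]$, then use the elementary convexity bound $A(\alpha z)\leq \alpha A(z)$ for $\alpha\in[0,1]$ with $\alpha=(y-x)/\sigma$, obtaining
\[
A\!\left(\tfrac{3|h(x)-h(y)|}{\lambda}\right)\leq \tfrac{1}{\sigma}\int_{x}^{y}A\!\left(\tfrac{3\sigma|h'(t)|}{\lambda}\right)dt.
\]
Fubini's theorem (noting that for fixed $t$ the set $\{(x,y):x<t<y,\,y-x<\sigma\}$ has area $\leq\sigma^{2}/2$) then shows that this piece contributes at most $\tfrac{\sigma}{3}\int_{0}^{1}A(3\sigma|h'(t)|/\lambda)\,dt$.

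Collecting the three estimates yields
\[
\tfrac{1}{\sigma}\iint_{|x-y|<\sigma}A\!\left(\tfrac{|f(x)-f(y)|}{\lambda}\right)dx\,dy\leq \tfrac{4}{3}\!\int_{0}^{1}A\!\left(\tfrac{3|g|}{\lambda}\right)dx+\tfrac{1}{3}\!\int_{0}^{1}A\!\left(\tfrac{3\sigma|h'|}{\lambda}\right)dt.
\]
Choose $\lambda:=5\bigl(\|g\|_{L_A}+\sigma\|h'\|_{L_A}\bigr)$. Then $3|g|/\lambda\leq\tfrac{3}{5}\cdot|g|/\|g\|_{L_A}$, and the convexity bound $A(\alpha z)\leq\alpha A(z)$ with $\alpha=3/5$ together with the Luxemburg definition of $\|\cdot\|_{L_A}$ gives $\int A(3|g|/\lambda)\leq 3/5$; the same holds for the $h'$-term. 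Hence the total is $\leq \tfrac{4}{3}\cdot\tfrac{3}{5}+\tfrac{1}{3}\cdot\tfrac{3}{5}=1$, so $Q_A(\sigma,f)\leq \lambda=5(\|g\|_{L_A}+\sigma\|h'\|_{L_A})$. Letting $\varepsilon\to 0$ and taking the infimum over admissible decompositions gives $Q_A(\sigma,f)\leq 5\,K(\sigma,f;L_A,\dot W_{L_A}^{1})\leq 5\,K(\delta,f;L_A,\dot W_{L_A}^{1})$, and taking the supremum in $\sigma<\delta$ concludes the proof.

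The only real obstacle is bookkeeping with constants: one must ensure that the single multiplier $\lambda$ simultaneously tames the three pieces produced by the triple Jensen splitting, which is achieved by exploiting the scaling $A(\alpha z)\leq \alpha A(z)$ for $\alpha\leq 1$. Once the factor $\sigma$ is matched against $\|h'\|_{L_A}$ on the gradient side (via the substitution $(y-x)/\sigma\leq 1$ inside $A$), all ingredients are homogeneous in $\lambda$ and a single rescaling by a universal constant finishes the job.
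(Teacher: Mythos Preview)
Your proof is correct but takes a different route from the paper. The paper does not work directly with the $K$-functional: instead it first invokes the known equivalence $K(\delta,f;L_A,\dot W_{L_A}^1)\simeq\omega_A(\delta,f)$ (the usual $L_A$-modulus of continuity, quoted from \cite{bs}, \cite{mamiproc}), and then compares $Q_A$ to $\omega_A$ by the elementary change of variables $(x,y)\mapsto(x,h)$ with $h=y-x$, obtaining
\[
\frac{1}{\delta}\iint_{|x-y|<\delta}A\!\left(\frac{|f(x)-f(y)|}{2\lambda}\right)dx\,dy\;\leq\;\sup_{h\leq\delta}\int_{0}^{1-\delta}A\!\left(\frac{|f(x+h)-f(x)|}{\lambda}\right)dx,
\]
from which $Q_A(\delta,f)\leq 2\omega_A(\delta,f)$ follows immediately by taking $\lambda=\omega_A(\delta,f)$. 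Your argument, by contrast, is self-contained: you bypass the modulus $\omega_A$ entirely, take a near-optimal decomposition $f=g+h$, and control the three resulting pieces via Jensen, the fundamental theorem of calculus, and Fubini. The paper's proof is shorter and conceptually cleaner if one is willing to import the $K\simeq\omega_A$ equivalence; your proof is longer but more elementary in that it requires no such background result, and it makes transparent exactly how the factor $\sigma$ in front of $\|h'\|_{L_A}$ arises from the constraint $|x-y|<\sigma$.
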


\begin{proof}
To see this note that, for all $\lambda>0,$ $\delta\in(0,1),$ we have
\begin{align*}
\frac{1}{\delta}\int\int_{\{(x,y)\in\lbrack0,1]^{2}:\left|  x-y\right|
<\delta\}}A\left(  \frac{\left|  f(x)-f(y)\right|  }{2\lambda}\right)  dxdy
&  \leq\frac{1}{\delta}\int_{0}^{\delta}\int_{0}^{1-\delta}A\left(
\frac{\left|  f(x+h)-f(x)\right|  }{\lambda}\right)  dxdh\\
&  \leq\sup_{h\leq\delta}\int_{0}^{1-\delta}A\left(  \frac{\left|
f(x+h)-f(x)\right|  }{\lambda}\right)  dx.
\end{align*}
Therefore, if we let $\lambda=\omega_{A}(\delta,f),$ by the definitions,
\[
\frac{1}{\delta}\int\int_{\{(x,y)\in\lbrack0,1]^{2}:\left|  x-y\right|
<\delta\}}A\left(  \frac{\left|  f(x)-f(y)\right|  }{2\lambda}\right)
dxdy\leq1,
\]
and consequently%
\begin{align*}
Q_{A}(\delta,f)  &  \leq2\omega_{A}(\delta,f)\\
&  \preceq K(\delta,f;L_{A},\dot{W}_{L_{A}}^{1}).
\end{align*}

\end{proof}

To complete the picture let us also note that%
\[
\phi_{L_{A}}(t)=\frac{1}{A^{-1}(\frac{1}{t})}.
\]

We now show in detail (\ref{garneta}). One technical problem we have to
overcome is that the results of this paper do not apply directly for functions
on $[0,1],$ since the isoperimetric profile of $[0,1]$ is $I(t)\equiv1,$ and
therefore $I$ does not satisfy the required hypotheses to apply our general
machinery (cf. Condition \ref{condition} in Chapter \ref{preliminar}, and
\cite{mamiadv}, \cite{mamicon}). Therefore while the inequalities
[(\ref{seis}), Chapter 1], and their corresponding signed rearrangement
variants are valid (cf. Chapter \ref{contchap}), our results cannot be applied
directly. However, we will now show that our methods can be readily adapted to
yield the one dimensional result as well.

To prove [(\ref{seis}), Chapter 1] for $n=1,$ we need to establish the
following inequality (compare with [(\ref{cuatro}), Chapter 1, letting
formally $I(t)=1)$%
\[
f^{\ast\ast}(t)-f^{\ast}(t)\leq t\left(  \left|  f^{\prime}\right|  \right)
^{\ast\ast}(t),t\in(0,1).
\]
While \cite{mamiadv} formally does not cover this case, it turns out that we
can easily prove this inequality directly using the method of ``truncation by
symmetrization'', which was apparently introduced in \cite{mmp}. Indeed, a
known elementary result of Duff \cite{duff} states that%
\[
\left\|  (f^{\ast})^{\prime}\right\|  _{L^{p}[0,1]}\leq\left\|  f^{\prime
}\right\|  _{L^{p}[0,1]}.
\]
The truncation method of \cite{mmp} (cf. also \cite[discussion before
Corallaire 2.4]{ehrhard}), as it is developed in detail in \cite{leoni}, when
applied to the case $p=1,$ yields the corresponding P\'{o}lya-Szeg\"{o}
inequality (as formulated in \cite{mmp})%
\[
t\left(  (f^{\ast})^{\prime}\right)  ^{\ast\ast}(t)\leq t\left(  \left|
f^{\prime}\right|  \right)  ^{\ast\ast}(t),\text{ }t\in(0,1).
\]
We can (and will) assume without loss that $f$ is bounded, then (cf.
\cite{leoni}),
\[
t\left(  (f^{\ast})^{\prime}\right)  ^{\ast\ast}(t)=\int_{0}^{t}\left|
(f^{\ast})^{\prime}\right|  ds=f^{\ast}(0)-f^{\ast}(t)<\infty.
\]
Now, since $f^{\ast\ast}(0)=f^{\ast}(0),$ and $f^{\ast\ast}$ is decreasing, we
have
\[
f^{\ast\ast}(t)-f^{\ast}(t)\leq f^{\ast\ast}(0)-f^{\ast}(t)=f^{\ast
}(0)-f^{\ast}(t).
\]
Therefore, combining these estimates we arrive at
\[
f^{\ast\ast}(t)-f^{\ast}(t)\leq t\left(  \left|  f^{\prime}\right|  \right)
^{\ast\ast}(t),\text{ }t\in(0,1),
\]
as required. At this point the proof of Theorem \ref{main1} applies without
changes to yield for $0<t\leq1/2,$%
\[
f^{\ast\ast}(t)-f^{\ast}(t)\leq c\frac{K(t,f;X,\dot{W}_{X}^{1})}{\phi_{X}%
(t)}.
\]
Moreover, using \cite[(4.1)]{bmr} we have%
\[
f^{\ast}\left(  t/2\right)  -f^{\ast}(t)\leq2\left(  f^{\ast\ast}(t)-f^{\ast
}(t)\right)  .
\]
Thus,%
\begin{align}
f^{\ast}\left(  t/2\right)  -f^{\ast}(t)  &  \leq c\frac{K(t,f;X,\dot{W}%
_{X}^{1})}{\phi_{X}(t)}\nonumber\\
&  \leq2c\frac{K(\frac{t}{2},f;X,\dot{W}_{X}^{1})}{\phi_{X}(t)}\text{ (since
}\frac{K(t,f;X,\dot{W}_{X}^{1})}{t}\text{ and }\frac{1}{\phi_{X}(t)}\text{
decrease)}\nonumber\\
&  \leq\frac{2c}{\ln2}\int_{\frac{t}{2}}^{t}\frac{K(s,f;X,\dot{W}_{X}^{1}%
)}{\phi_{X}(s)}\frac{ds}{s}. \label{fornaio}%
\end{align}
Given $t\in(0,1/2),$ let $N=N(t)$ be such that $\frac{t}{2}\leq2^{-(N+1)}%
<t<2^{-N}\leq\frac{1}{2},$ then%
\begin{align*}
f^{\ast}(t)-f^{\ast}(1/2)  &  \leq f^{\ast}(2^{-(N+1)})-f^{\ast}(1/2)\\
&  =%
{\displaystyle\sum\limits_{j=1}^{N}}
\left(  f^{\ast}(2^{-(j+1)})-f^{\ast}(2^{-j})\right) \\
&  \leq C%
{\displaystyle\sum\limits_{j=1}^{N}}
\int_{2^{-(j+1)}}^{2^{-j}}\frac{K(s,f;X,\dot{W}_{X}^{1})}{\phi_{X}(s)}%
\frac{ds}{s}\\
&  \leq C\int_{2^{-(N+1)}}^{1/2}\frac{K(s,f;X,\dot{W}_{X}^{1})}{\phi_{X}%
(s)}\frac{ds}{s}\\
&  =C\int_{2^{-(N+1)}}^{2^{-N}}\frac{K(s,f;X,\dot{W}_{X}^{1})}{\phi_{X}%
(s)}\frac{ds}{s}+C\int_{2^{-N}}^{1/2}\frac{K(s,f;X,\dot{W}_{X}^{1})}{\phi
_{X}(s)}\frac{ds}{s}.
\end{align*}
Now,%
\[
\int_{2^{-N}}^{1/2}\frac{K(s,f;X,\dot{W}_{X}^{1})}{\phi_{X}(s)}\frac{ds}%
{s}\leq\int_{t}^{1/2}\frac{K(s,f;X,\dot{W}_{X}^{1})}{\phi_{X}(s)}\frac{ds}%
{s}.
\]
Moreover, we will show in a moment that%
\begin{equation}
\int_{2^{-(N+1)}}^{2^{-N}}\frac{K(s,f;X,\dot{W}_{X}^{1})}{s}\frac{1}{\phi
_{X}(s)}ds\leq\frac{4}{\ln(1/2)}\int_{t}^{1/2}K(s,f;X,\dot{W}_{X}^{1})\frac
{1}{\phi_{X}(s)}\frac{ds}{s}. \label{pucha}%
\end{equation}
Collecting these results we see that there exists a universal constant $c>0$
such that,%
\[
f^{\ast}(t)-f^{\ast}(1/2)\leq c\int_{t}^{1/2}K(s,f;X,\dot{W}_{X}^{1})\frac
{1}{\phi_{X}(s)}\frac{ds}{s},t\in(0,1/2).
\]
The previous inequality applied to $-f$ yields the second half of Garsia's
inequality%
\[
f^{\ast}(1/2)-f^{\ast}(1-t)\leq c\int_{t}^{1/2}K(s,f;X,\dot{W}_{X}^{1}%
)\frac{1}{\phi_{X}(s)}\frac{ds}{s},t\in(0,1/2).
\]
We complete the details of the proof of (\ref{pucha}) using various
monotonicity properties of the functions involved and the position of $t$ in
the interval:
\begin{align*}
\int_{2^{-(N+1)}}^{2^{-N}}\frac{K(s,f;X,\dot{W}_{X}^{1})}{s}\frac{1}{\phi
_{X}(s)}ds  &  \leq\frac{K(2^{-(N+1)},f;X,\dot{W}_{X}^{1})}{2^{-(N+1)}}%
\frac{1}{\phi_{X}(2^{-(N+1)})}2^{-(N+1)}\text{ (}\frac{K(r}{r}\downarrow
,\frac{1}{\phi_{X}(r)}\downarrow)\\
&  =K(2^{-(N+1)},f;X,\dot{W}_{X}^{1})\frac{1}{\phi_{X}(2^{-(N+1)})}\frac
{1}{\ln(2^{-N}/t)}\int_{t}^{2^{-N}}\frac{ds}{s}\\
&  \leq\frac{4}{\ln(1/2)}\int_{t}^{2^{-N}}K(s,f;X,\dot{W}_{X}^{1})\frac
{1}{\phi_{X}(s)}\frac{ds}{s}\text{ (}K\uparrow,\frac{r}{\phi_{X}(r)}%
\uparrow)\\
&  \leq\frac{4}{\ln(1/2)}\int_{t}^{1/2}K(s,f;X,\dot{W}_{X}^{1})\frac{1}%
{\phi_{X}(s)}\frac{ds}{s}.
\end{align*}

In particular, our results thus give versions of (\ref{gar1}), (\ref{gar2}),
(\ref{gar3}), but replacing $Q_{A}(\delta,f)$ with the usual modulus of
continuity $K(\delta,f;L_{A},\dot{W}_{L_{A}}^{1}).$ We also note that Deland
\cite{deland} found the following improvement to (\ref{gar3})%
\[
\left.
\begin{array}
[c]{c}%
f^{\ast}(x)-f^{\ast}(1/2)\\
f^{\ast}(1/2)-f^{\ast}(1-x)
\end{array}
\right\}  \preceq\int_{x}^{1}Q_{A}(\delta,f)dA^{-1}(\frac{c}{\delta}),\text{
}0<x<1/2.
\]
This is of particular interest when dealing with the space $X=e^{L^{2}}.$
Indeed, in this case $A(t)=e^{t^{2}}-1,$ and therefore%
\[
\phi_{X}(t)=\frac{1}{\left(  \ln\frac{e}{t}\right)  ^{1/2}}.
\]
Consequently, from (\ref{gar4}) (or (\ref{garneta})) one finds that a
sufficient condition for continuity can be formulated as: there exists
$0<a<1,c>0,$ such that%
\begin{equation}
\int_{0}^{a}Q_{A}(\delta,f)\left(  \ln\frac{c}{\delta}\right)  ^{1/2}%
\frac{d\delta}{\delta}<\infty. \label{captu1}%
\end{equation}
On the other hand, Deland's improved condition for continuity replaces
(\ref{captu1}) by%
\begin{equation}
\int_{0}^{a}Q_{A}(\delta,f)\frac{d\delta}{\left(  \ln\frac{c}{\delta}\right)
^{1/2}\delta}<\infty. \label{captu2}%
\end{equation}

In our formulation (\ref{captu2}) corresponds to a condition of the form%
\[
\int_{0}^{a}K(\delta,f;L_{A},\dot{W}_{L_{A}}^{1})d(\frac{1}{\phi_{A}%
(t)})<\infty.
\]
While we don't have any new insight to add to Deland's improvement we should
point out here that Deland's improvement is automatic for spaces far away from
$L^{\infty},$ in the sense that $\underline{\alpha}_{\Lambda(X)}>0.$ Indeed,
we have

\begin{lemma}
Suppose that $X=X[0,1]$ is a r.i. space such that $\underline{\alpha}%
_{\Lambda(X)}>0.$ Then there exists a re-norming of $X,$ that we shall call
$\bar{X},$ such that
\begin{equation}
\int_{0}^{1}K(\delta,f;\bar{X},\dot{W}_{\bar{X}}^{1})d(\frac{1}{\phi_{\bar{X}%
}(\delta)})<\infty\Longleftrightarrow\int_{0}^{1}\frac{K(\delta,f;X,\dot
{W}_{X}^{1})}{\phi_{X}(\delta)}\frac{d\delta}{\delta}<\infty. \label{captu4}%
\end{equation}

\end{lemma}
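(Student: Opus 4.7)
The plan is to exploit the Shimogaki-type estimate already invoked in the proof of the embedding theorem in Section \ref{eudom}: namely, the hypothesis $\underline{\alpha}_{\Lambda(X)}>0$ implies, after a suitable renorming, a differential identity relating the fundamental function to its logarithmic derivative. Since every r.i.\ space admits an equivalent norm whose fundamental function is concave on $[0,1]$ (cf.\ \cite{bs}), I first replace $X$ by $\bar X$ (with $\Vert\cdot\Vert_{\bar X}\simeq\Vert\cdot\Vert_X$) so that $\phi_{\bar X}$ is concave, strictly increasing and absolutely continuous on $(0,1]$. Because the $K$-functional depends only on norms up to constants, $K(\delta,f;\bar X,\dot W^{1}_{\bar X})\simeq K(\delta,f;X,\dot W^{1}_{X})$, and $\phi_{\bar X}\simeq\phi_X$; hence both sides of \eqref{captu4} are unchanged up to constants under the replacement $X\rightsquigarrow\bar X$.

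Next, I would apply [Sh, Theorem 2.4], exactly as in the proof of the Lipschitz-domain theorem, to deduce that the hypothesis $\underline{\alpha}_{\Lambda(\bar X)}=\underline{\alpha}_{\Lambda(X)}>0$ yields the equivalence of measures
\[
d\phi_{\bar X}(\delta)\simeq\frac{\phi_{\bar X}(\delta)}{\delta}\,d\delta\qquad\text{on }(0,1].
\]
Since $\phi_{\bar X}$ is absolutely continuous, the chain rule gives
\[
-d\!\left(\frac{1}{\phi_{\bar X}(\delta)}\right)=\frac{d\phi_{\bar X}(\delta)}{\phi_{\bar X}(\delta)^{2}}\simeq\frac{1}{\phi_{\bar X}(\delta)^{2}}\cdot\frac{\phi_{\bar X}(\delta)}{\delta}\,d\delta=\frac{d\delta}{\delta\,\phi_{\bar X}(\delta)},
\]
where the sign convention on the left is that the Stieltjes integral is taken against the positive measure $|d(1/\phi_{\bar X})|$ (consistent with Deland's formulation of \eqref{captu2}, since $1/\phi_{\bar X}$ is decreasing).

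With this equivalence in hand, I would simply substitute into the Stieltjes integral on the left side of \eqref{captu4}:
\[
\int_{0}^{1}K(\delta,f;\bar X,\dot W^{1}_{\bar X})\,d\!\left(\frac{1}{\phi_{\bar X}(\delta)}\right)\simeq\int_{0}^{1}\frac{K(\delta,f;\bar X,\dot W^{1}_{\bar X})}{\phi_{\bar X}(\delta)}\,\frac{d\delta}{\delta}\simeq\int_{0}^{1}\frac{K(\delta,f;X,\dot W^{1}_{X})}{\phi_{X}(\delta)}\,\frac{d\delta}{\delta},
\]
using the comparability of $K$-functionals and fundamental functions under the renorming established in the first step. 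Both implications of \eqref{captu4} follow at once.

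The only delicate step is the passage to $\bar X$: one must ensure simultaneously that (i) $\phi_{\bar X}$ is absolutely continuous (so the Stieltjes integral is well-defined and $d(1/\phi_{\bar X})$ is a genuine absolutely continuous measure), and (ii) the positive lower Boyd index of $\Lambda(X)$ is preserved by the renorming so that [Sh, Theorem 2.4] applies. Both are standard — the concave fundamental function construction in \cite{bs} leaves the Boyd indices invariant — but the verification that $d\phi_{\bar X}\simeq\phi_{\bar X}\,d\delta/\delta$ as measures (and not merely the almost-everywhere comparison of densities) is where one must be careful, because in general a concave function's derivative exists only almost everywhere; this is precisely what the cited Shimogaki theorem handles via a control on both the Lebesgue-differentiable and the singular parts.
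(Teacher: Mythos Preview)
Your proof is correct and the overall strategy (compare $d(1/\phi)$ with $d\delta/(\delta\phi)$ via the index condition, then transfer through the equivalence of $K$-functionals and fundamental functions) matches the paper's. The main difference is in how the renorming $\bar X$ is produced. You take the generic concave renorming from \cite{bs} and then invoke \cite[Theorem 2.4]{Sh} to control $d\phi_{\bar X}$ as a measure, explicitly flagging the possible singular part as the delicate point. The paper instead \emph{defines} the new fundamental function by the explicit formula $\bar\phi(t)=\int_0^t\phi_X(s)\,\frac{ds}{s}$, which is absolutely continuous by construction with $\bar\phi'(t)=\phi_X(t)/t$; the index hypothesis is then used only through \cite[Lemma 2.1]{Sh} to verify $\bar\phi\simeq\phi_X$, after which $(\phi_{\bar X}^{-1})'$ can be computed directly. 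The paper's route is slightly more elementary---it bypasses the absolute continuity issue you identified rather than resolving it---while your route is more conceptual and shows that any renorming with the right index property would do, not just the specific $\bar\phi$ above.
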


\begin{proof}
Let $\bar{\phi}(t)=\int_{0}^{t}\phi_{X}(s)\frac{ds}{s},$ then, since
$\frac{\phi_{X}(s)}{s}$ decreases, we have $\bar{\phi}(t)\geq\phi_{X}(t),$ and%
\[
([-\bar{\phi}(t)]^{-1})^{\prime}=\frac{1}{\bar{\phi}(t)^{2}}\frac{\phi_{X}%
(t)}{t}\leq\frac{1}{\bar{\phi}(t)t}\leq\frac{1}{t\phi_{X}(t)}.
\]
Moreover, since $\underline{\alpha}_{\Lambda(X)}>0,$ we have (cf. \cite[Lemma
2.1]{Sh})%
\[
\bar{\phi}(t)\preceq\phi_{X}(t).
\]
Therefore there exists an equivalent re-norming of $X$, which we shall call
$\bar{X},$ such that%
\[
\phi_{X}(t)\simeq\phi_{\bar{X}}(t)=\bar{\phi}(t).
\]
Moreover, we clearly have%
\[
K(\delta,f;X,\dot{W}_{X}^{1})\simeq K(\delta,f;\bar{X},\dot{W}_{\bar{X}}%
^{1}).
\]
We can also see that,
\begin{align*}
\left(  \left[  \phi_{\bar{X}}(t)\right]  ^{-1}\right)  ^{\prime}  &
=([-\bar{\phi}(t)]^{-1})^{\prime}\\
&  =\bar{\phi}(t)^{-2}\frac{\phi_{X}(t)}{t}\\
&  \simeq\frac{1}{\left(  \phi_{X}(t)\right)  ^{2}}\frac{\phi_{X}(t)}{t}\\
&  \simeq\frac{1}{\phi_{X}(t)t}.
\end{align*}
Consequently (\ref{captu4}) holds when $\underline{\alpha}_{\Lambda(X)}>0$.
\end{proof}

On the other hand Deland's improvement does not follow from the previous
Lemma, since from the point of view of the theory of indices
$\underline{\alpha}_{\Lambda(e^{L^{2}})}=0.\,$For more details on how to
overcome this difficulty for spaces close to $L^{\infty}$ we must refer to
Deland's thesis \cite{deland}.

For applications to Fourier series, the appropriate moduli of continuity
defined for periodic functions on, say, $[0,2\pi],$ are defined by (cf.
\cite{garsiaind}, \cite[(1.1),(1.3)]{deland})
\[
W_{A}(h,f)=\inf\left\{  \lambda>0:\int_{0}^{2\pi}A\left(  \frac{\left|
f(x+h)-f(x)\right|  }{\lambda}\right)  )dx\leq1\right\}  .
\]
Then, we also have%
\begin{align*}
\sup_{\sigma<\delta}Q_{A}(\sigma,f)  &  \preceq K(\delta,f;L_{A}[0,2\pi
],\dot{W}_{L_{A}}^{1}[0,2\pi])\\
&  \simeq\sup_{h\leq\delta}W_{A}(h,f).
\end{align*}
It follows from our work that the results of \cite{garsiaind} can be now
extended to r.i. spaces. In this connection we note that (just like in
\cite{garsiaind} for $L^{p}$ spaces) one could also use the boundedness of the
Hilbert transform on r.i. spaces where one has control of the Boyd indices
(cf. \cite{boyd}, \cite{bs}). However, to continue with this topic will take
us too far away from our main concerns here so we must leave the discussion
for another occasion.

For further applications to: the path continuity of stochastic processes,
Fourier series, random Fourier series and embeddings we refer to
\cite{garsiagrenoble}, \cite{garsiaind}, \cite{garsia}, \cite{deland} and the
references therein. Moreover, under suitable assumptions on the connection
between the isoperimetric profile and the measure of balls (cf. \cite{tes})
one can also formulate the Besov conditions as entropy conditions as it is
customarily done in probability (cf. the discussion in Pisier \cite[cf.
Remarque, p 14.]{pis}).

\chapter{Appendix: Some remarks on the calculation of $K-$%
functionals\label{Kfuntionals}}

\section{Introduction}

It seemed to us useful to collect for our reader some known computations of
$K-$functionals of the form $K(t,f;X(\Omega),\dot{W}_{X}^{1}(\Omega)),$ where
$X$ is a r.i. space. We don't claim any originality, but we provide detailed
proofs when we could not find suitable references.

In the Euclidean case, for smooth (Lip) domains, these estimates are well
known for $L^{p}$ spaces (cf. \cite{bs}, \cite{js}, \cite{tr}), and can be
readily extended to r.i. spaces (cf. \cite{mamiproc}):%
\begin{align*}
K(t,f;X(\Omega),\dot{W}_{X}^{1}(\Omega)  &  \simeq\omega_{X}(t,f)\\
&  =\sup_{\left|  h\right|  \leq t}\left\|  \left(  f(.+h)-f(.)\right)
\chi_{\Omega(h)}\right\|  _{X}%
\end{align*}
where%
\[
\Omega(h)=\{x\in\Omega:x+th\in\Omega,0\leq t\leq1\}.
\]

Consider $(\mathbb{R}^{n},\left|  \cdot\right|  ,d\gamma_{n}),$ i.e.
$\mathbb{R}^{n}$ with Gaussian measure. The fact that this measure is not
translation invariant makes the computation of the $K$-functional somewhat
more complicated. We discuss the necessary modifications in some detail for
$n=1.$

We consider spaces on $(\mathbb{R},\left|  \cdot\right|  ,d\gamma_{1}).$ Let
$p\in\lbrack1,\infty],$ and let%
\[
K_{\gamma}(t,f,L^{p},\dot{W}_{p}^{1})=\inf\left\{  \left\|  f-g\right\|
_{L^{p}(\mathbb{R},d\gamma_{1})}+t\left\|  g^{\prime}\right\|  _{L^{p}%
(d\gamma_{1})}\right\}  .
\]
This functional was studied by the approximation theory community (cf.
Ditzian-Totik \cite{ditzian}, Ditzian-Lubinsky \cite{dil} and the references
therein). For example, from \cite[page 183]{ditzian}, we have%
\begin{align}
K_{\gamma}(t,f,L^{p},\dot{W}_{p}^{1})  &  \simeq\sup_{0<h\leq t}\left\|
f(.+h)-f(.)\right\|  _{L^{p}\left(  (-\frac{1}{2h},\frac{1}{2h}),d\gamma
_{1}\right)  }+\inf_{c}\left\|  f-c\right\|  _{L^{p}\left(  (\frac{1}%
{2t},\infty),d\gamma_{1}\right)  }\nonumber\\
&  +\inf_{c}\left\|  f-c\right\|  _{L^{p}\left(  (-\infty,\frac{-1}%
{2t}),d\gamma_{1}\right)  } \label{kaf0}%
\end{align}
The main part of the right hand side of (\ref{kaf0}) is the modulus
\[
\Omega_{\gamma}(t,f)=\sup_{0<h\leq t}\left\|  f(.+h)-f(.)\right\|
_{L^{p}\left(  (-\frac{1}{2h},\frac{1}{2h}),d\gamma_{1})\right)  }.
\]
Indeed, $\Omega_{\gamma}(t,f)$ controls the characterization of the
corresponding interpolation spaces. For example, it follows (cf. \cite[Theorem
11.2.5]{ditzian}) that for $\theta\in(0,1),$
\begin{equation}
K_{\gamma}(t,f,L^{p},\dot{W}_{p}^{1})=O(t^{\theta})\Longleftrightarrow
\Omega_{\gamma}(t,f)=O(t^{\theta}). \label{kaf1}%
\end{equation}
More generally, a similar result holds for $(\mathbb{R},d\gamma_{\lambda})$,
where for $\lambda>1,$ $d\gamma_{\lambda}(x)=e^{-x^{\lambda}}dx.$ Indeed, in
this case (\ref{kaf0}) holds replacing $\frac{1}{2h}$ throughout by $\frac
{1}{\lambda h^{1/\left(  1-\lambda\right)  }}:$%
\begin{align}
K_{\gamma_{\lambda}}(t,f,L^{p},\dot{W}_{p}^{1})  &  \simeq\sup_{0<h\leq
t}\left\|  f(.+h)-f(.)\right\|  _{L^{p}\left(  (-\frac{1}{\lambda h^{1/\left(
1-\lambda\right)  }},\frac{1}{\lambda h^{1/\left(  1-\lambda\right)  }%
}),d\gamma_{\lambda}\right)  }+\label{emancipada}\\
&  \inf_{c}\left\|  f-c\right\|  _{L^{p}\left(  (\frac{1}{\lambda t^{1/\left(
1-\lambda\right)  }},\infty),d\gamma_{\lambda}\right)  }+\inf_{c}\left\|
f-c\right\|  _{L^{p}\left(  (-\infty,\frac{-1}{\lambda t^{1/\left(
1-\lambda\right)  }}),d\gamma_{\lambda}\right)  }.\nonumber
\end{align}

Again the main part of the right hand side is the modulus of continuity
\[
\Omega_{\gamma_{\lambda}}(t,f)=\sup_{0<h\leq t}\left\|  f(.+h)-f(.)\right\|
_{L^{p}((-\frac{1}{\lambda h^{1/\left(  1-\lambda\right)  }},\frac{1}{\lambda
h^{1/\left(  1-\lambda\right)  }}),d\gamma_{\lambda})}.
\]
Likewise the analogue of (\ref{kaf1}) holds.

More generally, the estimates above have been extended to the class of the so
called ``Freud weights'' of the form $w(x)=e^{Q(x)}.$ Here we assume that $Q$
is a given function in $C^{1}(\mathbb{R})$ such that $Q$ is even,
$\lim_{x\rightarrow\infty}Q^{\prime}(x)=\infty,$ and such that there exists
$A>0,$ such that $Q^{\prime}(x+1)\leq AQ^{\prime}(x),$ for all $x>0.$ For
complete details we refer again to \cite{ditzian}.

Although one would expect that the $n-$dimensional extensions of the
$K-$func\-tio\-nal estimates above should not be very difficult, we have not
been able find references, even after consultation with many experts. On the
other hand, as is well known, one can avoid this difficulty through the use of
an alternate characterization of $K-$functionals for Gaussian measure using
appropriate semigroups. We provide some details in the next sections.

For the last section of this chapter, connecting semigroups and Gaussian Besov
spaces, we are grateful to Stefan Geiss and Alessandra Lunardi for precious
information, in particular, for pointing out the relevant literature. In this
last regard we also refer to the recent paper by Geiss-Toivola \cite{geto}. In
connection with this last section we should mention the recent formulation of
fractional Poincar\'{e} inequalities in \cite{moru}.

\section{Semigroups and Interpolation}

A family $\{G(t)\}_{t>0}$ of operators on a Banach space $A$ is called an
equibounded, strongly continuous semigroup if the following conditions are
satisfied:
\[
(i)\text{ \ \ \ \ \ }G(t+s)=G(t)G(s)
\]%
\[
(ii)\text{ \ \ \ There exists }M>0\text{ such that \ \ \ }\sup_{t>0}\left\|
G(t)\right\|  _{A\rightarrow A}\leq M
\]%
\[
(iii)\text{ \ \ \ \ }\lim_{t\rightarrow0}\left\|  G(t)a-a\right\|
_{A}=0,\text{ for }a\in A.
\]
The infinitesimal generator $\Lambda$ is defined on
\[
D(\Lambda)=\{a\in A:\lim_{t\rightarrow0}\frac{G(t)a-a}{t}\text{ exists}\}
\]
by
\[
\Lambda a=\lim_{t\rightarrow0}\frac{G(t)a-a}{t}.
\]
We consider
\[
K(t,a;A,D(\Lambda))=\inf\{\left\|  a_{0}\right\|  _{A}+t\left\|  \Lambda
a_{1}\right\|  _{A}:a=a_{0}+a_{1}\}.
\]
For equibounded strongly continuous semigroups we have the well known
estimate, apparently going back to Peetre \cite{peebra} (cf. \cite{bl},
\cite{ditiv}, \cite{pee})
\begin{equation}
K(t,a;A,D(\Lambda))\simeq\sup_{0<s\leq t}\left\|  (G(s)-I)a\right\|  _{A},
\label{Kf}%
\end{equation}
where $I=$identity operator on $A.$ The proof can be accomplished using the
decomposition
\[
a=\underset{a_{0}\epsilon A}{\underbrace{\left(  a-\frac{1}{t}\int_{0}%
^{t}G(s)ads\right)  }}+\underset{a_{1}\epsilon D(\Lambda)}{\underbrace{\frac
{1}{t}\int_{0}^{t}G(s)ads}}.
\]
Note that the right hand side of (\ref{Kf}) should be thought as a generalized
modulus of continuity which in the classical case corresponds to the semigroup
of translations $G(s)f=f(s+\cdot).$

In \cite[Corollary 7.2]{ditiv} the following alternate estimates were pointed
out%
\begin{align*}
K(t,a;A,D(\Lambda))  &  \simeq\frac{1}{t}\int_{0}^{t}\left\|
(G(s)-I)a\right\|  _{A}ds\\
&  \simeq\frac{1}{t}\left\|  \int_{0}^{t}(G(s)-I)ads\right\|  _{A}\\
&  \simeq\frac{1}{t}\left\|  \int_{t/2}^{t}(G(s)-I)ads\right\|  _{A}\\
&  \simeq\frac{1}{t}\int_{t/2}^{t}\left\|  (G(s)-I)ads\right\|  _{A}.
\end{align*}

The preceding estimates can be further improved under more restrictions on the
semigroups. Recall that a semigroup is said to be holomorphic if:
\[
(i)\text{ \ \ \ \ }G(t)a\in D(\Lambda)\text{ \ \ for all }a\in A,
\]
and
\[
(ii)\text{ \ There exists a constant }C>0\text{ such that }\left\|  \Lambda
G(t)a\right\|  _{A}\leq C\frac{\left\|  a\right\|  _{A}}{t},\forall a\in
A,t>0.
\]
In \cite{ditiv} it is shown that for holomorphic semigroups we have the
following improvement of (\ref{Kf})
\begin{equation}
K(t,a;A,D(\Lambda))\approx\left\|  (G(t)-I)a\right\|  _{A}. \label{holomo}%
\end{equation}
Peetre \cite[page 33]{pee} pointed out, without proof, that for holomorphic
semigroups we also have%
\[
K(t,a;A,D(\Lambda))\simeq\sup_{s\leq t}\left\|  \Lambda G(s)a\right\|  _{A}.
\]
However, we can only prove a somewhat weaker result here.

\begin{lemma}
\label{lemalimon}Suppose that $\{G(t)\}_{t>0}$ is an holomorphic semigroup on
a Banach space $A$. Let $c_{1}>1,$ be such that for all $t>0,$ and for all
$a\in A$ (cf. (\ref{holomo}) above),
\[
\frac{1}{c_{1}}\left\|  (G(t)-I)a\right\|  _{A}\leq K(t,a;A,D(\Lambda))\leq
c_{1}\left\|  (G(t)-I)a\right\|  _{A}.
\]
Then, there exist absolute constants $c_{2}(m),c_{3}(m)$ such that for all
$t>0,$ for all $a\in A,$ for all $m\geq2,$
\begin{equation}
K(t,a;A,D(\Lambda))-c_{1}^{2}K(\frac{t}{m},a;A,D(\Lambda))\leq c_{2}%
(m)\sup_{s\leq t}\left\|  \Lambda G(s)a\right\|  _{A}\leq c_{3}%
(m)K(t,a;A,D(\Lambda)). \label{beta}%
\end{equation}

\end{lemma}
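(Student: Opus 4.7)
The plan is to lean on the already-established equivalence (\ref{holomo}) between $K(t,a;A,D(\Lambda))$ and $\|(G(t)-I)a\|_A$, combined with the fundamental theorem of calculus for a holomorphic semigroup. Since $G(s)a \in D(\Lambda)$ for every $s>0$, the identity $\tfrac{d}{ds}G(s)a = \Lambda G(s)a$ gives
\[
(G(t)-G(t/m))a \;=\; \int_{t/m}^{t} \Lambda G(s)a \, ds,
\]
and both halves of (\ref{beta}) will be driven from this single representation together with the triangle inequality.

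For the lower bound on the supremum, I would split
\[
\|(G(t)-I)a\|_A \;\le\; \|(G(t)-G(t/m))a\|_A + \|(G(t/m)-I)a\|_A,
\]
use the left inequality of (\ref{holomo}) to replace $\|(G(t)-I)a\|_A$ below by $K(t,a)/c_1$ and the right inequality to replace $\|(G(t/m)-I)a\|_A$ above by $c_1K(t/m,a)$, and then pass the norm inside the integral to obtain
\[
\|(G(t)-G(t/m))a\|_A \;\le\; (1-1/m)\,t \sup_{s\le t}\|\Lambda G(s)a\|_A.
\]
Rearranging yields $K(t,a) - c_1^2K(t/m,a)$ controlled by a constant (depending on $m$) times $\sup_{s\le t}\|\Lambda G(s)a\|_A$, which identifies~$c_2(m)$.

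For the upper bound, I would fix $s\le t$ and an arbitrary decomposition $a=a_0+a_1$ with $a_1\in D(\Lambda)$, and exploit the two standard features of a holomorphic semigroup (equiboundedness and $\|\Lambda G(s)\|\le C/s$):
\[
\|\Lambda G(s)a\|_A \;\le\; \|\Lambda G(s)a_0\|_A + \|G(s)\Lambda a_1\|_A \;\le\; \frac{C}{s}\|a_0\|_A + M\|\Lambda a_1\|_A.
\]
Multiplying by $s$ and taking the infimum over admissible decompositions gives $s\|\Lambda G(s)a\|_A \lesssim K(s,a)$, and monotonicity of $K(\cdot,a)$ together with the restriction $s\le t$ produces a bound by $K(t,a)$ with a constant depending only on $m$, $C$, and $M$, which identifies $c_3(m)$.

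The main obstacle is the bookkeeping of the supremum: for $a\notin D(\Lambda)$ one has $\|\Lambda G(s)a\|_A \to\infty$ like $1/s$ as $s\to 0^+$, so the expression $\sup_{s\le t}\|\Lambda G(s)a\|_A$ in (\ref{beta}) is meaningfully finite only on the truncated range $s\in[t/m,t]$ naturally produced by the integral identity above. The delicate part of the plan is therefore to organize the argument so that this implicit truncation is honest, and to verify that the final $c_2(m),c_3(m)$ depend only on $m$ and on the absolute constants $c_1$, $C$, $M$ attached to the semigroup, and not on $t$ or $a$.
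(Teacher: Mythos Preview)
Your approach is essentially the paper's: the same decomposition $a=a_0+a_1$ together with the holomorphic bound $\|\Lambda G(s)\|\le C/s$ for the upper estimate, and the same integral identity $\tfrac{d}{ds}G(s)a=\Lambda G(s)a$ combined with the split at $t/m$ and the two-sided equivalence (\ref{holomo}) for the lower estimate. Your concern about the unweighted supremum is exactly right and is resolved in the paper by working throughout with $\sup_{s\le t}s\|\Lambda G(s)a\|_A$ rather than $\sup_{s\le t}\|\Lambda G(s)a\|_A$; the integral $\int_{t/m}^{t}\Lambda G(s)a\,ds$ is estimated as $\int_{t/m}^{t}\tfrac{1}{s}\,\|s\Lambda G(s)a\|_A\,ds\le (m-1)\sup_{s\le t}\|s\Lambda G(s)a\|_A$, and the upper bound likewise yields $\sup_{s\le t}s\|\Lambda G(s)a\|_A\le CK(t,a;A,D(\Lambda))$, so the quantity appearing in (\ref{beta}) should be read with the extra factor of $s$ (as is done in Theorem \ref{markante}(ii$_2$)).
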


\begin{proof}
It is easy to show that there exists an absolute constant $C>0$ such that%
\begin{equation}
\sup_{s\leq t}\left\|  \Lambda G(s)a\right\|  _{A}\leq CK(t,a;A,D(\Lambda)).
\label{sesigue}%
\end{equation}
Indeed, let $a=a_{0}+a_{1},$ be any decomposition with $a_{0}\in A,$ $a_{1}\in
D(\Lambda).$ Then, using the properties of holomorphic semigroups, we have%
\begin{align*}
\sup_{s\leq t}s\left\|  \Lambda G(s)a\right\|  _{A}  &  \leq\sup_{s\leq
t}s\left\|  G(s)\Lambda a_{0}\right\|  _{A}+\sup_{s\leq t}s\left\|
G(s)\Lambda a_{1}\right\|  _{A}\\
&  \leq C\left(  \left\|  a_{0}\right\|  _{A}+t\left\|  \Lambda a_{1}\right\|
_{A}\right)  .
\end{align*}
Consequently, (\ref{sesigue}) follows by taking infimum over all such decompositions.

We now prove the left hand side of (\ref{beta}). Observe that, for $t>0$ we
have $G(t)a\in D(\Lambda),$ therefore we can write $\frac{d}{dt}%
(G(t)a)=\Lambda G(t)a.$ Consequently, for all $m\geq2,$
\begin{align*}
K(t,a;A,D(\Lambda))  &  \leq c_{1}\left\|  (G(t)-I)a\right\|  _{A}\\
&  \leq c_{1}\left\|  \int_{0}^{t/m}\Lambda G(s)ads\right\|  _{A}%
+c_{1}\left\|  \int_{t/m}^{t}\Lambda G(s)ads\right\|  _{A}\\
&  =c_{1}\left\|  (G(t/m)-I)a\right\|  _{A}+c_{1}\left\|  \int_{t/m}^{t}%
\frac{s}{s}\Lambda G(s)ads\right\|  _{A}\\
&  \leq c_{1}\left\|  (G(t/m)-I)a\right\|  _{A}+c_{1}\int_{t/m}^{t}\frac{1}%
{s}\left\|  s\Lambda G(s)a\right\|  _{A}ds\\
&  \leq c_{1}\left\|  (G(t/m)-I)a\right\|  _{A}+c_{1}\frac{m}{t}\sup_{s\leq
t}\left\|  s\Lambda G(s)a\right\|  _{A}\frac{(m-1)}{m}t\\
&  \leq c_{1}^{2}K(\frac{t}{m},a;A,D(\Lambda))+c_{1}(m-1)\sup_{s\leq
t}\left\|  s\Lambda G(s)a\right\|  _{A},
\end{align*}
as we wished to show.
\end{proof}

Recall the definition of real interpolation spaces. Let $\theta\in(0,1),$
$q\in(0,\infty),$
\[
(A,D(\Lambda))_{\theta,q}=\left\{  a\in A:\left\|  a\right\|  _{(A,D(\Lambda
))_{\theta,q}}^{q}=\int_{0}^{\infty}\left(  t^{-\theta}K(s,a;A,D(\Lambda
))\right)  ^{q}\frac{dt}{t}<\infty\right\}  ,
\]
and%
\[
(A,D(\Lambda))_{\theta,\infty}=\left\{  a\in A:\left\|  a\right\|
_{(A,D(\Lambda))_{\theta,\infty}}=\sup_{t>0}\left\{  t^{-\theta}%
K(s,a;A,D(\Lambda))\right\}  <\infty\right\}  .
\]

From the previous Lemma we see that

\begin{proposition}
\label{carbao}Suppose that $\{G(t)\}_{t>0}$ is an holomorphic semigroup on a
Banach space $A.$ Then $(A,D(\Lambda))_{\theta,q}$ can be equivalently
described by%
\[
(A,D(\Lambda))_{\theta,q}=\left\{  a:\left\{  \int_{0}^{\infty}\left(
t^{-\theta}\sup_{s\leq t}\left\|  \Lambda G(s)a\right\|  _{A}\right)
^{q}\frac{dt}{t}\right\}  ^{1/q}<\infty\right\}  ,
\]
with the obvious modification if $q=\infty,$ and where the constants of the
underlying norm equivalences depend only on $\theta.$
\end{proposition}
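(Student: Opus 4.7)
The plan is to deduce Proposition \ref{carbao} directly from the two-sided estimate provided by Lemma \ref{lemalimon}. Writing $\Phi(t,a) := \sup_{s \leq t}\|\Lambda G(s) a\|_A$, the Lemma gives, for each integer $m \geq 2$, constants $c_2(m), c_3(m) > 0$ such that
\[
K(t,a) - c_1^2 K(t/m, a) \leq c_2(m)\,\Phi(t,a) \leq c_3(m)\,K(t,a),
\]
where I abbreviate $K(t,a) = K(t,a;A,D(\Lambda))$. The two halves of this estimate will yield the two desired norm inequalities after taking a weighted $L^q(dt/t)$ quasi-norm.

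For the easy direction, the right inequality $\Phi(t,a) \leq c_3(m)K(t,a)$ immediately gives, after multiplying by $t^{-\theta}$ and integrating in $dt/t$,
\[
\left\{\int_0^\infty (t^{-\theta}\Phi(t,a))^q \frac{dt}{t}\right\}^{1/q} \leq c_3(m)\,\|a\|_{(A,D(\Lambda))_{\theta,q}},
\]
with the obvious supremum version when $q = \infty$. For the reverse direction I would rewrite the left inequality as $K(t,a) \leq c_1^2 K(t/m,a) + c_2(m)\,\Phi(t,a)$, multiply by $t^{-\theta}$, and take the $L^q(dt/t)$ (quasi-)norm. The change of variables $u = t/m$ yields
\[
\left\{\int_0^\infty (t^{-\theta}K(t/m,a))^q \frac{dt}{t}\right\}^{1/q} = m^{-\theta}\,\|a\|_{(A,D(\Lambda))_{\theta,q}},
\]
so combining with (quasi-)subadditivity,
\[
\|a\|_{(A,D(\Lambda))_{\theta,q}} \leq c_1^2\, m^{-\theta}\,\|a\|_{(A,D(\Lambda))_{\theta,q}} + c_2(m)\left\{\int_0^\infty (t^{-\theta}\Phi(t,a))^q \frac{dt}{t}\right\}^{1/q}.
\]
Since $\theta > 0$, I can choose $m$ large enough so that $c_1^2 m^{-\theta} \leq 1/2$ and absorb the first term on the right into the left, producing the reverse inequality with constants that depend only on $\theta$ (and $q$).

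The main obstacle is that this absorption step is not legitimate unless $\|a\|_{(A,D(\Lambda))_{\theta,q}}$ is known to be finite a priori. I would handle this by performing the argument first on truncated integrals $\int_\varepsilon^R$ in place of $\int_0^\infty$. Since $K(t,a) \leq \|a\|_A$ is bounded and $K(t,a) \leq t\,\|\Lambda a_1\|$ behaves linearly near zero for any admissible decomposition, all truncated weighted integrals are finite, so the absorption is valid. The resulting constants are independent of $\varepsilon$ and $R$, and monotone convergence as $\varepsilon \to 0^+$ and $R \to \infty$ delivers the equivalence for the full range of $t$. The case $q = \infty$ is handled by the same scheme with suprema in place of integrals; no absorption-finiteness issue arises there, since both quantities are bounded by $\|a\|_A$ uniformly in $t$.
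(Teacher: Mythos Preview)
Your argument is exactly the paper's: apply Lemma \ref{lemalimon}, take the weighted $L^{q}(dt/t)$ (quasi-)norm of both sides, change variables $u=t/m$ to produce the factor $m^{-\theta}$, and absorb after choosing $m$ with $c_{1}^{2}m^{-\theta}<1$. The paper performs the absorption without addressing a priori finiteness; you are right to raise this, but your truncation does not close the gap as written: after the substitution the integral on the right runs over $[\varepsilon/m,\,R/m]$, not $[\varepsilon,R]$, so the two sides carry different truncated norms and you cannot subtract one from the other---a density argument through $D(\Lambda)$, or first establishing finiteness via one of the other equivalent descriptions in Theorem \ref{markante}, is the standard remedy.
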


\begin{proof}
One part follows readily from (\ref{sesigue}). For the less trivial inclusion
we proceed as follows. Given $\theta\in(0,1),$ select $m$ such that
$m^{-\theta}c_{1}^{2}<1.$ Then from Lemma \ref{lemalimon}, there exists an
absolute $c_{2}(m)>0$ such that%
\[
K(t,a;A,D(\Lambda))\leq c_{2}(m)\sup_{s\leq t}\left\|  \Lambda G(s)a\right\|
_{A}+c_{1}^{2}K(\frac{t}{m},a;A,D(\Lambda))
\]
Thus%
\begin{align*}
\left(  \int_{0}^{\infty}\left(  t^{-\theta}K(t,a;A,D(\Lambda))\right)
^{q}\frac{dt}{t}\right)  ^{1/q}  &  \leq c_{2}(m)\left(  \int_{0}^{\infty
}\left(  t^{-\theta}\sup_{s\leq t}\left\|  \Lambda G(s)a\right\|  _{A}\right)
^{q}\frac{dt}{t}\right)  ^{1/q}\\
&  +c_{1}^{2}\left(  \int_{0}^{\infty}(t^{-\theta}K(\frac{t}{m},a;A,D(\Lambda
)))^{q}\frac{dt}{t}\right)  ^{1/q}\\
&  =c_{2}(m)\left(  \int_{0}^{\infty}\left(  t^{-\theta}\sup_{s\leq t}\left\|
\Lambda G(s)a\right\|  _{A}\right)  ^{q}\frac{dt}{t}\right)  ^{1/q}\\
&  +c_{1}^{2}m^{-\theta}\left(  \int_{0}^{\infty}\left(  t^{-\theta
}K(t,a;A,D(\Lambda))\right)  ^{q}\frac{dt}{t}\right)  ^{1/q}.
\end{align*}

Hence,%
\[
\left(  \int_{0}^{\infty}\left(  t^{-\theta}K(t,a;A,D(\Lambda))\right)
^{q}\frac{dt}{t}\right)  ^{1/q}\leq(1-c_{1}^{2}m^{-\theta})^{-1}%
c_{2}(m)\left(  \int_{0}^{\infty}\left(  t^{-\theta}\sup_{s\leq t}\left\|
\Lambda G(s)a\right\|  _{A}\right)  ^{q}\frac{dt}{t}\right)  ^{1/q}.
\]

\end{proof}

We have the following well known result (cf. \cite{bube}, \cite{bl})

\begin{theorem}
\label{markante}Let $\{G(t)\}_{t>0}$ be an equibounded, strongly continuous
semigroup on the Banach space $A.$ Let $\theta\in(0,1),q\in(0,\infty];$ then
(with the usual modifications when $q=\infty)$

(i)%
\[
(A,D(\Lambda))_{\theta,q}=\left\{  a:\left\{  \int_{0}^{\infty}\left(
t^{-\theta}\sup_{0<s\leq t}\left\|  G(s)a-a\right\|  _{A}\right)  ^{q}%
\frac{dt}{t}\right\}  ^{1/q}<\infty\right\}  .
\]
(ii) Moreover, if the semigroup is analytic then we also have the following
characterizations (with the usual modifications when $q=\infty)$

(ii$_{1})$
\[
(A,D(\Lambda))_{\theta,q}=\left\{  a:\left\{  \int_{0}^{\infty}\left(
t^{-\theta}\left\|  G(t)a-a\right\|  _{A}\right)  ^{q}\frac{dt}{t}\right\}
^{1/q}<\infty\right\}  ,
\]
(ii$_{2})$%
\[
(A,D(\Lambda))_{\theta,q}=\left\{  a\in A:\int_{0}^{\infty}\left(  t^{-\theta
}\sup_{s\leq t}s\left\|  \Lambda G(s)a\right\|  _{A}\right)  ^{q}\frac{dt}%
{t}<\infty\right\}  ,
\]

(ii$_{3})$%
\[
(A,D(\Lambda))_{\theta,q}=\left\{  a\in A:\int_{0}^{\infty}\left(
t^{1-\theta}\left\|  \Lambda G(t)a\right\|  _{A}\right)  ^{q}\frac{dt}%
{t}<\infty\right\}  .
\]

\end{theorem}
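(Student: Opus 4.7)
The plan is to derive each of the four characterizations by substituting one of the pointwise $K$-functional equivalences already established above into the definition of $(A,D(\Lambda))_{\theta,q}$.

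Part (i) follows by inserting the Peetre equivalence $(\ref{Kf})$, $K(t,a;A,D(\Lambda))\simeq \sup_{0<s\le t}\|(G(s)-I)a\|_A$, into the weighted $L^q(dt/t)$ norm; since the constants in $(\ref{Kf})$ are universal, the two norms are comparable and the stated characterization follows. Part (ii$_1$) is obtained in the same way, using the sharper analytic estimate $(\ref{holomo})$ in place of $(\ref{Kf})$. Part (ii$_2$) is essentially a restatement of Proposition \ref{carbao}, the underlying pointwise bound $\sup_{s\le t}s\|\Lambda G(s)a\|_A\le CK(t,a;A,D(\Lambda))$ being derived during the proof of Lemma \ref{lemalimon}.

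For (ii$_3$) I would argue the two halves separately. The pointwise upper bound $t\|\Lambda G(t)a\|_A\le CK(t,a;A,D(\Lambda))$ follows for analytic semigroups by decomposing $a=a_0+a_1$ with $a_1\in D(\Lambda)$, writing $\Lambda G(t)a=\Lambda G(t)a_0+G(t)\Lambda a_1$, and combining the analyticity bound $\|\Lambda G(t)a_0\|_A\le (C/t)\|a_0\|_A$ with $\|G(t)\Lambda a_1\|_A\le M\|\Lambda a_1\|_A$; taking the infimum over decompositions yields the estimate. For the converse, use the improper integral representation $G(t)a-a=\int_0^t \Lambda G(s)a\,ds$ (valid since $G(s)a\in D(\Lambda)$ for all $s>0$ by analyticity) together with $(\ref{holomo})$ to obtain
\[
K(t,a;A,D(\Lambda))\le c_1\int_0^t \|\Lambda G(s)a\|_A\,ds.
\]
Applying the classical Hardy inequality on $(0,\infty)$ with weight $t^{-\theta q-1}\,dt$ (valid precisely because $\theta\in(0,1)$, with the corresponding supremum-modification for $q=\infty$) converts this pointwise estimate into
\[
\Bigl(\int_0^\infty \bigl(t^{-\theta}K(t,a;A,D(\Lambda))\bigr)^q \tfrac{dt}{t}\Bigr)^{1/q}\le C_{\theta,q}\Bigl(\int_0^\infty \bigl(t^{1-\theta}\|\Lambda G(t)a\|_A\bigr)^q \tfrac{dt}{t}\Bigr)^{1/q}.
\]

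The only technical point is the Hardy inequality step in (ii$_3$), which dictates the restriction $\theta\in(0,1)$ and requires the standard weighted form together with a routine adjustment when $q=\infty$; the remainder of the argument is a direct chaining of the equivalences already in hand.
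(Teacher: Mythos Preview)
Your proposal is correct and follows essentially the same route as the paper: (i), (ii$_1$), (ii$_2$) are obtained exactly as you say from $(\ref{Kf})$, $(\ref{holomo})$, and Proposition~\ref{carbao}, and the hard direction of (ii$_3$) is handled via the integral representation $G(t)a-a=\int_0^t\Lambda G(s)a\,ds$ together with Hardy's inequality, precisely as in the paper. The only cosmetic difference is in the easy direction of (ii$_3$): the paper simply invokes the trivial inequality $t\|\Lambda G(t)a\|_A\le \sup_{s\le t}s\|\Lambda G(s)a\|_A$ to reduce to (ii$_2$), whereas you reprove the pointwise bound $t\|\Lambda G(t)a\|_A\le CK(t,a;A,D(\Lambda))$ directly (this is just $(\ref{sesigue})$ evaluated at $s=t$), which is equivalent.
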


\begin{proof}
The characterizations $(i)-(ii_{1})-(ii_{2})$ follow (respectively) from
(\ref{Kf}), (\ref{holomo}) and Proposition \ref{carbao}. To prove $(ii_{3})$
we remark that, on the one hand,%
\[
\int_{0}^{\infty}\left(  t^{1-\theta}\left\|  \Lambda G(t)a\right\|
_{A}\right)  ^{q}\frac{dt}{t}\leq\int_{0}^{\infty}\left(  t^{-\theta}%
\sup_{s\leq t}s\left\|  \Lambda G(s)a\right\|  _{A}\right)  ^{q}\frac{dt}{t}.
\]
On the other hand, since $\frac{d}{dt}(G(t)a)=\Lambda G(t)a,$
\begin{align*}
\int_{0}^{\infty}\left(  t^{-\theta}\left\|  G(t)a-a\right\|  _{A}\right)
^{q}\frac{dt}{t}  &  =\int_{0}^{\infty}\left(  t^{-\theta}\left\|  \int%
_{0}^{t}\Lambda G(s)ads\right\|  _{A}\right)  ^{q}\frac{dt}{t}\\
&  \leq\int_{0}^{\infty}\left(  t^{-\theta}\int_{0}^{t}\left\|  \Lambda
G(s)ads\right\|  _{A}\right)  ^{q}\frac{dt}{t}\\
&  \leq c_{\theta,q}\int_{0}^{\infty}\left(  t^{1-\theta}\left\|  \Lambda
G(t)a\right\|  _{A}\right)  ^{q}\frac{dt}{t},
\end{align*}
where the last step follows from Hardy's inequality.
\end{proof}

\begin{remark}
Related interpolation spaces (obtained by the ``complex method'') can be
characterized, under suitable conditions, using functional calculus. By the
known relations between these different interpolation methods one can obtain
further characterizations and embedding theorems for the real method (cf.
\cite{xiao}). In this setting fractional powers of the infinitesimal generator
$\Lambda,$ play the role of fractional derivatives. We must refer to \cite{tr}
and \cite{pee} for a complete treatment.
\end{remark}

\section{Specific Semigroups}

Two basic examples of semigroups on $L^{p}((\mathbb{R}^{n}),d\gamma_{n}),$
which are relevant for this paper are given by

1. Ornstein-Uhlenbeck semigroup, defined by%
\[
G(t)f(x)=(1-e^{-2t})^{-n/2}\int e^{-\frac{e^{-2t}(\left|  x\right|
^{2}+\left|  y\right|  ^{2}-2<x,y>)}{1-e^{-2t}}}f(y)d\gamma_{n}(y),
\]
with generator%
\[
\Lambda=\frac{1}{2}\Delta_{x}-<x,\nabla_{x}>.
\]

2. Poisson-Hermite semigroup%
\[
P_{t}f(x)=\frac{1}{\sqrt{\pi}}\int_{0}^{\infty}\frac{e^{-s}}{\sqrt{s}}%
G(\frac{t^{2}}{4s})f(x)ds,
\]
with generator%
\[
\Lambda_{1/2}=-(-\Lambda)^{1/2}.
\]
For example, $P_{t}$ on $L^{\infty}(\mathbb{R}^{n})$ is analytic although not
strongly continuous. Restricting $P_{t}$ to $\widetilde{L^{\infty}%
(\mathbb{R}^{n})}$ , the subspace of elements of $L^{\infty}(\mathbb{R}^{n})$
such that $\lim\left\Vert P_{t}f-f\right\Vert _{\infty}=0,$ remedies this
deficiency and we have (cf. \cite{tr})%
\[
(L^{\infty}(\mathbb{R}^{n}),d\gamma_{n}),D(\Lambda_{1/2}))_{\theta,\infty
}=(\widetilde{L^{\infty}(\mathbb{R}^{n})},d\gamma_{n}),D(\Lambda
_{1/2}))_{\theta,\infty}=Lip_{\theta}(\mathbb{R}^{n}).
\]
In particular, it follows from Theorem \ref{markante} that $f\in Lip_{\theta
}(\mathbb{R}^{n}),$ iff%
\[
\left\Vert P_{t}f-f\right\Vert _{\infty}=0(t^{\theta}).
\]
For other characterizations of Besov spaces we must refer to \cite{pee},
\cite{tr} and the references therein. For a treatment of fractional
derivatives in Gaussian Lipschitz spaces using semigroups and classical
analysis we refer to \cite{gaturb}.

\backmatter


\begin{thebibliography}{99}                                                                                               %


\bibitem {aalto}D. Aalto, \textsl{Weak }$L^{\infty}$\textsl{ and }%
$BMO$\textsl{ in metric spaces}, (arXiv:0910.1207).

\bibitem {aalto1}D. Aalto, L. Berkovits, O. E. Maasalo and H. Yue,
\textsl{John-Nirenberg lemmas for a doubling measure}, (arXiv:0910.1228).

\bibitem {almlie}F. Almgren and E. Lieb, \textsl{Symmetric Decreasing
Rearrangement is sometimes continuous}, J. Amer. Math. Soc. \textbf{2} (1989), 683-773.

\bibitem {Amb}Ambrosio, L. \textsl{Some Fine Properties of Sets of Finite
Perimeter in Ahlfors Regular Metric Measure Spaces, }Adv. Mat \textbf{159
}(2001), 51-67.

\bibitem {bakr}D. Bakry, T. Coulhon, M. Ledoux and L. Saloff-Coste,
\textsl{Sobolev inequalities in disguise}, Indiana Univ. Math. J. \textbf{44}
(1995), 1033--1074.

\bibitem {bakrmey}D. Bakry and P. A. Meyer, \textsl{Sur les in\'{e}galit\'{e}s
de Sobolev logarithmiques II}, S\'{e}m. Prob. (Strasbourg) \textbf{16} (1982), 146-150.

\bibitem {bar}F. Barthe, P. Cattiaux and C. Roberto, \textsl{Interpolated
inequalities between exponential and Gaussian, Orlicz hypercontractivity and
isoperimetry}, Rev. Mat. Iber. \textbf{22} (2006), 993--1067.

\bibitem {BCR1}F. Barthe, P. Cattiaux and C. Roberto. \textsl{Isoperimetry
between exponential and Gaussian}, Electronic Journal of Probability
\textbf{12} (2007), 1212-1237.

\bibitem {bmr}J. Bastero, M. Milman and F. Ruiz, \textsl{A note on }%
$L(\infty,q)$\textsl{ spaces and Sobolev embeddings}, Indiana Univ. Math. J.
\textbf{52} (2003), 1215--1230.

\bibitem {Bayle}V. Bayle, \textsl{Propri\'{e}t\'{e}s de concavit\'{e} du
profil isop\'{e}rim\'{e}trique et applications}, Th\`{e}se de Doctorat, 2003. (French)

\bibitem {bds}C. Bennett, R. De Vore and R. Sharpley, \textsl{Weak-L}%
$^{\infty}$\textsl{ and BMO}, Annals of Math. \textbf{113} (1981), 601--611.

\bibitem {br}C. Bennett and K. Rudnick, \textsl{On Lorentz Zygmund spaces},
Diss. Math. \textbf{75}, 1980.

\bibitem {bs}C. Bennett and R. Sharpley, \textsl{Interpolation of Operators},
Academic Press, Boston\textbf{, }1988.

\bibitem {bl}J. Bergh and J. L\"{o}fstrom, \textsl{Interpolation spaces. An
Introduction,} Springer-Verlag, New York, 1976.

\bibitem {bes}G. Besson, \textsl{From isoperimetric inequalities to heat
kernels via symmetrization}, Surveys in differential geometry Vol. IX,
Eigenvalues of Laplacians and other geometric operators (Edited by A.
Grigor'yan and S.T. Yau), pp 27-51, Int. Press, Somerville, MA, 2004.

\bibitem {Bob}S. G. Bobkov. \textsl{Extremal properties of half-spaces for
log-concave distributions}, Ann. Probab. \textbf{24} (1996), 35--48.

\bibitem {bogo}S. G. Bobkov and F. Gotze, \textsl{Exponential integrability
and transportation cost related to logarithmic Sobolev inequalities}, J.
Funct. Anal. \textbf{163} (1999), 1-28.

\bibitem {bobk}S. G. Bobkov and C. Houdr\'{e}, \textsl{Some connections
between isoperimetric and Sobolev type inequalities}, Mem. Amer. Math. Soc.
\textbf{129} (1997), no 616.

\bibitem {Bor}C. Borell, \textsl{Intrinsic bounds on some real-valued
stationary random functions.} Lect. Notes in Math. \textbf{1153} (1985), 72-95.

\bibitem {bourdon}P. S. Bourdon, J. H. Shapiro and W. T. Sledd,
\textsl{Fourier series, mean Lipschitz spaces and bounded mean oscillation},
Analysis at Urbana 1, Lond. Math. Soc. Lect Notes \textbf{137}, 1989, pp. 81-110.

\bibitem {boyd}D. W. Boyd, \textsl{Indices of function spaces and their
relationship to interpolation}, Canad. J. Math. \textbf{21} (1969), 1245--1254.

\bibitem {bube}P. L. Butzer and H. Behrens, \textsl{Semigroups of operators
and approximation}, Springer Verlag, New York, 1967.

\bibitem {calx}C. P. Calder\'{o}n, \textsl{Lacunary differentiability of
functions in }$R^{n}$, J. Approx. Theory \textbf{40} (1984), 148-154.

\bibitem {che}J. Cheeger, \textsl{Differentiability of Lipschitz functions on
metric measure spaces,} Geom. Funct. Anal. \textbf{9} (1999), 428--517.

\bibitem {Chi}A. Cianchi, \textsl{Continuity properties of functions from
Orlicz-Sobolev spaces and embedding theorems.} Ann. Scuola Norm. Sup. Pisa Cl.
Sci. \textbf{23} (1996), 575--608.

\bibitem {coifweiss}R. R. Coifman and G. Weiss, \textsl{Analyse harmonique
non-commutative sur certains espaces homog\`{e}nes} Lecture Notes in
Mathematics, Vol. \textbf{242}. Springer-Verlag, Berlin-New York, 1971.

\bibitem {cou}T. Coulhon, \textsl{Espaces de Lipschitz et in\'{e}galit\'{e}s
de Poincar\'{e}}, J. Funct. Anal. \textbf{136} (1996), 81-113.

\bibitem {cou1}T. Coulhon, \textsl{Heat kernel and isoperimetry on non-compact
Riemmanian manifolds}, Contemporary Mathematics \textbf{338} (2003), 65-99.

\bibitem {cjm}M. Cwikel, B. Jawerth and M. Milman, \textsl{A note on
extrapolation of inequalities}, preprint, 2010.

\bibitem {css}M. Cwikel, Y. Sagher and P. Shvartsman, \textsl{A
geometrical/combinatorical question with implications for the JohnNirenberg
inequality for BMO functions}, Banach Center Publ. \textbf{95} (2011), 45-53.

\bibitem {deland}P. DeLand, \textsl{Moduli of continuity for exponential
classes}, Trans. Amer. Math. Soc. \textbf{229} (1977), 175-189.

\bibitem {ditzian}Z. Ditzian and V. Totik, \textsl{Moduli of Smoothness},
Springer-Verlag, New York, 1987.

\bibitem {ditiv}Z. Ditzian and K. G. Ivanov, \textsl{Strong converse
inequalities}, J. D'Analise Math. \textbf{61} (1993), 61-111.

\bibitem {dil}Z. Ditzian and D. S. Lubinsky, \textsl{Jackson and Smoothness
theorems for Freud weights in }$L_{p}$ $(0<p<\infty),$ Constr. Approx.
\textbf{13} (1997), 99-152.

\bibitem {duff}G. F. D. Duff, \textsl{Differences, derivatives, and decreasing
rearrangements}, Canad. J. Math. \textbf{19} (1967), 1153-1178.

\bibitem {ehrhard}A. Ehrhard, \textsl{In\'{e}galit\'{e}s
isop\'{e}rim\'{e}triques et int\'{e}grales de Dirichlet gaussiennes}, Ann.
Sci. ENS \textbf{17} (1984), 317-332.

\bibitem {flem}H. Federer and W. H. Fleming, \textsl{Normal and integral
currents}, Ann. of Math. \textbf{72} (1960), 458-520.

\bibitem {fs}C. Fefferman and E. M. Stein, $H^{p}$\textsl{ spaces of several
variables}, Acta Math. \textbf{129} (1972), 137-193.

\bibitem {fei}G. F. Feissner, \textsl{Hypercontractive semigroups and
Sobolev's inequality}, Trans. Amer. Math. Soc. \textbf{210} (1975), 51-62.

\bibitem {garsia0}A. M. Garsia, \textsl{On the smoothness of functions
satisfying certain integral inequalities}, Functional Analysis (Proc. Sympos.,
Monterey, Calif., 1969), Academic Press, New York, 1970, pp. 127-162.

\bibitem {garsia}A. M. Garsia, \textsl{Combinatorial inequalities and
smoothness of functions}, Bull. Amer. Math. Soc. \textbf{82} (1976), 157-170.

\bibitem {garsiaind}A. M. Garsia, \textsl{A remarkable inequality and the
uniform convergence of Fourier series}, Indiana Univ. Math. J. \textbf{25}
(1976), 85-102.

\bibitem {garsiagrenoble}A. M. Garsia and E. Rodemich, \textsl{Monotonicity of
certain functionals under rearrangements}, Ann. Inst. Fourier (Grenoble)
\textbf{24} (1974), 67-116.

\bibitem {garr}A. M. Garsia, E. Rodemich and H. Rumsey, \textsl{A real
variable Lemma and the continuity of some Gaussian processes}, Indiana U.
Math. J. \textbf{20} (1970/71), 565-578.

\bibitem {gaturb}A. E. Gatto and W. O. Urbina, \textsl{On Gaussian Lipschitz
spaces and the boundedness of fractional integrals}, arxiv:0911.3962v2
[math.CA], 2010.

\bibitem {geto}S. Geiss and A. Toivola, \textsl{On fractional smoothness and
$L_{p}$-approximation on the Wiener space}, preprint, Innsbruck, 2012.

\bibitem {Akos}A. Gogatishvili, P. Koskela and Shanmugalingam,
\textsl{Interpolation properties of Besov spaes defined on metric spaces},
Math. Nachr. \textbf{283} (2010), 215?231.

\bibitem {haroske}D. D. Haroske, \textsl{Envelopes and sharp embeddings of
function spaces}, Chapman \& Hall, Boca Raton, 2007.

\bibitem {HK}P. Hajlasz and P. Koskela, \textsl{Sobolev met Poincar\'{e}}.
Mem. Amer. Math. Soc. \textbf{145} (2000).

\bibitem {ho}T. Holmstedt, \textsl{Equivalence of two methods of
interpolation}, Math. Scand. \textbf{18} (1966), 45-52.

\bibitem {jm}B. Jawerth and M. Milman, \textsl{Interpolation of weak type
spaces, }Math. Z. \textbf{201} (1989), 509-520.

\bibitem {jat}B. Jawerth and A. Torchinsky, \textsl{Local sharp maximal
functions}, J. Approx. Th. \textbf{43} (1985), 231-270.

\bibitem {jhn}F. John, \textsl{Quasi-isometric mappings}, Seminari 1962-1963
di Analisi, Algebra, Geometria e Topologia, Rome, 1965.

\bibitem {js}H. Johnen and K. Scherer, \textsl{On the equivalence of the }%
$K-$\textsl{functional and moduli of continuity and some applications}, In:
Constructive theory of functions of several variables, pp. 119-140, Lecture
Notes in Math. \textbf{571}, Springer, Berlin, 1977

\bibitem {ko}V. I. Kolyada, \textsl{Estimates of rearrangements and embedding
theorems}, Mat. Sb. \textbf{136 }(1988), 3-23 (in Russian); English transl.:
Math. USSR-Sb. \textbf{55 }(1989), 1-21.

\bibitem {krb0}M. Krbec and H.-J. Schmeisser, \textsl{On dimension-free
Sobolev imbeddings I}. J. Math. Anal. Appl. \textbf{387} (2012), 114-125.

\bibitem {krb}M. Krbec and H.-J. Schmeisser, \textsl{On dimension-free Sobolev
imbeddings II}, Rev Mat Complut \textbf{25} (2012), 247-265.

\bibitem {KPS}S. G. Krein, Yu. I. Petunin and E. M. Semenov,
\textsl{Interpolation of linear operators}, Transl. Math. Monogr. Amer. Math,
Soc.\emph{ }\textbf{54, }Providence\textbf{, }1982.

\bibitem {le}M. Ledoux, \textsl{Isop\'{e}rim\'{e}trie et in\'{e}galit\'{e}es
de Sobolev logarithmiques gaussiennes}, C. R. Acad. Sci. Paris \textbf{306}
(1988), 79-92.

\bibitem {leoni}G. Leoni, \textsl{A first course in Sobolev spaces}, Grad.
Studies in Math. 105, Amer. Math. Soc., 2009.

\bibitem {ler}A. K. Lerner, \textsl{On pointwise estimates for maximal and
singular integral operators}, Studia Math. \textbf{138} (2000), 285-291.

\bibitem {lor}G. G. Lorentz, \textsl{On the theory of spaces} $\Lambda,$
Pacific J. Math.\textbf{ 1 }(1951), 321-480.

\bibitem {Mar}J. Mart\'{\i}n, \textsl{Symmetrization inequalities in the
fractional case and Besov embeddings}, J. Math. Anal. Appl. \textbf{344}
(2008), 99-123.

\bibitem {mamiconver}J. Mart\'{\i}n and M. Milman, \textsl{Modes of
convergence: interpolation methods. I}, J. Approx. Theory \textbf{111} (2001), 91-127.

\bibitem {mamiproc}J. Martin and M. Milman, \textsl{Symmetrization
inequalities and Sobolev embeddings}, Proc. Amer. Math. Soc. \textbf{134}
(2006), 2335-2347.

\bibitem {mmletter}J. Martin and M. Milman, \textsl{Sharp Gagliardo-Nirenberg
inequalities via Symmetrization}, Mathematical Research Letters \textbf{14
}(2007), 49-62.

\bibitem {mamicwk}J. Martin and M. Milman, \textsl{A note on Sobolev
inequalities and limits of Lorentz spaces}, Contemporary Mathematics\textbf{
445 }(2007), 237-245.

\bibitem {mamipot}J. Martin and M. Milman, \textsl{Self Improving
Sobolev-Poincar\'{e} Inequalities, Truncation and Symmetrization}, Potential
Anal. \textbf{29} (2008) 391-408.

\bibitem {mmlog}J. Martin and M. Milman, \textsl{Isoperimetry and
symmetrization for logarithmic Sobolev inequalities}, J. Funct. Anal.
\textbf{256} (2009), 149-178.

\bibitem {mamiadv}J. Martin and M. Milman, \textsl{Pointwise symmetrization
inequalities for Sobolev functions and applications}, Adv. Math \textbf{225}
(2010), 121-199.

\bibitem {mamicon}J. Martin and M. Milman, \textsl{Sobolev inequalities,
rearrangements, isoperimetry and interpolation spaces}, Contemporary
Mathematics, \textbf{545} (2011), pp 167-193.

\bibitem {mmp}J. Martin, M. Milman and E. Pustylnik, \textsl{Sobolev
Inequalities: Symmetrization and Self Improvement via truncation}, J.
Funct.Anal.\textbf{ 252 }(2007), 677-695.

\bibitem {mastylo}M. Mastylo, \textsl{The Modulus of Smoothness in Metric
Spaces and Related Problems}, Potential Anal. \textbf{35} (2011), 301-328.

\bibitem {maz}V. G. Maz'ya, \textsl{Classes of domains and imbedding theorems
for function spaces}, Dokl. Acad. Nauk SSSR \textbf{3 }(1960), 527-530, (Engl.
transl. Soviet Math. Dokl. \textbf{1} (1961), 882-885.)

\bibitem {ma}V. G. Maz'ya, \textsl{Sobolev Spaces with applications to
elliptic partial differential equations}. Second, revised and augmented
edition. Grundlehren der Mathematischen Wissenschaften [Fundamental Principles
of Mathematical Sciences], \textbf{342}. Springer, Heidelberg, 2011.

\bibitem {mey}P. A. Meyer, \textsl{Interpolation entre espaces d'Orlicz},
S\'{e}m. Prob. (Strasbourg) \textbf{16} (1982), 153-158.

\bibitem {MiE}E. Milman, \textsl{On the role of convexity in isoperimetry,
spectral gap and concentration} Invent. Math. \textbf{177} (2009), 1--43.

\bibitem {mp}M. Milman and E. Pustylnik, \textsl{On sharp higher order Sobolev
embeddings}, Commun. Contemp. Math. \textbf{6} (2004), 495--511.

\bibitem {Mira}M. Miranda, \textsl{Functions of bounded variation on ''good''
metric spaces}. J. Math. Pures Appl. \textbf{82} (2003), 975--1004.

\bibitem {moru}C. Mouhot, E. Russ, Y. Sire, \textsl{Fractional Poincar\'{e}
inequalities for general measures}, arxiv.org (http://arxiv.org/abs/0911.4563).

\bibitem {park}T. H. Park,\textsl{ Sobolev type inequalities}, Ph D Thesis
(under A. Garsia), Univ.\ San Diego, 1974.

\bibitem {pee}J. Peetre, \textsl{New thoughts on Besov spaces}, Duke Univ.
Math. Series, 1976.

\bibitem {peebra}J. Peetre, \textsl{A theory of interpolation of normed
spaces}, Lecture Notes, Brasilia, 1963.

\bibitem {pis}G. Pisier, \textsl{Conditions d'entropie assurant la
continuit\'{e} de certains processus et applications \`{a} l'analyse
harmonique}, Seminaire d'analyse fonctionnelle, Ecole Polytechnique
(1979-1980), exp 13-14, pp. 1-43.

\bibitem {saghschv}Y. Sagher and P. Shvartsman, \textsl{Rearrangement-Function
Inequalities and \ Interpolation Theory}, J. Approx. Th. \textbf{119} (2002), 214-251.

\bibitem {Sh}R. Sharpley, \textsl{Spaces $\Lambda_{\alpha}(X)$ and
interpolation}, J. Funct. Anal. \textbf{11} (1972), 479-513.

\bibitem {Semm}S. Semmes, \textsl{Some novel types of fractal geometry},
Oxford Mathematical Monographs. The Clarendon Press, Oxford University Press,
New York, 2001.

\bibitem {St}E. M. Stein, \textsl{Singular integrals and differentiability
properties of functions,} Princeton Mathematical Series, No. 30 Princeton
University Press, Princeton, N.J. 1970.

\bibitem {ST1}E. Stein, \textsl{Editor's note: The differentiability of
functions in }$\mathbb{R}^{n},$ Annals of Math. \textbf{113} (1981), 383-385.

\bibitem {stro}J.-O. Str\"{o}mberg, \textsl{Bounded mean oscillation with
Orlicz norms and duality of Hardy spaces}, Indiana Univ. Math. J. \textbf{28}
(1979), 511-544.

\bibitem {rako}J. M. Rakotoson and B. Simon \textsl{Relative Rearrangement on
a finite measure space spplication to the regularity of weighted monotone
rearrangement (Part 1)}, Rev. R. Acad. Cienc. Exact. Fis. Nat. \textbf{91}
(1997), 17-31.

\bibitem {tes}R. Tessera,\textsl{ Large scale Sobolev inequalities on metric
measure spaces and applications}, Rev. Mat. Iberoam. \textbf{24 }(2008), 825-864.

\bibitem {tor}A. Torchinsky, \textsl{Real variable methods in harmonic
analysis}, Academic Press, 1986.

\bibitem {tr}H. Triebel, \textsl{Interpolation theory, Function Spaces,
Differential Operators}, Berlin, 1978.

\bibitem {tr1}H. Triebel, \textsl{Tractable embeddings of Besov spaces into
Zygmund spaces}, Preprint, Jena, 2009

\bibitem {xiao}J. Xiao and Zh. Zhai, \textsl{Fractional Sobolev,
Moser-Trudinger, Morrey-Sobolev inequalities under Lorentz norms}, J. Math.
Sc. \textbf{166} (2010), 357-371.

\bibitem {Zie}W. P. Ziemer, \textsl{Weakly differenciable functions, Sobolev
spaces and functions of bounded variation}. Graduate Texts in Mathematics,
\textbf{120}. Springer-Verlag, New York, 1989., Berlin, 1978.
\end{thebibliography}
\end{document}